\newcommand {\Z}{\mathbb{Z}}
\newcommand {\bng}{B_n\Gamma}
\newcommand {\ucng}{U\mathcal{C}^n\Gamma}
\newcommand {\udng}{U\mathcal{D}^n\Gamma}
\newcommand {\uc}[2]{U\mathcal{C}^{#1} #2}
\newcommand {\ud}[2]{U\mathcal{D}^{#1} #2}
\newcommand {\leqn}{\leq_{{}_N}\hspace{-1mm}}
\newcommand {\en}{=_{{}_N}\hspace{-1mm}}
\newtheorem{theorem}{Theorem}[section]
\newtheorem{lemma}[theorem]{Lemma}
\newtheorem{proposition}[theorem]{Proposition}
\newtheorem{corollary}[theorem]{Corollary}
\newtheorem{conjecture}[theorem]{Conjecture}
\theoremstyle{definition}
\newtheorem{definition}[theorem]{Definition}
\newtheorem{convention}[theorem]{Convention}
\newtheorem{example}[theorem]{Example}
\newtheorem*{theoremA}{Theorem A}
\newtheorem*{theoremB}{Theorem B}
\newtheorem*{theoremC}{Theorem C}
\begin{document}

\title[Rigidity of tree braid groups]{On rigidity and the 
isomorphism problem for tree braid groups}
\author[L.Sabalka]{Lucas~Sabalka}
      \address{\tt Department of Mathematical Sciences\\
               Binghamton U\\
               Binghamton, NY 13902-6000
\newline       http://math.binghamton.edu/sabalka}
      \email{sabalka at math.binghamton.edu}

\begin{abstract}

We solve the isomorphism problem for braid groups on trees with $n = 4$ 
or $5$ strands.  We do so in three main steps, each of which is 
interesting in its own right.  First, we establish some tools and 
terminology for dealing with computations using the cohomology of tree 
braid groups, couching our discussion in the language of differential 
forms.  Second, we show that, given a tree braid group $B_nT$ on $n = 4$ 
or $5$ strands, $H^*(B_nT)$ is an exterior face algebra.  Finally, we 
prove that one may reconstruct the tree $T$ from a tree braid group 
$B_nT$ for $n = 4$ or $5$.  Among other corollaries, this third step 
shows that, when $n = 4$ or $5$, tree braid groups $B_nT$ and trees $T$ 
(up to homeomorphism) are in bijective correspondence.  That such a 
bijection exists is not true for higher dimensional spaces, and is an 
artifact of the $1$-dimensionality of trees.  We end by stating the 
results for right-angled Artin groups corresponding to the main 
theorems, some of which do not yet appear in the literature.

\end{abstract}

\keywords{tree braid groups, configuration spaces, isomorphism problem, exterior face algebras, discrete Morse theory, group cohomology, right-angled Artin groups}

\subjclass[2000]{Primary 20F65, 20F36; Secondary 55N99, 13D25, 20F10}

\maketitle

\section{Introduction}\label{sec:intro}

Given a graph $\Gamma$, the \emph{unlabelled configuration space} 
$\ucng$ of $n$ points on $\Gamma$ is the space of $n$-element subsets of 
distinct points in $\Gamma$. The \emph{$n$-strand braid group of 
$\Gamma$}, denoted $\bng$, is the fundamental group of $\ucng$.  If 
$\Gamma$ is a tree, $\bng$ is a \emph{tree braid group}.

Graph braid groups are of interest because of their connections with 
classical braid groups (see, for instance, \cite{Sabalka4}) and 
right-angled Artin groups \cite{CrispWiest,Sabalka,FarleySabalka2a}, as 
well as connections in robotics and mechanical engineering.  Graph braid 
groups can, for instance, model the motions of robots moving about a 
factory floor \cite{Ghrist,Farber,Farber2}, or the motions of 
microscopic balls of liquid on a nano-scale electronic circuit 
\cite{GhristPeterson}.

Ghrist \cite{Ghrist} showed that the complexes $\uc{n}{\Gamma}$ are 
$K(B_n\Gamma,1)$ spaces. Abrams \cite{Abrams1} showed that graph braid 
groups are fundamental groups of locally CAT(0) cubical complexes, and 
so for instance have solvable word and conjugacy problem 
\cite{BridsonHaefliger}.  Crisp and Wiest \cite{CrispWiest} showed that 
any graph braid group embeds in some right-angled Artin group, so graph 
braid groups are linear, bi-orderable, and residually finite.  For more 
information on what is known about graph braid groups, see for instance 
\cite{Sabalka4}.

For any class of groups $\mathcal{G}$, it is interesting to ask whether 
or not one can algorithmically decide if two members $G$ and $G'$ in 
$\mathcal{G}$ are isomorphic as groups.  We call this question the 
\emph{isomorphism problem} for $\mathcal{G}$.

Isomorphism problems are one of the fundamental topics of study for 
combinatorial and geometric group theory.  Isomorphism problems are the 
hardest of the three classes of algorithmic problems in group theory 
formulated by Max Dehn \cite{Dehn}.  It is known that the isomorphism 
problem for finitely presented groups is undecidable in general 
\cite{Adyan,Rabin}.  However, there are solutions to the isomorphism 
problem for certain classes of groups.  A short list of such classes of 
groups includes: polycyclic-by-finite groups \cite{Segal}, finitely 
generated nilpotent groups \cite{GrunewaldSegal}, torsion-free word 
hyperbolic groups which do not split over the trivial or infinite cyclic 
group \cite{Sela}, and finitely generated fully residually free groups 
\cite{BKM}.

The purpose of this paper is to implement an algorithm to solve the 
isomorphism problem for tree braid groups in some cases.  We prove:

\begin{theoremA}[cf. Theorem \ref{thm:isoproblem})(The Isomorphism Problem] 

Let $G$ and $G'$ be two groups be given by finite presentations, and 
assume that $G \cong B_nT$ and $G' \cong B_nT'$ for some positive 
integer $n$ and finite trees $T$ and $T'$.  If either:

  \begin{itemize} 
  \item $n = 4$ or $5$ or 
  \item at least one of $G$ or $G'$ is free, 
  \end{itemize} 
then there exists an algorithm which decides whether $G$ and $G'$ are 
isomorphic.  The trees $T$ and $T'$ need not be specified.  If one of 
$T$ and $T'$ has at least 3 essential vertices, then $n$ need not be 
specified.

\end{theoremA}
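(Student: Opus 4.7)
The plan is to reduce the isomorphism problem for tree braid groups to the homeomorphism problem for finite trees, which is classically decidable in linear time. The enabling ingredient is the rigidity theorem advertised in the abstract: for $n\in\{4,5\}$ the tree $T$ is determined up to homeomorphism by $B_nT$. Granted an effective procedure for extracting $T$ from a bare finite presentation of $G\cong B_nT$, the overall algorithm is to extract $T$ from $G$, extract $T'$ from $G'$, and compare $T$ with $T'$.

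Extraction proceeds in two phases. In the search-bounding phase, compute the abelianization $G^{\mathrm{ab}}$ and its free rank $b_1(G)$ from the presentation via Smith normal form. The Morse-theoretic cell decomposition developed in the first step of the paper gives an explicit, computable formula for $b_1(B_nT)$ in terms of the combinatorics of $T$; since this quantity grows with the number of edges and essential vertices of $T$, one obtains an effective upper bound $N=N(G,n)$ on the size of any tree $T$ with $G\cong B_nT$. In the identification phase, enumerate the finitely many trees of size at most $N$; for each candidate $T_0$ write down a presentation of $B_nT_0$ by the same Morse-theoretic recipe; and interleave Tietze-transformation searches for isomorphisms $G\cong B_nT_0$ across all candidates in parallel. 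By the hypothesis $G\cong B_nT$ the search succeeds for $T_0=T$; by rigidity it never produces a false match; the parallel procedure therefore terminates by delivering the unique correct $T$. Applying the same extraction to $G'$ yields $T'$, and the standard tree-isomorphism test finishes the decision.

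The remaining cases are handled as variations on the same theme. When $n$ is unspecified but some tree has at least three essential vertices, read $n$ off from the cohomological dimension of $G$, which is encoded in the exterior face algebra structure established in the paper's second step; this distinguishes $n=4$ from $n=5$. When at least one of $G,G'$ is free, the argument simplifies: two finitely generated free groups are isomorphic iff their ranks agree, and freeness of a tree braid group is visible from the cohomology (vanishing of $H^{\geq 2}$), so checking freeness by Tietze enumeration bounded via $b_1$ and matching ranks suffices, without appealing to rigidity at all.

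The principal obstacle is the effective bound $N(G,n)$: because the tree is not given, one must convert the computable invariant $b_1(G)$ into an explicit combinatorial ceiling on $T$, which requires the closed-form expression for $b_1(B_nT)$ from step one of the paper together with care about its dependence on $n$. Once this bound is in hand, the rigidity theorem is precisely what upgrades Tietze enumeration --- a priori only a semi-decision procedure --- into a genuine algorithm, by guaranteeing that the correct candidate among the finite list of trees of size $\leq N$ is unique and so must eventually be found.
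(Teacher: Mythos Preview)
Your overall strategy---extract the tree via enumeration, appeal to rigidity (Theorem~\ref{thm:45rigidity}) for uniqueness, then compare trees---is essentially the paper's strategy, and your use of $b_1$ to bound the search space is a pleasant refinement of the paper's unbounded diagonalization. But one step fails outright.

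\textbf{Determining $n$ from cohomological dimension does not work.} You propose to ``read $n$ off from the cohomological dimension of $G$\dots; this distinguishes $n=4$ from $n=5$.'' It does not: by Theorem~\ref{thm:cohomology}, $H^i(B_nT)$ vanishes for $i>\lfloor n/2\rfloor$, and $\lfloor 4/2\rfloor=\lfloor 5/2\rfloor=2$, so in both cases the cohomology is concentrated in degrees $\leq 2$ and the exterior face complex $\Delta$ of Theorem~\ref{thm:cohom} is $1$-dimensional. No dimension-type invariant separates the two values of $n$. The paper's mechanism (Corollary~\ref{cor:45candeterminen}) is much more delicate: it examines the poset of vertex neighborhoods in $\Delta$ and shows that, when $T$ has at least three essential vertices, two children of a $\leqn$-maximal class share a common child if and only if $n=5$. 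You would need to invoke this, or produce an alternative invariant of the ring $H^*(B_nT)$ that genuinely separates $n=4$ from $n=5$; cohomological dimension is not one.

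A secondary point: in the free case you invoke ``vanishing of $H^{\geq 2}$'' to detect freeness, but group cohomology is not computable from a bare finite presentation, so this is not an algorithmic test as stated. The paper sidesteps this by noting that the enumeration over tree braid groups will eventually exhibit $G\cong B_mT_0$ for \emph{some} $(m,T_0)$, and freeness of $B_mT_0$ is then decidable directly from $(m,T_0)$; rigidity is not needed here since freeness is an isomorphism invariant. Your $b_1$-bounded Tietze search for $G\cong F_{b_1(G)}$ only semi-decides freeness (it never halts on a non-free input), so you need the enumeration route as a fallback to certify non-freeness.
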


To prove Theorem A, the main ingredient is a bijection between trees and 
tree braid groups.  This bijection allows us to algorithmically 
reconstruct the defining tree $T$ from the tree braid group $B_nT$.
This reduces the isomorphism problem for tree braid groups to the 
isomorphism problem for trees, which has a brute force algorithmic 
solution.  

The bijection between trees and tree braid groups is the strongest and 
most difficult result of this paper, and is interesting in its own 
right:

\begin{theoremB}[cf. Theorem \ref{thm:45rigidity})(Rigidity for $4$ and 
$5$ Strand Tree Braid Groups] 

Let $T$ and $T'$ be two finite trees, and let $n = 4$ or $5$.  
The tree braid groups $B_nT$ and $B_nT'$ are isomorphic as groups if and 
only if the trees $T$ and $T'$ are homeomorphic as trees.

\end{theoremB}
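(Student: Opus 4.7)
The forward implication is straightforward: a homeomorphism $T \to T'$ induces a homeomorphism of unlabelled configuration spaces $\ucng \to \ucng T'$ (where I abuse the macro $\ucng$ to denote configuration spaces of $T$ and $T'$), and since these are $K(\pi,1)$ spaces by Ghrist's theorem, the induced map on fundamental groups is the desired isomorphism $B_nT \cong B_nT'$. My effort therefore goes entirely into the reverse implication, which I plan to prove by extracting the homeomorphism type of $T$ as an invariant of the abstract group $B_nT$.

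The overall strategy is to route the argument through cohomology. Because $\ucng$ is a $K(B_nT,1)$, any abstract isomorphism $B_nT \cong B_nT'$ induces a graded ring isomorphism $H^*(B_nT;\Z) \cong H^*(B_nT';\Z)$, so it suffices to show that this cohomology ring determines $T$ up to homeomorphism (for $n = 4$ or $5$). This is where the other main results of the paper come in. The plan is to invoke the exterior face algebra structure of $H^*(B_nT)$, which is the content of the second main ingredient, to translate the ring-theoretic data into combinatorial data: an exterior face algebra over $\Z$ is the quotient of an exterior algebra by a squarefree monomial ideal, and as such it is canonically associated to a finite abstract simplicial complex $\Delta = \Delta(B_nT)$. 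Two exterior face algebras are isomorphic as graded rings precisely when their underlying simplicial complexes are isomorphic, so the isomorphism type of $\Delta(B_nT)$ is a group-theoretic invariant of $B_nT$.

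With the simplicial complex $\Delta(B_nT)$ in hand, the task reduces to reading off the tree $T$ (up to homeomorphism) from $\Delta(B_nT)$, which is the third main ingredient in the paper. The plan is to exploit the $1$-dimensionality of $T$: up to homeomorphism, $T$ is determined by the combinatorial tree whose vertices are the essential vertices of $T$ (those of valence $\geq 3$), whose edges record adjacency along arcs of valence-$2$ vertices, and whose vertex labels record valences. I therefore aim to identify, inside $\Delta(B_nT)$, distinguished vertices/simplices corresponding to the essential vertices of $T$, recover the valence of each such essential vertex from the local structure of $\Delta$, and recover the adjacency pattern from the way these distinguished pieces are glued together in $\Delta$. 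Once the essential vertices, their valences, and their adjacencies are recovered, the homeomorphism type of $T$ is determined.

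The main obstacle will be the final reconstruction step: showing that the simplicial complex $\Delta(B_nT)$ really does contain enough combinatorial information to pin down the essential-vertex structure of $T$, and that this reconstruction is canonical (independent of the isomorphism used). The restriction to $n = 4$ or $5$ will likely be essential here, because the second ingredient (that $H^*(B_nT)$ is an exterior face algebra) is only available in these cases, and because for small $n$ one still has enough strands to probe all essential vertices simultaneously but few enough that the cohomological bookkeeping remains tractable. The tools built in the first ingredient of the paper, couching cohomology computations for tree braid groups in the language of differential forms, will be what make the concrete identification of vertices, valences, and adjacencies inside $\Delta(B_nT)$ feasible.
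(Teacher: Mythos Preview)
Your proposal is correct and follows essentially the same route as the paper: pass to cohomology, invoke the exterior face algebra structure (Theorem~\ref{thm:cohom}) together with Gubeladze's uniqueness theorem to extract a canonical simplicial complex $\Delta$, and then reconstruct $T$ combinatorially from $\Delta$. One small technical point: the paper works throughout with $\mathbb{Z}/2\mathbb{Z}$ coefficients rather than $\mathbb{Z}$, since both Theorem~\ref{thm:cohom} and Gubeladze's theorem are stated over $\mathbb{Z}/2\mathbb{Z}$; this is harmless because $H_*(B_nT;\mathbb{Z})$ is free abelian, so an integral cohomology isomorphism descends to one over $\mathbb{Z}/2\mathbb{Z}$. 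The reconstruction step you flag as the main obstacle is exactly what the paper carries out in Section~\ref{sec:T_Delta} via the neighborhood hierarchy of a $\leqn$-maximal vertex, using the CUB direction and CUB number machinery of Lemmas~\ref{lem:directiondc}--\ref{lem:counting} to recover essential vertices, their degrees, and their adjacencies.
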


The idea of the proof of Theorem B is to use cohomology to reconstruct 
the tree $T$ from the tree braid group $B_nT$.  This reconstruction 
involves careful combinatorial bookkeeping in a finite simplicial 
complex $\Delta$ associated to $B_nT$.  The complex $\Delta$ is the 
defining complex for an exterior face algebra structure on the 
cohomology ring of $B_nT$ (see Section \ref{sec:extfacealgs} for 
definitions).  The main technical used to prove Theorem B is that 
$\Delta$ exists and is unique:

\begin{theoremC}[cf. Theorem \ref{thm:cohom})(Exterior Face Algebra 
Structure on Cohomology]

Let $T$ be a finite tree.  For $n = 4$ or $5$, 
$H^*(B_nT;\mathbb{Z}/2\mathbb{Z})$ is an exterior face algebra.  The 
simplicial complex $\Delta$ defining the exterior face algebra structure 
is unique and at most $1$-dimensional.

\end{theoremC}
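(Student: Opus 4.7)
The plan is to use the Farley--Sabalka discrete Morse function on the Abrams complex $U\mathcal{C}^n T$ to compute $H^*(B_n T; \mathbb{Z}/2\mathbb{Z})$ and its cup-product structure explicitly, and then read off the simplicial complex $\Delta$ from this data. First, I would identify the critical cells of the gradient vector field. For $n = 4$ or $5$, the critical cells admit an explicit combinatorial description in terms of the essential vertices and branching structure of $T$; in particular, critical $1$-cells correspond to elementary branch moves at essential vertices and yield generators $[\omega_i] \in H^1(B_n T; \mathbb{Z}/2\mathbb{Z})$.

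Second, using the differential-forms language set up earlier in the paper, I would compute the pairwise cup products $[\omega_i] \smile [\omega_j]$. Those that are nonzero will index the edges of the candidate complex $\Delta$, whose vertex set is the collection of classes $[\omega_i]$. To conclude that $H^*(B_n T; \mathbb{Z}/2\mathbb{Z})$ is the exterior face algebra on $\Delta$, I must verify: (i) all critical cells above dimension $2$ are cohomologically trivial, so $H^k = 0$ for $k \geq 3$; (ii) each $[\omega_i] \smile [\omega_i] = 0$, so that the exterior relation holds over $\mathbb{Z}/2\mathbb{Z}$; (iii) the nonzero products $[\omega_i] \smile [\omega_j]$ form a $\mathbb{Z}/2\mathbb{Z}$-basis of $H^2$; and (iv) the classes $[\omega_i]$ themselves are linearly independent in $H^1$. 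Each of these reduces to a direct combinatorial count of critical cells and their incidences in the Morse complex for $n = 4$ or $5$.

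The main obstacle is (i), the vanishing of all triple and higher cup products: for $n \geq 6$ this typically fails, and the restriction to small $n$ is essential. I would attack this by a case analysis on the combinatorial types of triples of critical $1$-cells, using the explicit description of the gradient flow to match every potentially surviving critical $3$-cell with a Morse pairing that kills it, or to show that its cup-product expression factors through one of the vanishing relations in (ii) or through a non-edge of $\Delta$. Finally, uniqueness of $\Delta$ reduces to the standard fact that the isomorphism type of an exterior face algebra determines the isomorphism type of its defining simplicial complex. Since $\Delta$ is at most $1$-dimensional, only vertex and edge information is involved: the vertices are recovered as a minimal degree-$1$ generating set on which the squaring relation and the non-degeneracy of distinct-support products hold, and the edges are detected by the pattern of nonvanishing pairwise products in $H^2$. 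Both are intrinsic to the ring $H^*(B_n T; \mathbb{Z}/2\mathbb{Z})$, so $\Delta$ is unique.
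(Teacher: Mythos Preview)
Your proposal has a genuine gap in step (iii), and you have misidentified the main obstacle.

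First, the obstacle you flag in (i) is a non-issue: for $n=4$ or $5$ one has $\lfloor n/2\rfloor=2$, so there are \emph{no} critical cells of dimension $\geq 3$ in the Morse complex, and $H^k(B_nT)=0$ for $k\geq 3$ automatically. There is no case analysis on triples of critical $1$-cells to perform.

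The real difficulty is (iii), which you dismiss as a ``direct combinatorial count.'' It is not. With $\Delta$ defined as you suggest---vertices the Morse generators $c^*$ and edges the pairs with $c_1^*\cup c_2^*\neq 0$---the resulting map $\Lambda(\Delta)\to H^*(B_nT)$ is surjective but in general \emph{not injective}. The paper makes this explicit: two critical $1$-cells $c_1,c_2$ can have $\{[c_1],[c_2]\}$ possessing a least upper bound $[s]$ whose reduced representative $s$ is \emph{not} critical; in that case $c_1^*\cup c_2^*$ is still nonzero but equals a sum of other cup products via a coboundary relation (the differential of a necessary $1$-form). Consequently the nonzero pairwise cup products of Morse generators are linearly dependent in $H^2$, and the edge count of your $\Delta$ exceeds $\dim H^2$.

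The paper's fix is to construct an explicit change of basis $M$ on $H^1$, built from matrices $M_\omega$ attached to necessary forms, with special handling of three exceptional types of critical $1$-cells when $n=5$. Only after passing to the basis $\{Mc^*\}$ does one obtain a complex $\Delta$ (Definition~\ref{def:Delta}) for which the nonzero cup products $Mc_1^*\cup Mc_2^*$ are in bijection with critical $2$-cells, giving the isomorphism $\Lambda(\Delta)\cong H^*(B_nT)$. Your proposal is missing this change of basis entirely, and without it the argument for (iii) fails.
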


Theorem C and other related results lead to an almost complete 
characterization of when the cohomology of a tree braid group is 
an exterior face algebra (see Conjecture \ref{conj:extcohom}).

Throughout the proofs of Theorems B and C, we rely extensively on 
results due to discrete Morse theory, many of which were presented in 
previous papers: \cite{FarleySabalka1,Farley,FarleySabalka2a,Farley2}.  
We also develop tools to discuss many properties of cohomology rings of 
tree braid groups, using the language of differential forms.

The remainder of this paper is organized as follows.  In Section 
\ref{sec:terminology}, we introduce terminology about trees that we will 
need to prove the main results, and define exterior face algebras.  In 
Section \ref{sec:prevresults}, we survey results from other sources 
needed in our proofs.  We pay particular attention to the structure of 
the cohomology ring for tree braid groups, as detailed in 
\cite{FarleySabalka2a}.  In Section \ref{sec:cobound}, we develop the 
notation and terminology to talk about the cohomology rings of graph 
braid groups in the language of differential forms.  In Sections 
\ref{sec:extcohom} and \ref{sec:rigidity}, we prove a number of results 
which lead to Theorems \ref{thm:cohom} and \ref{thm:45rigidity}, 
respectively.  The solution to the isomorphism problem for tree braid 
groups on $4$ or $5$ strands then follows in Section 
\ref{sec:isoproblem}.  Finally, in Section \ref{sec:RAAGs}, we end with 
theorems for right-angled Artin groups in a similar vein to Theorems A, 
B, and C.

The author would like to thank the following people for their help in 
writing this paper: his postdoctoral advisor, Misha Kapovich; his 
doctoral advisor, Ilya Kapovich; Daniel Farley, for numerous helpful 
discussions on this matter; and Go Fujita for initially proposing this 
problem.

\section{Terminology}\label{sec:terminology}

\subsection{Trees}

We begin with terminology for trees.  Throughout this subsection, see 
Figure \ref{fig:terminology} for explicit examples of some of the many 
of the concepts we define.

  \begin{figure}[!hb]
  \begin{center}
  \input{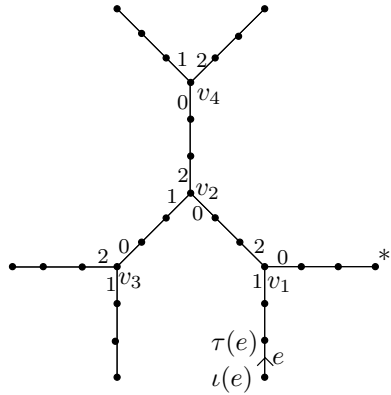}
  \end{center}

  \caption{This figure shows an embedding of a tree in the plane.  The 
  tree is $T_{min}$, the minimal nonlinear tree - i.e. the (unique up to 
  homeomorphism) nonlinear tree with the fewest number of essential 
  vertices and the smallest degrees of essential vertices.  With the 
  choice of basepoint $\ast$, we have a Morse $T_{min}$-embedding.  The 
  essential vertices of $T_{min}$ are labelled $v_1$, $v_2$, $v_3$, and 
  $v_4$.  The vertices $v_1$, $v_3$, and $v_4$ are extremal, and each 
  are adjacent only to $v_2$.  Directions from each essential vertex are 
  labelled.  For instance, $v_2$ is in direction $2$ from $v_1$, and 
  $v_4$ is in direction $0$ from $v_3$.  The edge $e$ has its endpoints 
  $\iota(e)$ and $\tau(e)$ labelled.  Here, $T_{min}$ is sufficiently 
  subdivided for $n = 4$.}

  \label{fig:terminology}

  \end{figure}

Let $T$ be a tree.  We call a vertex $v$ of $T$ \emph{essential} if $v$ 
has degree 3 or more.  Two essential vertices are considered 
\emph{adjacent} if they are connected by a path which crosses no other 
essential vertices.  An essential vertex $v$ is \emph{extremal} it is 
adjacent to exactly one other essential vertex.  A tree is \emph{linear} 
if there exists an embedded line segment which contains every essential 
vertex; equivalently, if it has at most two extremal vertices.

\begin{definition}[Morse $T$-embedding]\label{def:Morseembedding}

Let $T$ be a tree.  Embed $T$ into the plane. Let $\ast$ denote a degree 
$1$ vertex of $T$, called the \emph{basepoint} of $T$.  The information 
of the tree $T$, the embedding of $T$ into the plane, and the choice of 
$\ast$ is called a \emph{Morse $T$-embedding}.

\end{definition}

Let $T$ be a tree with a Morse $T$-embedding.  Let $e$ be an edge in 
$T$.  We call the endpoint of $e$ closer to $\ast$ in $T$ the 
\emph{terminal} vertex of $e$, denoted $\tau(e)$.  This convention gives 
us an orientation on edges in $T$. Similarly, the endpoint of $e$ 
further from $\ast$ in $T$ is the \emph{initial} vertex of $e$, denoted 
$\iota(e)$.  This convention gives us an orientation on edges in $T$.

Let $S$ be a collection of vertices and edges of $T$.  We denote by $T - 
S$ the largest closed subgraph of $T$ which does not contain an element 
of $S$.  In other words, $T - S$ is formed by deleting every edge in 
$S$, as well as any vertex in $S$ and any edge with an endpoint in $S$.

Let $v$ be a vertex in $T$.  Given a Morse $T$-embedding, the edges 
adjacent to $v$ may be numbered $0, \dots, deg(v)-1$ in the order 
encountered by a clockwise traversal of $T$ from $\ast$, and where 
$deg(v)$ denotes the degree of $v$.  If $v = \ast$, the unique edge 
adjacent to $\ast$ is numbered $1$.  A \emph{direction} from $v$ is a 
choice of one of these edge labels.  A vertex is said to \emph{lie in} 
direction $d$ from $v$ if $d$ labels the first edge of the unique simple 
path (that is, a path with no self intersections) from $v$ to the 
vertex.  By convention, $v$ \emph{lies in} direction $0$ from itself.  
An edge \emph{lies in} direction $d$ from $v$ if, for one of the 
endpoints of $e$, $d$ labels the first edge of the unique simple path 
from $v$ to that endpoint.

An extremal vertex has the property that every other essential vertex 
lies in a single direction from it.

Let $\Delta'$ denote the union of those open cells of $\prod^n \Gamma$ 
whose closures intersect the fat diagonal $\Delta = \{(x_1, \dots, 
x_n)|x_i = x_j \hbox{ for some } i \neq j\}$.  Let $\ud{n}{\Gamma}$ 
denote the quotient of the space $\prod^n \Gamma - \Delta'$ by the 
action of the symmetric group given by permuting coordinates. Note that 
$\ud{n}{\Gamma}$ inherits a CW complex structure from the Cartesian 
product: an open cell in $\udng$ has the form $\{y_1, \dots, y_n\}$ such 
that each $y_i$ is either a vertex or an edge and the closures of the 
$y_i$ are mutually disjoint.  The set notation is used to indicate that 
order does not matter.  We call $\udng$ the \emph{unlabelled discretized 
configuration space} of $\Gamma$.  Under most circumstances, the 
$\uc{n}{\Gamma}$ is homotopy equivalent to $\udng$.  Specifically:

\begin{theorem}[Sufficient Subdivision] \emph{(}\cite{Hu} for $n=2$; 
\cite{Abrams1} for $n > 2$\emph{)} \label{thm:Abrams}

For any $n>1$ and any graph $\Gamma$ with at least $n$ vertices, 
$\uc{n}{\Gamma}$ strong deformation retracts onto $\udng$ if

  \begin{enumerate}

  \item each path between distinct vertices of degree not equal to $2$
  passes through at least $n-1$ edges; and

  \item each path from a vertex to itself which is not null-homotopic in
  $\Gamma$ passes through at least $n+1$ edges.

  \end{enumerate}

\end{theorem}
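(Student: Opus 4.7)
The plan is to exhibit an explicit $S_n$-equivariant strong deformation retraction from the labelled configuration space $\cng := \prod^n\Gamma - \Delta$ onto the labelled discretized configuration space $\dng := \prod^n\Gamma - \Delta'$, and then descend to the quotients by the permutation action to obtain the desired retraction $\uc{n}{\Gamma} \simeq \udng$. Since the symmetric group acts freely on both spaces (points of any configuration are distinct), this descent is legitimate, so I will work throughout in the labelled setting.

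Endow $\prod^n\Gamma$ with the product CW-structure whose open cells are of the form $c_1 \times \cdots \times c_n$ with each $c_i$ a vertex or an open edge of $\Gamma$. Such a cell lies in $\dng$ exactly when the closures $\overline{c_i}$ are pairwise disjoint. Call a cell \emph{bad} if it is contained in $\cng$ but not in $\dng$: for some pair $i\neq j$ the closures $\overline{c_i}$ and $\overline{c_j}$ share a vertex, yet distinct points $x_i\neq x_j$ can still be chosen. I plan to deformation-retract $\cng$ onto $\dng$ by collapsing bad cells onto their non-bad faces in a carefully ordered, $S_n$-equivariant manner. For each bad cell, identify the colliding pair(s); the idea is to ``push'' the offending coordinate off of the shared vertex along an adjacent edge, which amounts to identifying a free face of the bad cell sitting in $\dng$. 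Doing so in decreasing order of dimension (and, within a dimension, by a sensible combinatorial tiebreaker) yields a sequence of elementary collapses whose concatenation is a strong deformation retraction.

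The main obstacle—and the precise content of the hypotheses—is ensuring that such a push is available without forcing new collisions, so that the collapses combine into a single continuous retraction. When a coordinate $x_i$ is pushed along an edge, that edge may already be occupied by another $x_k$, forcing $x_k$ further along; this cascade continues until it reaches an edge not already occupied, or until it reaches a branching vertex with an unused direction. Condition (1) guarantees at least $n-1$ edges between any two distinct vertices of degree $\neq 2$, which is precisely enough room to absorb the at most $n-1$ other coordinates without running out of space before the next branch. Condition (2) plays the identical role when the cascade would wrap around a cycle: without at least $n+1$ edges in every essential loop, a short cycle could be fully occupied and trap $x_i$ in place. Verifying both the existence of the required free faces under the counting hypotheses and the existence of a globally compatible, $S_n$-equivariant collapsing order is the technical heart of the argument, and is exactly what Abrams established in \cite{Abrams1} (with Hu's $n=2$ case in \cite{Hu} requiring only the first condition, since pairs of points cannot exhaust a cycle of length $\geq 3$).
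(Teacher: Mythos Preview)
The paper does not supply its own proof of this theorem: it is quoted as a result from the literature (Hu for $n=2$, Abrams for $n>2$) and is followed immediately by the definition of ``sufficiently subdivided'' with no intervening argument. Your proposal is a reasonable narrative of Abrams' strategy---lift to the labelled configuration space, collapse the ``bad'' product cells onto good faces by pushing colliding coordinates along edges, and use the two subdivision hypotheses to guarantee enough room for the resulting cascades---and you yourself defer the technical verification to \cite{Abrams1} at the end. So there is nothing to compare: both you and the paper ultimately cite Abrams rather than prove the statement, and your sketch is consistent with his argument.
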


A graph $\Gamma$ satisfying the conditions of this theorem for a given 
$n$ is called \emph{sufficiently subdivided} for $n$.  It is clear 
that, for any $n$, every graph is homeomorphic to a sufficiently 
subdivided graph for $n$.  

Throughout the rest of this paper, we assume that we are 
dealing with graphs which are sufficiently subdivided for at least $n+2$ 
strands.

We mention here that Abrams \cite{Abrams1} proved that the universal 
cover of the space $\udng$ is a CAT(0) cubical complex for any graph 
$\Gamma$. This implies that graph braid groups have solvable word and 
conjugacy problems \cite{BridsonHaefliger}.

\subsection{Exterior Face Algebras}\label{sec:extfacealgs}

We now consider exterior face algebras.  Let $K$ be a finite simplicial 
complex with vertices $\{v_1, \dots, v_k\}$.  For any field $F$ with 
identity, the \emph{exterior face algebra} $\Lambda_{F}(K)$ of $K$ over 
$F$ is the quotient of the exterior algebra $\Lambda_{F}[v_1, \dots, 
v_{k+1}]$ by the ideal generated by products of vertices of non-faces of 
$K$.  In other words, $\Lambda_F(K)$ is the $F$-vector space having the 
products $v_{i_1}v_{i_2}\dots v_{i_j}$ ($0 \leq j \leq k$, $i_1 < i_2 < 
\dots < i_j$) as a basis, and subject to the following multiplicative 
relations:

  \begin{itemize}
  \item $v_i v_j = -v_jv_i$ for $0 \leq i,j \leq n$,
  \item $v_i^2 = 0$ for $0 \leq i \leq n$, and
  \item $v_{i_1} \dots v_{i_k} = 0$ if $i_1 < \dots < i_k$ and 
  $\{v_{i_1}, \dots, v_{i_k}\}$ is not a face of $K$.
  \end{itemize}
An exterior algebra corresponds to the exterior face algebra of a 
standard simplex.  Note the shift of indices:  an $i$-cell in $K$ 
corresponds to an element of degree $i+1$ in $\Lambda(K)$.

For our purposes, $F$ will always be the field $\mathbb{Z}/2\mathbb{Z}$, 
so we suppress the subscript $F$ from now on.  For this field, 
$\Lambda(K)$ is a quotient of a polynomial ring:
  $$ \Lambda(K) = (\mathbb{Z}/2\mathbb{Z}) \left[ v_1, \dots, v_n 
  \right]/ I(K),$$
where $I(K)$ is the ideal of $(\mathbb{Z}/2\mathbb{Z}) \left[ v_1, \dots,
v_n \right]$ generated by the set
  \begin{eqnarray*}
  &\{v_1^2,&\hspace{-2mm} \dots, v_n^2\} \cup\\
  &&\{v_{i_1} \dots v_{i_k} | i_1 < \dots < i_k; \{ 
  v_{i_1}, \dots, v_{i_k}\} \hbox{ is not a face of $K$}\}.
  \end{eqnarray*}

In the $\mathbb{Z}/2\mathbb{Z}$ case, Gubeladze \cite{Gubeladze} has 
shown that exterior face algebras are in bijective correspondence with 
their defining simplicial complexes:

\begin{theorem}[Gubeladze's Theorem on Exterior Face Algebra Rigidity]
\emph{(}\cite{Gubeladze}\emph{)}\label{thm:Gubeladze} 

Let $K$ and $K'$ be two finite simplicial complexes.  Then $\Lambda(K)$ 
and $\Lambda(K')$ are isomorphic as algebras if and only if $K$ and $K'$ 
are isomorphic as simplicial complexes.

\end{theorem}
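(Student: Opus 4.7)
The reverse direction is immediate: any simplicial isomorphism $K \to K'$ extends multiplicatively, sending $v_i$ to its image, to an algebra isomorphism $\Lambda(K) \to \Lambda(K')$. The content of the theorem lies in the forward direction, so suppose an algebra isomorphism $\varphi \colon \Lambda(K) \to \Lambda(K')$ is given, and we must produce a simplicial isomorphism between the underlying complexes.

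The first step in my plan is to reduce to a graded isomorphism. The algebra $\Lambda(K)$ is local over $\mathbb{Z}/2$ with unique maximal ideal $\mathfrak{m} = \bigoplus_{i \geq 1} \Lambda^i(K)$, since every element with nonzero constant term is a unit. Any algebra isomorphism preserves maximal ideals and hence their powers $\mathfrak{m}^k$; because the original grading satisfies $\mathfrak{m}^k = \bigoplus_{j \geq k} \Lambda^j(K)$, passing to the associated graded yields $\mathfrak{m}^k/\mathfrak{m}^{k+1} \cong \Lambda^k$ canonically. Thus $\varphi$ induces a graded algebra isomorphism $\Lambda(K) \to \Lambda(K')$, and we may assume $\varphi$ itself is graded. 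Setting $x_i := \varphi^{-1}(v_i')$ for each vertex $v_i'$ of $K'$ then produces a $\mathbb{Z}/2$-basis of $V_1 := \Lambda^1(K)$ satisfying $x_{i_1} \cdots x_{i_k} \neq 0$ if and only if $\{i_1, \dots, i_k\}$ is a face of $K'$. In particular, $K$ and $K'$ share the same $f$-vector, since Hilbert series are preserved.

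The main step is to argue that the "face complex" on $\{1,\dots,n\}$ recovered from this alternative basis $\{x_i\}$ of $V_1$ is necessarily isomorphic to $K$. The key obstacle is precisely the basis dependence of the face structure: a random change of basis of $V_1$ can corrupt the pattern of zero versus nonzero products (for instance, in a $4$-cycle the basis $\{v_1+v_2, v_2, v_3, v_4\}$ gives five pair-products that appear nonzero, more than the Hilbert series allows, so this basis secretly produces a linear dependence among products and does not yield a valid complex at all). Bases arising from an algebra isomorphism do give valid complexes, and the remaining question is whether two such "valid" bases can yield non-isomorphic complexes. My approach would be induction on the number of vertices: identify an extremal vertex of $K$ intrinsically from $\Lambda(K)$ (for example, as a generator $x$ of $V_1$ for which the quotient $\Lambda(K)/(x)$ has the structure of $\Lambda(K \setminus v)$ for some deletion), and apply the inductive hypothesis to the link and deletion. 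An alternative route would be to identify a basis-free combinatorial invariant of $\Lambda(K)$ that directly encodes the face poset, such as the lattice of monomial-like ideals or a distinguished system of annihilator ideals. Making either approach precise is the crux of the proof.
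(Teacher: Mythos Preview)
The paper does not prove this theorem; it is quoted from \cite{Gubeladze} and used as a black box. So there is no ``paper's own proof'' to compare against, and your attempt must be judged on its own merits.

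Your setup is sound. The reverse direction is immediate, the local-ring argument is correct over $\mathbb{Z}/2\mathbb{Z}$ (every element of $\mathfrak{m}$ is nilpotent since the complex is finite, so $1+m$ is a unit), and the identity $\mathfrak{m}^k = \bigoplus_{j\ge k}\Lambda^j(K)$ holds because each basis monomial of degree $k$ is visibly a $k$-fold product of degree-one elements. Passing to the associated graded to assume $\varphi$ is graded is a standard and correct reduction.

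The genuine gap is that you stop exactly where the content begins. You correctly isolate the problem---given a second ``valid'' basis $\{x_i\}$ of $\Lambda^1(K)$ whose vanishing pattern of squarefree products defines some complex $K'$, show $K'\cong K$---and you even give a good example (the $4$-cycle) showing why arbitrary bases fail. But you then list two possible strategies and explicitly concede that ``making either approach precise is the crux of the proof.'' Neither is carried out. The inductive sketch in particular is underspecified: identifying a vertex $v$ via the condition $\Lambda(K)/(x)\cong\Lambda(K\setminus v)$ presupposes you can recognise $\Lambda(K\setminus v)$ without already knowing $K$, and it is not clear that the same $x$ behaves compatibly with respect to both $K$ and $K'$. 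Gubeladze's actual argument requires real work (it proceeds via a careful analysis of how a change of basis interacts with the Stanley--Reisner-type ideal), and nothing in your proposal substitutes for it. As written, this is a plan, not a proof.
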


This rigidity allows us to speak of `the' simplicial complex defining an 
exterior face algebra.

As an example application of Gubeladze's Theorem, see Theorem 
\ref{thm:RAAGcohomrigidity} in Section \ref{sec:RAAGs}, on right-angled 
Artin groups.

\section{Previous Results}\label{sec:prevresults}

Now that we have introduced notation and terminology concerning trees 
and exterior face algebras, we turn our attention to recalling previous 
results - in particular important results from \cite{FarleySabalka1}, 
\cite{Farley}, and \cite{FarleySabalka2a}.  We begin with the first two 
references, on fundamental group and homology.

\subsection{Morse Theory, the Fundamental Group, and Homology}

For a tree $T$, consider a Morse $T$-embedding (Definition 
\ref{def:Morseembedding}). By \cite{FarleySabalka1}, this embedding 
induces a `Morse matching' on $\ud{n}{T}$.  For the sake of brevity, we 
do not define or detail the Morse matching here, but instead define the 
cells of $\ud{n}{T}$ which are \emph{critical} with respect to this 
matching.  More detailed expositions on the Morse matching and the 
classification of cells of $\ud{n}{T}$ can be found in \cite{Sabalka4}, 
where a Morse matching is referred to as a discrete gradient vector 
field (the exposition in \cite{Sabalka4} is based on the original work 
in \cite{FarleySabalka1}).

\begin{definition}[Blocked, Respectful, and Critical]

Begin with a Morse $T$-embedding.  Let $c$ be an open cell in 
$\ud{n}{T}$.  Consider a vertex $v \in c$.  If $v = \ast$, then $v$ is 
\emph{blocked} by $\ast$ in $c$.  If $v \neq \ast$, let $e$ be the 
unique edge in $T$ with $\iota(e) = v$.  If $e \cap x \neq \emptyset$ 
for some edge or vertex $x \in c$, $x \neq v$, then again $v$ is 
\emph{blocked} by $x$ in $c$.  If $v$ is not blocked in $c$, $v$ is 
\emph{unblocked}.

Now consider an edge $e \in c$.  The edge $e$ is \emph{disrespectful} in 
$c$ if: there exists a vertex $v \in c$ blocked by $e$ and which, in a 
clockwise traversal of the tree $T$ from $\ast$, is traversed after one 
endpoint of $e$ but before the other.  Otherwise, the edge $e$ is 
\emph{respectful} in $c$.

A cell $c$ is \emph{critical} if there are neither unblocked vertices 
nor respectful edges in $c$.  See Figure \ref{fig:crittobraid} for an 
example. 

\end{definition}

Critical cells were seen to be very useful in describing graph braid 
groups, as evidenced in \cite{FarleySabalka1,FarleySabalka2a,Farley}, 
etc.  In particular, as we will see, critical cells lend themselves well 
to describing generating sets and even relations for fundamental group, 
homology, and cohomology.

A $k$-cell in $\ud{n}{T}$ corresponds exactly to $n-k$ strands sitting 
on $T$ and $k$ strands simultaneously crossing $k$ disjoint edges. To 
see the connection between critical cells and braids, consider a 
critical $1$-cell $c = \{v_1, \dots, v_{n-1},e\}$ with unique edge $e 
\in c$.  Then $c$ corresponds to the following braid: start with $n$ 
strands sitting next to $\ast$ in $T$.  Move each strand out to a 
distinct vertex in $\{v_1, \dots, v_{n-1},\iota(e)\}$ in order so that 
the strand furthest away from $\ast$ goes to the highest-numbered 
vertex, the second-furthest goes to the second-highest, etc.  Next, move 
the strand at $\iota(e)$ to $\tau(e)$.  Finally, move each of the 
strands back to $\ast$, in order so that the strand on the 
lowest-numbered vertex in $\{v_1, \dots, v_{n-1},\tau(e)\}$ moves first 
and ends nearest $\ast$, the second-lowest ends second-nearest, etc.  
See Figure \ref{fig:crittobraid}.

  \begin{figure}[!h]
    \begin{center}
    \includegraphics{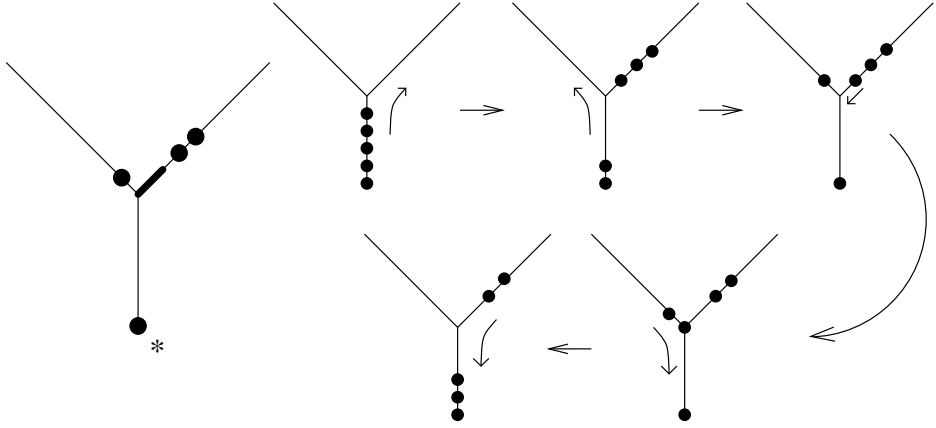}
    \end{center}

  \caption{On the left is a critical $1$-cell $c$.  The remaining 
  figures are a diagrammatic illustration of the braid that $c$ 
  represents.}
  
  \label{fig:crittobraid}
  
  \end{figure}

The power of discrete Morse theory and the classification of critical 
cells is evident in the following useful theorems:

\begin{theorem}[Morse Presentation for $B_nT$] 
\emph{(}\cite{FarleySabalka1}, corollary of Theorem 2.5\emph{)} 
\label{thm:Morse}

Let $T$ be a tree.  Fix a Morse $T$-embedding.  Then $B_nT$ has a 
presentation for which the generators may be identified with the 
set of critical $1$-cells, and the relations are determined by the 
set of critical $2$-cells.\qed

\end{theorem}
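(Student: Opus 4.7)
The plan is to deduce this from Forman's discrete Morse theory applied to the CW complex $\ud{n}{T}$, using the Morse matching on $\ud{n}{T}$ induced by the Morse $T$-embedding (constructed in \cite{FarleySabalka1}). First, by Theorem \ref{thm:Abrams} and our standing sufficient subdivision hypothesis, $\uc{n}{T}$ deformation retracts onto $\ud{n}{T}$, so $B_nT \cong \pi_1(\ud{n}{T})$. Thus it suffices to compute $\pi_1$ of $\ud{n}{T}$ from its cell structure.

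Second, I would invoke Forman's main theorem of discrete Morse theory: for any acyclic matching on a regular CW complex, the complex is homotopy equivalent to a CW complex $X$ having exactly one cell of dimension $k$ for each critical $k$-cell of the matching. Since the matching from \cite{FarleySabalka1} is known to be acyclic, this gives us a CW model $X \simeq \ud{n}{T}$ whose cells are exactly the critical cells of $\ud{n}{T}$. The key structural observation needed at this stage is that the matching admits a \emph{unique} critical $0$-cell, namely the configuration $\{y_1,\dots,y_n\}$ consisting of the $n$ vertices of $T$ nearest $\ast$ (including $\ast$ itself), where every vertex is blocked. Uniqueness follows directly from the definition of ``blocked'': any $0$-cell containing an unblocked vertex fails the criticality condition, and the only configuration of vertices in which every vertex is blocked is the one described.

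Third, having arranged that $X$ has a single $0$-cell, I would apply the standard CW presentation of the fundamental group: for a connected CW complex with one $0$-cell, $\pi_1$ is generated by the $1$-cells (each viewed as a loop at the unique vertex), subject to one relation per $2$-cell, namely the word spelled out by that $2$-cell's attaching map along the $1$-skeleton. Pulling this back to $\ud{n}{T}$ via the homotopy equivalence gives a presentation of $B_nT$ with generators in bijection with the critical $1$-cells and one relation per critical $2$-cell.

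The main obstacle in fleshing this out is showing that the discrete gradient flow from \cite{FarleySabalka1} is acyclic and produces attaching maps in $X$ whose words along critical $1$-cells can be read off combinatorially from each critical $2$-cell; this is done in \cite{FarleySabalka1} by tracking how the boundary of a critical $2$-cell is pushed through the matching until it lies in the critical $1$-skeleton. Since the statement of the theorem only asserts that the relations are ``determined by'' the critical $2$-cells rather than giving them explicitly, the argument above is enough, and the more delicate bookkeeping for explicit relators may be referenced rather than reproduced.
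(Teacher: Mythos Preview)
Your proposal is correct and follows exactly the approach of \cite{FarleySabalka1}, which is the source the paper cites for this result (the paper itself gives no proof, only the citation and a \qed). The reduction via Theorem~\ref{thm:Abrams}, the application of Forman's theorem to the acyclic matching, the observation that the unique critical $0$-cell is the configuration of $n$ vertices stacked at $\ast$, and the standard one-vertex CW presentation of $\pi_1$ are precisely the ingredients used there.
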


A presentation derived from a Morse $T$-embedding as in Theorem 
\ref{thm:Morse} is called a \emph{Morse} presentation, and its 
generators are Morse generators.

\begin{theorem}[Homology] 
\emph{(}\cite{Farley}\emph{)}\label{thm:homology}

Let $T$ be a tree. Fix a Morse $T$-embedding. Then $H_iB_nT$ is free 
abelian of rank equal to the cardinality of the set of critical 
$i$-cells of $T$. In particular, $H_iB_nT$ has a generating set which 
may be identified with the set of critical $i$-cells of $T$. \qed

\end{theorem}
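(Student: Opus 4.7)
The plan is to prove Theorem \ref{thm:homology} by exploiting the Morse matching on $\udng$ from \cite{FarleySabalka1}, and then showing that the induced Morse boundary maps vanish. First I would reduce the computation of $H_\ast B_nT$ to a computation on the discretized configuration space: by Ghrist's result, $\uc{n}{T}$ is a $K(B_nT,1)$, so $H_\ast(B_nT)\cong H_\ast(\uc{n}{T})$. Since throughout the paper $T$ is assumed sufficiently subdivided (Theorem \ref{thm:Abrams}), there is a strong deformation retraction $\uc{n}{T}\simeq \udng$, and so $H_\ast(B_nT)\cong H_\ast(\udng)$. This reduces the problem to a computation on a finite CW complex whose cells are understood combinatorially.

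Next I would invoke Forman's discrete Morse theory. The matching from \cite{FarleySabalka1} pairs non-critical cells of $\udng$, and Forman's machinery produces a chain complex $(\mathcal{M}_\ast,\partial^M_\ast)$, the \emph{Morse complex}, in which $\mathcal{M}_i$ is the free abelian group on the set of critical $i$-cells and $H_\ast(\mathcal{M}_\ast)\cong H_\ast(\udng)$. The Morse boundary $\partial^M$ of a critical $(i+1)$-cell $c$ is a signed count, over all critical $i$-cells $c'$, of the gradient paths from boundary faces of $c$ to $c'$. In particular, if one can show $\partial^M_\ast\equiv 0$, then $H_iB_nT\cong \mathcal{M}_i$ is free abelian of rank equal to the number of critical $i$-cells, which is the statement of the theorem.

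The heart of the proof, and where I expect the main obstacle to lie, is the verification that $\partial^M_\ast$ vanishes. My approach would be a direct combinatorial analysis of gradient paths in $\udng$. Given a critical $(i+1)$-cell $c$, each codimension-one face $c_0$ of $c$ is obtained by either deleting an edge of $c$ (leaving an unblocked vertex in its place) or leaving $c$ unchanged on the combinatorial level but promoting an edge from disrespectful to respectful. One then traces the gradient flow of each such $c_0$: either it terminates at a critical cell, or it runs until some unblocked vertex becomes blocked. The claim I would try to establish is that for any target critical $i$-cell $c'$, the gradient paths from faces of $c$ to $c'$ either do not exist or pair up so as to cancel, using the acyclicity of $T$ as the key structural input (a gradient path cannot loop back around a cycle, so any potential contribution is determined locally by the clockwise ordering at finitely many essential vertices, and local symmetries of the matching force cancellation). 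Carrying out this bookkeeping cleanly — keeping track of signs, blocked/unblocked vertices, and respectful/disrespectful edges simultaneously — is the delicate part; the rest of the proof is structural.
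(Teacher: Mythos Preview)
The paper does not prove this theorem; it is quoted from \cite{Farley} and closed with a \qed. Your outline is essentially Farley's own argument: reduce $H_\ast(B_nT)$ to $H_\ast(\udng)$ via Ghrist and Abrams, pass to the Morse complex for the matching of \cite{FarleySabalka1}, and show the Morse differential vanishes. The present paper alludes to exactly this last fact later in Section~\ref{sec:cohomology}, noting in passing that ``the boundary maps for chains are all $0$ \cite{Farley}.''

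One inaccuracy worth flagging: your description of the codimension-one faces is off. A face of an $(i{+}1)$-cell $c$ is obtained by replacing a single edge $e\in c$ by one of its endpoints $\iota(e)$ or $\tau(e)$; there is no separate operation of ``leaving $c$ unchanged but promoting an edge from disrespectful to respectful.'' The two faces associated to $e$ are the $\iota$-face and the $\tau$-face, and the gradient-path analysis tracks how each flows under the matching. Your instinct that acyclicity of $T$ is the structural input driving cancellation is correct, but the bookkeeping in Farley's proof is organized somewhat differently than you sketch (via an explicit description of collapsible/redundant pairs and the resulting flow), and the signs require care beyond ``local symmetries force cancellation.''
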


Such a distinguished basis for homology is called a \emph{Morse basis}, 
composed of elements called \emph{Morse generators}.

We end this subsection with the following useful calculation about 
certain tree braid groups, to which we will repeatedly refer:

\begin{theorem}[Radial Rank] \emph{(}\cite{FarleySabalka1}, Corollary 
4.2\emph{)}\label{thm:radial}

If $\Gamma$ is a \emph{radial} tree - i.e. a tree with exactly one 
essential vertex $v$ - then the tree braid group $B_n\Gamma$ is free of 
rank
  $$Y_n(x)
    := \sum_{i=2}^{x-1} \left[\binom{n+x-2}{n-1} - 
    \binom{n+x-i-1}{n-1}\right],$$
where $x = deg(v)$.\qed

\end{theorem}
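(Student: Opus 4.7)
The plan is to combine Theorem \ref{thm:Morse} with an explicit analysis of the Morse matching on $\ud{n}{\Gamma}$. First I would observe that, if $\Gamma$ is radial with essential vertex $v$ of degree $x$, then every disrespectful edge in $\ud{n}{\Gamma}$ must be one of the $x$ edges incident to $v$. To see this, consider an edge $e$ strictly inside one branch: any vertex $u$ blocked by $e$ must lie on the same branch (since the edge $e_u$ starting at $u$ must share a vertex with $e$), and the clockwise traversal from $\ast$ visits such a $u$ only after both endpoints of $e$, never between them. Since any two edges meeting $v$ share the vertex $v$, and elements of a cell must have pairwise disjoint closures, no cell of dimension $\geq 2$ can have every edge disrespectful. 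Thus there are no critical cells of dimension $\geq 2$, and Theorem \ref{thm:Morse} then tells us that $B_n\Gamma$ is free of rank equal to the number of critical $1$-cells.

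Next I would enumerate those critical $1$-cells. Writing $w_j$ for the neighbor of $v$ in direction $j$, a critical $1$-cell has the form $\{v_1,\dots,v_{n-1},e\}$ where $e = (v, w_i)$ for some $i$; an analogous clockwise-traversal analysis shows $e$ is disrespectful exactly when $i \geq 2$ and at least one $w_j$ with $1 \leq j \leq i-1$ appears among the $v_k$. The blocked condition then forces the remaining $n-1$ vertices to stack contiguously along each branch: inward from $\ast$ on branch $0$, and outward from $v$ on each branch $j \neq 0$ (starting one step out on branch $i$, since $e$ occupies the innermost edge there). So, for fixed $i$, a critical $1$-cell is determined by the branch-occupancy vector $(k_0,\dots,k_{x-1})$ with $\sum_j k_j = n-1$ and $k_1 + \cdots + k_{i-1} \geq 1$.

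The count is then stars and bars: there are $\binom{n+x-2}{n-1}$ such occupancy vectors, of which $\binom{n+x-i-1}{n-1}$ have $k_1 = \cdots = k_{i-1} = 0$ and thus fail disrespectfulness. Summing $\binom{n+x-2}{n-1} - \binom{n+x-i-1}{n-1}$ over $i \in \{2,\dots,x-1\}$ gives $Y_n(x)$. I expect the main obstacle to be pinning down the disrespectful/blocked conditions cleanly enough to simultaneously (i) rule out critical cells of higher dimension and (ii) justify that blocked configurations of $n-1$ strands disjoint from $e$ biject with branch-occupancy vectors; the traversal-based analysis is elementary but requires care to avoid off-by-one errors around $\ast$ and around the edge $e$ on branch $i$.
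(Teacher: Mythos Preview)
The paper does not supply its own proof of this statement; it is quoted from \cite{FarleySabalka1} and closed with a \qed. Your argument is correct and is essentially the standard one: show that in a radial tree every disrespectful edge must have the essential vertex $v$ as an endpoint (so no two can coexist in a cell, killing critical cells of dimension $\geq 2$ and making $B_n\Gamma$ free by Theorem~\ref{thm:Morse}), then parametrize critical $1$-cells by a direction $i \in \{2,\dots,x-1\}$ together with a branch-occupancy vector for the remaining $n-1$ blocked strands, and count via stars and bars. Your identification of the disrespectfulness condition as $k_1+\cdots+k_{i-1}\geq 1$ and the resulting subtraction $\binom{n+x-2}{n-1}-\binom{n+x-i-1}{n-1}$ are both right. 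The only cosmetic point is that your phrase ``one of the $x$ edges incident to $v$'' is slightly loose, since the edge in direction $0$ (and in direction $1$) is never disrespectful either; but you only need the incidence claim to rule out higher-dimensional critical cells, and for that the weaker statement suffices.
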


Note that the function $Y_n(x)$ is a monotonically increasing function 
of $x$ for $x \geq 3$ and of $n$ for $n \geq 2$.  For instance, $Y_2(x) 
= x^2/2-3x/2+1$, and $Y_3(x) = x^3/3-x^2/2-5x/6+1$.

\subsection{Cohomology}\label{sec:cohomology}

To describe the structure of cohomology for tree braid groups, we first 
need to introduce some definitions.  We begin by defining a partial 
ordering and an equivalence relation on cells of $\ud{n}{T}$.  Using 
this partial order and equivalence relation, we are able to state two of 
the main results from \cite{FarleySabalka2a}, which are computational 
and structural statements about cohomology for tree braid groups, and 
will be referred to often.  We end the section with some notation, 
suggested by one of these results, for what will be called reduced 
$1$-cells.

Let $E(c)$ denote the set of edges of the $i$-cell $c$.  We abuse the 
notation by also letting $E(c)$ denote the subset $\bigcup_{e \in E(c)} 
e$ of $T$. For two cells $c$ and $c'$, write $c \sim c'$ if

  \begin{enumerate} 

  \item $E(c) = E(c')$, and 

  \item for any connected component $C$ of $T - E(c)$,
    $$ \left|C \cap (c - E(c))\right| =
    \left|C \cap (c' - E(c'))\right|.$$

  \end{enumerate}
It is straightforward to see that $\sim$ is an equivalence relation on 
the set of open cells in $\ud{n}{T}$. Let $[c]$ denote the equivalence 
class of a cell $c$.  

If every vertex in $c$ is blocked, we say $c$ is \emph{reduced}.  If $c$ 
is reduced but there exists an edge of $c$ such that $\tau(e)$ is not 
essential, $c$ is \emph{extraneous}.  If $c$ is reduced, $c$ is not a 
$0$-cell, and for every edge $e$ of $c$ no vertex of $c$ is blocked by 
$\tau(e)$, then again $c$ is \emph{extraneous}.

We define a partial order $\leq$ on the equivalence classes based on the 
face relation $\leq$ on cells, writing $[c_0] \leq [c_1]$ if there exist 
representatives $\hat{c}_0 \in [c_0]$, $\hat{c}_1 \in [c_1]$ such that 
$\hat{c}_0 \leq \hat{c}_1$ - i.e. $\hat{c}_0$ is a face of $\hat{c}_1$.  
We record some properties of $\sim$ and $\leq$ here:

\begin{theorem}[Properties of $\leq$ and $\sim$] \emph{(}c.f. 
\cite{FarleySabalka2a}\emph{)} \label{thm:leq}

  \begin{enumerate}

  \item The relation $\leq$ is indeed a partial order.

  \item Let $c_1, \dots, c_k$ be $1$-cells from distinct equivalence 
  classes.  If the set $\{[c_1], \dots, [c_k]\}$ has an upper bound $[s]$ with 
  respect to $\leq$, then the collection has a least upper bound.  
  Furthermore, if $e_1, \dots, e_k$ are the edges of $T$ satisfying $e_i 
  \in c_i$, then the edges $e_1, \dots, e_k$ are the edges of $s$ and 
  are pairwise disjoint.

  \item For any $k$-cell $s$, there exists a unique collection 
  $\{[c_1],\dots,[c_k]\}$ of equivalence classes of $1$-cells such that 
  $[s]$ is the least upper bound of $\{[c_1],\dots,[c_k]\}$ with respect 
  to $\leq$.

  \item If $c$ is a critical cell in $\ud{n}{\Gamma}$ and $[c'] \leq 
  [c]$, then $c' \sim \hat{c}$ for some critical cell $\hat{c}$.

  \item There exists exactly one reduced cell in each $\sim$-equivalence 
  class.  Every critical cell is reduced, but not every reduced cell is 
  critical.  In particular, a critical cell is the unique critical cell 
  in its equivalence class.

  \end{enumerate} 

\end{theorem}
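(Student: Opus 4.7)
The plan is to exploit the following picture of the equivalence relation: two cells lie in the same class of $\sim$ exactly when they share the same edge set $E(c)$ and place the same number of vertices in each connected component of $T - E(c)$. Thus passing to a $\sim$-equivalent cell amounts to redistributing vertices within components of $T - E(c)$ while preserving their multiplicities. The technical core of all five parts is a \emph{lifting lemma}: if $c_0 \leq c_1$ in the face relation on $\ud{n}{T}$ and $d_0 \sim c_0$, then there exists $d_1 \sim c_1$ with $d_0 \leq d_1$. I would prove this by observing that $c_0$ is obtained from $c_1$ by collapsing a subset $E' \subseteq E(c_1)$ of edges to endpoints; the redistribution of vertices witnessing $d_0 \sim c_0$ only moves vertices within components of $T - E(c_0)$, so one may apply the analogous moves to the vertices of $c_1$ not incident to edges in $E'$, producing the desired $d_1$.

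Given the lifting lemma, part (1) is routine: reflexivity is immediate, transitivity follows by applying the lemma once to align representatives, and antisymmetry holds because $\sim$ preserves dimension while strict face relations drop dimension. For (5), I would describe an explicit reduction: iteratively locate an unblocked vertex $v \in c$, let $e$ be the unique edge with $\iota(e) = v$, and replace $v$ by $\tau(e)$; since $e \notin E(c)$ and $e$ is disjoint from the rest of $c$, this move preserves $[c]$ and strictly decreases the sum of distances from vertices of $c$ to $\ast$, so the process terminates at a reduced cell. Uniqueness of the reduced representative follows because, within each component of $T - E(c)$, the blocked condition forces each vertex into a uniquely determined configuration pushed maximally toward $\ast$. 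Parts (2) and (3) then drop out from the definition of a cell in $\ud{n}{T}$, whose constituents have pairwise disjoint closures by definition: any candidate upper bound for $\{[c_1], \dots, [c_k]\}$ must have edge set exactly $\{e_1, \dots, e_k\}$ (the $e_i$ being the unique edges of each $c_i$), and conversely the reduced representative of such a class realizes the least upper bound.

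The main obstacle is part (4). Given a critical $c$ and $[c'] \leq [c]$, the lifting lemma lets me assume $c' \leq c$, so $c'$ is obtained from $c$ by replacing some edges $e_j$ of $c$ with one of their endpoints. This operation can produce unblocked vertices and can turn disrespectful edges into respectful ones, because removing an edge from the cell removes its ability to block adjacent vertices or to interfere with the clockwise traversal. My approach would be: first reduce $c'$ via the procedure from part (5) to obtain the canonical reduced representative $\hat{c}$ of $[c']$, and then show that every edge of $\hat{c}$ is still disrespectful. The delicate point is this last step, which requires careful tracking of where vertices end up under the reduction and verifying that the clockwise-traversal interference needed for disrespectfulness is inherited from $c$ to $\hat{c}$. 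I expect the argument to proceed by case analysis on which endpoint of each removed edge was retained in $c'$, combined with the counting data recorded by $\sim$. Since much of this bookkeeping is developed in \cite{FarleySabalka2a}, I would cite the relevant lemmas there for the detailed combinatorics and focus the present argument on how the five properties fit together around the lifting lemma.
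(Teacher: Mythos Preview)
Your overall architecture is sound, but your emphasis is inverted relative to the paper and you have a genuine gap in the one place the paper actually does work.

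The paper does \emph{not} prove parts (1)--(4); it simply cites \cite{FarleySabalka2a}, Lemma~4.1. Your lifting lemma and the case analysis you sketch for (4) are a reasonable way to reprove those results from scratch, and your instinct that (4) is the most delicate of the four is correct. But since you already plan to cite \cite{FarleySabalka2a} for the combinatorics in (4), you end up in the same place as the paper by a longer route.

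The only part the paper proves here is (5), and specifically the uniqueness of the reduced representative. Your one-sentence justification---``the blocked condition forces each vertex into a uniquely determined configuration pushed maximally toward $\ast$''---is where you have a gap. This is not automatic: if a connected component $C$ of $T - E(c)$ contained an essential vertex within a few edges of its minimum $\ast_C$, then after placing vertices at $\ast_C$ and up to that branch point there would be more than one way to continue stacking blocked vertices, giving two distinct reduced cells in the same $\sim$-class. The paper's argument hinges on the standing hypothesis that $T$ is sufficiently subdivided for $n+2$ strands: this forces $\ast_C$ to lie at least $n-2$ edges from any other vertex of $C$ of degree $\neq 2$ in $C$, so the first $n-1$ vertices along the path out of $\ast_C$ are uniquely determined, and any $k \leq n-1$ blocked vertices in $C$ must be exactly those. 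You need to invoke sufficient subdivision explicitly and carry out this distance estimate; otherwise the uniqueness claim is unsupported.
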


\begin{proof}

Most of this theorem is from \cite{FarleySabalka2a}, Lemma 4.1 and the 
preceding discussion.  The only part not proven in 
\cite{FarleySabalka2a} is Part (5).

For Part (5), that every equivalence class contains a reduced cell is 
clear.  That every critical cell is reduced follows from the 
definitions.  That there exist reduced cells which are not critical is 
clear.  That a critical cell is the unique critical cell in its 
equivalence class follows from the first two statements of Part (5), and 
was also proven in \cite{FarleySabalka2a}.

It remains to prove that two reduced cells $c_1$ and $c_2$ in the same 
equivalence class must be equal.  By \cite{FarleySabalka2a}, Lemma 
4.1(1), $E(c_1) = E(c_2)$, so let $E := E(c_1) = E(c_2)$.  Consider a 
connected component $C$ of $T-E$.  Let $\ast_C$ be the smallest vertex 
of $C$.  The idea of the proof is that $T$ is sufficiently subdivided so 
that any collection of at most $n-1$ blocked vertices inside of $C$ must 
be `stacked up' at $\ast_C$, and is in particular uniquely determined.  
The uniqueness will force $c_1$ and $c_2$ to coincide, for each such 
connected component.  We formalize this idea.

Since $\ast_C$ is the smallest vertex of $C$, either $\ast_C = \ast$ or 
$e(\ast_C)$ intersects an edge in $E$.  Define a vertex $v_C$ of $T$ as 
follows.  If $\ast_C = \ast$, let $v_C := \ast$.  If $\ast_C \neq \ast$, 
then since $\ast_C$ is the smallest vertex of $C$, there exists an edge 
$e \in E$ with $e(\ast_C) \cap e \neq \emptyset$.  In this case, define 
$\ast_C$ to be $\tau(e)$.  In the former case, $v_C$ has degree $1$ in 
$T$, and in the latter case, $v_C$ has degree at least $3$ in $T$.  
Since $T$ is sufficiently subdivided for $n+2$ strands, $v_C$ is at 
least $n+1$ edges away from any other vertex of $T$ which does not have 
degree $2$ in $T$.  Since $\ast_C$ is at most $2$ edges away from $v_C$, 
$\ast_C$ is at least $n-1$ edges away from any other vertex of $T$ which 
does not have degree $2$ in $T$.  Since each edge of $E$ has an 
essential terminal endpoint, $\ast_C$ is at least $n-2$ edges away from 
any other vertex of $C$ which does not have degree $2$ in $C$.  Thus, 
since $C$ is a (connected subset of a) tree, for any $i = 0, \dots, 
n-1$, there is a unique vertex $v_{C,i}$ in $C$ such that the unique 
path from $v_{C,i}$ to $\ast_C$ contains exactly $k$ edges.

By the definition of $\sim$, $c_1$ and $c_2$ have the same number $k$ of 
vertices in $C$.  We claim that both $c_1$ and $c_2$ contain the $k$ 
vertices $v_{C,1}, \dots, v_{C,k}$ of $C$.  For, assume otherwise.  
Without loss of generality, assume $c_1$ does not contain the vertex 
$v_{C,i}$.  Let $v$ be the smallest vertex of $c_1 \cap C$ greater than 
$v_{C,i}$.  Then $\tau(e(v))$ is in $C$, since $\ast_C$ is in the same 
direction from $v_{C,i}$ in $T$ as $\ast$ is.  But $\tau(e(v))$ does not 
intersect $c_1$, since $C$ contains only vertices of $c_1$ and $v$ was 
chosen to be as small as possible.  Thus, $v$ is unblocked in $c_1$.  
This contradicts the hypothesis that $c_1$ is reduced.  Thus, $c_1$ and 
$c_2$ have the same edge set $E$ and the same vertices in each connected 
component of $T-E$, $c_1 = c_2$.

\end{proof}

Let $C_*(\ud{n}{T})$ denote the cellular chain complex of chains of 
cells in $\ud{n}{T}$ with coefficients in $\Z/2\Z$.  Let 
$\phi_{[c]}:C_*(\ud{n}{T}) \to \mathbb{Z}/2\mathbb{Z}$ denote the 
characteristic function of the $\sim$-equivalence class of $c$: 
$\phi_{[c]}(c') = 1$ if and only if $c' \sim c$.  For $c$ a critical 
cell, let $c^*: H_*(B_nT)\to \mathbb{Z}/2\mathbb{Z}$ denote the dual of 
$c$ viewed as a basis element of cohomology, by the Universal 
Coefficient Theorem.  Note the boundary maps for chains are all $0$ 
\cite{Farley}, so $H^*(B_nT) \cong Hom(H_*(B_n,T),\Z/2\Z)$. Then, using 
the ordering $\leq$ on equivalence classes of $1$-cells, we may state 
the following theorem:

\begin{theorem}[Cohomology] \label{thm:cohomology} 
\emph{(}\cite{FarleySabalka2a}, Proposition 4.5 and preceding 
discussion\emph{)}

Let $T$ be a tree.  Fix a Morse $T$-embedding (see Definition 
\ref{def:Morseembedding}).  Then under the induced Morse matching:

  \begin{enumerate}

  \item If $c$ is a critical cell in $\ud{n}{T}$, then $c^* = 
  [\phi_{[c]}]$. A distinguished basis for $i$-dimensional cohomology is
    $$\{c^{\ast} | c \hbox{ a critical $i$-cell}\}.$$

  \item Let $s$ be a critical $i$-cell in $\ud{n}{T}$.  Let $[s]$ be the 
  least upper bound of $\{[c_1],\dots,[c_i]\}$, where the $[c_1], \dots, 
  [c_i]$ are distinct equivalence classes of $1$-cells.  Then, without 
  loss of generality, $c_1, \dots, c_i$ are critical, and
    $$c^{\ast}_1 \cup \dots \cup c^{\ast}_i = s^{\ast}.$$
  In particular, $H^*(B_nT)$ is generated as a ring by duals of critical 
  $1$-cells.

  \item If $[c_1], \dots, [c_i]$ are distinct equivalence classes of
  critical $1$-cells having the least upper bound $[s]$, then
    $$[\phi_{[c_1]}] \cup \dots \cup [\phi_{[c_i]}] = 
    \left[\phi_{[s]}\right].$$
  If $[c_1], \dots, [c_i]$ are not all pairwise distinct or have no 
  upper bound, then $[\phi_{[c_1]}] \cup \dots \cup [\phi_{[c_i]}] = 0$.

  \end{enumerate}

\end{theorem}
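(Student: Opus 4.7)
The approach exploits that all cellular boundary maps on $\ud{n}{T}$ vanish modulo the Morse matching (as noted before the theorem), so $H^*(B_nT;\Z/2\Z) \cong \operatorname{Hom}(H_*(B_nT),\Z/2\Z)$ and the basis element $c^*$ dual to a critical cell $c$ is characterized by $c^*(d)=1$ iff the critical cell $d$ equals $c$. I will prove (1), then (3), and deduce (2) as a corollary.

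For (1), I would first observe that by Theorem \ref{thm:leq}(5) each $\sim$-equivalence class contains exactly one critical cell, so $\phi_{[c]}$ takes value $1$ on $c$ and $0$ on every other critical cell, matching the prescription for $c^*$. The substantive task is to verify that $\phi_{[c]}$ is actually a cocycle: for every $(i{+}1)$-cell $d$, the number of $i$-faces of $d$ lying in $[c]$ must be even. I would argue this by inspecting the cubical face structure of $\ud{n}{T}$. A face of $d$ is obtained by replacing some edge $e \in E(d)$ by $\iota(e)$ or by $\tau(e)$; pairing these two choices and using the second clause of the definition of $\sim$ (which counts blocked vertices in each component of $T - E$) shows that the pair lies in the same $\sim$-class, giving cancellation mod $2$.

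For (3), I would use the standard cellular cup product on the cubical complex $\ud{n}{T}$: the product $\phi_{[c_1]} \cup \cdots \cup \phi_{[c_i]}$ evaluated on an $i$-cell $d$ counts, mod $2$, the ways of factoring $d$ as a cubical product of $1$-faces, one in each class $[c_k]$. By Theorem \ref{thm:leq}(2), if the $[c_k]$ admit any common upper bound then they admit a least upper bound $[s]$ whose reduced representative has precisely the pairwise disjoint edges $e_1, \dots, e_i$ drawn from the $c_k$; tracking equivalence classes of the $i$-faces of $d$ then shows that the count is nonzero iff $d \sim s$, so the cup product equals $\phi_{[s]}$. If the $[c_k]$ share no upper bound, no such $d$ exists and the product is $0$. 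Part (2) then follows by substituting each $c_k$ with the unique critical representative of its class (Theorem \ref{thm:leq}(4),(5)) and invoking Part (1) to identify $\phi_{[s]}$ with $s^*$.

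The main obstacle is the combinatorial identity $\phi_{[c_1]} \cup \cdots \cup \phi_{[c_i]} = \phi_{[s]}$ underlying (3): it is what converts the multiplicative structure of $H^*(B_nT;\Z/2\Z)$ into the order-theoretic structure $(\leq,\sim)$ on cells. The care required here, and in the cocycle check for (1), comes from the flexibility of $\sim$ to slide vertices within connected components of $T - E(c)$; one must verify that this flexibility is exactly what is needed for the pairs of face-replacements above to equivalence-match and for the cup product to factor through least upper bounds. Since $T$ is assumed sufficiently subdivided, each component of $T - E$ has enough room for the bookkeeping to go through uniformly, as in the proof of Theorem \ref{thm:leq}(5).
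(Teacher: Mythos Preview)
The paper does not give its own proof of this theorem: it is quoted verbatim as a result from \cite{FarleySabalka2a} (Proposition 4.5 and the preceding discussion there), so there is no in-paper argument to compare against. Your sketch is essentially the argument of the cited reference, and the overall strategy is correct.

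Two small points. First, your invocation of Theorem \ref{thm:leq}(5) is slightly misstated: that part says each $\sim$-class contains exactly one \emph{reduced} cell, not exactly one critical cell, and indeed many classes contain no critical cell at all. What you actually need, and what \ref{thm:leq}(5) does supply, is that a critical cell is the unique critical cell in its class; this is enough to conclude that $\phi_{[c]}$ agrees with $c^*$ on the Morse basis. Your cocycle check for (1) is fine: for an $(i{+}1)$-cell $d$ and each edge $e\in E(d)$, the two faces obtained by collapsing $e$ to $\iota(e)$ or $\tau(e)$ share the edge set $E(d)\setminus\{e\}$, and since $\iota(e)$ and $\tau(e)$ lie in the same component of $T-(E(d)\setminus\{e\})$ the two faces are $\sim$-equivalent, so faces in $[c]$ come in pairs.

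Second, your treatment of (3) is where the real work hides. Saying the cup product ``counts, mod $2$, the ways of factoring $d$ as a cubical product of $1$-faces'' presupposes a specific cellular diagonal approximation (the standard cubical/Serre diagonal). With that choice the cochain-level identity $\phi_{[c_1]}\cup\cdots\cup\phi_{[c_i]}=\phi_{[s]}$ does hold on the nose over $\Z/2\Z$, but you should either name the diagonal you are using or pass to cohomology and appeal to the vanishing of the boundary maps. This is exactly the care taken in \cite{FarleySabalka2a}; your outline is the right one, just underspecified at this step.
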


Note a critical cell has dimension at most $(\lfloor \frac{n}{2} 
\rfloor)$ \cite{FarleySabalka1}.  It follows from the first part of 
Theorem \ref{thm:cohomology} that cohomology is trivial in all 
dimensions greater than $\lfloor \frac{n}{2} \rfloor$.

As with homology, such a distinguished basis for cohomology is called a 
\emph{Morse basis}, composed of elements called \emph{Morse generators}.  
The power of this theorem comes from the characterization not only of 
a Morse basis, but of the cup product structure.

\subsection{Reduced Cells}\label{sec:not&term}

Throughout the remainder of this paper, we will be extensively using 
reduced cells and computations involving reduced cells.  For future 
reference, we establish here notation for many reduced $1$-cells, and 
record some properties of reduced cells.

For a vector $\vec{x} \in \mathbb{N}^{\infty}$ of nonnegative integers, 
indexed from 0 to $\infty$, we let $x_i$ denote the $i^{th}$ entry of 
$\vec{x}$. Given a tree $T$, a Morse $T$-embedding, and an essential 
vertex $v$ of $T$, the vector $\vec{x}$ is a \emph{$v$-vector} if $x_i = 
0$ for all $i \geq deg(v)$.  When writing $v$-vectors, we will omit the 
entries $0$ for indices $i \geq deg(v)$.  The \emph{length} of 
$\vec{x}$, denoted $|\vec{x}|$, is the sum $\sum_{i=0}^{\infty}x_i$ (all 
of our lengths will be finite).

Let $c$ be a reduced $1$-cell of $\ud{n}{T}$ with unique edge $e$ such 
that $e$ has an essential terminal endpoint $a = \tau(e)$.  By Theorem 
\ref{thm:leq}, $c$ is unique in its equivalence class $[c]$.  Thus, $c$ 
is uniquely determined by the number of vertices in each connected 
component of $T-e$. This means that $c$ is uniquely determined by 
specifying the endpoint $a = \tau(e)$, the direction $d$ from $a$ along 
$e$, and the number of vertices in $c$ in each direction from $a$. Let 
$\vec{x} \in \mathbb{N}^{\infty}$ be the $a$-vector such that, for $i 
\in \{0, \dots, \deg(a)-1\}$ $x_i$ is the number of vertices in $c$ in 
direction $i$ from $a$.  Note $n = |\vec{x}|$.

\begin{definition}[Reduced $1$-Cell Notation]\label{def:reducednotation} 

To encode the reduced $1$-cell $c$ with edge $e$ such that $\tau(e)$ is 
the essential vertex $a$, we write
  $$(a,d,\vec{x}) \hbox{~~or equivalently~~} (a,e,\vec{x}),$$
where $d$ and $\vec{x}$ are as above.  We say that $\vec{x}$ is 
\emph{the $a$-vector for $c$}, and that $c$ \emph{lies over} 
the vertex $a$.

\end{definition}

This notation is a slightly modified version of the notation developed 
in the paper \cite{FarleySabalka1}, and appears in 
\cite{FarleySabalka2a}.

Using this notation, a fairly immediate observation is:

\begin{lemma}\label{lem:x_0}

Let $c = (a,e,\vec{x})$ be a non-extraneous reduced $1$-cell.  Then $x_0 
\leq n - 2$.

\end{lemma}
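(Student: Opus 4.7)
The plan is a short counting argument that relies on the observation that direction $0$ at every essential vertex $a\neq\ast$ is the \emph{parent direction} (the direction back toward $\ast$). This follows from the Morse $T$-embedding convention: the edges at $a$ are labeled $0,\dots,\deg(a)-1$ in the order first encountered by the clockwise traversal from $\ast$, and that traversal reaches $a$ for the first time along the parent edge, which therefore receives label $0$. This is exactly what is displayed at $v_1,v_2,v_3,v_4$ in Figure~\ref{fig:terminology}.

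With this in hand, I would extract two units of $\vec{x}$ lying in directions different from $0$. The first comes from the edge $e$: since $\tau(e)=a$ the endpoint $\iota(e)$ lies farther from $\ast$ than $a$, so $e$ is a child edge of $a$, forcing $d\neq 0$ and contributing $x_d\geq 1$ to $\vec x$ (this accounts for why $|\vec x|=n$ rather than $n-1$).

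The second comes from non-extraneousness. Since $c$ is reduced, not a $0$-cell, and non-extraneous, the negation of the second part of the extraneous condition forces some vertex $v\in c$ to be blocked by $\tau(e)=a$; writing $e_v$ for the edge of $T$ with $\iota(e_v)=v$, this means $\tau(e_v)=a$, so $v$ is adjacent to $a$ along the child edge $e_v$. Hence $v$ lies in a child direction $d_v$, so $d_v\neq 0$; and because $v$, being a vertex of $c$, is disjoint from $e$, we have $v\neq\iota(e)$, so $e_v\neq e$ and $d_v\neq d$. This gives $x_{d_v}\geq 1$.

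Combining, $x_d\geq 1$ and $x_{d_v}\geq 1$ with $d,d_v\neq 0$ and $d\neq d_v$, so
$$x_0\;\leq\;|\vec{x}|-x_d-x_{d_v}\;\leq\;n-2.$$
The only delicate point is pinning down direction $0$ as the parent direction at $a$; once that is in place, the rest is immediate bookkeeping and there is no serious obstacle.
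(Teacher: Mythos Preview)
Your proof is correct and follows essentially the same route as the paper's own argument: identify two distinct nonzero directions $d$ and $d_v$ at $a$ each carrying at least one unit of $\vec{x}$, then use $|\vec{x}|=n$. The paper is terser---it simply asserts that non-extraneousness yields a vertex $v$ in a direction $d'\notin\{0,d\}$ and concludes---while you unpack more carefully why direction $0$ is the parent direction and why the blocked vertex $v$ must sit in a child direction different from $d$; but the substance is identical.
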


\begin{proof}

Let $d$ be the direction from $a$ to $\iota(e)$.  Since $e$ is not 
extraneous, there exists some vertex $v$ of $c$ such that the direction 
$d'$ from $a$ to $v$ is not $0$ or $d$.  Thus, $x_d, x_{d'} \geq 1$.  
But $|\vec{x}| = n$, so $x_0 \leq n - 2$.

\end{proof}

As Theorem \ref{thm:cohomology} suggests, upper bounds of equivalence 
classes of reduced cells play an important role.  We record many 
properties of upper bounds in the following lemma.  Note that, in the 
proof of the last statement, we explicitly construct the reduced 
representative of upper bound of the equivalence classes of two 
non-extraneous reduced $1$-cells.

\begin{lemma}[Upper Bound Lemma]\label{lem:upperbound}

Let $c_1 = (a,d,\vec{x})$ and $c_2 = (b,f,\vec{y})$ be non-extraneous 
reduced $1$-cells where $a \leq b$ in the order on vertices.  Let 
$\alpha$ be the direction from $a$ to $b$.  Then:

  \begin{enumerate}

  \item the direction from $b$ to $a$ is $0$.  

  \item \label{cond:upperbound2} $\{[c_1], [c_2]\}$ has an upper bound 
  if and only if
    \begin{enumerate}

    \item $a \neq b$, and

    \item $x_\alpha + y_0 \geq n + \epsilon$, where $\epsilon = 
    \epsilon(c_1,\alpha)$ is $1$ if $d = \alpha$ and $0$ otherwise.

    \end{enumerate}

  \item If $\{[c_1],[c_2]\}$ has an upper bound, then $x_\alpha \geq 
  2 + \epsilon$.

  \item If $\{[c_1],[c_2]\}$ has an upper bound $[s]$, let $s$ be the 
  reduced representative of $[s]$.  We may explicitly describe $s$.  The 
  edge $f$ of $c_2$ is disrespectful in the reduced representative $s$ 
  of $[s]$ if and only if $f$ is disrespectful in $c_2$.  The edge $e$ 
  of $c_1$ is disrespectful in $s$ if and only if either

    \begin{enumerate}

    \item $0 < \alpha < d$ and $x_\alpha + y_0 > n$, or

    \item there exists some $i\neq \alpha$ such that $0 < i < d$ and 
    $x_i > 0$.
    
    \end{enumerate}
  
  \end{enumerate}

\end{lemma}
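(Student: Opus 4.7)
The plan is to prove the four parts sequentially, leveraging Theorem \ref{thm:leq} throughout.

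Part (1) is immediate from the clockwise-traversal definition of the vertex ordering. Since $a \leq b$ and both $a, b$ are essential (hence distinct from $\ast$), the DFS from $\ast$ enters $a$'s branch no later than $b$'s, so the unique path from $b$ to $a$ must first leave $b$'s subtree via the edge pointing toward $\ast$, which is direction $0$ at $b$ by definition.

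For Part (2), necessity of $a \neq b$ follows from Theorem \ref{thm:leq}(2): an upper bound $[s]$ would force the edges $e$ and $f$ of $c_1, c_2$ to be pairwise disjoint, but $a = b$ makes both share the endpoint $a$. Conversely, given $a \neq b$, sufficient subdivision (essential vertices are at least $n+1$ edges apart in $T$) rules out $\iota(e) = b$ and $\iota(e) = \iota(f)$, while Part (1) together with $f \neq 0$ rules out $a = \iota(f)$; so $e, f$ are automatically disjoint. For the count in (2)(b), the crux is the geometric observation that the regions ``direction $\alpha$ from $a$'' and ``direction $0$ from $b$'' cover $T$ with intersection the open $a$-$b$ path. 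A candidate upper bound $s$ has edges $e, f$ and $n-2$ vertices. Comparing the face obtained by collapsing $f$ to $b$ (which must be $\sim c_1$) while tracking that $b$ is in direction $\alpha$ from $a$ and $\iota(e)$ is in direction $\alpha$ exactly when $d = \alpha$ yields that $s$ has $x_\alpha - 1 - \epsilon$ vertices in direction $\alpha$ from $a$; symmetrically, collapsing $e$ to $a$ (which must be $\sim c_2$) gives $y_0 - 1$ vertices of $s$ in direction $0$ from $b$. Inclusion-exclusion on the $n-2$ vertices forces the intersection count on the $a$-$b$ path to equal $x_\alpha + y_0 - n - \epsilon$, and its nonnegativity is exactly (2)(b). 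Sufficiency is proved by placing vertices at the stacked reduced positions of Theorem \ref{thm:leq}(5) in each component of $T - \{e, f\}$, which simultaneously supplies the explicit $s$ used in Part (4).

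Part (3) is then immediate: the analogue of Lemma \ref{lem:x_0} applied to $c_2$ (non-extraneousness forces $y_f \geq 1$ together with $y_j \geq 1$ for some $j \neq 0, f$) yields $y_0 \leq n - 2$, and substitution into (2)(b) gives $x_\alpha \geq n + \epsilon - (n - 2) = 2 + \epsilon$. For Part (4), the explicit reduced $s$ constructed above turns the disrespectfulness statements into clockwise-ordering bookkeeping. Since the vertices of $s$ adjacent to $b$ in directions $\neq 0, f$ coincide with the corresponding vertices of $c_2$ at the same clockwise positions relative to $f$, the disrespectfulness of $f$ in $s$ matches its disrespectfulness in $c_2$. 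For $e$, a disrespectful blocker is a vertex of $s$ adjacent to $a$ in some direction $i$ with $0 < i < d$. Such blockers come either from a $c_1$-vertex in direction $i \neq \alpha$ (the stated clause involving $x_i > 0$) or from the extra reduced-position vertex in direction $\alpha$ contributed by $b$ or by a stacked path vertex — and the latter requires $0 < \alpha < d$ together with a strictly positive intersection count, i.e., $x_\alpha + y_0 > n$. The main obstacle is the exhaustive case analysis here: confirming that the stacking in each component of $T - \{e, f\}$ behaves as described for all admissible tree shapes, and that no disrespectful blocker of $e$ escapes the two listed conditions.
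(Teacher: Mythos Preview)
Your proposal is correct and follows essentially the same route as the paper's proof: Part~(1) from the vertex ordering, Part~(2) via the same disjointness-plus-counting argument (your inclusion--exclusion formulation $x_\alpha + y_0 - n - \epsilon \geq 0$ is just a repackaging of the paper's direct inequality $n - y_0 \leq x_\alpha - \epsilon$), Part~(3) from Lemma~\ref{lem:x_0}, and Part~(4) via the same explicit reduced representative $s$, with the same case split on whether the blocking vertex at $a$ sits in direction $\alpha$ or in some other direction $i$. The only organizational difference is that you build the reduced $s$ once and reuse it for both sufficiency in Part~(2) and the analysis in Part~(4), whereas the paper gives two (equivalent) descriptions; your remark that the intersection of the two regions is ``the open $a$--$b$ path'' is slightly imprecise when there are branch points between $a$ and $b$, but this does not affect the count since only the inclusion--exclusion identity on the $n-2$ vertices of $s$ is actually used.
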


The $\epsilon(c_1,\alpha)$ in the lemma is called the \emph{upper bound 
constant} in direction $\alpha$ for $c_1$.

\begin{proof} ~\\

  \begin{enumerate}

  \item If $a = b$, then the direction from $b = a$ to itself is by 
  definition $0$.  If $a < b$, that the direction from $b$ to $a$ is $0$ 
  follows directly from the definition of $a < b$.

  \item If $\{[c_1], [c_2]\}$ has an upper bound, let $[s]$ be a least 
  upper bound.  By Theorem \ref{thm:leq}, $[s]$ exists, is unique, and 
  has a unique element - say, $s$ - in which all vertices are blocked.  
  Since $\tau(e)$ and $\tau(f)$ are essential, $s$ is reduced.  Also by 
  Theorem \ref{thm:leq}, the edges $e$ and $f$ are disjoint - in 
  particular, $a \neq b$.  Finally, note that $x_\alpha$ is the number 
  of elements of not just $c_1$ but also $s$ in direction $\alpha$ from 
  $a$, and similarly $y_\beta$ is the number of elements of $s$ in 
  direction $\beta$ from $b$.  Since $\beta = 0$, $n - y_\beta$ is the 
  number of elements of $s$ in all directions not equal to $\beta$ from 
  $b$.  Since $T$ is a tree, $n - y_\beta$ must be at most the number of 
  elements in $s$ in direction $\alpha$ from $a$ - that is, $x_\alpha$.  
  But if $d = \alpha$, then the edge of $c_1$ which is also an edge of 
  $s$ is in direction $\alpha$ from $a$ but direction $\beta$ from $b$.  
  Thus if $d = \alpha$, $n - y_\beta$ must be at most $x_\alpha - 1$.  
  The desired inequality then follows.

  Now assume that $c_1$ and $c_2$ satisfy the desired properties.  Let 
  $c_1'$ be the $1$-cell with $(n - x_\alpha + \epsilon)$ strands on $T$ 
  consisting of the edge of $c_1$ and all vertices of $c_1$ not in 
  direction $\alpha$ from $a$.  Similarly, let $c_2'$ be the $1$-cell 
  with $(n-y_\beta)$ strands on $T$ consisting of the edge of $c_2$ and 
  all vertices of $c_2$ not in direction $\alpha$ from $a$.  Since $a 
  \neq b$, we may define the $2$-cell $s'$ to be $c_1' \cup c_2'$ with 
  $k$ strands on $T$, where $k = (n - x_\alpha + \epsilon) + (n - 
  y_\beta)$.  Since $x_\alpha + y_\beta \geq n + \epsilon$, $k \leq n$.  
  Since $T$ is sufficiently subdivided for $n+2$, there are at least $n$ 
  vertices of $T$ between $a$ and $b$ not contained in $s'$.  Let $s$ be 
  the $2$-cell with $n$ strands on $T$ which is $s'$ plus exactly $n-k$ 
  vertices between $a$ and $b$.  Then by the definitions of $\sim$ and 
  $\leq$, $[s]$ is an upper bound for $\{[c_1],[c_2]\}$.

  \item If $\{[c_1],[c_2]\}$ has an upper bound, then $x_\alpha \geq n + 
  \epsilon - y_0 \geq 2 + \epsilon$, as $y_0 \leq n - 2$ by Lemma 
  \ref{lem:x_0}.

  \item If $\{[c_1], [c_2]\}$ has an upper bound $[s]$, then by Theorem 
  \ref{thm:leq}, the reduced representative $s$ of $[s]$ is unique.  We 
  explicitly construct $s$.

  Let $s_1$ be the reduced cell on $y_0$ strands which is $c_1$ but with 
  only $x_\alpha - (n - y_0)$ strands in direction $\alpha$ from $a$.  
  As $x_\alpha \geq n + \epsilon - y_0$, even if $e$ is in direction 
  $\alpha$ from $a$, $s_1$ still contains the edge $e$.  The cell $s_1$ 
  corresponds to deleting the $n-y_0$ largest strands with respect to 
  the order on vertices in the direction $\alpha$ from $a$. Let $s_2$ be 
  the reduced cell on $n - y_0$ strands which is $c_2$ but with the 
  vertices in direction $0$ from $b$ removed.  Finally, let $s = s_1 
  \cup s_2$.  Since each of $s_1$ and $s_2$ is reduced, so is $\alpha$.  
  Note $f$ is disrespectful in $c_2$ if and only if $f$ is disrespectful 
  in $s_2$, if and only if $f$ is disrespectful in $s$, by the 
  definition of disrespectful. Since $e$ is disrespectful in $c_1$, $e$ 
  will remain disrespectful in $s_1$ and therefore $s$ unless there was 
  only one direction $i$ such that $0 < i < d$ and $x_i > 0$ - namely, 
  $i = \alpha$ - and there are no vertices in direction $i = \alpha$ 
  from $a$ in $s_1$.  There are no vertices in direction $\alpha$ from 
  $a$ in $s_1$ if and only if $0 < \alpha < d$ and $x_\alpha + y_0 = 
  n$.  This finishes the proof of the lemma.

  \end{enumerate}

\end{proof}

\section{Cohomology in Terms of Differential Forms}\label{sec:cobound}

We will couch our further discussion of cohomology of tree braid groups 
in the terminology of de Rham cohomology and differential forms.  
Although we are dealing with CW complexes instead of manifolds, 
hopefully the similarity of the formulas, particularly in defining 
differentials, will justify our abuse of notation.

Recall that $C_*(\ud{n}{T})$ is the cellular chain complex on 
$\ud{n}{T}$.  We denote certain cochains on our space by the term 
\emph{form}, where a $k$-form will be a $k$-cochain.  The $0$-forms will 
be functions on cellular chains in $C_*(\ud{n}{T})$ which take values in 
$\Z/2\Z$.

For a vertex $v \in T$ and a direction $i$ from $v$, let $D_{v,i}$ be 
the function from cells of $\ud{n}{T}$ to $\Z$ which takes a cell $c$ 
and counts the number of vertices or edges in $c$ in direction $i$ from 
$v$.  Thus, for instance, for a reduced cell $(a,d,\vec{x})$, 
$D_{a,i}(a,d,\vec{x}) = x_i$.  Let $\overline{D}_{v,i}$ be similarly a 
function which takes a cell $c$ and counts the number of vertices or 
edges in $c$ in direction $i$ from $v$, \emph{but} where each edge of 
$v$ is considered in the direction of its terminal endpoint from $v$ 
instead of its initial endpoint.  Thus, for instance, 
$\overline{D}_{a,i}(a,d,\vec{x}) = x_i$ if $i \neq 0, d$, but 
$\overline{D}_{a,0}(a,d,\vec{x}) = x_0+1$ and 
$\overline{D}_{a,d}(a,d,\vec{x}) = x_d-1$.

  \begin{figure}
    \begin{center}
    \input{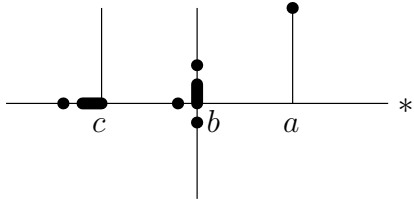}
    \end{center}

  \caption{This figure depicts the $2$-cell $s$ used in Example 
  \ref{ex:Daiexample}.}

  \label{fig:Daiexample}
  
  \end{figure}

\begin{example}\label{ex:Daiexample}

Consider the $2$-cell $s$ shown in Figure \ref{fig:Daiexample}. This 
figure depicts a $2$-cell $s$ on a tree with three essential vertices 
$a$, $b$, and $c$.  We have:

    \begin{eqnarray*}
    D_{a,i}(s) = \left\{
      \begin{array}{cl} 
      0 & \hbox{ if } i = 0\\
      6 & \hbox{ if } i = 1\\
      1 & \hbox{ if } i = 2
      \end{array}\right.,&\qquad
    D_{b,i}(s) = \left\{
      \begin{array}{cl} 
      1 & \hbox{ if } i = 0\\
      1 & \hbox{ if } i = 1\\
      3 & \hbox{ if } i = 2\\
      2 & \hbox{ if } i = 3
      \end{array}\right.,\\
    \overline{D}_{b,i}(s) = \left\{
      \begin{array}{cl} 
      2 & \hbox{ if } i = 0\\
      1 & \hbox{ if } i = 1\\
      3 & \hbox{ if } i = 2\\
      1 & \hbox{ if } i = 3
      \end{array}\right. .&
    \end{eqnarray*}

In terms of differential forms, the following is an incomplete list of 
forms which map $c$ to $1$: \\

\begin{center}
$f(a,\left[\begin{array}{c}0\\6\\1\end{array}\right])$, $\qquad$ 
$f(b,\left[\begin{array}{c}2\\1\\3\\1\end{array}\right])$, $\qquad$ 
$f(c,\left[\begin{array}{c}6\\1\\0\end{array}\right])$, $\qquad$ \\
$f(a,\left[\begin{array}{c}0\\6\\1\end{array}\right])d(b,2,\left[\begin{array}{c}2\\1\\3\\1\end{array}\right])$, $\qquad$ 
$f(c,\left[\begin{array}{c}5\\2\\0\end{array}\right])d(b,2,\left[\begin{array}{c}2\\1\\3\\1\end{array}\right])$, $\qquad$ \\
$f(b,\left[\begin{array}{c}2\\1\\3\\1\end{array}\right])d(c,1,\left[\begin{array}{c}5\\2\\0\end{array}\right])$, $\qquad$ 
$d(b,2,\left[\begin{array}{c}2\\1\\3\\1\end{array}\right]) 
\wedge d(c,1,\left[\begin{array}{c}5\\2\\0\end{array}\right])$.
\end{center}

\end{example}

\begin{lemma}\label{lem:Dais}

Let $c = (a,e,\vec{x})$ be a reduced $1$-cell.  For any cell $s$ such 
that $[c]\leq [s]$, $e \in s$.  Furthermore, $D_{a,i}(s) = D_{a,i}(c) = 
\vec{x}_i$ for each $i \in \{ 0, \dots, deg(a)-1\}$.

\end{lemma}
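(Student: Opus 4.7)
For the first claim that $e \in s$: by the definition of $[c]\leq[s]$, choose representatives $\hat{c}\sim c$ and $\hat{s}\sim s$ with $\hat{c}$ a face of $\hat{s}$. Since $\sim$ preserves edge sets, $E(\hat{c})=E(c)=\{e\}$, and the face relation gives $E(\hat{c})\subseteq E(\hat{s})$. Hence $e \in E(\hat{s})=E(s)$, so $e \in s$.

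The equality $D_{a,i}(c)=x_i$ is a direct unpacking of the reduced $1$-cell notation together with the definition of $D_{a,i}$. For $i\neq d$, $x_i$ is by convention the number of vertices of $c$ in direction $i$ from $a$, matching $D_{a,i}(c)$ because no edge of $c$ lies in direction $i$. For $i=d$, $x_d$ records the vertices of $c$ in direction $d$ plus a $+1$ contribution for the edge $e$ (whose initial endpoint $\iota(e)$ lies in direction $d$ from $a$), again matching $D_{a,d}(c)$.

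The main assertion is $D_{a,i}(s)=D_{a,i}(c)$. The plan is to apply Theorem~\ref{thm:leq}(3) to write $[s]$ as the least upper bound of distinct $1$-cell classes $\{[c_1],\ldots,[c_k]\}$ with $[c_1]=[c]$; by Theorem~\ref{thm:leq}(2), the edges $e=e_1,e_2,\ldots,e_k$ of $s$ are pairwise disjoint and constitute $E(s)$. One then inducts on $k$. The base case $k=1$ means $[s]=[c]$, and $\sim$-invariance of the vertex counts in each component of $T-\{e\}$ together with the presence of $e$ pins down every $D_{a,i}(s)$. For the inductive step, use the explicit construction in Lemma~\ref{lem:upperbound}(4) of the reduced representative of the lub of $[c]$ with a new class $[c_{k+1}]$: this construction exchanges some vertices in a direction $\alpha$ from $a$ for the new edge $e_{k+1}$ together with its blocked vertices, and one verifies that the net effect on each $D_{a,i}$ is zero.

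The main obstacle will be this case analysis in the inductive step. Because $e_{k+1}$ may lie in any direction from $a$ --- including $d$, which already contains $e$, or a direction previously containing no elements of $c$ --- several subcases arise, and the relative positions of $a$ and $\tau(e_{k+1})$ control how vertices are shuffled between directions. The key principle driving the bookkeeping is that the total strand count within every subtree emanating from $a$ is conserved under the upper bound construction (an instance of $\sim$-invariance applied to components of $T-E$), so the $+1$ contribution of the new edge $e_{k+1}$ in its direction from $a$ is exactly offset by the corresponding loss of a strand there.
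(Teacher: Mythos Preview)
The paper disposes of this lemma in two sentences, as an immediate consequence of the definitions of $\leq$ and $\sim$. Your approach---decomposing $[s]$ via Theorem~\ref{thm:leq}(3), inducting on the number of edges, and invoking the explicit least-upper-bound construction from Lemma~\ref{lem:upperbound}(4) at each step---is far more elaborate than what the paper intends, and the machinery is not needed. The direct reading is: once you know $e\in s$, every other edge of $s$ is disjoint from $e$ and hence has both endpoints in the same direction from $a=\tau(e)$; collapsing those edges to obtain a $1$-cell face $c'$ of $s$ with $E(c')=\{e\}$ therefore does not change any $D_{a,i}$, and $[c']=[c]$ by the definition of $\leq$.

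That said, your base case exposes a genuine gap that the paper's one-line proof also glosses over. You assert that for $k=1$, ``$\sim$-invariance of the vertex counts in each component of $T-\{e\}$ \ldots\ pins down every $D_{a,i}(s)$.'' This is not so: $T-\{e\}$ has exactly two components, one in direction $d$ from $a$ and one containing $a$ together with \emph{all} directions $i\ne d$. Hence $\sim$ determines $D_{a,d}(s)$ and the total $\sum_{i\ne d}D_{a,i}(s)$, but a non-reduced $s\sim c$ may redistribute vertices freely among the directions $i\ne d$, so the statement as written fails for arbitrary $s\in[s]$. In the paper's applications the cell $s$ is always taken to be reduced (or to literally have $c$ as a face), and for such $s$ the conclusion does hold; your inductive argument, which works with reduced representatives throughout, would in fact establish exactly that restricted version, though by a route much longer than the paper's.
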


\begin{proof}

By the definition of $\leq$, $s$ must contain $e$.  The lemma then 
follows from the definition of $\sim$.

\end{proof}

For a vertex $a$ and an $a$-vector $\vec{x}$ with $|\vec{x}| = n$, 
define the $0$-form $f(a,\vec{x}): C_*(\ud{n}{T}) \to \Z/2\Z$ by:

  $$c \mapsto \left\{\begin{array}{cl} 
  1 & \hbox{ if } D_{a,i}(c) = x_i \hbox{ for all } i \in \{0, \dots, deg(a)-1\} \\
    & \hbox{ or } \overline{D}_{a,i}(c) = x_i \hbox{ for all } i \in \{0, \dots, deg(a)-1\} \\
  0 & \hbox{ otherwise,}
  \end{array}\right.$$
extended linearly to all cellular chains.  The constant function $1$ is 
a $0$-form: for any vertex $a \neq \ast$ of degree $1$ in $T$, $f(a,0) = 
1$.

Now we define $k$-forms.  Let $c = (a,e,\vec{x})$ be a reduced
$1$-cell.  Define the basic $1$-form $dc = d(a,e,\vec{x}): 
C_*(\ud{n}{T}) \to \Z/2\Z$ by:
  $$c' \mapsto \left\{\begin{array}{cl}
  1 & \hbox{ if } e \in c' \hbox{ and } f(a,\vec{x})([c']) = 1 \\
  0 & \hbox{ otherwise,}
  \end{array}\right.$$
and extend linearly to all cellular chains.  In general, a basic $k$-form is

  $$f(a,\vec{x})dc_1\wedge\dots\wedge dc_k,$$
where $f(a,\vec{x})$ is a $0$-form as above and each of $c_1$, $\dots$, 
$c_k$ is a distinct reduced $1$-cell.  Here, the wedge product 
represents conjunction: for $[c] \in H_*(B_nT)$, the $k$-form sends $[c]$ 
to $1$ if and only if $[c] \mapsto 1$ under $f(a,\vec{x})$ and under 
each $dc_i$.  If not all of the $c_i$ are distinct, then the $k$-form is 
identically the $0$-function.

As an example of forms, see Figure \ref{fig:Daiexample}.

Thus defined, forms have the following interpretation for cohomology:

\begin{proposition}[Forms and Cohomology]\label{prop:forms}

Let $T$ be a tree.  Fix a Morse $T$-embedding.  Then under the induced 
Morse matching:

  \begin{enumerate}

  \item If $c$ is a reduced $1$-cell, then $\phi_{[c]} = dc$.  In 
  particular, if $c$ is a critical $1$-cell, then $c^* = [dc]$.  

  \item If $c_1, \dots, c_k$ are reduced $1$-cells, then 
    $$[\phi_{[c_1]}] \cup \dots [\phi_{[c_k]}] = [dc_1 \wedge \dots 
    \wedge dc_k].$$
  In particular, if $c_1, \dots, c_k$ are critical $1$-cells, then 
    $$c_1^* \cup \dots \cup c_k^* = [dc_1 \wedge \dots \wedge dc_k].$$

  \item If $c_1, \dots, c_k$ are reduced $1$-cells such that 
  $\{[c_1],\dots, [c_k]\}$ has no upper bound or contains a repeated 
  element, then
    $$[dc_1 \wedge \dots \wedge dc_k] = [0].$$

  \end{enumerate}

\end{proposition}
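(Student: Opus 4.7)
The plan is to establish Part~(1) by a direct cochain-level computation, and then to deduce Parts~(2) and~(3) from Part~(1) together with Theorem~\ref{thm:cohomology}(3).

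For Part~(1), let $c = (a,e,\vec{x})$ be a reduced $1$-cell and evaluate both $\phi_{[c]}$ and $dc$ on an arbitrary $1$-cell $c'$. The key observation is that, because $a = \tau(e)$ is essential and $T$ is sufficiently subdivided, removing the closed edge $\overline{e}$ from $T$ separates the tree into components naturally indexed by the directions from $a$ (together with one component on the $\iota(e)$-side). If $c' \sim c$, then $E(c') = \{e\}$ forces $e \in c'$, and the componentwise vertex-count equality of the $\sim$-relation upgrades to the directional equality $D_{a,j}(c') = x_j$ for every $j$, so $f(a,\vec{x})(c') = 1$ and $dc(c') = 1$. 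The reverse implication runs the same dictionary backwards, with the $\overline{D}$-alternative in $f(a,\vec{x})$ providing a conjugate bookkeeping of the edge that identifies the same $\sim$-class.

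For Part~(2), Part~(1) identifies $\phi_{[c_i]}$ with $dc_i$. When $\{[c_1],\dots,[c_k]\}$ admits a least upper bound $[s]$, Theorem~\ref{thm:cohomology}(3) gives $[\phi_{[c_1]}] \cup \dots \cup [\phi_{[c_k]}] = [\phi_{[s]}]$; since the boundaries on chains vanish, it suffices to verify the cochain identity $dc_1 \wedge \dots \wedge dc_k = \phi_{[s]}$. Evaluate both on a $k$-cell $c'$: the wedge equals $1$ precisely when each $e_i \in c'$ and each $f(a_i,\vec{x}_i)(c') = 1$, while $\phi_{[s]}(c') = 1$ precisely when $c' \sim s$. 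Theorem~\ref{thm:leq}(2) identifies the edges of $s$ as exactly $\{e_1,\dots,e_k\}$, and Lemma~\ref{lem:Dais} gives $D_{a_i,j}(s) = (\vec{x}_i)_j$. The equivalence of the two conditions then follows from a careful analysis of how the directions from each $a_i$ partition the components of $T - \overline{E(c')}$.

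For Part~(3), if two of the $c_i$ coincide then $dc_1 \wedge \dots \wedge dc_k$ is identically zero by the convention at the end of Section~\ref{sec:cobound}. If $\{[c_1],\dots,[c_k]\}$ has no upper bound, suppose for contradiction that $(dc_1 \wedge \dots \wedge dc_k)(c') = 1$ for some $k$-cell $c'$. Then each $e_i \in c'$, and since $|E(c')| = k$ we get $E(c') = \{e_1,\dots,e_k\}$. For each $i$, collapsing the edges $e_j$ with $j \neq i$ to appropriate endpoints (guided by $f(a_i,\vec{x}_i)(c') = 1$) produces a $1$-face of $c'$ whose reduced representative is $c_i$, so $[c_i] \leq [c']$ for every $i$. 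Thus $[c']$ is an upper bound for $\{[c_1],\dots,[c_k]\}$, contradicting the hypothesis.

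The technical crux is the cochain identity in Part~(2): translating the simultaneous system of $D_{a_i,j}$-constraints at $k$ different base vertices into a single $\sim$-class for $[s]$. The sufficient-subdivision hypothesis (Theorem~\ref{thm:Abrams}) is essential here, since it ensures that the directional data at the various $a_i$ refine the components of $T - \overline{E(c')}$ finely enough to pin down the distribution of the $n - k$ remaining vertices.
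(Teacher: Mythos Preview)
Your overall strategy mirrors the paper's: the cohomology-class assertions ($c^* = [\phi_{[c]}]$, and Part~(3)) come from Theorem~\ref{thm:cohomology}, while the rest is meant to be definition-chasing. The paper's own proof says exactly this in one sentence, so your expansion is in the right spirit.

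However, your Part~(1) argument contains a genuine gap. You assert that removing the \emph{closed} edge $\overline{e}$ gives components indexed by the directions from $a$, and that the componentwise vertex-count condition in $\sim$ therefore upgrades to the directional equalities $D_{a,j}(c') = x_j$. But the relation $\sim$ is defined using the components of $T - E(c)$, and in the paper's convention (see the paragraph defining $T-S$) this removes only the \emph{open} edge $e$: the vertex $a = \tau(e)$ remains, so $T - \{e\}$ has exactly two components, the $a$-side and the $\iota(e)$-side, not $\deg(a)$ pieces. Consequently, for a general $1$-cell $c'$ with edge $e$, the condition $c' \sim c$ only forces the total vertex count on each of these two pieces to match; it does \emph{not} force $D_{a,j}(c') = x_j$ direction by direction. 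Concretely, if $\deg(a)=3$, $d=2$, and $\vec{x}=(1,1,2)$, then a $1$-cell $c'$ with edge $e$ and direction counts $(2,0,2)$ satisfies $c'\sim c$ (both have two vertices on the $a$-side and one on the $\iota(e)$-side), yet $D_{a,0}(c')=2\neq 1=x_0$ and $\overline{D}_{a,0}(c')=3\neq 1$, so $f(a,\vec{x})(c')=0$ and $dc(c')=0$ while $\phi_{[c]}(c')=1$. Thus the literal cochain equality $\phi_{[c]}=dc$ fails on $C_1$, and your route cannot establish it.

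The same issue infects your Part~(2): the asserted cochain identity $dc_1\wedge\dots\wedge dc_k=\phi_{[s]}$ would require the system of directional constraints $D_{a_i,j}(c')=(\vec{x}_i)_j$ to coincide with the componentwise constraints of $\sim$ for $[s]$, and the mismatch above shows this is not automatic. The paper sidesteps all of this by citing Theorem~\ref{thm:cohomology} for the cohomology-class statements and leaving the remainder as a notational identification; your proof would be on firmer ground if you likewise argued at the level of cohomology classes (where Theorem~\ref{thm:cohomology}(2)--(3) already does the work) rather than attempting exact cochain equalities. Your Part~(3) argument, by contrast, is a clean direct alternative to the paper's citation of Theorem~\ref{thm:cohomology}(3), modulo checking the subcase where two distinct $c_i$ share the same edge.
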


\begin{proof}

Part (3) and that $c^* = [\phi_{[c]}]$ follow from Theorem 
\ref{thm:cohomology}.  The remaining statements follow from chasing 
definitions.

\end{proof}

Now that we have defined forms, we wish to define differentials for 
these forms. We will see in Proposition \ref{prop:d=delta} that 
coboundaries and differentials coincide.  Before defining differentials, 
though, we give a motivating theorem, Theorem \ref{thm:cohompres}, for 
why coboundaries are important. To state the theorem, we need a few more 
definitions.

Construct a simplicial complex $K$ from $\ud{n}{T}$ as follows.  First 
define a simplicial complex $K''$.  The vertex set of $K''$ corresponds 
to distinct equivalence classes of $1$-cells $[c]$.  Vertices $[c_1], 
\dots, [c_k]$ span a $(k-1)$-simplex in $K''$ if and only if $\{[c_1], 
\dots, [c_k]\}$ has an upper bound $[s]$, and is labelled by $[s]$.  
Note the labels on the faces of $K''$ induce an injective map from 
equivalence classes of cells in $\ud{n}{T}$ to $\Lambda(K'')$ (see 
Theorem \ref{thm:leq}).  Let $K'$ denote the $(\lfloor 
\frac{n}{2}\rfloor-1)$-skeleton of $K''$; then $\Lambda(K')= 
\Lambda(K'')/I''$, where $I''$ is the ideal of $\Lambda(K'')$ generated 
by all simplices of dimension greater than $\lfloor \frac{n}{2}\rfloor - 
1$.  Finally, define the complex $K$ to be the subcomplex of $K'$ where 
the only vertices left correspond to equivalence classes of $1$-cells 
$[c]$ with non-extraneous reduced representatives.  Then $\Lambda(K) = 
\Lambda(K')/I'$, where $I'$ is the ideal of $\Lambda(K')$ generated by 
equivalence classes of extraneous reduced $1$-cells (see Section 
\ref{sec:cohomology}).

\begin{definition}[Necessary Forms and Cells]

Let $\omega = f(a,\vec{x})dc_1 \wedge \dots \wedge dc_k$ be a $k$-form.  
The $k$-form $\omega$ is \emph{necessary} if:

  \begin{enumerate}

  \item $k < \lfloor n/2 \rfloor$~\footnote{The reason for this 
  dimension restriction is that the simplicial complex $K$ of Theorem 
  \ref{thm:cohompres} has no faces in dimension $\lfloor \frac{n}{2} 
  \rfloor$ or larger; see Theorem \ref{thm:cohomology}.},

  \item there exists an edge $e \in T$ such that $(a,e,\vec{x})$ is a 
  non-extraneous reduced $1$-cell,
  
  \item the set $\{[c_1], \dots, [c_k], [(a,e,\vec{x})]\}$ has an upper 
  bound $[s]$, and
  
  \item the edge $e$ is the unique respectful edge in the reduced 
  representative $s$ of $[s]$.

  \end{enumerate}
The reduced $1$-cell $(a,e,\vec{x})$ is the \emph{necessary reduced 
$1$-cell for} $\omega$, and is called \emph{necessary}.

\end{definition}

We leave it as an exercise for the reader to verify that there exists a 
unique necessary reduced $1$-cell for a given necessary form.

For an arbitary $k$-form $\omega$, the $(k+1)$-chain which is the sum of 
all equivalence classes in the support of the coboundary of $\omega$ 
will be called the \emph{coboundary support chain} for $\omega$.

\begin{theorem}[Presentation for Cohomology]
\emph{(}\cite{Farley2}, Theorem 4.5\emph{)}\label{thm:cohompres}

We have that
  $$H^*(B_nT) \cong \Lambda(K)/I,$$
where $I$ is the ideal of $\Lambda(K)$ generated by all coboundary 
support chains for necessary forms, viewed as cochains.  The isomorphism 
is induced by the injective map from equivalence classes of cells in 
$\ud{n}{T}$ to $\Lambda(K'')$.

\end{theorem}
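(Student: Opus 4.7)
The plan is to construct an explicit ring isomorphism $\Phi:\Lambda(K)/I \to H^*(B_nT)$ and identify its kernel with $I$. First, I would define $\Phi$ on the generators of $\Lambda(K)$ by sending the vertex $v_{[c]}$ of $K$ (labelled by an equivalence class of a non-extraneous reduced $1$-cell $c$) to the class $[dc]\in H^1(B_nT)$, and extending multiplicatively. Well-definedness reduces to the exterior algebra relations: $v_{[c]}^2 = 0$ is automatic over $\Z/2\Z$; the non-face relations $v_{[c_1]}\cdots v_{[c_k]} = 0$ for collections without an upper bound are exactly the relations given by Proposition \ref{prop:forms}(3); the vanishing of the degree-$k$ part for $k > \lfloor n/2\rfloor$ (accounting for passage from $K''$ to $K'$) follows from the dimension bound on critical cells noted after Theorem \ref{thm:cohomology}; and the extraneous generators (the ideal $I'$) map to zero because an extraneous reduced $1$-cell cannot be the reduced representative of a $\sim$-class of a critical cell, so its dual is the zero class. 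Surjectivity is then immediate from Theorem \ref{thm:cohomology}(2), since $H^*(B_nT)$ is generated as a ring by duals of critical $1$-cells, each of which is non-extraneous.

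Next I would verify the easy inclusion $I\subseteq \ker\Phi$. Each necessary form $\omega = f(a,\vec{x})\,dc_1\wedge\cdots\wedge dc_k$ is a cellular $k$-cochain on $\ud{n}{T}$, and its cellular coboundary $\delta\omega$ must vanish in $H^{k+1}(B_nT)$. The point is that $\delta\omega$ is supported on a union of $\sim$-equivalence classes of $(k+1)$-cells, and by Proposition \ref{prop:forms} the sum of these classes (the coboundary support chain) maps under $\Phi$ precisely to $[\delta\omega] = 0$. Thus the generators of $I$ lie in $\ker\Phi$.

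The main obstacle is the reverse inclusion $\ker\Phi\subseteq I$, which is the real content of Theorem 4.5 of \cite{Farley2}. To see why the forms appearing are exactly the necessary ones, I would factor $\Phi$ through the cellular cochain complex: $\Lambda(K) \to C^*(\ud{n}{T};\Z/2\Z) \to H^*(B_nT)$, where the first arrow sends $v_{[c_1]}\cdots v_{[c_k]}$ to the wedge $dc_1\wedge\cdots\wedge dc_k = \phi_{[s]}$ (Proposition \ref{prop:forms}(1)). Any element of $\ker\Phi$ is then a $\Z/2\Z$-combination of classes $\phi_{[s]}$ whose sum is a coboundary in $C^*(\ud{n}{T})$. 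One must show every such coboundary relation can be generated by coboundaries of necessary forms. The combinatorial heart of the argument, carried out in \cite{Farley2}, is to verify that the four conditions in the definition of ``necessary'' are precisely what distinguishes the cochains whose coboundary (i) lands in the image of $\Lambda(K)$, (ii) is nonzero, and (iii) cannot be obtained by adding a smaller coboundary; the uniqueness of the necessary reduced $1$-cell attached to each $\omega$ (an exercise using Theorem \ref{thm:leq}) ensures a one-to-one match between generating relations and necessary forms. The Upper Bound Lemma \ref{lem:upperbound} then controls which reduced representatives can appear in the coboundary support, and the whole machinery assembles into the desired presentation.
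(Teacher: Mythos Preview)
Your approach differs from the paper's in an important structural way. The paper does not construct $\Phi$ and verify isomorphism properties directly; instead it takes Farley's Theorem~4.5 as a black box (giving $H^*(B_nT)$ as a quotient of $\Lambda(K'')$ by a certain ideal $J$), and the entire proof is a translation showing that Farley's $J$ decomposes as $I'' + I' + I$, so that $\Lambda(K'')/J = ((\Lambda(K'')/I'')/I')/I = \Lambda(K)/I$. Most of the paper's effort goes into the $I'$ piece: matching Farley's generators involving extraneous cells---including the case where $\tau(e)$ is not essential, which requires a short induction on $x_0$---with the ideal generated by equivalence classes of extraneous reduced $1$-cells.

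Your direct-construction outline is reasonable, and both you and the paper ultimately defer the hard inclusion $\ker\Phi\subseteq I$ to \cite{Farley2}. But there is a genuine gap in your treatment of extraneous cells. You claim ``the extraneous generators map to zero because an extraneous reduced $1$-cell cannot be the reduced representative of a $\sim$-class of a critical cell, so its dual is the zero class.'' The premise is correct, but the conclusion does not follow: $[\phi_{[c]}]\in H^1$ is not determined by whether $[c]$ contains a critical cell. For a non-critical, \emph{non}-extraneous reduced $1$-cell $c$, no critical cell lies in $[c]$ either, yet $[\phi_{[c]}]$ is typically nonzero---indeed, the entire purpose of the ideal $I$ is to record how such classes rewrite as sums of critical duals via coboundary relations. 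What actually makes the extraneous case work is that the coboundary of the appropriate $0$-form (or, when $\tau(e)$ is not essential, a telescoping chain of such coboundaries as in the paper's induction) forces $[\phi_{[c]}]$ to vanish; this is exactly the content of the paper's $I'$ analysis, not a one-line consequence of non-criticality. Alternatively, if you define $\Phi$ directly on $\Lambda(K)$---whose vertex set already excludes extraneous classes---then this check is simply unnecessary, and you should drop it from your list of well-definedness verifications and make explicit that the non-face relations you are checking are those of $K$, not of $K''$.
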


\begin{proof}

This theorem is almost a rewording of Farley's Theorem 4.5 from 
\cite{Farley2}.  The difference is that Farley does not define the 
ideals $I$, $I'$, and $I''$, but instead views $H^*(B_nT)$ as isomorphic 
to a quotient of $\Lambda(K'')$.  A statement of the result presented 
here which more closely resembles Farley's theorem is that

  $$H^*(B_nT) \cong ((\Lambda(K'')/I'')/I')/I.$$
  
That we may quotient by the ideal $I''$ follows from Theorem 
\ref{thm:cohomology}:  cohomology is trivial in all dimensions greater 
that $\lfloor \frac{n}{2} \rfloor$.  

That we may then quotient by $I'$ is more difficult to see.  Clearly 
$I'$ is an ideal of $\Lambda(K')$.  Let $c = (a,e,\vec{x})$ be an 
extraneous $1$-cell.  If the edge $e$ of $c$ is such that $\tau(e)$ is 
essential, then this text and Farley agree: both quotient by the ideal 
generated by $\partial f(a,\vec{x})$.  Now consider if the edge $e$ of 
$c$ is such that $\tau(e)$ is not essential.  We may still talk about 
directions from $\tau(e)$, but there are at most $2$ directions.  Let 
$x_0 := D_{\tau(e),0}(c)$, and let $x_1 := D_{\tau(e),1}(c)$ if a second 
direction from $\tau(e)$ exists or $0$ otherwise.  Even though the cell 
$c$ is not necessarily uniquely determined from the information 
$(\tau(e),e,\vec{x})$, its equivalence class $[c]$ is.  For the purposes 
of this proof, we will abuse notation and write $[c] = 
[(\tau(e),e,\vec{x})]$.  For example, consider the cell $c$ in Figure 
\ref{fig:degree2} with edge $e$.  If exactly one of the non-filled-in 
vertices is an element of $c$, then $[c] = [(\tau(e),e, \left[ 
\begin{array}{c}1\\4\end{array} \right] )]$.  As the tuple $(\tau(e),e, 
\left[\begin{array}{c}1\\4\end{array}\right] )$ does not depend on which 
of the two non-filled-in vertices is an element of $c$, $c$ is not 
uniquely determined by it.

  \begin{figure}
    \begin{center}
    \input{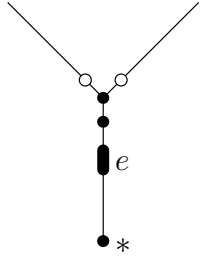}
    \end{center}

  \caption{An extraneous $1$-cell $c$ whose edge $e$ is such that 
  $\tau(e)$ is not essential.}
  
  \label{fig:degree2}
  
  \end{figure}

Consider the $\tau(e)$-vector $\vec{x}'$, where $x'_0 := x_0-1$ and 
$x'_1 := x_1 + 1$.  If $x_0 = 0$, or there are less than $x'_1$ vertices 
in direction $1$ from $\tau(e)$ in all of $T$, then there are no 
$1$-cells $c'$ satisfying $[c'] = [(\tau(e),e,\vec{x}')]$.  In this 
case, Farley includes $[c]$ in the generating set of his quotient ideal, 
as we have done here.  Otherwise, tracing Farley's definitions yields 
that the Farley generating set does not include $[c]$, but instead 
includes the chain $[c] + [(\tau(e),e,\vec{x}')]$.  In the quotient, 
$[c]$ and $[(\tau(e),e,\vec{x}')]$ are equivalent.  Inducting on the 
value $x_0$, we see that indeed $[c]$ is equivalent to $0$ in the 
quotient.  The ideal $I'$ is thus precisely the subideal of Farley's 
ideal corresponding to these chains.

Carefully tracing Farley's definitions and the definition of necessary 
show that the remaining generating cochains of Farley's ideal precisely 
coincide with coboundary support chains for necessary forms.  Note that 
in \cite{Farley2}, the convention that $\tau(e) < \iota(e)$ is switched.

\end{proof}

Let $\omega = f(a,\vec{x})dc_1 \wedge \dots \wedge dc_k$ be a necessary 
cochain.  Necessary cochains were defined so that each term in the 
coboundary support chain of $\omega$ has the form $dc_0 \wedge dc_1 
\wedge \dots \wedge dc_k$, where each $c_i$ is a non-extraneous critical 
$1$-cell for $i = 0, 1, \dots, k$.  We leave this to the reader to 
verify.  In particular, since each $c_i$ is non-extraneous and $k < 
\lfloor n/2 \rfloor$, every term in the coboundary support chain of 
$\omega$ represents a nontrivial element of $\Lambda(K)$.

We already know that cohomology is generated by duals of critical cells. 
Theorem \ref{thm:cohompres} tells us that we may effectively ignore all 
extraneous reduced $1$-cells.  \emph{From now on, all reduced cells will 
be assumed to be non-extraneous unless otherwise stated.}

Theorem \ref{thm:cohompres} also tells us that coboundaries of necessary 
forms show us how to rewrite duals of noncritical cells in terms of our 
Morse basis.  The isomorphism in the theorem takes a characteristic 
function $\phi_{[c]}$ of a cell $c$ to the element of $\Lambda(K)/I$ 
corresponding to $[c]$.

We wish to understand coboundaries of necessary forms.  To do so, we 
define differentials of forms, and show that our differential coincides 
with the operation of coboundary.

Define the \emph{differential} of a form as follows.  For a basic 
$0$-form $f(a,\vec{x})$, define

  $$df(a,\vec{x}) = \sum \left(f(a,\vec{x})(\partial c)\right) dc,$$
where the sum runs over all reduced $1$-cells $c$, and $\partial c$ is 
the CW-boundary of $c$ as a cellular chain.  We are left with an 
expression of $df(a,\vec{x})$ as a sum of basic $1$-forms, all of which 
have $1$ as the leading $0$-form.  Note that $d1 = 0$.  For a basic 
$k$-form $\omega = f(a,\vec{x})dc_1\wedge \dots\wedge dc_k$, define

  $$d\omega := d(f(a,\vec{x})\wedge dc_1\wedge \dots\wedge
  dc_k) = (df(a,\vec{x}))\wedge dc_1\wedge \dots\wedge dc_k.$$
Extend linearly to all $k$-forms. Since $d1 = 0$, it is clear that $d^2 
= 0$.

We are now ready to prove the relationship between differentials and 
coboundaries:

\begin{proposition}[Coboundary and Differential]\label{prop:d=delta}

The differential operation $d$ precisely coincides with the coboundary 
operation.

\end{proposition}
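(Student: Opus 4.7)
My plan is to reduce to basic forms by linearity and then handle the $k=0$ case directly, leveraging it in an induction on form degree. Since both $d$ and $\delta$ are $\mathbb{Z}/2\mathbb{Z}$-linear, it suffices to verify $d\omega = \delta \omega$ on a generic basic $k$-form $\omega = f(a, \vec{x}) \wedge dc_1 \wedge \cdots \wedge dc_k$.

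For the base case $k = 0$, I will fix an arbitrary $1$-cell $c'$ with unique edge $e'$ and compute both sides on $c'$. The coboundary $\delta f(a, \vec{x})(c') = f(a, \vec{x})(\partial c')$ splits mod $2$ as $f(a, \vec{x})(c'_\iota) + f(a, \vec{x})(c'_\tau)$, where $c'_\iota$ and $c'_\tau$ denote the two $0$-cells obtained by replacing $e'$ with $\iota(e')$ and $\tau(e')$, respectively. On the other hand, by the definition of $dc$, the sum
$$df(a, \vec{x})(c') = \sum_c f(a, \vec{x})(\partial c)\, dc(c')$$
collapses to at most two terms, corresponding to those reduced $1$-cells $c$ with edge $e'$ whose direction vectors at $\tau(e')$ match the $D$-vector or the $\overline{D}$-vector of $c'$. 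I will verify equality by tracking direction vectors, exploiting that the symmetric $D$/$\overline{D}$ definition of $f(a, \vec{x})$ is tailored precisely to the splitting of $\partial c'$.

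For the inductive step, I will evaluate both sides on an arbitrary $(k+1)$-cell $t$. The sum $\omega(\partial t)$ vanishes on every face of $t$ missing any of the edges $e_1, \ldots, e_k$, forcing $E(t) = \{e_1, \ldots, e_k, e^*\}$ for a unique extra edge $e^*$; the only contributing faces are $s_\iota = t - e^* + \iota(e^*)$ and $s_\tau = t - e^* + \tau(e^*)$. Because the closures of the elements of $t$ are pairwise disjoint, each $a_i = \tau(e_i)$ fails to be an endpoint of $e^*$, so the direction vectors about each $a_i$ are unchanged in passing from $t$ to either face; consequently $dc_i(s_\iota) = dc_i(s_\tau) = dc_i(t)$. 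Factoring out the common product $\prod_i dc_i(t)$ from both sides reduces the identity to the base case applied along the edge $e^*$.

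The main obstacle will be the base case: pairing the reduced $1$-cells $c$ with $dc(c') = 1$ against the two $0$-cell summands of $\partial c'$. The argument splits according to whether $a$ is incident to $e'$. When $a$ is not an endpoint of $e'$, both endpoints of $e'$ lie in the same direction from $a$, and a parity argument yields $df(a,\vec{x})(c') = 0 = \delta f(a,\vec{x})(c')$. When $a = \tau(e')$, the $D$- and $\overline{D}$-vectors of $c'$ about $a$ differ exactly by the contribution of the edge $e'$, and this controlled asymmetry is what aligns the reduced $1$-cells contributing to the sum with the boundary components $c'_\iota$ and $c'_\tau$ on the coboundary side.
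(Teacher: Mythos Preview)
Your approach is essentially the same as the paper's: both evaluate $d\omega$ and $\delta\omega$ on an arbitrary $(k{+}1)$-cell, argue that only the two faces obtained by collapsing the ``extra'' edge $e^*$ contribute, observe that each $dc_i$ takes the same value on the cell and on those two faces, and reduce both sides to $f(a,\vec{x})(s_\iota)+f(a,\vec{x})(s_\tau)$.

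One organizational comment: your framing as an induction is slightly misleading, since your inductive step never invokes the hypothesis for degree $k{-}1$ but rather reapplies the $k{=}0$ computation along $e^*$. In effect your ``base case'' supplies the detailed case analysis (whether $a$ is incident to the distinguished edge) that the paper compresses into its final displayed line $d\omega(s)=f(a,\vec{x})(s')+f(a,\vec{x})(s'')$. So you are filling in detail the paper elides rather than arguing differently. When you write up the reduction, be explicit that the sum $\sum_c f(a,\vec{x})(\partial c)\,dc(t)$ over reduced $1$-cells $c$ with $c\notin\{c_1,\dots,c_k\}$ receives no contribution from a $c$ whose edge is some $e_j$ with $j\le k$: for $a\neq\tau(e_j)$ both endpoints of $e_j$ lie in the same direction from $a$, forcing $f(a,\vec{x})(\partial c)=0$, and disjointness of closures in $t$ rules out $a=\tau(e_j)$ once $a$ is already an endpoint of $e^*$. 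This is the one spot where ``reduces to the base case along $e^*$'' hides real content.
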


\begin{proof} 

To compute the coboundary $\delta \omega$ of an $k$-cochain $\omega = 
f(a,\vec{x})\wedge dc_1\wedge \dots \wedge dc_k$, we consider all 
$\sim$-equivalence classes of $(k+1)$-cells in $\ud{n}{T_{min}}$. Let 
$[s]$ be an equivalence class of $(k+1)$-cells, and let $s \in [s]$.  

Consider a face $s'$ of $s$.  The face $s'$ corresponds to replacing 
some edge $f$ of $s$ with one of its endpoints.  For any $i \in \{1, 
\dots, k\}$, we claim that if $dc_i(s) = 0$, then $dc_i(s') = 0$.  For, 
let $e_i$ denote the edge of $c_i$.  If $e_i \not\in s$, then $e_i 
\not\in s'$.  Consider if $e_i \in s$.  If $e_i \not\in s'$ (that is, if 
$f = e_i$), then $dc_i(s') = 0$ and there is nothing to prove.  If $e_i 
\in s'$ (that is, if $f \neq e_i$), then since $T$ is a tree both 
endpoints of $f$ are in the same direction from $\tau(e)$.  In 
particular, the number of vertices in each direction from $\tau(e)$ is 
the same for both $s$ and $s'$.  If $e_i \in s$, but $dc_i(s) = 0$, then 
by definition the number of vertices of $s$ (and thus $s'$) in some 
direction - say $j$ - from $\tau(e_i)$ is not equal to the number of 
vertices in $c_i$ in direction $j$ from $\tau(e_i)$; thus, $dc_i(s') = 
0$.

We have proven that for any $i \in \{1, \dots, k\}$, if $dc_i(s) = 0$, 
then for any face $s'$ of $s$, we will have $dc_i(s') = 0$.  By the 
definition of coboundary, $\delta \omega (s) = 1$ if and only if the sum 
of $\omega$ evaluated on the $2^{k+1}$ $k$-faces of $s$ is $1$ in 
$\Z/2\Z$.  Thus, if there exists an $i \in \{1, \dots, k\}$ such that 
$dc_i(s) = 0$, then $\delta\omega(s) = 0 = d\omega(s)$.

Assume that $dc_i(s) = 1$ for each $i \in \{1, \dots, k\}$.  Then $s$ 
contains the edge $e_i$ of the cell $c_i$ for each $i$.  If $e_i = e_j$ 
for some $i \neq j$, then since $dc_i(s) = dc_j(s) = 1$, it must be that 
$c_i = c_j$.  But then $dc_i\wedge dc_j = dc_i^2 = 0$, and again both 
$\delta\omega(s) = 0$ and $d\omega(s) = 0$.  Now assume the edges $e_i$ 
are all distinct.  Let $e$ denote the edge of $s$ which is not one of 
the $e_i$.  Consider again a face $s'$ of $s$ corresponding to replacing 
some edge $f$ of $s$ with one of its endpoints. If $f \neq e$, then $f = 
e_i$ for some $i$, so $dc_i$ and therefore $\omega$ evaluated on the 
given face will be $0$.  Let $s'$ and $s''$ be the two faces of $s$ 
corresponding to replacing $e$ with one of its endpoints. 
Thus, since $dc_i(s) = dc_i(s') = dc_i(s'') = 1$ for each $i \in \{1, 
\dots, k\}$,
  $$\delta\omega(s) = \omega(s') + \omega(s'') = f(a,\vec{x})(s') + 
  f(a,\vec{x})(s'').$$
Also, by the definition of differential,
  $$d\omega(s) = f(a,\vec{x})(\partial s)ds(s) = f(a,\vec{x})(s') + 
  f(a,\vec{x})(s'').$$
In this final case, we have again shown that $\delta\omega(s) = 
d\omega(s)$.  Since $[s]$ was chosen arbitrarily, $\delta\omega = 
d\omega$ as desired.

\end{proof}

The proof of Proposition \ref{prop:d=delta} actually gives us some 
computational techniques for considering differentials.  In particular, 
because of the last equation in the proof, we have the following 
scholium:

\begin{corollary}[Restricting the Differential]\label{cor:restrictingdf}

Let $f(a,\vec{x})$ be a basic $0$-form and let $s$ be a reduced 
$k$-cell.  

  \begin{enumerate}

  \item If $f(a,\vec{x})(\partial s) = 1$ then $s$ contains an edge $e$ 
  with $\tau(e) = a$.

  \item For each $i \in \{0, \dots, deg(a)-1\}$, let $x'_i := 
  D_{a,i}(s)$, and let $d$ be the direction from $a$ along $e$.  If 
  $f(a,\vec{x})(\partial s) = 1$ then either $\vec{x} = \vec{x'}$ or 
  $\vec{x'}$ differs from $\vec{x}$ by the subtraction of $1$ in the 
  $0^{th}$ entry and the addition of $1$ in the $d^{th}$ entry.\qed

  \end{enumerate}

\end{corollary}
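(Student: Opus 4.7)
The plan is to analyze $f(a,\vec{x})(\partial s)$ by decomposing the cellular boundary $\partial s$ as a sum over the codimension-one faces of $s$. Each such face is obtained by replacing a single edge $e'$ of $s$ with one of its two endpoints, so faces come in pairs indexed by the edges $e'\in s$. Building on the pair-cancellation observation that closes the proof of Proposition \ref{prop:d=delta}, the strategy is to show that only edges $e\in s$ with $\tau(e)=a$ can contribute nontrivially to the mod-$2$ sum $f(a,\vec{x})(\partial s)$, and then to read off the two allowable shapes of $\vec{x}$ from the values of $D_{a,i}$ on the corresponding pair of faces.

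For Part~(1), I would carry out a case analysis on the edge $e'$ being replaced. If $e'$ is not adjacent to $a$, then since $T$ is a tree both endpoints of $e'$ and the edge $e'$ itself lie in a common direction from $a$; hence $D_{a,i}$ and $\overline{D}_{a,i}$ agree on $s$ and on both replacement faces, so $f(a,\vec{x})$ evaluates identically on the two faces of the pair and they cancel modulo~$2$. If $e'$ is adjacent to $a$ with $\iota(e')=a$, then $e'$, its endpoint $a$, and its endpoint $\tau(e')$ all lie in direction $0$ from $a$, and the same invariance and pair-cancellation occur. The only possibility that survives is $\tau(e')=a$, which proves Part~(1).

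For Part~(2), fix an edge $e$ of $s$ with $\tau(e)=a$ lying in direction $d$ from $a$. A direct computation of $D_{a,i}$ on the two faces obtained by replacing $e$ with an endpoint yields: replacing $e$ by $\iota(e)$ trades an edge in direction $d$ for a vertex in direction $d$, so $D_{a,\cdot}$ stays equal to $\vec{x}'$; replacing $e$ by $a=\tau(e)$ trades an edge in direction $d$ for a vertex in direction $0$, so $D_{a,\cdot}$ becomes $\vec{x}'$ with an increment of $1$ in entry $0$ and a decrement of $1$ in entry $d$. Assuming that $e$ is the unique edge of $s$ whose terminal endpoint equals $a$, the functions $D_{a,i}$ and $\overline{D}_{a,i}$ coincide on both of these faces, so the defining condition $f(a,\vec{x})=1$ pins $\vec{x}$ to one of these two candidate vectors. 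Rearranging gives exactly the two alternatives in the corollary statement.

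The main bookkeeping subtlety will be the $\overline{D}$-clause in the definition of $f(a,\vec{x})$: because $\overline{D}_{a,0}$ counts each edge $e$ with $\tau(e)=a$ in direction $0$ rather than in its geometric direction $d$, one must check carefully that after the replacement the functions $D_{a,i}$ and $\overline{D}_{a,i}$ truly agree on the resulting faces, so that no spurious third value of $\vec{x}$ can force $f(a,\vec{x})=1$. Once this verification is in hand, both parts of the corollary follow directly from the pair-cancellation principle extracted at the end of the proof of Proposition \ref{prop:d=delta}.
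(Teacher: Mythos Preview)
Your proposal is correct and follows essentially the same approach as the paper, which treats the corollary as an immediate scholium of the pair-cancellation computation at the end of the proof of Proposition~\ref{prop:d=delta}. One small point: your assumption that $e$ is the \emph{unique} edge of $s$ with $\tau(e)=a$ is automatically satisfied (two such edges would have $a$ in both their closures, violating the disjoint-closure condition for cells of $\ud{n}{T}$), so the $\overline{D}$-subtlety you flag resolves itself once this is observed.
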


More can be said about $d\omega$ when $\omega = f(a,\vec{x})dc_1\wedge 
\dots\wedge dc_k$ is a basic $k$-form.  Many of the $1$-forms in the sum 
used to define $d(f(a,\vec{x}))$ multiply to $0$ when wedged with 
$dc_1\wedge \dots\wedge dc_k$.  That is, many of the terms of the 
expansion of $d\omega$ with respect to the definition of $df(a,\vec{x})$ 
are trivial. We define an annihilator function to get rid of the trivial 
terms.

\begin{definition}[Annihilator]

Let $c_1, \dots, c_k$ be reduced $1$-cells.  Define an 
\emph{annihilator} function $A_{c_1,\dots,c_k}$ on $1$-forms as follows.  
Let $d(a,e,\vec{x})$ be a basic $1$-form. Then the annihilator acts 
either as the identity or the $0$ function on $d(a,e,\vec{x})$:
$A_{c_1,\dots,c_k}(d(a,e,\vec{x})) = d(a,e,\vec{x})$ if and only if 

  \begin{itemize}

  \item the equivalence classes $[c_1], \dots, [c_k],$ and 
  $[(a,e,\vec{x})]$ are all distinct, and

  \item the set of equivalence classes $\{[c_1], \dots, [c_k], 
  [(a,e,\vec{x})]\}$ has an upper bound;

  \end{itemize}
otherwise, $A_{c_1,\dots,c_k}(d(a,e,\vec{x})) = 0$.  Extend 
$A_{c_1,\dots,c_k}$ linearly to all $1$-forms.

\end{definition}

\begin{corollary}[Coboundaries as Annihilators]\label{cor:annihilator}

We have that 
  $$\delta\omega = d\omega = A_{c_1,\dots,c_k}(df(a,\vec{x}))\wedge 
  dc_1\wedge \dots\wedge dc_k.$$

\end{corollary}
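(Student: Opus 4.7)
The plan is to observe that the first equality $\delta\omega = d\omega$ is already given by Proposition \ref{prop:d=delta}, so the real work is establishing the cochain identity
$$df(a,\vec{x}) \wedge dc_1 \wedge \dots \wedge dc_k = A_{c_1,\dots,c_k}(df(a,\vec{x})) \wedge dc_1 \wedge \dots \wedge dc_k.$$
I would expand $df(a,\vec{x})$ as the sum over reduced $1$-cells $(b,e,\vec{y})$ given by its definition, distribute the wedge, and then argue term-by-term that the basic $1$-forms $d(b,e,\vec{y})$ killed by $A_{c_1,\dots,c_k}$ are precisely the ones whose wedge $d(b,e,\vec{y}) \wedge dc_1 \wedge \dots \wedge dc_k$ already vanishes as a cochain. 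The unannihilated terms pass through untouched, and the two sides coincide.

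The term-by-term argument splits along the two clauses in the definition of $A_{c_1,\dots,c_k}$. First, if $[(b,e,\vec{y})] = [c_i]$ for some $i$, then uniqueness of reduced representatives within an equivalence class (Theorem \ref{thm:leq}(5)) forces $(b,e,\vec{y}) = c_i$, so the wedge contains a repeated basic $1$-form and is identically zero by the convention stated with the definition of the wedge product. Second, if the equivalence classes are all distinct but $\{[c_1],\dots,[c_k],[(b,e,\vec{y})]\}$ has no upper bound, I would show the wedge vanishes on every $(k+1)$-cell $s$: were it to evaluate to $1$ on some such $s$, then $dc_i(s)=1$ for each $i$ and $d(b,e,\vec{y})(s)=1$, so $s$ would contain each of the edges $e_1,\dots,e_k,e$ and realize the prescribed $D$-values, which together (via Lemma \ref{lem:Dais} and the definition of $\sim$) would exhibit $[s]$ as an upper bound of the set, contradicting the hypothesis.

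The main technical wrinkle lies in arguing that the local data $D_{\tau(e_i),j}(s) = (\vec{x}_i)_j$ at the terminal endpoints of the various edges really does suffice to place a representative of each $[c_i]$ below $[s]$ in the face order. Here I would invoke Theorem \ref{thm:leq}(2), which guarantees the edges of $s$ are pairwise disjoint whenever an upper bound exists, so the connected components of $T$ minus those edges are cleanly indexed by directions from the various $\tau(e_i)$'s, and the vertex counts can be matched component by component. Once this bookkeeping is pinned down, the remainder of the proof is just writing out the defining sum and observing that zero summands may be added or discarded at will, yielding the claimed equality.
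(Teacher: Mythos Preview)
Your approach is correct and is essentially the same as the paper's, just unpacked. The paper's proof is a one-liner citing the definition of the annihilator, Proposition~\ref{prop:d=delta}, and Proposition~\ref{prop:forms}(3); the latter (via Theorem~\ref{thm:cohomology}(3)) is exactly the statement that $dc_1 \wedge \dots \wedge dc_k$ vanishes identically whenever the equivalence classes repeat or have no upper bound. You are simply re-deriving that fact directly rather than citing it.

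One small point: your invocation of Theorem~\ref{thm:leq}(2) in the ``technical wrinkle'' paragraph is slightly misplaced. That theorem presupposes an upper bound exists, whereas you are trying to exhibit $[s]$ as one. In fact you don't need it: the edges of any cell $s$ of $\ud{n}{T}$ are pairwise disjoint by the very definition of the discretized configuration space, so once each $dc_i$ and $d(b,e,\vec{y})$ evaluate to $1$ on $s$, the edge data and the direction counts immediately give $[c_i] \leq [s]$ via Lemma~\ref{lem:Dais} and the definition of $\sim$. The bookkeeping is simpler than you suggest.
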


\begin{proof}

This follows from the definition of the annihilator $A_{c_1,\dots,c_k}$, 
Proposition \ref{prop:d=delta}, and part (3) of Proposition 
\ref{prop:forms}.

\end{proof}

We refer the reader to the recent paper \cite{Farley2} for more 
information on coboundaries and cohomology presentations.

\section{Exterior Face Algebra Structures on Cohomology}\label{sec:extcohom}

Recall that a linear tree is one in which there exists an embedded line 
segment containing every essential vertex.  Consider the following 
theorem:

\begin{theorem}\label{thm:RAAcases}

Let $T$ be a tree.  If $T$ is linear or $n \leq 3$, then $H^*(B_nT; 
\Z/2\Z)$ is an exterior face algebra.

\end{theorem}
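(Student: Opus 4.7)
The plan is to split along the two branches of the hypothesis. For $n \leq 3$, Theorem \ref{thm:cohomology} combined with the fact that critical cells in $\ud{n}{T}$ have dimension at most $\lfloor n/2\rfloor \leq 1$ implies $H^i(B_nT;\Z/2\Z) = 0$ for all $i \geq 2$. Every cup product of positive-degree classes therefore vanishes, and $H^*(B_nT;\Z/2\Z)$ is immediately identified with the exterior face algebra of a $0$-dimensional simplicial complex whose vertex set is indexed by a Morse basis of $H^1$.

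For $T$ linear, my strategy is to exhibit the defining simplicial complex directly. Let $\tilde K$ have vertex set indexed by equivalence classes of critical $1$-cells of $\ud{n}{T}$, and declare $\{[c_1], \ldots, [c_k]\}$ to span a $(k-1)$-simplex precisely when this set admits an upper bound under $\leq$. This is a simplicial complex since upper bounds pass to subsets. By Theorem \ref{thm:cohomology}(2), the cup product $c_1^* \cup \cdots \cup c_k^*$ equals $s^*$ whenever the least upper bound $[s]$ exists and vanishes otherwise. It follows that the multiplicative structure of $H^*(B_nT;\Z/2\Z)$ agrees with that of $\Lambda(\tilde K)$ as soon as each such $s^*$ is a Morse basis element, which reduces to showing that the equivalence class $[s]$ contains a critical representative whenever the LUB exists.

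The key sub-claim is therefore: for $T$ linear and critical $1$-cells $c_1, \ldots, c_k$ whose equivalence classes admit an upper bound, the LUB $[s]$ contains a critical representative. I would first handle the case $k=2$ via Lemma \ref{lem:upperbound}(4), which explicitly constructs the reduced representative of the LUB and characterizes exactly when each of its two edges is disrespectful. Linearity of $T$ forces all essential vertices to lie along a single embedded line, which dramatically restricts the directions in which each $v$-vector can have nonzero entries. A careful case analysis on the relative positions of the essential vertices over which $c_1$ and $c_2$ sit, combined with the criticality of each $c_i$ (hence disrespectfulness of each $c_i$'s edge inside $c_i$), should then verify the disrespect conditions of Lemma \ref{lem:upperbound}(4). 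For $k \geq 3$, rather than a clean induction, I would argue directly that each edge $e_i$ of the reduced representative $s$ of the LUB is disrespectful by inspecting how the vertices of $s$ accumulate between the endpoints of $e_i$ in the clockwise traversal, again exploiting the fact that all essential vertices lie along a single line.

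The main obstacle will be this combinatorial verification of disrespectfulness for every edge of the LUB's reduced representative, which is the only place the linearity hypothesis genuinely enters. It requires careful bookkeeping with $v$-vectors, the clockwise traversal order induced by the Morse $T$-embedding, and the linear arrangement of essential vertices along the spine of $T$.
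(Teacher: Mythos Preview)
Your approach is genuinely different from the paper's. The paper proves this theorem in two sentences by citing outside results: under either hypothesis $B_nT$ is a right-angled Artin group (\cite{ConnollyDoig} for linear $T$; \cite{Ghrist}, \cite{FarleySabalka1} for $n\le 3$, where $B_nT$ is in fact free), and by Charney--Davis \cite{CharneyDavis} the cohomology of any right-angled Artin group is an exterior face algebra. Your argument for $n\le 3$ is correct and pleasantly self-contained; it recovers the same conclusion without invoking the RAAG literature.

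Your linear case, however, has a real gap. The key sub-claim --- that whenever critical $1$-cells $c_1,\dots,c_k$ admit an upper bound, the reduced representative of their least upper bound is itself critical --- is false for an arbitrary Morse $T$-embedding of a linear tree. Take $n=4$ and a linear tree with two essential vertices $a<b$, each of degree $3$, embedded so that $b$ lies in direction~$1$ from $a$. Set $c_1=(a,2,(0,3,1))$ and $c_2=(b,2,(1,1,2))$; both are critical. Here $\alpha=1$ and $x_\alpha+y_0=3+1=4=n$, so an upper bound exists by Lemma~\ref{lem:upperbound}(2). But Lemma~\ref{lem:upperbound}(4) then says the edge of $c_1$ is disrespectful in the reduced LUB $s$ iff either $x_\alpha+y_0>n$ (false, equality) or some $i\neq\alpha$ with $0<i<d=2$ has $x_i>0$ (impossible, the only such $i$ is $i=1=\alpha$). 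Hence the edge of $c_1$ is respectful in $s$ and $s$ is not critical. Your ``careful case analysis'' would not go through here; linearity alone does not force the disrespect conditions.

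The fix is to \emph{choose} the Morse embedding so that at every essential vertex the direction along the spine toward the far end is the largest direction, i.e.\ direction $\deg(v)-1$. Then for $a<b$ one always has $\alpha=\deg(a)-1\ge d$, so criticality of $c_1$ supplies some $i$ with $0<i<d\le\alpha$ and $x_i>0$, automatically giving $i\neq\alpha$ and hence condition~(b) of Lemma~\ref{lem:upperbound}(4). With that embedding your strategy does succeed, and the argument extends to $k\ge 3$ via Lemma~\ref{lem:Dais}: any non-spine direction $i$ with $x_i>0$ still carries a vertex of the reduced LUB, and that vertex is blocked by the edge over $a$. So your route is salvageable, but it hinges on a specific embedding choice you did not make.
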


\begin{proof}

A tree braid group $B_nT$ is a right-angled Artin group (see Section 
\ref{sec:RAAGs} for a definition) if and only if $T$ is linear or $n 
\leq 3$ (\cite{ConnollyDoig} proves linear trees are right-angled Artin; 
\cite{Ghrist} and \cite{FarleySabalka1} prove $B_nT$ is free if $n \leq 
3$; \cite{FarleySabalka2a} proves the only if direction).  For any 
right-angled Artin group, the cohomology ring is an exterior face 
algebra \cite{CharneyDavis}.

\end{proof}

Let $T_{min}$ be the `minimal' nonlinear tree of Figure 
\ref{fig:terminology}: $T_{min}$ has exactly 4 essential vertices, each 
of degree 3, such that not all 4 essential vertices lie on an embedded 
line segment. In \cite{FarleySabalka2a}, it was shown that 
$H^*(B_4T_{min};\mathbb{Z}/2\mathbb{Z})$ is an exterior face algebra 
$\Lambda(\Delta)$; Figure \ref{fig:Deltamin} shows the complex $\Delta$. 
In this section, our goal is to expand the easy observations in Theorem 
\ref{thm:RAAcases} to show that the cohomology of a four, or even five, 
strand tree braid group is an exterior face algebra holds in general:

  \begin{figure}
    \begin{center}
    \input{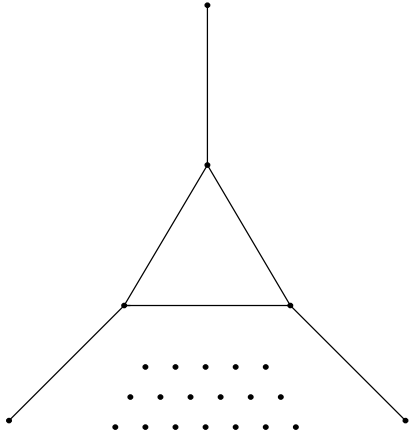}
    \end{center}

  \caption{The simplicial complex $\Delta$ giving the exterior face 
  algebra structure on $H^*(B_4T_{min}; \mathbb{Z}/2\mathbb{Z})$.}

  \label{fig:Deltamin}

  \end{figure}

\begin{theorem}[Exterior Face Algebra Structure on $4$ and $5$ Strand 
Cohomology]\label{thm:cohom}

Let $T$ be a finite tree.  For $n = 4$ or $5$, 
$H^*(B_nT;\mathbb{Z}/2\mathbb{Z})$ is an exterior face algebra.  The 
simplicial complex $\Delta$ defining the exterior face algebra structure 
is unique and at most $1$-dimensional.

\end{theorem}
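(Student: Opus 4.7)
My approach is to build directly on the presentation of $H^*(B_nT;\mathbb{Z}/2\mathbb{Z})$ from Theorem \ref{thm:cohompres}. By Theorem \ref{thm:RAAcases}, we may assume $T$ is nonlinear. Since $n = 4$ or $5$ gives $\lfloor n/2 \rfloor = 2$, cohomology is concentrated in degrees $0$, $1$, and $2$, so the simplicial complex $K$ of Theorem \ref{thm:cohompres} is at most $1$-dimensional. Uniqueness of $\Delta$ will then follow immediately from Gubeladze's Theorem \ref{thm:Gubeladze}, so the substance of the theorem is \emph{existence}: producing a simplicial complex $\Delta$ with $\Lambda(\Delta) \cong \Lambda(K)/I$.

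My plan is to realize $\Delta$ as the sub-simplicial-complex of $K$ obtained by deleting certain vertices and edges. For this to succeed, it suffices to show that every generator of the ideal $I$ --- that is, each coboundary support chain of a necessary form --- reduces, modulo the remaining generators of $I$, to a monomial relation: either a statement that a single vertex $[c]$ of $K$ vanishes in cohomology, or a statement that a single edge $[c_1][c_2]$ of $K$ vanishes. Once this is established, $\Delta$ is defined by removing exactly those vertices and edges, and $\Lambda(\Delta) \cong \Lambda(K)/I$ is formal.

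Since $k < \lfloor n/2 \rfloor = 2$ for a necessary $k$-form, only two families of relations need analysis. For a necessary $0$-form $f(a,\vec{x})$, Corollary \ref{cor:restrictingdf} shows that every non-extraneous reduced $1$-cell in the support of its coboundary lies over $a$ and has its $a$-vector closely related to $\vec{x}$; combined with Lemma \ref{lem:x_0} (which forces $x_0 \leq n-2 \in \{2,3\}$), the small size of $n$ tightly constrains the support, and a case analysis on the values of the $x_i$ shows that, after being combined with neighboring $0$-form relations, each such relation reduces to monomial form. For a necessary $1$-form $f(a,\vec{x}) \wedge dc_1$, Corollary \ref{cor:annihilator} reduces the computation to analyzing which $1$-forms survive the annihilator $A_{c_1}$, and the Upper Bound Lemma \ref{lem:upperbound} tightly controls which equivalence classes of $1$-cells produce an upper bound with $[c_1]$; again, for $n \in \{4,5\}$ the case analysis collapses each such relation (modulo the previously analyzed $1$-cell relations) to monomial form.

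The main obstacle will be this case analysis --- especially for the $1$-form coboundaries, where the interplay between direction labels, the disrespectful-edge criterion of Lemma \ref{lem:upperbound}(4), and the essential-versus-extraneous dichotomy for reduced $1$-cells can produce genuinely multi-term relations whose monomial nature only becomes apparent after several relations are combined. The restriction $n \in \{4,5\}$ is essential here: for larger $n$, strand counts can be distributed more evenly among the directions at an essential vertex, producing longer coboundary support chains whose reduction to monomial form may simply fail, and the analysis above would no longer suffice.
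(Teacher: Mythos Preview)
Your reduction of uniqueness and the dimension bound to Gubeladze's theorem and the vanishing of cohomology above degree $\lfloor n/2\rfloor$ is fine and matches the paper.  The existence argument, however, rests on a claim that is not true: you assert that each generator of $I$ ``reduces, modulo the remaining generators of $I$, to a monomial relation,'' so that $\Delta$ may be taken to be a \emph{subcomplex} of $K$.  This already fails at the level of the $0$-form relations.  For instance, with $n=4$ and $a$ an essential vertex of degree $4$, the coboundary of $f(a,(0,1,1,2))$ (computed via Corollary~\ref{cor:restrictingdf}) has support exactly $\{(a,1,\vec{x}),(a,2,\vec{x}),(a,3,\vec{x})\}$, where the first cell is non-critical and the other two are critical.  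Modulo $I$ this says $[(a,1,\vec{x})] = [(a,2,\vec{x})] + [(a,3,\vec{x})]$, which is not equivalent to any monomial relation: the non-critical generator is not killed but rather identified with a genuine \emph{sum} of critical generators.  Similar multi-term phenomena occur for the $1$-form coboundaries.  Consequently $I$ is not a monomial ideal in the generators of $K$, and $\Lambda(K)/I$ cannot be realized as $\Lambda(\Delta)$ for $\Delta\subset K$.

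What the paper does instead is a genuine change of basis on $H^1(B_nT)$.  It builds an invertible lower-triangular matrix $M$ (a product of matrices $M_c$, one per reduced $1$-cell, with three ``exceptional'' cell types requiring ad hoc definitions when $n=5$) and proves that in the new basis $\{Mc^*\}$ the cup products are controlled by a simplicial complex $\Delta$ whose vertices are the $Mc^*$ and whose edges record exactly when $Mc^*\cup M(c')^*\neq 0$ (Lemmas~\ref{lem:effectofM} and~\ref{lem:multIbyM}, Theorem~\ref{thm:cohomiso}).  The exceptional types are there precisely because no naive reduction to monomials works; your case analysis, had you carried it out, would have hit exactly those obstructions.
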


We will comment on more general tree braid groups at the end of the 
section.  As our coefficients for cohomology are always in 
$\mathbb{Z}/2\mathbb{Z}$, we suppress this in the notation from now on.  
Even though a $1$-dimensional simplicial complex (for instance, 
$\Delta$) is a graph, we will continue to refer to them as simplicial 
complexes to maintain the distinction between defining complexes for 
exterior face algebras and graphs underlying graph braid groups. 

Some of the statements of Theorem \ref{thm:cohom} are immediate, and we 
prove them now.

\begin{lemma}[Uniqueness and the Dimension Bound]

Let $T$ be a finite tree.  For $n = 4$ or $5$, if $H^*(B_nT)$ is an 
exterior face algebra then the simplicial complex $\Delta$ defining the 
exterior face algebra structure is unique and at most $1$-dimensional.

\end{lemma}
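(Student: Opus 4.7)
The plan is to derive both conclusions quickly from results already established in the excerpt, namely Gubeladze's Theorem on exterior face algebra rigidity (Theorem \ref{thm:Gubeladze}) and the dimension bound on critical cells.

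First I would dispatch uniqueness. Suppose $H^*(B_nT) \cong \Lambda(\Delta)$ and also $H^*(B_nT) \cong \Lambda(\Delta')$ for simplicial complexes $\Delta$ and $\Delta'$. Composing the isomorphisms gives $\Lambda(\Delta) \cong \Lambda(\Delta')$ as algebras, so Gubeladze's Theorem immediately yields $\Delta \cong \Delta'$ as simplicial complexes. Hence, modulo simplicial isomorphism, the defining complex is unique.

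Next I would establish the dimension bound. The excerpt notes (following Theorem \ref{thm:cohomology}) that every critical cell has dimension at most $\lfloor n/2 \rfloor$, and that consequently $H^i(B_nT) = 0$ for all $i > \lfloor n/2 \rfloor$. For $n = 4$ or $5$, this means $H^i(B_nT) = 0$ for all $i \geq 3$. Now recall from the definition of $\Lambda(\Delta)$ in Section \ref{sec:extfacealgs} that, under the degree shift noted there, an $i$-face of $\Delta$ contributes a nonzero basis element of degree $i+1$ to $\Lambda(\Delta)$: if $\{v_{j_0}, \dots, v_{j_i}\}$ spans an $i$-face of $\Delta$, then the product $v_{j_0} v_{j_1} \cdots v_{j_i}$ is a nonzero element of $\Lambda(\Delta)$ of degree $i+1$. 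Therefore, if $\Delta$ had a $2$-face, the corresponding degree-$3$ element of $\Lambda(\Delta)$ would be nontrivial, contradicting the vanishing of $H^3(B_nT)$ under the assumed isomorphism $\Lambda(\Delta) \cong H^*(B_nT)$.

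Thus $\Delta$ has no faces of dimension $\geq 2$, so $\Delta$ is at most $1$-dimensional, completing the lemma. Because both statements reduce so directly to cited results, I do not anticipate any serious obstacle here; the real work of Theorem \ref{thm:cohom} lies in the existence portion, namely producing the complex $\Delta$ and verifying that the cohomology ring admits the required exterior face algebra presentation, which will presumably be carried out in the subsequent lemmas of Section \ref{sec:extcohom}.
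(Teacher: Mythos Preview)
Your proof is correct and follows essentially the same approach as the paper: both invoke Gubeladze's Theorem for uniqueness and use the vanishing of cohomology above degree $\lfloor n/2 \rfloor = 2$ together with the index shift in the definition of exterior face algebras to bound the dimension of $\Delta$. The paper's version is simply more terse, but the ingredients and logic are identical.
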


\begin{proof}

By Theorem \ref{thm:cohomology} and a dimension computation for 
$\ud{n}{T}$ in \cite{FarleySabalka1}, cohomology is trivial in all 
dimensions greater than $\lfloor n/2\rfloor = 2$.  Thus, if $H^*(B_nT)$ 
is an exterior face algebra corresponding to some simplicial complex 
$\Delta$, $\Delta$ must be at most $(2-1)$-dimensional, by the shift of 
indices in the definition of exterior face algebras.  By Theorem 
\ref{thm:Gubeladze}, such a complex $\Delta$ is unique.

\end{proof}

To prove Theorem \ref{thm:cohom}, it remains to show that $H^*(B_nT)$ is 
an exterior face algebra.  We will spend much of the remainder of this 
section proving this, via a series of lemmas.  In essence, we will 
define a $1$-dimensional simplicial complex $\Delta$ and a homomorphism 
$\Psi: \Lambda(\Delta) \to H^*(B_nT)$, and then show the homomorphism is 
an isomorphism.  The bulk of our effort will be in defining $\Delta$, 
via a change of basis for $H^*(B_nT)$.

Until the end of the proof of Theorem \ref{thm:cohom}, we will assume 
that $n \in \{4, 5\}$.

\subsection{Two Computational Lemmas}\label{sec:lemmas}

In this subsection we establish two computational lemmas to be used 
throughout the remainder of this section.

\begin{lemma}[Necessary $0$-Forms]\label{lem:necessary0form}

Let $c = (a,e,\vec{x})$ be a reduced noncritical $1$-cell.  Then 
$f(a,\vec{x})$ is necessary, $c$ is necessary for the $0$-form 
$f(a,\vec{x})$, and $f(a,\vec{x})$ is the unique $0$-form for which $c$ 
is necessary.

\end{lemma}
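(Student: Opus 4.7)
The plan is to read the lemma directly from the definition of \emph{necessary form}: this is essentially a bookkeeping exercise, with existence and uniqueness of the leading $0$-form to check.

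First I would verify that $f(a,\vec{x})$ satisfies the four clauses of the definition of necessary, with $k=0$. Clause~(1), $k < \lfloor n/2 \rfloor$, reads $0 < 2$ since the standing assumption is $n \in \{4,5\}$. Clause~(2) supplies the edge $e$ from $c = (a,e,\vec{x})$, and $c$ is a non-extraneous reduced $1$-cell by hypothesis (with the convention that reduced cells are non-extraneous). Clause~(3) is trivial when $k = 0$: the singleton $\{[c]\}$ is bounded by $[c]$ itself. For clause~(4), Theorem~\ref{thm:leq}(5) tells us that the reduced representative of $[c]$ is $c$ itself, so I need only show that the single edge $e$ of $c$ is respectful. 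Since $c$ is reduced, every vertex of $c$ is blocked; since $c$ is also noncritical, the failure of criticality must be witnessed by a respectful edge; as $e$ is the only edge of $c$, it is the unique respectful edge. By construction $c$ is then the necessary reduced $1$-cell for $f(a,\vec{x})$.

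For uniqueness, I would suppose that $c$ is the necessary reduced $1$-cell for some basic $0$-form $\omega = f(a',\vec{x}')$. Unwinding the definition of necessary, the necessary cell associated to $f(a',\vec{x}')$ must have the form $(a', e'', \vec{x}')$ for some edge $e''$. Equating this triple with $c = (a,e,\vec{x})$ forces $a' = a$, $\vec{x}' = \vec{x}$, and $e'' = e$, so $\omega = f(a,\vec{x})$.

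I do not expect any real obstacle; the only ingredient with any content is clause~(4), which rests on the trivial observation that a reduced noncritical $1$-cell has exactly one edge and that edge must be respectful. I expect the written proof to occupy at most a short paragraph.
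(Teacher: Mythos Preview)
Your argument is correct and is exactly the unpacking of definitions that the paper has in mind; the paper's own proof is the single sentence ``This is a direct consequence of the definitions of necessary and $f(a,\vec{x})$.'' Your more explicit verification of clauses (1)--(4) and the uniqueness step are all sound.
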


\begin{proof}

This is a direct consequence of the definitions of necessary and 
$f(a,\vec{x})$.

\end{proof}

\begin{lemma}[Necessary $1$-Forms]\label{lem:necessary1form}

Let $c= (a,e,\vec{x})$ and $c_1 = (b,e_1,\vec{y})$ be critical 
$1$-cells.  If $\omega := f(a,\vec{x})dc_1$ is necessary and $c$ is the 
necessary reduced $1$-cell for $\omega$, then:

  \begin{enumerate}

  \item the direction from $b$ to $a$ is $0$,

  \item the direction $\alpha$ from $a$ to $b$ satisfies $0 < \alpha < 
  d$, and

  \item $\alpha$ is the unique direction between $0$ and $d$ for which 
  $x_\alpha \neq 0$.

  \end{enumerate}

\end{lemma}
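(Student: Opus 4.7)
The idea is to apply the Upper Bound Lemma (Lemma~\ref{lem:upperbound}) twice to the pair $\{[c_1],[c]\}$.  Since $\omega$ is necessary, this pair has an upper bound $[s]$ whose reduced representative $s$ contains $e$ as its unique respectful edge, and part~(2) of the Upper Bound Lemma already gives $a \neq b$.

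To obtain claim~(1), I argue by contradiction.  Suppose $b < a$.  Then in the Upper Bound Lemma's notation $c_1$ plays the role of the smaller-base cell and $c$ plays the role of the larger.  Part~(4) of the lemma then says that the edge $e$ of the larger-base cell is disrespectful in $s$ if and only if it is disrespectful in $c$ itself.  But $c$ is critical, so $e$ is disrespectful in $c$, and therefore in $s$, contradicting the necessity hypothesis that $e$ is respectful in $s$.  Hence $a < b$, and part~(1) of the Upper Bound Lemma gives the direction from $b$ to $a$ as $0$, establishing claim~(1).

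Now that $a < b$, let $\alpha$ be the direction from $a$ to $b$, so $\alpha \neq 0$.  This time $c$ is the smaller-base cell, and part~(4) of the Upper Bound Lemma characterizes disrespectfulness of $e$ in $s$: $e$ is disrespectful in $s$ iff either (a) $0 < \alpha < d$ and $x_\alpha + y_0 > n$, or (b) there exists $i \neq \alpha$ with $0 < i < d$ and $x_i > 0$.  Since the necessity hypothesis forces $e$ respectful in $s$, both (a) and (b) must fail; in particular $x_i = 0$ for every $i \neq \alpha$ in the range $0 < i < d$.  On the other hand, $c$ is critical, so $e$ is disrespectful in $c$; unpacking the definition of disrespectful, together with sufficient subdivision of $T$, forces the offending vertex to be adjacent to $a$ in some direction $i$ with $0 < i < d$, so $x_i \geq 1$ for some such $i$.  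Combining these two observations, the unique $i \in \{1,\dots,d-1\}$ with $x_i \neq 0$ is forced to be $\alpha$, which simultaneously gives $0 < \alpha < d$ (claim~(2)) and the uniqueness statement of claim~(3).

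The main obstacle is the slight tension in the hypotheses: $c$ is critical, so $e$ is disrespectful in $c$, yet $e$ is \emph{respectful} in the upper bound $s$.  Part~(4) of the Upper Bound Lemma is precisely the tool that quantifies this transition, and the real work in the proof is the careful bookkeeping it demands --- especially tracking which of $c$ and $c_1$ plays the smaller-base role in each invocation, so that the relevant case of part~(4) is applied correctly.
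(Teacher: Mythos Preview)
Your proof is correct.  It differs from the paper's argument only in packaging: the paper argues directly that if any of (1)--(3) fails then the reduced representative $s$ retains a vertex in some direction $d_0$ with $0 < d_0 < d$ and $d_0 \neq \alpha$, which is then necessarily blocked by $e$, making $e$ disrespectful in $s$.  You instead invoke part~(4) of the Upper Bound Lemma as a black box---once with $b < a$ to get (1), then with $a < b$ to read off (2) and (3) from the failure of its two disjuncts.  Since part~(4) of the Upper Bound Lemma encapsulates exactly the combinatorial analysis the paper carries out by hand, your route is the more economical one; the paper's route is more self-contained and makes the mechanism (the stranded vertex forced to be blocked by $e$) visible at the point of use.
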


\begin{proof}

Let $[s]$ be the least upper bound of $\{[c],[c_1]\}$.  By the 
definition of $\leq$, $s$ must contain the edges $e$ and $e_1$.  But the 
only edges of $s$ are $e$ and $e_1$, for if $s$ contains any other 
edges, replacing each with one of its endpoints yields a smaller upper 
bound.

Since $e$ is disrespectful in $c$, there is a vertex of $c$ between 
$\tau(e)$ and $\iota(e)$ in the order on vertices, i.e. in direction 
$d_0$ from $a$, where $0 < d_0 < d$. By Lemma \ref{lem:Dais}, $s$ still 
has a vertex or edge in direction $d_0$ from $a$.

We finish the proof by contradiction.  If the direction from $b$ to $a$ 
is nonzero, then the direction from $a$ to $b$ must be $0$ (since $T$ is 
a tree) - in particular, $e_1$ is not in direction $d_0$ from $a$. If 
$\alpha$ does not satisfy $0 < \alpha < d$, then $\alpha \neq d_0$, and 
again $e_1$ is not in direction $d_0$ from $a$.  If $c$ has two nonzero 
directions less than $d$ such that $c$ has vertices in both directions 
from $a$, then without loss of generality we may assume $d_0$ is such 
that $d_0 \neq \alpha$, and again $e_1$ is not in direction $d_0$ from 
$a$.

Thus, if any of the conclusions of this lemma do not hold, then $s$ has 
a vertex in direction $d_0$ from $a$.  Assume without loss of generality 
that $s$ is reduced, and let $v$ be the least vertex in direction $d_0$ 
from $a$.  Then $v$ must be blocked in $s$ by either $\ast$, $e_1$, or 
$e$.  Since $d_0 \neq 0$, $v$ cannot be blocked by $\ast$.  Since $d_0 
\neq \alpha$, $v$ cannot be blocked by $e_1$.  So, $v$ must be blocked 
by $e$.  This makes $e$ disrespectful in $s$, since $0 < d_0 < d$.  This 
contradicts the definition of necessary, and proves the lemma.

\end{proof}

\subsection{Towards Changing Bases}\label{sec:M_c}

In this subsection, we describe why we need to change bases to find the 
exterior face algebra structure on cohomology.  We also define many 
matrices, some associated to necessary forms and some to critical 
$1$-cells, which we will use in the next section to define our change of 
basis matrix.

Fix a Morse $T$-embedding, so that we have a classification of critical 
cells in $T$.  Consider the finite simplicial complex $\Delta'$, defined 
as follows.  Let the vertex set of $\Delta'$ be identified with the set 
$\{c^* | c \hbox{ a critical $1$-cell}\}$.  A set of vertices $\{c_1^*, 
c_2^*\}$ span a $1$-simplex, labelled $c_1^* \cup c_2^*$, if and only if 
$c_1^* \cup c_2^*$ is nontrivial.

Let $\Psi':\Lambda(\Delta') \to H^*(B_nT)$ be the map which takes an 
element of $\Lambda(\Delta')$ corresponding to a vertex of $\Delta'$ 
labelled $c^*$ and maps it to the $i$-cohomology class $c^*$.  Since 
critical $1$-cells form a free basis for $H^1(B_nT)$, the map extends to 
all of $H^1(B_nT)$.  By the definition of $\Delta'$, $\Psi'$ is 
surjective onto $H^1(B_nT)$ and moreover extends linearly to a 
surjective homomorphism, since $H^1(B_nT)$ generates all of $H^*(B_nT)$ 
(see Theorem \ref{thm:cohomology}).

If $\Psi'$ were injective, then $\Psi'$ would be the desired 
isomorphism.  Recall that a critical $2$-cell $s$ uniquely determines 
the pair $\{c_1, c_2\}$ of critical $1$-cells for which it is an upper 
bound (Theorem \ref{thm:cohomology}).  As critical $2$-cells form a 
basis for $2$-dimensional cohomology, $\Psi'$ is injective if and only 
if, for every pair $\{c_1, c_2\}$ of critical $1$-cells which has a 
least upper bound $[s]$, we may find a representative $s \in [s]$ such 
that $s$ is a critical $2$-cell.  Unfortunately, though, this is not the 
case, and $\Psi'$ is not injective.

Our goal is to modify the Morse generating set of $H^*(B_nT)$ to address 
these cases.  We will define a new simplicial complex $\Delta$ using 
this new generating set so that the corresponding map $\Psi: 
\Lambda(\Delta) \to H^*(B_nT)$ will be an isomorphism.

To modify the generating set of duals of critical $1$-cells, we need to 
specify a change of basis.  Since duals of critical $1$-cells freely 
generate $H^1(B_nT)$, we may introduce a vector-theoretic interpretation 
of $H^1(B_nT)$.  Then, using this vector-theoretic interpretation, we 
specify an invertible change of basis matrix $M$.  The matrix $M$ will 
be a product of invertible matrices, one for each critical cell, in a 
particular order, as we will see.

Define $<_r$ to be a (usually non-unique) total order on reduced 
$1$-cells $(a,d,\vec{x})$ induced by lexicographically ordering the 
triple $(a,-x_0,d)$.  Note we are ordering all reduced $1$-cells here, 
not just critical ones.  Let $rm$ be the number of reduced $1$-cells, 
$sm$ the number of critical $1$-cells, and $tm$ the number of 
non-critical reduced $1$-cells, so $rm = sm + tm$.  Define a map $ri$ on 
reduced $1$-cells, so that for a reduced $1$-cell $c$, its image $ri(c) 
\in \{1, \dots, M\}$ is its index in the total order $<_r$, so that the 
$<_r$-smallest reduced $1$-cell has index $1$, the second smallest has 
index $2$, etc.  Also define maps $si(c)$ and $ti(c)$ on critical and 
non-critical reduced $1$-cells respectively, so that for a critical 
(respectively, noncritical) $1$-cell $c$, $si(c)$ is its index among 
critical (respectively, noncritical) cells in the total order $<_r$.  
Ergo, the $<_r$-smallest critical $1$-cell maps to $1$ under $ci$, the 
second smallest to $2$, etc.

There is a bijection between reduced $1$-cells and the standard basis 
vectors for $\mathbb{F}_2^rm$, where a reduced $1$-cell $c$ corresponds 
to the vector $\vec{v}_c$ consisting of all $0$s except a $1$ in the 
$ri(c)^{th}$ row.  By Theorems \ref{thm:homology} and 
\ref{thm:cohomology}, this bijection induces a surjective homomorphism 
from $\mathbb{F}_2^{rm}$ to each of the rings $H_1(B_nT)$ and 
$H^1(B_nT)$.  The surjection is an isomorphism on the $sm$-dimensional 
subspace corresponding to the critical $1$-cells, where for a critical 
cell $c$, $\vec{v}_c$ is mapped to to $[c]$ or $c^*$, respectively.

Theorem \ref{thm:cohompres} tells us that, if we want to rewrite our 
basis for cohomology to reduce the number of relations in our 
presentation, then we need to focus on necessary $k$-forms.  So, we use 
these vector representatives of critical $1$-cells to associate to each 
necessary $k$-form $\omega$ a matrix $M_{\omega}$.

Let $\omega = f(a,\vec{x})dc_1\wedge \dots \wedge dc_k$ be a necessary 
$k$-form with $c$ the necessary reduced $1$-cell for $\omega$.  Consider 
the annihilator portion $A_{c_1,\dots,c_k}(df(a,\vec{x}))$ of $d\omega$.  
Let $\vec{u}_\omega \in \mathbb{F}_2^{rm}$ be the vector whose nonzero 
entries exactly correspond to nonzero terms of 
$A_{c_1,\dots,c_k}(df(a,\vec{x}))$.  That is, $(u_\omega)_{ri(c')} = 1$ 
if and only if $dc'$ is a nonzero term of 
$A_{c_1,\dots,c_k}(df(a,\vec{x}))$.  Define the $rm \times rm$ matrix 
$M_{\omega}$ to be the identity matrix $I_m$, but with the $ri(c)^{th}$ 
column replaced by $\vec{u}_{\omega}$.

\begin{lemma}\label{lem:lowertri}

For any necessary $k$-form $\omega = f(a,\vec{x})dc_1 \wedge \dots 
\wedge dc_k$, the necessary reduced $1$-cell $c$ for $\omega$ is the 
$<_r$-smallest reduced $1$-cell $c'$ such that $dc'$ appears as a 
nonzero term in $A_{c_1,\dots,c_k}(df(a,\vec{x}))$. 
\end{lemma}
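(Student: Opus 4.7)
The plan is to identify the nonzero terms of $A_{c_1,\dots,c_k}(df(a,\vec{x}))$ and show that the necessary cell $c$ is $<_r$-smallest among them. Using Corollary \ref{cor:restrictingdf}, any reduced $1$-cell $c' = (a',e',\vec{x}')$ with $dc'$ of nonzero coefficient in $df(a,\vec{x})$ must satisfy $a' = a$ and either (Case A) $\vec{x}' = \vec{x}$, or (Case B) $\vec{x}'$ differs from $\vec{x}$ by subtracting $1$ in position $0$ and adding $1$ in the direction $d'$ of $e'$. Since $<_r$ lex-orders $(a',-x'_0,d')$, Case A cells have $-x'_0 = -x_0$ while Case B cells have $-x'_0 = -x_0 + 1$; thus every Case A cell $<_r$-precedes every Case B cell, and among Case A cells $(a,d',\vec{x})$ smaller $d'$ comes first. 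So the $<_r$-smallest surviving term must be a Case A cell of minimal direction.

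To see that $c$ itself survives the annihilator, notice that the upper bound $[s]$ in the definition of necessary, together with Lemma \ref{lem:upperbound}(2a) applied pairwise to $c$ and each $c_i$, forces $b_i \neq a$ for every $i$; combined with sufficient subdivision (no two essential vertices of $T$ are adjacent), this in fact means no $e_i$ is incident to $a$ at all. Therefore $[c]$ is distinct from each $[c_i]$, the set $\{[c],[c_1],\dots,[c_k]\}$ admits the upper bound $[s]$, and $dc$ is not annihilated.

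The main step, and the hardest, is to show that no Case A cell $c' = (a,d',\vec{x})$ with $0 < d' < d$ survives. Assume for contradiction that $\{[c'],[c_1],\dots,[c_k]\}$ admits a reduced upper bound $s'$. By Lemma \ref{lem:Dais}, $s$ and $s'$ have the same direction counts $\vec{x}$ from $a$, yet $s$ has the edge $e$ in direction $d$ while $s'$ has $e'$ in direction $d'$; a direct count then shows that $s$ contains exactly one more vertex in direction $d'$ from $a$ than $s'$ does. Using the stacking description of reduced representatives from the proof of Theorem \ref{thm:leq}(5), together with the explicit form of pairwise upper bounds in Lemma \ref{lem:upperbound}(4) and the fact that no $e_i$ is incident to $a$, I would argue that this extra vertex must be the vertex of $T$ adjacent to $a$ in direction $d'$: it lies on the linear segment of the component of $a$ in $T - E(s)$ as it passes through $a$ into direction $d'$. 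Such a vertex is blocked by $e$ and is traversed between the endpoints of $e$ in the clockwise ordering, so $e$ would be disrespectful in $s$, contradicting the necessity hypothesis. The main difficulty is pinning down the location of the extra vertex when some $e_i$ has $b_i$ inside the $d'$-subtree from $a$: the $d'$-subtree then splits across several components of $T - E(s)$, and the extra vertex might a priori lie past such an $e_i$ rather than adjacent to $a$, so one must combine the sufficient subdivision hypothesis with the explicit construction in Lemma \ref{lem:upperbound}(4) to rule this out.
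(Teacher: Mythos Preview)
Your Case A/B split, the handling of Case B, and the verification that $dc$ itself survives the annihilator all match the paper's argument. The divergence is in how you treat Case A with $0<d'<d$. The paper never introduces an auxiliary upper bound $s'$ to compare with $s$; it argues entirely inside $s$. Since $D_{a,d'}(s)=x_{d'}\ge 1$ and $s$ is reduced, the paper asserts that $s$ has a vertex in direction $d'$ from $a$ which is blocked by $e$; respectfulness of $e$ then forces $d'>d$. This bypasses your ``extra vertex'' comparison altogether, so the difficulty you flag simply does not arise in the paper's route.

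That said, your instinct that some $e_i$ lying in direction $d'$ is the delicate point is exactly right, and the paper's sentence ``that vertex must be blocked by $e$'' is terse here: if the element of $s$ nearest $a$ in direction $d'$ is an edge $e_i$ rather than a vertex, then $c'=(a,d',\vec x)$ genuinely appears in $df(a,\vec x)$ with $c'<_r c$, and what rescues the lemma is that $dc'$ is killed by the annihilator. You can see this without constructing $s'$: apply the Upper Bound Lemma to $[c']$ and $[c_i]$. The direction $\alpha_i$ from $a$ to $b_i$ equals the edge-direction $d'$ of $c'$, so the upper-bound constant jumps from $\epsilon(c,\alpha_i)=0$ to $\epsilon(c',\alpha_i)=1$; but since no element of $s$ lies between $a$ and $b_i$ (else, by reducedness, the one nearest $a$ would be adjacent to $a$ and blocked by $e$, contradicting respectfulness), one has $x_{d'}+(y_i)_0=n$ exactly, and the strengthened inequality $\ge n+1$ fails. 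Thus $\{[c'],[c_i]\}$ has no upper bound and $A_{c_1,\dots,c_k}$ annihilates $dc'$. Replacing your $s$-versus-$s'$ comparison with this direct Upper Bound Lemma check removes the gap you identify and is both shorter and closer in spirit to the paper's argument.
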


Note this lemma holds for arbitrary $n$, not just $n = 4$ and $5$.

In Convention \ref{conv:fixingorder}, we will slightly modify the 
definition of $<_r$ in the cases $n = 4$ or $5$.  One consequence will 
be that, for a necessary $1$-form whose associated necessary critical 
$1$-cell $c$ is exceptional of Type I (to be defined soon), this lemma 
and its corollary will not hold - in fact, $M_{\omega}$ will be upper 
triangular.  The modification will not affect our applications of these 
results, though.

\begin{proof}[Proof of Lemma \ref{lem:lowertri}]

Let $c = (a,e,\vec{x})$ be the reduced $1$-cell for the  $\omega$.  Let $c' \neq c$ be any other 
reduced $1$-cell such that $dc'$ appears nontrivially in 
$df(a,\vec{x})$.  By Restricting the Differential (Corollary 
\ref{cor:restrictingdf}), $c'$ must have an edge $e'$ with $\tau(e') = 
a$.  By the definition of differential, we know that 
$f(a,\vec{x})(\partial c') = 1$.  Let $c'_\iota$ and $c'_\tau$ denote 
the faces of $c'$, corresponding to replacing $e'$ with $\iota(e')$ and 
$\tau(e')$, respectively.  Then exactly one of $f(a,\vec{x})(c'_\iota)$ 
or $f(a,\vec{x})(c'_\tau)$ is $1$.  For each direction $i$ from $a$,

  $$D_{a,i}(c'_\iota) = \overline{D}_{a,i}(c'_\iota) = D_{a,i}(c'),$$
and

  $$D_{a,i}(c'_\tau ) = \overline{D}_{a,i}(c'_\tau ) = 
  \overline{D}_{a,i}(c').$$

We need to show that $c <_r c'$.  If $f(a,\vec{x})(c'_\tau) = 1$ then 
for $i = 0$ we have $D_{a,0}(c') = \overline{D}_{a,0}(c') = x_0 - 1$, so 
$c <_r c'$. It remains to consider the case when $f(a,\vec{x})(c'_\iota) 
= 1$. Then $D_{a,i}(c') = x_i = D_{a,i}(c)$, and in particular, $c' = 
(a,e',\vec{x})$.

By the definition of necessary, $c$ is such that:

  \begin{enumerate}

  \item the set $\{[c_1], \dots, [c_k], [c]\}$ has an upper 
  bound $[s]$, and

  \item the edge $e$ is respectful in the reduced representative $s$ of 
  $[s]$.

  \end{enumerate}
Let $d > 0$ be the direction from $a$ along $e$ and let $d' > 0$ be the 
direction from $a$ along $e'$. Since $D_{a,i}(c') = x_i$ for each $i$ 
and $e' \in c'$, $D_{a,d'}(c) = D_{a,d'}(c') \geq 1$.  Unless $d = d'$ 
so that $c = c'$, $s$ must have a vertex in direction $d'$ from $a$.  
Since $s$ is reduced, that vertex must be blocked by $e$ in $s$.  Since 
$e$ is respectful in $s$, $d'> d$. Thus, $c <_r c'$.
\end{proof}

\begin{corollary}[$M_{\omega}$ is Lower Triangular]\label{cor:lowertri}

For any necessary $k$-form $\omega$, the matrix $M_{\omega}$ is lower 
triangular and invertible.

\end{corollary}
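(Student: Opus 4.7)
The plan is to deduce both conclusions from Lemma \ref{lem:lowertri}, together with a short check that $dc$ itself survives the annihilator.

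By construction, $M_\omega$ agrees with $I_{rm}$ outside column $ri(c)$, where $c$ is the necessary reduced $1$-cell for $\omega$, and that column has been replaced by $\vec{u}_\omega$. Lemma \ref{lem:lowertri} asserts that every reduced $1$-cell $c'$ for which $dc'$ appears nontrivially in $A_{c_1,\dots,c_k}(df(a,\vec{x}))$ satisfies $ri(c) \leq ri(c')$; hence every nonzero entry of $\vec{u}_\omega$ lies in row $ri(c)$ or below. Every entry of $M_\omega$ strictly above the diagonal therefore vanishes, giving lower triangularity.

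For invertibility over $\mathbb{F}_2$ it suffices to show every diagonal entry of $M_\omega$ equals $1$. Off column $ri(c)$ this is immediate, and in column $ri(c)$ the diagonal entry is $(u_\omega)_{ri(c)}$, so I must check that $dc$ is itself a nonzero summand of $A_{c_1,\dots,c_k}(df(a,\vec{x}))$. I would verify this in two short steps. First, a direct boundary computation: writing $c = (a,e,\vec{x})$ and splitting $\partial c$ into its two $0$-cell faces $c_\iota$ (replacing $e$ by $\iota(e)$) and $c_\tau$ (replacing $e$ by $\tau(e) = a$), one computes $D_{a,i}(c_\iota) = x_i$ for all $i$, while $D_{a,0}(c_\tau) = x_0 + 1$ and $D_{a,d}(c_\tau) = x_d - 1$, where $d$ is the direction from $a$ along $e$. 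Since both faces are $0$-cells, $\overline{D} = D$ on them, so $f(a,\vec{x})(c_\iota) = 1$ and $f(a,\vec{x})(c_\tau) = 0$, giving $dc$ coefficient $1$ in $df(a,\vec{x})$. Second, the annihilator fixes $dc$: the definition of necessary form supplies an upper bound $[s]$ of $\{[c_1],\dots,[c_k],[c]\}$ in which $e$ is respectful while every other edge of the reduced representative $s$ is disrespectful, which forces $e \neq e_i$ and hence $[c] \neq [c_i]$ for each $i$; the classes $[c_i]$ are themselves pairwise distinct, since otherwise $\omega \equiv 0$ as a form. Thus $A_{c_1,\dots,c_k}(dc) = dc$, so $(u_\omega)_{ri(c)} = 1$.

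The main delicate point is the verification that $dc$ actually occurs in $df(a,\vec{x})$, a fact not explicitly carried out in the proof of Lemma \ref{lem:lowertri}. Once that boundary count is in hand, everything else is column-bookkeeping combined with Lemma \ref{lem:lowertri}, and invertibility follows because a lower triangular matrix over $\mathbb{F}_2$ with unit diagonal has determinant $1$.
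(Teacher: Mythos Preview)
Your proof is correct and follows essentially the same approach as the paper's. The paper's argument is terser: it invokes Lemma \ref{lem:lowertri} for lower triangularity and then asserts, ``by the definition of necessary $1$-cell,'' that $dc$ appears nontrivially in $A_{c_1,\dots,c_k}(df(a,\vec{x}))$, giving unit diagonal and hence invertibility. Your explicit boundary computation and annihilator check simply unpack what the paper leaves implicit in that phrase.
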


\begin{proof}

That $c <_r c'$ for any other $c'$ for which $dc'$ appears nontrivially 
in the annihilator $A_{c_1,\dots,c_k}(df(a,\vec{x}))$ makes $M_{\omega}$ 
a lower triangular matrix. By the definition of necessary $1$-cell, $dc$ 
also appears nontrivially in $A_{c_1,\dots,c_k}(df(a,\vec{x}))$.  Thus, 
$M_{\omega}$ is a lower triangular matrix, all of whose diagonal entries 
are 1.  Therefore, $M_{\omega}$ is invertible.

\end{proof}

At this point, we have associated to each necessary form $\omega$ an 
invertible matrix $M_{\omega}$.  We now use these matrices associated to 
forms to define a matrix $M_c$ for each reduced $1$-cell $c$.  

Let $c$ be a reduced $1$-cell.

If $c = (a,d,\vec{x})$ is not critical, then $c$ is necessary for the 
necessary $0$-form $\omega = f(a,\vec{x})$ (Lemma 
\ref{lem:necessary0form}).  Define $M_c$ to be the matrix $M_{\omega}$, 
but with a $0$ on the diagonal in the $ri(c)^{th}$ row.  In terms of 
multiplication by $M_c$, this will correspond to \emph{replacing $dc$ 
with a cohomologically equivalent cochain determined by the coboundary 
of $\omega$}.

If $c$ is critical, there are three types of exceptions we must make 
when defining $M_c$.  Before stating the exceptions, we state the 
general cases.  If $c$ is critical, not exceptional, and not necessary, 
define the matrix $M_c$ to be the identity matrix.  If $c$ is critical, 
not exceptional, and necessary, define $M_c$ to be the matrix 
$M_{\omega}$ for any $1$-form $\omega$ for which $c$ is necessary. This 
corresponds to \emph{rewriting $dc$ with a $1$-cochain that will 
eliminate the relation corresponding to $\omega$}.  We will prove 
momentarily that $M_c$ is well-defined.

We now state the three types of exceptions. The motivation for the 
definitions of $M_c$ for the exceptional cases is not intuitively 
apparent, but the lemmas in the remainder of this section will justify 
our choices. All of the exceptional types have $n = 5$.  Let $c = 
(a,d,\vec{x})$ be critical. Assume that there exist two directions 
$dir1$ and $dir2$ from $a$ for which $x_{dir1}, x_{dir2} \geq 2$.  
Without loss of generality, assume $dir1 < dir2$.  Let $dir3$ be: $dir1$ 
if $x_{dir1} = 3$, or $dir2$ if $x_{dir2} = 3$, or the unique index such 
that $x_{dir3} = 1$.  As $|\vec{x}| = 5$, $dir3$ is uniquely determined.

  \begin{figure}
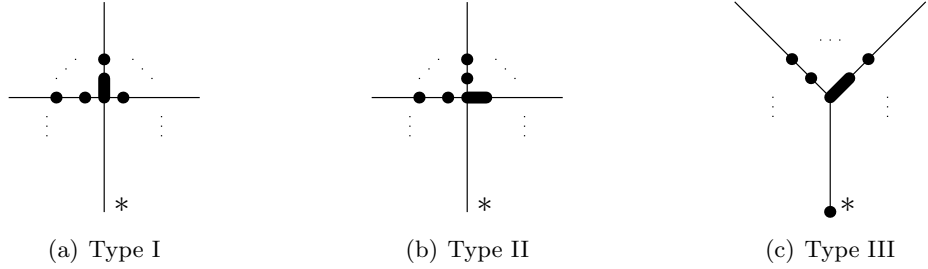

    \begin{center}
    \subfigure[Type I]{
      \input{exTypeI.pstex_t}
    }
    \hfill
    \subfigure[Type II]{
      \input{exTypeII.pstex_t}
    }
    \hfill
    \subfigure[Type III]{
      \input{exTypeIII.pstex_t}
    }
    \end{center}

  \caption{The three types of exceptional critical $1$-cells.  For each 
  type, all that is drawn are the relevant edges of the tree $T$:  $T$ 
  may have other essential vertices, and the essential vertex may have 
  degree greater than 4 (for Types I and II) or 3 (for Type III), as 
  suggested by the ellipses.  As exceptional cells are critical, the 
  diagrams depict reduced $1$-cells, even though a subdivision of $T$ is 
  not shown.}
  
  \label{fig:exceptions}
  
  \end{figure}

  \begin{enumerate}
  \item (Type I) $0 < dir1 < dir2 < dir3$, and $d = dir2$: $M_c$ is the 
  identity matrix.

  \item (Type II) $0 < dir1 < dir2 < dir3$, and $d = dir3$: Let $c'$ be 
  the critical cell $(a,dir2,\vec{x})$ - that is, replace the edge of 
  $e$ in direction $dir3$ from $a$ with an edge in direction $dir2$ from 
  $a$. Then $M_c$ is the $rm\times rm$ identity matrix, but with an 
  extra $1$ in the $ri(c)^{th}$ row and $ri(c')^{th}$ column.  Note as 
  defined $c' <_r c$, so $M_c$ is \emph{upper} triangular.  We want 
  $M_c$ to be lower triangular, so we will slightly modify the order 
  $<_r$ in Convention \ref{conv:fixingorder}, below.  The Type I cell 
  $c'$ and the Type II cell $c$ are said to \emph{correspond} to each 
  other.  Note there is a bijection between cells of Type I and cells of 
  Type II given by this correspondence.

  \item (Type III) $0 = dir3 < dir1 < dir2$, and $d = dir2$: For $i = 1, 
  \dots, deg(a)-1$, define $\vec{y_i}$ to be the $a$-vector which is 
  $\vec{x}$, but with $(\vec{y_i})_0 = x_0-1 = 0$ and $(\vec{y_i})_i = 
  x_i+1$.  Let $\vec{u}' \in \mathbb{F}_2^{rm}$ be the vector whose only 
  nonzero entries are exactly a $1$ in the $ri(c')^{th}$ row for every 
  reduced cell $c'$ such that $c' = c$ or $c' = (a,dir2,\vec{y_i})$ for 
  some $i \in \{1, \dots, deg(a)-1\}$, $i \neq d$.  Define $M_c$ to be 
  the $rm\times rm$ identity matrix $I_m$, but with the $ri(c)^{th}$ row 
  replaced by the transpose $(\vec{u}')^T$.  Note that $c <_r 
  (a,dir2,\vec{y_i})$ for each $i \in \{1, \dots, deg(a)-1\}$, so $M_c$ 
  is lower triangular.

  \end{enumerate}

\begin{convention}[A Modification to $<_r$]\label{conv:fixingorder}

We slightly modify the total order $<_r$ on reduced $1$-cells by having 
corresponding exceptional critical cells of Types I and II switch 
places.  By convention, the matrices $M_\omega$ and $M_c$ defined above 
are defined using this modified total order.  All further references to 
$<_r$, and the associated functions $ri$ and $ci$, will be to the 
modified total order. 

\end{convention}

Note that with this new convention, Lemma \ref{lem:lowertri} no longer 
holds for necessary forms $\omega$ where the associated necessary 
$1$-cell is exceptional of Type I - in fact, instead of being lower 
triangular, $M_\omega$ will be upper triangular.  This is fine, as 
$M_\omega$ was not used to define any matrix $M_c$.  It is a small 
exercise to verify that, for all other cases, Lemma \ref{lem:lowertri} 
still holds.

\begin{lemma}[$M_c$ is Well-Defined]\label{lem:Mcwelldefined}

Let $c$ be a reduced $1$-cell.  The matrix $M_c$ is well-defined and 
lower triangular.

\end{lemma}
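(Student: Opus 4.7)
The plan is to proceed by case analysis on the type of the reduced $1$-cell $c$ as it appears in the definition of $M_c$.

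If $c$ is non-critical, Lemma \ref{lem:necessary0form} supplies a unique necessary $0$-form $f(a,\vec{x})$ for which $c$ is necessary, so $M_c$ is well-defined (it is $M_{f(a,\vec{x})}$ with a single diagonal entry zeroed out, which preserves lower triangularity), and lower triangularity of $M_{f(a,\vec{x})}$ follows from Lemma \ref{lem:lowertri}. If $c$ is critical, non-exceptional, and non-necessary, then $M_c = I$ and there is nothing to show. The three exceptional cases are each defined by an explicit formula, so well-definedness is automatic: Type I gives the identity; Type III produces a row whose off-diagonal nonzero entries lie in columns $ri(c')$ with $c <_r c'$ (directly from the definition of $<_r$); and Type II is lower triangular precisely because Convention \ref{conv:fixingorder} switches corresponding Type I and Type II cells in the total order, placing the extra $1$ in a column whose index is smaller than $ri(c)$.

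The substantive case is when $c = (a,e,\vec{x})$ is critical, non-exceptional, and necessary. Here $M_c$ is declared to be $M_\omega$ for \emph{any} necessary $1$-form $\omega = f(a,\vec{x})dc_1$ with $c$ as necessary reduced $1$-cell, and one must show this is independent of the choice of $\omega$. Since $M_\omega$ depends only on $\vec{u}_\omega$, and the $0$-form $f(a,\vec{x})$ is determined by $c$, it suffices to fix a reduced $1$-cell $c'$ such that $dc'$ appears nontrivially in $df(a,\vec{x})$ and show that $A_{c_1}(dc')$ is zero for one admissible $c_1$ iff it is zero for every admissible $c_1$. By Lemma \ref{lem:necessary1form}, every admissible $c_1 = (b, e_1, \vec{y})$ lies over an essential vertex $b$ in the unique direction $\alpha$ from $a$ satisfying $0 < \alpha < d$ and $x_\alpha \neq 0$. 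Applying the Upper Bound Lemma (Lemma \ref{lem:upperbound}) converts the existence of an upper bound for $\{[c_1], [c']\}$ into arithmetic conditions on the direction counts of $c'$ at the vertices $a$ and $b$; I will check, using the non-exceptional hypothesis on $c$ and the constraint $n \in \{4,5\}$ (which leaves very few possible shapes for $\vec{x}$), that these conditions depend only on $c$ and $c'$, not on the particular choice of $b$ or $c_1$. Lower triangularity of $M_c$ then follows from Lemma \ref{lem:lowertri} applied to any one admissible $\omega$.

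The main obstacle is the independence claim for the non-exceptional critical necessary case: the three exceptional types were singled out precisely because independence fails for them, and the heart of the proof is a careful enumeration of the possible $\vec{x}$, $c_1$, and $c'$ showing that in all remaining cases the annihilator test is a function only of $c$ and $c'$. The small value of $n$ and the rigid form of $\vec{x}$ forced by Lemma \ref{lem:necessary1form} should keep this enumeration finite and tractable.
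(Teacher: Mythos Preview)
Your proposal is correct and follows the same case decomposition as the paper: non-critical, critical non-exceptional non-necessary, the three exceptional types, and finally critical non-exceptional necessary. The handling of every case except the last matches the paper essentially verbatim.

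In the substantive case, the paper avoids your planned enumeration with a single direct observation you have not isolated. Once Lemma~\ref{lem:necessary1form} pins down the unique direction $\alpha$ with $0 < \alpha < d$ and $x_\alpha \neq 0$, the respectfulness of $e$ in the reduced upper bound $s$ forces (via part~(4) of the Upper Bound Lemma) $x_\alpha + y_0 \le n$, while part~(2) gives $x_\alpha + y_0 \ge n$ (here $\epsilon = 0$ since $\alpha \neq d$). Hence $y_0 = n - x_\alpha$ is determined by $c$ alone, independent of which admissible $c_1$ is chosen. Now for any reduced $1$-cell $c'$ lying over $a$ with $dc'$ appearing in $df(a,\vec{x})$, the Upper Bound Lemma condition for $\{[c'],[c_1]\}$ to have an upper bound reads $D_{a,\alpha}(c') + y_0 \ge n + \epsilon(c',\alpha)$, which involves only $\alpha$ and $y_0$---both fixed by $c$---so $A_{c_1}(dc') = A_{c_2}(dc')$ for any two admissible $c_1, c_2$. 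This replaces your finite enumeration with a two-line argument. (Minor point: the relevant arithmetic condition involves the direction count of $c'$ at $a$ and the direction count of $c_1$ at $b$, not direction counts of $c'$ at $b$ as your phrasing suggests.)
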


\begin{proof}

For the three exceptional cases, the matrix $M_c$ was uniquely 
determined.  If $c$ is exceptional of Type I, $M_c$ is the identity and 
is lower triangular.  If $c$ is exceptional of Type II, then Convention 
\ref{conv:fixingorder} makex $M_c$ lower triangular.  If $c$ is 
exceptional of Type III, the argument that $M_c$ is lower triangular 
given in the definition of $M_c$ still applies, even with Convention 
\ref{conv:fixingorder}.

Aside from the three exceptional cases, if $c$ is not necessary or not 
critical, the matrix $M_c$ is uniquely specified and lower triangular by 
Corollary \ref{cor:lowertri}.  So, assume $c$ is necessary and critical.  
Corollary \ref{cor:lowertri} shows that $M_c$ will be lower triangular 
if it is well-defined.  To claim that $M_c$ is well-defined is to claim 
that, if $\omega_1$ and $\omega_2$ are any two $1$-forms for which $c$ 
is necessary, $M_{\omega_1} = M_{\omega_2}$.  Let $c = (a,d,\vec{x})$.  
By the definition of necessary, there exist critical $1$-cells $c_1 = 
(b_1,d_1,\vec{y})$ and $c_2 = (b_2, d_2, \vec{z})$ such that $\omega = 
f(a,\vec{x})\wedge dc_1$ and $\omega = f(a,\vec{x})\wedge dc_2$.  By the 
Upper Bound Lemma (Lemma \ref{lem:upperbound}), the directions 
$\alpha_1$ and $\alpha_2$ from $a$ to $b_1$ and $b_2$, respectively, are 
such that $\vec{x}_{\alpha_1} = n-y_0 \geq 2$ and $\vec{x}_{\alpha_2} = 
n - z_0 \geq 2$.  By Lemma \ref{lem:necessary1form}, $0 < \alpha_1 < d$ 
and $0 < \alpha_2 < d$, so if $\alpha_1 \neq \alpha_2$, $c$ is an 
exceptional case of Type II.  Since we are not addressing the 
exceptional cases, $\alpha_1 = \alpha_2$.

The formulas $\vec{x}_{\alpha_1} = n-y_0$ and $\vec{x}_{\alpha_2} = n - 
z_0$ imply $y_0 = z_0$.  By the Restricting the Differential (Corollary 
\ref{cor:restrictingdf}), a nonzero term of $df(a,\vec{x})$ is of the 
form $ds$, where $s$ is a reduced $1$-cell which lies over the vertex 
$a$.  Since $y_0 = z_0$ and $a$ is in direction $0$ from both of $b_1$ 
and $b_2$, by the Upper Bound Lemma (\ref{lem:upperbound}), $[s]$ has an 
upper bound with $[c_1]$ if and only if $[s]$ has an upper bound with 
$[c_2]$.  In particular, by the definition of annihilator, $A_{c_1}(ds) 
= A_{c_2}(ds)$ for each such $s$ - i.e. $A_{c_1}(df(a,\vec{x})) = 
A_{c_2}(df(a,\vec{x}))$.  The equality of the matrices $M_{\omega_1}$ 
and $M_{\omega_2}$ follows.

\end{proof}

\subsection{Changing Bases and Finishing Theorem 
\ref{thm:cohom}}\label{sec:cohom}

We now have lower triangular matrices $M_c$ for each reduced $1$-cell 
$c$.  We want to use the matrices $M_c$ to define a change of basis for 
$H^1(B_nT)$.

From now on, we think of the matrices $M_c$ as acting on reduced 
cocycles, where we identify a reduced cocycle $dc'$ and the vector in 
$\mathbb{F}_2^{rm}$ whose only nonzero entry is in the $ri(c')^{th}$ 
row.

To define our change of basis, we need to specify how to rewrite duals 
of critical cells.  We do so on the cochain level, using $1$-forms 
associated to both critical and noncritical reduced $1$-cells.  For 
critical reduced $1$-cells, we define a matrix $Ms$.  For noncritical 
reduced $1$-cells, we define a matrix $Mt$. We multiply the matrices 
$Ms$ and $Mt$ to define the desired change of basis matrix $M$, as 
follows.

\begin{definition}[The Matrices $Ms$, $Mt$, and $M$]

Define the matrix $Ms$ as:
  $$Ms := \prod_{\substack{i=1\\si(c)=i}}^{sm}M_c,$$
where the product is over all critical cells $c$ and is written so that 
the $<_r$-largest cell $c$ is such that $M_c$ is on the right, applied 
first to any target vector.
Define the matrix $Mt$ as:
  $$Mt := \prod_{\substack{i=1\\ti(c)=tm-1+i}}^{tm}M_c,$$
where the product is over all non-critical reduced cells $c$ and is 
written so that the $<_r$-smallest cell $c$ is such that $M_c$ is on the 
right, applied first to any target vector.
Define the matrix $M$, which will be the desired change of basis matrix, 
as:
  $$M := MtMs.$$

\end{definition}

The matrices $Ms$, $Mt$, and $M$ have many nice properties, some of
which we describe now.

Let $c_0$ be a reduced $1$-cell.  By definition, $M_{c}dc_0 = dc_0$ for 
every $c \neq c_0$.  Also by definition, $M_{c_0}dc_0$ consists of terms 
of the form $dc_0'$, where $c_0$ and $c_0'$ lie over the same vertex.  
Since each $M_c$ is lower triangular (Lemma \ref{lem:Mcwelldefined}),
$c_0 \leq_r c_0'$, even when $c$ is exceptional.

If $c_0$ is a critical $1$-cell, then $Mtdc_0 = dc_0$ and
  $$Msdc_0 = \prod_{\substack{i=1\\si(c)=i}}^{sm}M_cdc_0 = 
  M_{c_0}dc_0.$$
This means that $Ms$ agrees with the matrix $M_{c_0}$ in the 
$ri(c_0)^{th}$ column.  Since $M_c$ is lower triangular and invertible 
for any critical $1$-cell $c$, the matrix $Ms$ is also lower triangular 
and invertible.

If $c_0$ is a noncritical $1$-cell, $M_{c_0}dc_0$ is by definition 
cohomologous to $dc_0$.  As $M_{c_0}$ is the identity outside of column 
$ri(c_0)$, the matrix $Mt$ preserves cohomology classes.  As the matrix 
$M_c$ is lower triangular for each noncritical $1$-cell $c$, the matrix 
$Mt$ is lower triangular.  Moreover, as the matrix $M_{c_0}$ contains a 
$0$ on the diagonal in the $ri(c_0)^{th}$ place, the matrix $Mt$ has no 
nonzero entries in the $ri(c_0)^{th}$ row.  Finally, since $c_0$ is 
noncritical, $Msdc_0 = dc_0$.

These observations imply that $M = MtMs$, restricted to basic $1$-forms 
corresponding to critical $1$-cells, is a change of basis matrix on 
cohomology classes, where for a critical $1$-cell $c$, $Mc^* = [Mdc]$. 
We now have our change of basis matrix $M$ on $H^1(B_nT)$ and therefore 
on $H^*(B_nT)$.  We may thus write $M$ as acting on cohomology classes 
(like $c^*$) instead of cochains (like $dc$), and will do so freely from 
now on.  For future reference, we record some of our observations in a 
theorem:

\begin{theorem}[Change of Basis Theorem]\label{thm:changeofbasis}
~\\
  \begin{enumerate}

  \item The matrix $M$ gives a change of basis isomorphism for 
  $H^1(B_nT)$ and therefore $H^*(B_nT)$.  For a critical $1$-cell $c$, 
  $Mc^* := [Mdc]$.

  \item The matrices $M$, $Ms$, and $Mt$ are lower triangular.  The
  matrix $Ms$ is invertible.

  \item For a critical $1$-cell $c$, $Msdc = M_cdc$. The cochain $M_cdc$ 
  consists of terms of the form $dc_0'$, where $c$ and $c_0'$ lie over
  the same vertex and $c \leq_r c_0'$.

  \item The matrix $Mt$ preserves cohomology classes. \qed

  \end{enumerate}

\end{theorem}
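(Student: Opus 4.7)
The plan is to assemble the observations made in the preceding paragraphs, addressing each of the four parts in turn.

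For part (2), each factor $M_c$ is lower triangular by Lemma~\ref{lem:Mcwelldefined}, and a product of lower triangular matrices is lower triangular, so $Ms$, $Mt$, and $M = Mt\cdot Ms$ are all lower triangular. For invertibility of $Ms$, I will observe that each $M_c$ for critical $c$ carries $1$s on the diagonal---immediate for the identity in the non-necessary non-exceptional case and for each of the three exceptional definitions, and implied by Corollary~\ref{cor:lowertri} in the non-exceptional necessary case---so $Ms$ is a product of invertible matrices, hence invertible.

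For part (4), I will verify that for each noncritical $c$, the matrix $M_c$ replaces $dc$ by $dc + df(a,\vec{x})$, where $f(a,\vec{x})$ is the $0$-form for which $c$ is necessary (Lemma~\ref{lem:necessary0form}). This is a direct computation: $M_\omega dc$ equals $\vec{u}_\omega$, which records precisely the nonzero terms of $df(a,\vec{x})$, and zeroing the diagonal entry removes the $dc$ summand, leaving the cohomologous cochain $dc + df(a,\vec{x})$ in $\mathbb{F}_2$. Since $df(a,\vec{x})$ is a coboundary by Proposition~\ref{prop:d=delta}, $M_c$ preserves cohomology classes, and so does the product $Mt$.

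Part (3) is the main technical step. For critical $c_0$, I will establish $Ms\cdot dc_0 = M_{c_0}\cdot dc_0$ by tracking the $ri(c_0)^{\text{th}}$ column through the product $Ms$, verifying by case analysis over the critical cell types (non-exceptional necessary, non-exceptional non-necessary, Type~I, Type~II, Type~III) that every other factor $M_c$ with $c\neq c_0$ acts as the identity on the intermediate cochain. The structural description of $M_{c_0}\cdot dc_0$ as a sum of terms $dc_0'$ over the same vertex as $c_0$ with $c_0 \leq_r c_0'$ then follows from Corollary~\ref{cor:restrictingdf} together with Lemma~\ref{lem:lowertri} in the non-exceptional cases and by inspection of the explicit formulas in the three exceptional cases. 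Part (1) finally collects (2), (3), and (4): $Ms$ restricts to an invertible linear map on the critical subspace and $Mt$ preserves cohomology classes, so $M$ descends to a change-of-basis isomorphism on $H^1(B_nT)$, which extends to all of $H^*(B_nT)$ since critical $1$-cell duals generate the cohomology ring by Theorem~\ref{thm:cohomology}.

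The main obstacle will be the case analysis in part (3), specifically checking that the extrinsic matrices $M_c$ do not perturb column $ri(c_0)$ of the product, especially when $c_0$ is a Type~I cell whose paired Type~II cell's matrix modifies precisely that column. Convention~\ref{conv:fixingorder}---which reorders $<_r$ so that paired Type~I and Type~II cells swap places---together with the rule that $M_c$ with largest $si$ is applied first to a target vector, should force the only nontrivial contribution to column $ri(c_0)$ to come from $M_{c_0}$ itself, resolving these interactions cleanly.
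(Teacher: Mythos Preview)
Your proposal is correct and follows essentially the same route as the paper: the theorem is presented there with a \qed after simply recording, in the paragraphs immediately preceding it, the observations you plan to verify. The only real difference is in part~(3): the paper dispatches $Ms\,dc_0 = M_{c_0}\,dc_0$ in one line by observing that, by construction, each $M_c$ differs from the identity only in the $ri(c)^{\text{th}}$ column, so $M_c\,dc_0 = dc_0$ whenever $c\neq c_0$ and the product collapses immediately. Your proposed case analysis over the five cell types would just be checking this single structural fact type-by-type, so once you phrase it that way the ``main obstacle'' you anticipate (the Type~I/Type~II interaction) disappears---the Type~II matrix modifies its own column, not the paired Type~I column, and Convention~\ref{conv:fixingorder} is there only to keep that column modification below the diagonal.
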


We need to analyze the effect of $M$ on all of cohomology, not just 
$H^1(B_nT)$.  In particular, we need to know how $M$ affects cup 
products.

\begin{lemma}[Cup Products After Changing Basis]\label{lem:effectofM}

Let $c$ and $c'$ be critical $1$-cells, where $c \leq_r c'$.  If $Mc^* 
\cup M(c')^* \neq [0]$, then $[c]$ and $[c']$ have a least upper bound 
$[s]$ with reduced representative $s$ and either:

  \begin{enumerate}

  \item $s$ is critical, or

  \item $s$ is not critical and $c$ is exceptional of Type I.

  \end{enumerate}

\end{lemma}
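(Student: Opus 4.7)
The plan is to work out the cup product $Mc^* \cup M(c')^*$ at the cochain level and then pull conclusions back to the combinatorics of upper bounds. By Theorem \ref{thm:changeofbasis}, since $Mt$ preserves cohomology classes and cup product descends to cohomology, $Mc^* \cup M(c')^* = [M_c dc \wedge M_{c'}dc']$ in $H^2(B_nT)$. Using part (3) of Theorem \ref{thm:changeofbasis}, I can write $M_c dc = \sum_i dc_i$, where each $c_i$ lies over the vertex $a$ of $c$ with $c \leq_r c_i$, and similarly $M_{c'}dc' = \sum_j dc'_j$ with each $c'_j$ lying over the vertex $b$ of $c'$ and $c' \leq_r c'_j$. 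Expanding the wedge yields $\sum_{i,j}[dc_i \wedge dc'_j]$, and by Proposition \ref{prop:forms} a term survives only if $\{[c_i],[c'_j]\}$ has an upper bound, in which case it equals $[\phi_{[s_{ij}]}]$ for the least upper bound $[s_{ij}]$.

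The first main step is to show that nonvanishing of $Mc^* \cup M(c')^*$ forces $\{[c],[c']\}$ itself to admit an upper bound. The point is that each $c_i$ has the same vertex $a$ as $c$ and, by Corollary \ref{cor:restrictingdf}, an $a$-vector differing from $\vec{x}$ only in the $0$ entry and the entry for the direction of its edge; the analogous statement holds for $c'_j$. One then translates the Upper Bound Lemma (\ref{lem:upperbound}) inequality $(c_i)_\alpha + (c'_j)_0 \geq n + \epsilon$ back to the corresponding inequality for $\{[c],[c']\}$, using the constraint $|\vec{x}| = |\vec{x}^{(i)}| = n$ to control the shift. The existence of a (unique reduced) least upper bound $s$ then follows from Theorem \ref{thm:leq}(2),(5).

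The second main step, and the core of the argument, is to show that $s$ must be critical unless $c$ is exceptional of Type I. The matrices $M_c$ in Section \ref{sec:M_c} were built precisely so that, when a noncritical reduced $2$-cell $s$ lies above $\{[c],[c']\}$, the extra terms in $M_c dc$ arising from $A_{c_1}(df(a,\vec{x}))$ (for the necessary $1$-form $\omega = f(a,\vec{x})dc_1$) cancel the offending $[\phi_{[s]}]$ contribution after wedging. I would do a case split on the status of $c$: (i) critical non-necessary, (ii) critical necessary non-exceptional, (iii) Type I, (iv) Type II, (v) Type III. In cases (ii), (iv), (v), either $M_c dc$ or — via the corresponding matrix acting on $dc'$ through $Ms$ — $M_{c'}dc'$ contains an extra summand whose wedge with the remaining factor produces $[\phi_{[s]}]$, cancelling the $dc\wedge dc'$ contribution. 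In case (i), if $c$ is not necessary then no noncritical $s$ above $\{[c],[c']\}$ can arise without being rewritable through some necessary form whose necessary $1$-cell is one of $c, c'$, forcing us into a previous case. The only remaining possibility is case (iii), where $M_c = I$ means no cancellation occurs on the $c$ side, and a parallel analysis of $c'$ (using $c \leq_r c'$) shows that no cancellation occurs on the $c'$ side either; this yields conclusion (2), while all other nonvanishing cup products fall under conclusion (1).

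The main obstacle is the final case analysis: keeping track of exactly which extra terms appear in $M_c dc$ and $M_{c'}dc'$ for each configuration and verifying the cancellations, particularly in the exceptional Types II and III where the matrices $M_c$ are not built from a single necessary form. This requires combining Lemma \ref{lem:necessary1form} (to locate the directions $\alpha$, $d$ relative to the structure of $c$), Lemma \ref{lem:upperbound}(4) (to determine when the relevant edges are respectful in $s$), and the explicit combinatorics of $dir1,dir2,dir3$ with $n=5$ used to define the three exceptional types.
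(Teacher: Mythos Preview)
Your overall plan matches the paper's approach: reduce via $Mt$ preserving cohomology classes, expand $M_cdc\wedge M_{c'}dc'$, and run a case analysis controlled by the Upper Bound Lemma and the definitions of the exceptional types. But there is one substantive ingredient missing that the paper uses to make the case analysis tractable, and without it your sketch has a real gap.

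The paper's key move (before any case split on the status of $c$) is the asymmetric reduction
\[
[M_cdc]\cup[M_{c'}dc']\neq[0]\quad\Longleftrightarrow\quad[M_cdc]\cup[dc']\neq[0].
\]
This uses $c\leq_r c'$ to place $a$ in direction $0$ from $a'$, and then Lemma~\ref{lem:necessary1form} to see that no necessary $1$-form of the shape $f(a',\vec{z})\,dc_0$ can exist; hence any Tietze relation touching $dc_0\wedge dc'$ must come from the $a$ side, and the same relations apply with $dc'$ replaced by any $dc'_0$ in $M_{c'}dc'$. You treat $M_cdc$ and $M_{c'}dc'$ symmetrically, and you even speak of cancellations occurring ``on the $c'$ side.'' Without the reduction above, you would have to control simultaneously all pairs $(c_i,c'_j)$, and the Upper Bound Lemma inequalities do not transfer cleanly in both directions at once.

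Relatedly, your ``first main step'' is presented as a free-standing claim (nonvanishing forces $\{[c],[c']\}$ to have an upper bound), to be proved by pushing the inequality $(c_i)_\alpha+(c'_j)_0\geq n+\epsilon$ back to $\{[c],[c']\}$. This fails in isolation: in the Type~III situation with $x'_0=2$, a summand $c_0$ of $M_cdc$ can have an upper bound with $c'$ while $\{[c],[c']\}$ has none. What actually happens (and what the paper does) is that in each such obstruction case one shows instead that $[M_cdc]\cup[dc']=[0]$, by recognising $M_cdc\wedge dc'$ (or a relevant subsum) as the coboundary $d\bigl(f(a,\vec{x})\,dc'\bigr)$ or $d\bigl(f(a,\vec{y})\,dc'\bigr)$. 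So the existence of the upper bound for $\{[c],[c']\}$ is not a lemma proved prior to the case analysis; it is a byproduct of it. Your framing of the mechanism as ``extra terms cancel the offending $[\phi_{[s]}]$'' is also slightly off: the point is that the entire relevant sum is a coboundary, not that two basis elements in $H^2$ cancel.
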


\begin{proof}

We will repeatedly use the properties of $M$, $Ms$, and $Mt$ stated in 
Theorem \ref{thm:changeofbasis} to prove this lemma, and do so without 
further reference.

Since $M = MtMs$ and $Mt$ preserves cohomology classes, $Mc^* \cup 
M(c')^* \neq [0]$ if and only if $[Msdc]\cup [Msdc'] \neq [0]$.  We have 
that $Msdc = M_cdc$ and $Msdc' = M_{c'}dc'$.  Thus, $Mc^*\cup M(c')^* 
\neq [0]$ if and only if $[M_cdc]\cup [M_{c'}dc'] \neq [0]$.

Express $c$ as $(a,e,\vec{x})$ and $c'$ as $(a',e',\vec{x'})$.

If $a = a'$, then every summand in both $M_cdc$ and $M_{c'}dc'$ will lie 
over $a$.  By the Upper Bound Lemma (Lemma \ref{lem:upperbound}), no two 
reduced $1$-cells which lie over the same vertex have an upper bound.  
By the definition of $\wedge$, it follows that $M_cdc\wedge M_{c'}dc'$ 
is the $0$ function, so $[Msdc]\cup [Msdc'] = [0]$.  Thus, we may assume 
that $a \neq a'$.

Consider summands $dc_0$ and $dc'_0$ of $M_cdc$ and $M_{c'}dc'$, 
respectively.  Express $c_0$ as $(a,d_0,\vec{y})$ and $c'_0$ as 
$(a',d_0',\vec{y'})$. By the definition of $M_c$ and Restricting the 
Differential (Corollary \ref{cor:restrictingdf}), $\vec{y}$ differs from 
$\vec{x}$ for at most two indices, namely $0$ and some index $dir_0$.  If 
$\vec{y} \neq \vec{x}$ then $y_0 = x_0-1$ and $y_{dir_0} = x_{dir_0}+1$.  
In particular, $x_0 \geq y_0$. Furthermore, by definition, if $c$ is not 
exceptional of Type III, then $d_0' = d_0$.  Similar statements hold for 
$dc'_0$, but we will only need to use that $x'_0 \geq y'_0$.

We claim that $[M_cdc]\cup [M_{c'}dc'] \neq [0]$ if and only if $[M_cdc] 
\cup [dc'] \neq [0]$, as follows.

Consider if $[M_cdc]\cup [dc'] = [0]$ but $[M_cdc]\cup[M_{c'}dc'] \neq 
[0]$.  Then there exists some summand $dc'_0 \neq dc'$ of $M_{c'}dc'$ 
such that 
$[M_cdc]\cup[dc'_0] \neq [0]$.  Thus, there must exist a summand $dc_0$ 
of $M_cdc$ such that $dc_0\wedge dc'_0$ is not cohomologous to $0$.  In 
particular, $[c_0]$ and $[c'_0]$ have an upper bound.  By the Upper 
Bound Lemma, since $x'_0 \geq y'_0$, $[c_0]$ and $[c']$ have an upper 
bound.  Since $[M_cdc]\cup [dc'] = [0]$, there exists a sequence of 
Tietze transformations which equates $dc_0\wedge dc'$ with $(M_cdc\wedge 
dc' -dc_0\wedge dc')$.  Each Tietze transformation involving a relation 
will involve the coboundary of a necessary $1$-form.  By Restricting the 
Differential, such a $1$-form has either the form $f(a,\vec{z})dc'$ or 
$f(a',\vec{z'})dc_0$ for some $a$-vector $\vec{z}$ or $a'$-vector 
$\vec{z'}$.  By Lemma \ref{lem:necessary1form}, such a $1$-form must be 
of the form $f(a,\vec{z})dc'$, since $a$ is in direction $0$ from $a'$.  
By the definition of necessary, $f(a,\vec{z})dc'_0$ will also be a 
necessary $1$-form.  It follows that a corresponding sequence of Tietze 
transformations equates $dc_0\wedge dc'_0$ with $(M_cdc\wedge dc'_0 - 
dc_0\wedge dc'_0)$.  But then $[M_cdc \wedge dc'_0] = [0]$, a 
contradiction.

To finish the claim, it remains to prove that if $[M_cdc] \cup [dc'] 
\neq [0]$, then $[M_cdc] \cup [M_{c'}dc'] \neq [0]$.  If 
$[M_cdc]\cup[dc'] \neq [0]$ but $[M_cdc]\cup [M_{c'}dc'] = [0]$, then 
there exists some summand - say $dc'_0$ - of $M_{c'}dc'$ such that 
$dc_0\wedge dc'$ and $dc_0\wedge dc'_0$ are both in a single relation - 
that is, in the differential of a single necessary $1$-form $\omega'$.  
By Restricting the Differential, it must be that $\omega' = 
f(a',\vec{z})dc_0$ for some $a'$-vector $\vec{z}$.  This contradicts 
Lemma \ref{lem:necessary1form}, as the direction from $a'$ to $a$ is 
$0$.  This proves the claim.

To prove the lemma, we show that if $[M_cdc] \cup [dc'] \neq [0]$, then 
$[c]$ and $[c']$ have a least upper bound $[s]$ with reduced 
representative $s$, and either $s$ is critical or $s$ is not critical 
but $c$ is exceptional of Type I.

By Theorem \ref{thm:cohomology}, $[dc_0]\cup [dc_0'] \neq [0]$ only 
if $[c_0]$ and $[c_0']$ have an upper bound.  Let $\alpha > 0$ be the 
direction from $a$ to $a'$.  By the Upper Bound Lemma, $[c_0]$ and 
$[c_0']$ have a common upper bound if and only if

  \begin{equation}\label{eq:1}
  y_\alpha + y'_0 \geq n + \epsilon_0,
  \end{equation}
where $\epsilon_0 := \epsilon(c_0,\alpha)$ is the upper bound constant.  
We record the interpretation of equation \ref{eq:1} for the case that 
$c_0 = c$, as these two equations are the heart of what remains in the 
proof.  We have that $[c]$ and $[c']$ have a common upper bound if and 
only if

  \begin{equation}\label{eq:2}
  x_\alpha + x'_0 \geq n + \epsilon,
  \end{equation}
where $\epsilon := \epsilon(c,\alpha)$.

If $[c]$ and $[c']$ have a common upper bound, by the Upper Bound Lemma, 
then $x_{\alpha} \geq 2$.  Also, by definition, $x_d \geq 1$.

Consider the number $y_\alpha - \epsilon_0$.  By definition, $\vec{y}$ 
differs from $\vec{x}$ only in the $0^{th}$ and $(d_0')^{th}$ entries.  
Since $\alpha > 0$, $y_\alpha = x_\alpha$ unless $\alpha = d_0'$.  But 
unless $c$ is exceptional of Type III, $d_0' = d_0$, and when $\alpha = 
d_0'$, $\epsilon_0 = 1$.  Therefore, unless $\alpha = d_0'$ and $c$ is 
exceptional of Type III, $y_\alpha - \epsilon_0 = x_\alpha$.  Unless 
$\alpha = d$, $\epsilon = 0$, so $x_\alpha = x_\alpha - \epsilon$.  
Thus, unless $\alpha = d$, or $\alpha = d_0'$ and $c$ is exceptional of 
Type III, if $[c_0]$ and $[c']$ then $[c]$ and $[c']$ have a common 
upper bound $[s]$.

Consider the case that $c$ is exceptional of Type III.  Let $\omega = 
f(a,\vec{x})dc_1$ be a necessary $1$-form for which $c$ is necessary.  
Let $\alpha_1$ be the direction from $a$ towards the vertex over which 
$c_1$ lies.  By the Upper Bound Lemma, if $[c_0]$ and $[c']$ have an 
upper bound, then $y_\alpha - \epsilon_0 \geq 2$.  By the definition of 
$M_c$, if $\alpha \neq \alpha_1$, then $y_\alpha - \epsilon_0 \leq 1$.  
Thus, if $[c_0]$ and $[c']$ have an upper bound, then $\alpha = 
\alpha_1$.  If $[c]$ and $[c']$ also have an upper bound, then $x'_0 = 
3$.  By definition, any term $dc_0'$ of $M_cdc$ is such that $[c_0']$ 
and $[c_1]$ have an upper bound.  By the Upper Bound Lemma, we also have 
that $[c_0']$ and $[c']$ have an upper bound.  Moreover, consider the 
$1$-form $\omega' = f(a,\vec{x})dc'$.  By Restricting the Differential, 
we have that $d\omega' = M_cdc\wedge dc'$.  Thus, $[M_jdc]\cup[M_jdc'] = 
[0]$.  If $[c]$ and $[c']$ do not also have an upper bound, then $x'_0 = 
2$ and $y_\alpha = 3$.  Thus there is only one such $c_0$: when $d_0' = 
\alpha$.  By Restricting the Differential, we have that 
$d(f(a,\vec{y})dc') = dc_0\wedge dc'$, so
  $$[M_jdc]\cup [M_jdc'] = [M_cdc\wedge dc'] = [dc_0\wedge dc'] = [0].$$

Now consider the case when $c$ is not exceptional of Type III.  If 
$\alpha = d$, we have a number of subcases.

\begin{itemize}

\item If $c$ is exceptional of Type II, then $y_d = 1$.  This implies 
$x'_d \geq n - 1$, so $[c_0]$ and $[c']$ cannot have an upper bound.

\item If $c$ is not necessary and $c$ is not exceptional of Type II, 
then $M_c$ is the identity, and $c_0 = c$.  If $[c_0]$ and $[c']$ have 
an upper bound $[s]$ , then $[c] = [c_0]$ and $[c']$ have the upper 
bound $[s]$.  Let $s$ be the reduced representative of $[s]$.  If $e$ is 
not disrespectful in $s$, $c$ would be necessary for the necessary 
$1$-form $f(a,\vec{x})dc'$, contradicting that $c$ is not necessary.  
Thus, if $[c_0]$ and $[c']$ have an upper bound, $e$ must be 
disrespectful in $s$, and by the Upper Bound Lemma, $s$ is critical.

\item If $c$ is necessary and not exceptional of Type II, then let 
$\omega = f(a,\vec{x})dc_1$ be a $1$-form for which $c$ is necessary.  
Let $\alpha_1$ be the direction from $a$ towards the vertex over which 
$c_1$ lies.  By Lemma \ref{lem:necessary1form}, $0 < \alpha_1 < d$, and 
$x_{\alpha_1} \geq 2$. Since $c$ is not exceptional of Type III, $d_0' = 
d_0$.  By equations \ref{eq:1} and \ref{eq:2}, $y_\alpha - \epsilon_0 = 
x_\alpha$.  Thus, since $x'_\alpha \leq n - 2$, if $[c_0]$ and $[c']$ 
have an upper bound then $x_\alpha \geq 2 + \epsilon_0$.

  \begin{itemize}

  \item If $n = 4$, since $|\vec{x}| = n$, $x_\alpha = x_{\alpha_1} = 2$ 
  and $c$ is completely determined.  In this case, by Restricting the 
  Differential, we have $A_{c_1}(df(a,\vec{x})) = dc$, so $c_0 = c$.  But then 
  $\epsilon_0 = 1$, and $2 = x_\alpha \geq 2 + 1 = 3$.  Thus, if $n = 
  4$, then $[c_0]$ and $[c']$ cannot have an upper bound.

  \item If $n = 5$, since $|\vec{x}| = 5$ we know there is one vertex of 
  $c$ unaccounted for: there is some index $\beta$ such that $x_\beta =
  1$ when $\beta \not\in \{\alpha, \alpha_1\}$ or $x_\beta = 3$ when
  $\beta \in \{\alpha, \alpha_1\}$.  Since $c$ is not exceptional of Type
  III, $\beta \neq 0$.

    \begin{itemize}

    \item If $\beta > d$, then $c$ is of Type I, and so $c_0 = c$ by the 
    definition of $M_c$ and the lemma holds.

    \item If $\beta = d$, then the only nonzero entries of $\vec{x}$ are 
    $x_{\alpha_1} = 2$ and $x_d = 3$.  Since $\alpha \neq \alpha_1$ and 
    $x_\alpha \geq 2 + \epsilon_0$, $\alpha = d$.  By Restricting the 
    differential we have that $d\omega = dc \wedge dc_1$, so $M_cdc = dc$ 
    and $c_0 = c$.  Since $c'$ lies over a vertex in direction $\alpha =
    d$ from $a$, the lemma then follows from the Upper Bound Lemma.

    \item If $\beta < d$, then $c$ is not necessary: the cell 
    $(a,\beta,\vec{x})$ is the necessary $1$-cell for $f(a,\vec{x})dc_1$.  
    This contradicts the assumption that $c$ is necessary, so the lemma
    holds.

    \end{itemize}

  \end{itemize}

\end{itemize}

It remains to prove the lemma when $c$ is not exceptional of Type III 
and $\alpha \neq d$.  In this case, by equations \ref{eq:1} and 
\ref{eq:2}, if $[c_0]$ and $[c']$ have a common upper bound then $[c]$ 
and $[c']$ have a common upper bound $[s]$.  Let $s$ be the reduced 
representative of $[s]$.  If $e$ is disrespectful in $s$, we're done.  
Note that if $c$ is exceptional of Type II, then $e$ is disrespectful in 
$s$, by the Upper Bound Lemma.  If $e$ is respectful in $s$, then by the 
Upper Bound Lemma, $0 < \alpha < d$, $x_\alpha + y_0 = n$, and $x_i = 0$ 
for all $0 < i < d$, $i \neq \alpha$.  The lemma allows for $c$ to be 
exceptional of Type I in this case.  Consider if $c$ is not exceptional 
(as we have addressed each of the three exceptional cases).  By 
definition, any term $dc_0'$ of $M_cdc$ is such that $[c_0']$ and 
$[c_1]$ have an upper bound.  By the Upper Bound Lemma, we also have 
that $[c_0']$ and $[c']$ have an upper bound.  Moreover, consider the 
$1$-form $\omega' = f(a,\vec{x})dc'$.  By Restricting the Differential, 
we have that $d\omega' = M_cdc\wedge dc'$.  Thus, $[M_jdc]\cup[M_jdc'] = 
[0]$.  This finishes the proof.

\end{proof}

Lemma \ref{lem:effectofM} tells us that multiplication by $M$ is very 
nice in ensuring that when $Mc^*$ and $M(c')^*$ cup nontrivially, $[c]$ 
and $[c']$ have an upper bound with critical representative, 
with only one type of exception.  If $c$ is exceptional of Type I, then 
Lemma \ref{lem:effectofM} does not say much about $Mc^*$.  There is 
something to say, though:

\begin{lemma}\label{lem:multIbyM}

Let $c_1 = (a,d_1,\vec{x})$ be a critical cell of Type I and let $c_2 = 
(a,d_2,\vec{x})$ be the corresponding critical cell of Type II.  Let 
$d_0$ be the index of the remaining nonzero coordinate of $\vec{x}$.  
For any critical $1$-cell $c' = (a',e',\vec{x'})$, 

  \begin{enumerate}

  \item if $Mc_1^*\cup M(c')^* \neq [0]$ then $[c_2]$ and $[c']$ have a 
  least upper bound $[s]$, $c_2^* \cup (c')^* \neq [0]$, and $a'$ is in 
  direction $d_0$ from $a$.

  \item if $Mc_2^*\cup M(c')^* \neq [0]$ then $[c_2]$ and $[c']$ have a 
  least upper bound $[s]$, $c_2^* \cup (c')^* \neq [0]$ and $a'$ is in 
  direction $d_1$ from $a$.

  \end{enumerate}

\end{lemma}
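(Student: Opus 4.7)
The plan is to follow the template of the proof of Lemma \ref{lem:effectofM}, specialized to the exceptional Type I/II situation.

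First, I will compute $Mc_1^*$ and $Mc_2^*$ explicitly in cohomology. For $c_2$ of Type II, a case analysis over the types of matrices $M_c$ appearing in the product defining $M = MtMs$ shows that none modifies the cochain $dc_2$: another Type II matrix would require $c_2$ to be its Type I partner (impossible, since $c_2$ is itself Type II), and a Type III matrix would require $c_2$ to lie in its distinguished set (impossible, because $c_2$'s edge direction is $d_2$, which does not match the required direction of any Type III cell at $a$). Hence $Mc_2^* = c_2^*$. For $c_1$ of Type I, the partner $M_{c_2}$ contributes $dc_2$ to $Msdc_1$ via the off-diagonal entry at $(ri(c_2), ri(c_1))$, and at most one Type III matrix $M_{c_3}$ contributes a further term $dc_3$ (where $c_3$, if it exists in $T$, is the unique Type III cell at $a$ whose distinguished set contains $c_1$). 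Tracking the order of matrix applications imposed by Convention \ref{conv:fixingorder}, this yields $Mc_1^* = c_1^* + c_2^* + c_3^*$ in cohomology, with $c_3^*$ present only conditionally.

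Second, I will determine when $[c_2]$ and $[c']$ admit an LUB with critical reduced representative. By the Upper Bound Lemma, such an LUB exists iff the direction $\alpha$ from $a$ to $a'$ lies in $\{d_0, d_1\}$ and $x'_0 = n-2$. In both feasible sub-cases, condition (b) of Lemma \ref{lem:upperbound}(4) holds for $c_2$ via the third nonzero coordinate of $\vec{x}$, so the reduced representative $s$ is automatically critical and $c_2^* \cup (c')^* \neq [0]$.

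Third and crucially, I will show that the change of basis splits these two sub-cases between $Mc_1^*$ and $Mc_2^*$. When $\alpha = d_0$, the $1$-form $\omega = f(a,\vec{x}) dc'$ is necessary with $c_1$ as its necessary cell (by Lemma \ref{lem:necessary1form}), and its coboundary relation $[dc_1 \wedge dc' + dc_2 \wedge dc'] = [0]$ yields $c_1^* \cup (c')^* = c_2^* \cup (c')^*$. Combined with the analogous relation arising from a necessary form whose necessary cell is $c_3$ (when $c_3^*$ contributes to $Mc_1^*$) and the analysis of any necessary-form corrections to $M(c')^*$, this enforces $Mc_2^* \cup M(c')^* = [0]$ while $Mc_1^* \cup M(c')^*$ retains a nontrivial critical contribution witnessed by the LUB of $[c_2]$ and $[c']$. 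The case $\alpha = d_1$ is handled by a symmetric argument in which a different necessary $1$-form produces the cancellation on the $Mc_1^*$ side instead, leaving $Mc_2^* \cup M(c')^* \neq [0]$.

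The main obstacle is the careful bookkeeping in this third step: enumerating all necessary $1$-forms at vertex $a$ whose necessary cells are among $\{c_1, c_2, c_3\}$, expanding their coboundaries via the annihilator function, and verifying that the combined relations cancel in exactly the pattern claimed by parts (1) and (2). This is analogous in spirit to the case analysis in the proof of Lemma \ref{lem:effectofM}, but the interaction of the three summands of $Mc_1^*$ with the possible Type III contribution and with the corrections to $M(c')^*$ requires more elaborate combinatorial tracking.
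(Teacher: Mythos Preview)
Your explicit computation of $Mc_2^*$ and $Mc_1^*$ is the crux of your argument, and it is where the proposal breaks down. You read the off-diagonal entry of the Type~II matrix $M_{c_2}$ as sitting in column $ri(c_1)$, so that $M_{c_2}$ fixes $dc_2$ and modifies $dc_1$; this gives you $Mc_2^* = c_2^*$ and $Mc_1^* = c_1^* + c_2^* + c_3^*$. But that is incompatible with the Lemma itself: if $Mc_2^* = c_2^*$, then (using the reduction $[M_c dc]\cup[M_{c'}dc'] = [M_c dc]\cup[dc']$ established in the proof of Lemma~\ref{lem:effectofM}) one gets $Mc_2^*\cup M(c')^* = c_2^*\cup(c')^*$, which is nonzero whenever $[c_2],[c']$ have a critical least upper bound. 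By the Upper Bound Lemma that happens for $a'$ in \emph{either} direction $d_0$ or $d_1$, contradicting part~(2). In fact the paper's definition of $M_{c_2}$ contains a row/column transposition; the remark that $M_{c_2}$ is upper triangular before Convention~\ref{conv:fixingorder}, together with the paper's own use of $Mdc_2 = dc_1 + dc_2$ in this proof, shows the off-diagonal entry is meant to lie in column $ri(c_2)$. With the intended definition one has $Mdc_2 = dc_1 + dc_2$ and $Mdc_1 = dc_1$; your assignment of the $dc_1+dc_2$ combination to $Mc_1^*$ has the two roles swapped, and the putative Type~III summand $c_3^*$ does not enter.

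Once the correct identity $Mdc_2 = dc_1 + dc_2$ is in hand, the argument is far shorter than your plan. The paper simply invokes Lemma~\ref{lem:effectofM}: for part~(1), that lemma already says $[c_1]$ and $[c']$ have an upper bound, and the Upper Bound Lemma then forces $a'$ to be in direction $d_0$ and gives a critical upper bound for $[c_2],[c']$. For part~(2), Lemma~\ref{lem:effectofM} gives a critical upper bound for $[c_2],[c']$, so $a'$ lies in direction $d_0$ or $d_1$; one then rules out $d_0$ by a single coboundary: when $a'$ is in direction $d_0$, $(dc_1+dc_2)\wedge dc' = Mdc_2\wedge dc'$ is precisely $d\bigl(f(a,\vec{x})dc'\bigr)$, so $Mc_2^*\cup M(c')^* = [0]$. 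No enumeration of necessary forms at $a$, no $c_3$, and no ``elaborate combinatorial tracking'' is required.
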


The way to think of this is that $Mc_1^*$ is the $d_0$ `portion' of 
$c_2^*$ (and $c_1^*$), while $Mc_2^*$ is the $d_1$ `portion' of $c_2^*$ 
(and trivially $c_1^*$).

\begin{proof}

By the Upper Bound Lemma (Lemma \ref{lem:upperbound}), $[c_1]$ and 
$[c']$ have an upper bound if and only if $a'$ is in direction $d_0$ 
from $a$ and $[c_2]$ and $[c']$ have an upper bound.  Also, that $[c_2]$ 
and $[c']$ have an upper bound implies $a'$ is in either of the 
directions $d_0$ or $d_1$ from $a$.  By the Upper Bound Lemma, if 
$[c_2]$ and $[c']$ have an upper bound $[s]$, then the reduced 
representative $s$ of $[s]$ is critical, so $c_2^* \cup [dc'] \neq [0]$.

If $Mc_1^*\cup M[dc'] \neq [0]$, then by Lemma \ref{lem:effectofM}, 
$[c_1]$ and $[c']$ have an upper bound.  Thus, by above, if $Mc_1^*\cup 
M[dc'] \neq [0]$, then $c_2^* \cup [dc'] \neq [0]$.

If $Mc_2^*\cup M[dc'] \neq [0]$, then by Lemma \ref{lem:effectofM}, 
$[c_2]$ and $[c']$ have an upper bound, so by above, $c_2^* \cup [dc'] = 
[ds] \neq [0]$.  By above, $a'$ is in one of the directions $d_0$ or 
$d_1$ from $a$.  We prove, assuming $Mc_2^* \cup M[dc'] \neq [0]$, that 
$a'$ cannot be in direction $d_0$ from $a$.  By the definition of $M$ 
and the matrices $M_{c_1}$ and $M_{c_2}$, $Mdc_2 = dc_1 + dc_2$.  So, 
$Mdc_2 \wedge dc' = dc_1\wedge dc' + dc_2 \wedge dc'$.  But this is 
precisely the coboundary of the $1$-form $f(a,\vec{x})dc'$ by 
Restricting the Differential (Corollary \ref{cor:restrictingdf}).  Thus,
  $$[0] = [Mdc_2 \wedge dc'] = [Mdc_2] \cup [Mdc'] = Mc_2^* \cup M[dc'].$$
This contradicts the assumption that $Mc_2^* \cup M[dc'] \neq [0]$.  
We have proven that if $Mc_2^* \cup M[dc'] \neq [0]$ then $a'$ is in 
direction $d_1$ from $a$.  This finishes the proof.

\end{proof}

We are now ready to state and prove the final step of Theorem 
\ref{thm:cohom}.

\begin{definition}[The Simplicial Complex $\Delta$]\label{def:Delta}

Define the finite simplicial complex $\Delta$ as follows. The vertex set 
of $\Delta$ is identified with the set $\{Mc^* | c \hbox{ a critical 
$1$-cell}\}$.  A vertex $Mc^*$ is said to \emph{lie over} the vertex of
$T$ over which $c$ lies.  A set of vertices $\{Mc_1^*, Mc_2^*\}$ span a 
$1$-simplex, labelled $Mc_1^* \cup Mc_2^*$, if and only if $Mc_1^* \cup 
Mc_2^*$ is nontrivial.

\end{definition}

\begin{theorem}[The Exterior Face Algebra Structure]\label{thm:cohomiso}

We have that 
  $$H^*(B_nT) \cong \Lambda(\Delta).$$

\end{theorem}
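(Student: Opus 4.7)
The plan is to define the natural $\mathbb{F}_2$-algebra homomorphism $\Psi: \Lambda(\Delta) \to H^*(B_nT)$ by sending each vertex of $\Delta$ (labelled $Mc^*$) to the corresponding class $Mc^* \in H^1(B_nT)$, and extending multiplicatively. Once $\Psi$ is shown to be well-defined, the goal is to verify it is a ring isomorphism.

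Well-definedness requires respecting the three families of defining relations of $\Lambda(\Delta)$. Over $\mathbb{F}_2$, anti-commutativity is automatic from graded-commutativity of the cup product, and $v_i^2 = 0$ holds because cup-squares of odd-degree classes vanish in characteristic $2$. The remaining non-face relations are precisely the requirement that $Mc_1^* \cup Mc_2^* = 0$ whenever $\{Mc_1^*, Mc_2^*\}$ is not an edge of $\Delta$, which is tautological from Definition \ref{def:Delta}; since $\Delta$ is at most $1$-dimensional, no higher non-face relations need checking. Surjectivity is then immediate: Theorem \ref{thm:cohomology} says $H^*(B_nT)$ is generated as a ring by $H^1(B_nT)$, and Theorem \ref{thm:changeofbasis} says $\{Mc^* \mid c \text{ a critical } 1\text{-cell}\}$ is a basis of $H^1(B_nT)$.

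Injectivity is the heart of the proof. Since $\Delta$ is at most $1$-dimensional, $\Lambda(\Delta)$ vanishes in degrees $\geq 3$; by Theorem \ref{thm:cohomology} and the bound $\lfloor n/2 \rfloor = 2$, so does $H^*(B_nT)$. Degrees $0$ and $1$ are isomorphisms by construction, so it suffices to establish equality of $\mathbb{F}_2$-dimensions in degree $2$. That is, I plan to exhibit a bijection between the edges of $\Delta$ and the set of critical $2$-cells of $\ud{n}{T}$, which by Theorem \ref{thm:leq}(3) is in bijection with unordered pairs $\{[c_1], [c_2]\}$ of classes of critical $1$-cells whose least upper bound has a critical reduced representative.

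The main obstacle is the exceptional Type I/II cases; indeed, resolving this obstacle is precisely what the change-of-basis matrix $M$ was engineered for. In the non-exceptional situation, Lemma \ref{lem:effectofM} immediately says that a nonzero cup product $Mc_1^* \cup Mc_2^*$ forces $[c_1], [c_2]$ to have a least upper bound with critical reduced representative, giving one direction of the bijection; the converse direction will follow by tracking that the lower-triangular structure of $M$ does not destroy cup products arising from genuinely critical least upper bounds. For a Type I/II pair $c_1, c_2$ lying over a vertex $a$, Lemma \ref{lem:multIbyM} cleanly partitions the critical $2$-cells involving the equivalence class of $c_2$ according to which direction from $a$ contains the vertex $a'$ over which the partner critical $1$-cell $c'$ lies: those with $a'$ in direction $d_0$ are witnessed by the edge $\{Mc_1^*, M(c')^*\}$ of $\Delta$, while those with $a'$ in direction $d_1$ are witnessed by the edge $\{Mc_2^*, M(c')^*\}$. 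Combined with the Upper Bound Lemma to rule out both double-counting within each direction and missed edges, this assembles into the desired bijection and completes the dimension count.
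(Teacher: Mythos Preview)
Your approach is essentially the same as the paper's: define $\Psi$ on vertices, check well-definedness and surjectivity, then establish injectivity in degree $2$ via a counting argument using Lemmas~\ref{lem:effectofM} and~\ref{lem:multIbyM}.

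Two small points. First, your justification that $v_i^2 \mapsto 0$ is incorrect: it is \emph{not} true that cup-squares of odd-degree classes vanish in characteristic~$2$ (consider $H^*(\mathbb{RP}^\infty;\mathbb{F}_2)$). The correct argument here is that $Mc^*$ is an $\mathbb{F}_2$-sum of classes $[dc']$, each of which squares to zero by Proposition~\ref{prop:forms}(3), and the cross terms cancel in pairs over $\mathbb{F}_2$. Second, you propose more work than necessary for injectivity: the paper does not construct a full bijection between edges of $\Delta$ and critical $2$-cells. It only constructs an \emph{injection} from edges to critical $2$-cells (via Lemmas~\ref{lem:effectofM} and~\ref{lem:multIbyM}), giving $\dim \Lambda^2(\Delta) \leq \dim H^2(B_nT)$; surjectivity of $\Psi$ then forces equality. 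Your ``converse direction'' is unnecessary for the theorem itself and is in fact what the paper records afterward as Corollary~\ref{cor:multbyM}.
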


\begin{proof}

Consider the map $\Psi:\Lambda(\Delta) \to H^*(B_nT)$ given by, for $c$ 
a critical $1$-cell, mapping the element in $\Lambda(\Delta)$ 
corresponding to the $0$-cell of $\Delta$ labelled $Mc^*$ to the 
$1$-cohomology class $Mc^*$.  Since critical $1$-cells form a free basis 
for $H^1(B_nT)$ and $M$ is a change of basis matrix, the map extends to 
all of $H^1(B_nT)$.  By the definition of $\Delta$, $\Psi$ is surjective 
onto $H^1(B_nT)$ and moreover extends linearly to a surjective 
homomorphism, since $H^1(B_nT)$ generates all of $H^*(B_nT)$ (see 
Theorem \ref{thm:cohomology}).

We claim that $\Psi$ is also injective.  Since $n \in \{4, 5\}$, it 
suffices to consider $1$-cells of $\Delta$.  A simple counting argument 
will finish the result.  For any $1$-cell $\{Mc_1^*, Mc_2^*\}$ of 
$\Delta$, by Lemma \ref{lem:effectofM} there is a critical $2$-cell 
$s$ such that $[s]$ is the least upper bound of $[c_1]$ and $[c_2]$, 
\emph{unless} possibly one of $c_1$ or $c_2$ is exceptional of type I.  
By Theorem \ref{thm:cohomology}, $s$ uniquely determines and is uniquely 
determined by $c_1$ and $c_2$.  By Lemma \ref{lem:multIbyM}, even when 
one of $c_1$ or $c_2$ is exceptional of Type I, there is still a 
uniquely determined critical $2$-cell $s$ corresponding to $c_1$ and 
$c_2$.  Thus, there are at most as many edges of $\Delta$ as there are 
critical $2$-cells.  Since $\Psi$ is surjective and by Theorem 
\ref{thm:cohomology} $H^2(B_nT)$ has rank equal to the number of 
critical $2$-cells, $\Psi$ must also be injective.  Thus, $\Psi$ is 
the desired isomorphism.

\end{proof}

Theorem \ref{thm:cohomiso} gives us an important strengthening of 
Lemmas \ref{lem:effectofM} and \ref{lem:multIbyM}:

\begin{corollary}[Multiplication by $M$]\label{cor:multbyM}

Let $c = (a,d,\vec{x})$ and $c'=(a',d',\vec{x'})$ be critical $1$-cells 
with $c \leq_r c'$.  Then
  $$Mc^*\cup M(c')^* \neq [0]$$ 
if and only if $[c]$ and $[c']$ have a least upper bound $[s]$ with 
reduced representative $s$ and either

  \begin{enumerate}

  \item $c$ is not exceptional of Types I or II and $s$ is critical,

  \item $c$ is exceptional of Type I and $s$ is not critical, or

  \item $c$ is exceptional of Type II, $s$ is critical, and the
  direction $\alpha$ from $a$ to $a'$ is not the smallest direction for
  which $x_\alpha \neq 0$.

  \end{enumerate}

\end{corollary}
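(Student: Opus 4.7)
My plan is to combine the preceding two lemmas with a dimension count coming out of Theorem \ref{thm:cohomiso}. The forward implication ($\Rightarrow$) is almost immediate. Suppose $Mc^*\cup M(c')^* \neq [0]$. If $c$ is neither Type I nor Type II, Lemma \ref{lem:effectofM} yields case (1). If $c$ is Type I, Lemma \ref{lem:effectofM} allows the reduced LUB to be non-critical, and a short check with the Upper Bound Lemma shows this is exactly what must happen---the edge of a Type I cell is always respectful in any valid upper bound with a critical partner, since the only nonzero entries of $\vec x$ less than $d$ force $\alpha = dir_1$, and $y_0 \leq n-2 = 3$ prevents condition (4a) from triggering---giving case (2). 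If $c$ is Type II with Type I partner $c_1$, Lemma \ref{lem:multIbyM}(2) places $a'$ in direction $d_1$ from $a$; the explicit structure of Type II ($n = 5$ with $x_{dir_1} = x_{dir_2} = 2$, $x_d = 1$, and $0 < dir_1 < dir_2 < d = dir_3$, with $d_1 = dir_2$) shows $dir_1$ is the smallest direction with $x_{dir_1} \neq 0$ and $d_1 = dir_2 > dir_1$, yielding case (3).

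The reverse implication ($\Leftarrow$) will be derived from a counting argument. Theorem \ref{thm:cohomiso} provides an isomorphism $\Lambda(\Delta) \cong H^*(B_nT)$, so the number of edges of $\Delta$ equals $\dim_{\Z/2\Z}H^2(B_nT)$, which by Theorem \ref{thm:cohomology} equals the number of critical $2$-cells. The proof of Theorem \ref{thm:cohomiso} already exhibits an injection from edges of $\Delta$ into critical $2$-cells, so matched cardinalities promote it to a bijection.

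I then read off this bijection case by case. Given a critical $2$-cell $s$ with its unique LUB decomposition $\{[c_a],[c_b]\}$ from Theorem \ref{thm:leq}(3), first observe that neither $c_a$ nor $c_b$ can be of Type I, by the Upper Bound Lemma computation above. If neither is Type II either, we are in case (1) with pair $\{c_a,c_b\}$. Otherwise, say $c_a$ is Type II with Type I partner $c_1$; writing $c_b = c'$, the Upper Bound Lemma together with the rigid arithmetic of Type II shows $a'$ lies either in direction $d_0 = dir_1$ or in direction $d_1 = dir_2$ from $a$, and exactly one of these holds. In the first situation, $\{c_1, c'\}$ satisfies the hypotheses of case (2) via the non-critical LUB of $\{[c_1],[c']\}$; in the second, $\{c_a, c'\}$ satisfies case (3). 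In either situation, we recover the unique edge of $\Delta$ attached to $s$ by the bijection. Since every pair satisfying cases (1), (2), or (3) thus corresponds to some critical $2$-cell and cardinalities match, each such pair must yield a nontrivial cup product.

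The main obstacle is maintaining the dichotomy cleanly: I must verify that for each Type II cell, the directions $dir_1$ and $dir_2$ partition the possible $\alpha$'s without double-counting against its Type I partner, and that non-critical LUBs recovered by case (2) never collide with critical LUBs recovered by case (3). This reduces in each subcase to the Upper Bound Lemma's inequality $x_\alpha + y_0 \geq n + \epsilon$ applied to the rigid numerical constraints imposed by $n \in \{4,5\}$ and the explicit structure of exceptional cells, which is essentially bookkeeping once the recipe is correctly formulated.
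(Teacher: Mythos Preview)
Your approach is correct and matches what the paper intends: the corollary is stated immediately after Theorem~\ref{thm:cohomiso} as ``an important strengthening of Lemmas~\ref{lem:effectofM} and~\ref{lem:multIbyM},'' with no further proof given, and your argument---forward direction from those two lemmas, reverse direction from the dimension count in Theorem~\ref{thm:cohomiso}---is exactly how one extracts it.

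One point to tighten in your reverse direction: you describe a map $B \to C$ (critical $2$-cell $\mapsto$ pair satisfying (1), (2), or (3)), but the conclusion ``every pair satisfying (1), (2), or (3) corresponds to some critical $2$-cell'' requires the opposite map $C \to B$, together with a check that it is injective. The recipe is the obvious inverse of what you wrote---case~(1) or~(3) sends $\{c,c'\}$ to the critical LUB $s$, case~(2) sends $\{c,c'\}$ to the (critical) LUB of $\{c_2,c'\}$ where $c_2$ is the Type~II partner of $c$---and your ``no double-counting'' paragraph correctly identifies the only nontrivial collision check: that a case~(2) pair and a case~(3) pair cannot map to the same critical $2$-cell. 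This is immediate once you observe that in case~(2) the direction $\alpha$ from $a$ to $a'$ is $dir_1$ (the smallest nonzero direction), so $\{c_2,c'\}$ fails the condition of case~(3). With that clarification the counting argument closes: $|A| \le |C| \le |B| = |A|$ forces $A = C$.
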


The results of this section show that exterior face algebra structures 
do crop up for cohomology rings of tree braid groups, but restrictive 
assumptions were made on the tree braid groups in question.  We have 
shown that if $n < 6$ or $T$ is linear, then $H^*(B_nT)$ is an exterior 
face algebra.  We conjecture the converse holds:

\begin{conjecture}[Arbitrary Number of Strands]\label{conj:extcohom}

Let $T$ be a tree.  Then $H^*(B_nT)$ is an exterior face algebra if and 
only if $n < 6$ or $T$ is linear.

\end{conjecture}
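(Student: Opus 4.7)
The plan: The forward direction of the conjecture is already in hand. Theorem \ref{thm:RAAcases} and Theorem \ref{thm:cohom} together show that $H^{*}(B_nT)$ is an exterior face algebra whenever $T$ is linear or $n<6$. What remains is the converse: assuming $T$ is nonlinear and $n\geq 6$, $H^{*}(B_nT;\Z/2\Z)$ is \emph{not} an exterior face algebra. By Gubeladze's Theorem (Theorem \ref{thm:Gubeladze}) and the defining property of $\Lambda(\Delta)$, showing non-isomorphism amounts to producing an obstruction preserved by every change of basis of $H^{1}(B_nT)$.

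First, I would reduce to a minimal test case. Every nonlinear tree contains, after sufficient subdivision, a copy of $T_{min}$ as a subtree, and standard embedding arguments for configuration spaces allow one to pull the obstruction back from $T_{min}$ to $T$. Similarly, for $n>6$ I expect to be able to add spectator strands near $\ast$ to import the $n=6$ obstruction into $\ud{n}{T_{min}}$. Thus it suffices to rule out an exterior face algebra structure on $H^{*}(B_6T_{min})$.

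Second, I would attempt to extend the matrix construction of Section \ref{sec:M_c} to six strands and show that it breaks. The key feature of the $n=5$ case was that exceptional critical $1$-cells fall into exactly three types (I, II, III), and the well-definedness argument in Lemma \ref{lem:Mcwelldefined} relies crucially on $|\vec{x}|=5$ so that once two coordinates of $\vec x$ are at least $2$, the remaining coordinates are essentially forced. When $n\geq 6$ one can write down a critical $1$-cell $c=(a,d,\vec{x})$ which is necessary for two distinct necessary $1$-forms $\omega_1=f(a,\vec{x})dc_1$ and $\omega_2=f(a,\vec{x})dc_2$ whose associated directions $\alpha_1,\alpha_2$ from $a$ to the relevant neighboring essential vertices are genuinely independent; the resulting matrices $M_{\omega_1}$ and $M_{\omega_2}$ then disagree along a family of rows not covered by any of Types I, II, or III. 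I would verify this on a concrete critical cell over a degree-three essential vertex of $T_{min}$ with $\vec{x}$ of length $6$ and three nonzero entries each at least $2$.

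The main obstacle — and the heart of the conjecture — is converting this local failure of \emph{a particular} change of basis into a genuine ring-theoretic obstruction ruling out \emph{every} basis. My proposed route is to isolate a basis-independent invariant of a $\Z/2\Z$-algebra generated in degree one: for each nonzero $\eta\in H^{1}$, look at the cup-product map $\eta\cup(-):H^{1}\to H^{2}$ and record the triple of ranks (rank, dimension of kernel modulo ideal generated by $\eta$, dimension of the image). For an exterior face algebra $\Lambda(\Delta)$ these triples take a very restricted list of values determined by link sizes in $\Delta$, while for $H^{*}(B_6T_{min})$ one can use Theorem \ref{thm:cohomology} together with the Upper Bound Lemma (Lemma \ref{lem:upperbound}) and the radial rank formula (Theorem \ref{thm:radial}) at the essential vertices of $T_{min}$ to compute these triples explicitly and show that some value lies outside every admissible list. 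Executing this counting argument uniformly across all candidate $\Delta$ is where I expect the real difficulty to lie; in particular, ensuring that no clever choice of $\eta$ can mask the obstruction by mixing cells over different essential vertices will require the type of careful bookkeeping already developed in the proof of Lemma \ref{lem:effectofM}.
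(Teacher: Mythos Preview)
This statement is a \emph{conjecture} in the paper, not a theorem; the paper does not prove it. The forward direction (if $n<6$ or $T$ is linear then $H^*(B_nT)$ is an exterior face algebra) is indeed established by Theorems \ref{thm:RAAcases} and \ref{thm:cohom}, but the converse is explicitly left open. The author's own heuristic, stated just after the conjecture, is different from yours: rather than a rank-triple invariant, the paper points to a specific ``poisonous'' quotient subalgebra (the one pictured in Figure \ref{fig:W}, an exterior face algebra with two $1$-cells identified) and speculates that any algebra containing it in a suitable sense cannot be an exterior face algebra --- while admitting that neither the right notion of ``containing'' nor the non-exterior-face-algebra conclusion is in hand.

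Your proposal has two genuine gaps beyond the one you already flag. First, the reduction step is not justified: an embedding $T_{min}\hookrightarrow T$ of trees does induce maps of configuration spaces, but you would need this to yield a \emph{ring retraction} on cohomology (or at least an injective ring map in the right direction), and even then a subalgebra of an exterior face algebra can fail to be an exterior face algebra and vice versa, so the obstruction does not obviously transfer. The ``spectator strand'' reduction from $n>6$ to $n=6$ has the same problem. Second, the failure of Lemma \ref{lem:Mcwelldefined} for $n\geq 6$ only shows that the paper's \emph{particular} change of basis $M$ breaks; as you note, this says nothing about the nonexistence of \emph{some} basis realizing an exterior face structure. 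Your proposed rank-triple invariant of $\eta\cup(-)$ is a reasonable thing to try, but the claim that its values on $\Lambda(\Delta)$ form a restricted list determined only by link sizes is not obvious once $\eta$ is a sum of vertex generators rather than a single vertex, and you give no argument that the $B_6T_{min}$ values fall outside every such list. In short, you have correctly identified where the difficulty lies, but the proposal is a strategy sketch rather than a proof, which is consistent with the statement's status as an open conjecture.
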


Our conjecture is based on the reasoning that, for $T$ a nonlinear tree 
and $n \geq 6$, we may find a particular structure in $H^*(B_nT)$ which 
we believe prevents $H^*(B_nT)$ from being an exterior face algebra.  We 
do not go into the details here, but we claim if we apply the change of 
basis techniques of this section to $H^*(B_nT)$ for $n \geq 6$, the 
result is that $H^*(B_nT)$ is isomorphic to some particular quotient of 
an exterior face algebra.  That quotient has as a subalgebra the 
quotient pictured in Figure \ref{fig:W}.  We believe that this 
subalgebra is `poisonous': any algebra containing the given subalgebra 
in a `nice' way will not be an exterior face algebra. Of course, we do 
not know the appropriate definition of `nice', and showing that an 
algebra is \emph{not} an exterior face algebra is very difficult, even 
in this restricted setting.

  \begin{figure}
    \begin{center}
    \input{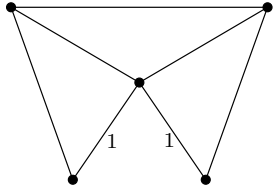}
    \end{center}

  \caption{Shown is a simplicial complex.  The $1$s labelling two 
  $1$-cells of the simplicial complex indicate that in the corresponding 
  exterior face algebra, we quotient by identifying the two edges. We 
  believe that, if this quotient is a subalgebra of a given algebra in a 
  nice way, then the given algebra cannot be an exterior face algebra.  
  We also believe that the cohomology algebra of tree braid groups for 
  nonlinear trees and at least 6 strands contain this quotient as a 
  subalgebra in a nice way.}

  \label{fig:W}

  \end{figure}

\section{Rigidity}\label{sec:rigidity}

In this section, our goal is to prove Theorem \ref{thm:45rigidity}, the 
Rigidity of $4$ and $5$ Strand Tree Braid Groups.  We do so by 
explicitly reconstructing the tree $T$ from the corresponding tree braid 
group $B_nT$.  The tree $T_{\Delta}$, defined in the first subsection, 
will be the reconstruction desired, as proven in the second subsection.

\subsection{The Tree $T_{\Delta}$}\label{sec:T_Delta}

In this subsection we try to construct a tree $T_{\Delta}$ from an 
arbitrary $1$-dimensional simplicial complex $\Delta$.  The tree 
$T_{\Delta}$ will be used in Section \ref{sec:45strands} to prove 
Theorem \ref{thm:45rigidity}.  In Theorem \ref{thm:45rigidity}, we will 
show that, if $\Delta$ is the simplicial complex giving the exterior 
face structure for $H^*(B_nT)$ for some tree $T$ and $n = 4$ or $5$, 
then $T_{\Delta}$ is homeomorphic to $T$.  By Theorem \ref{thm:cohom}, 
for $n = 4$ or $5$ there exists a unique $1$-dimensional simplicial 
complex associated to $H^*(B_nT)$ - this is why we restrict $\Delta$ to 
be $1$-dimensional.  Even with the dimension restriction, for many 
$\Delta$, $T_{\Delta}$ will not exist; it will turn out that 
$T_{\Delta}$ will exist exactly when we want it to (see Corollary 
\ref{cor:T_Delta}).

Throughout this section, refer to Example \ref{ex:treeTDelta} on page 
\pageref{ex:treeTDelta} for an example of the concepts defined.  Also 
see Example \ref{ex:5example} on page \pageref{ex:5example} for further 
reference.

Let $\Delta$ be an arbitrary 1-dimensional simplicial complex, and fix a 
constant $n \in \{4, 5\}$. For each vertex $v \in \Delta$, let $N_v$ 
denote the \emph{vertex neighborhood} of $v$ in $\Delta$: the set of all 
vertices $v' \in \Delta$ adjacent to $v$. Let $\leqn$ denote the 
preorder on vertices of $\Delta$ induced by vertex neighborhood 
inclusions, so $v \leqn v'$ if and only if $N_v \subseteq N_{v'}$.  Most 
pairs of vertices will not be comparable, but enough will be.

The preorder $\leqn$ induces an equivalence relation $\en$ on vertices 
of $\Delta$: if $v \leqn v'$ and $v' \leqn v$, we write $v \en v'$.  
Note that $\leqn$ induces a partial order on the vertex set $V(\Delta)$ 
of $\Delta$ modulo equivalence classes under $\en$.  We write the 
($\en$)-equivalence class of $v \in V(\Delta)$ as $[v]$.  For an 
($\en$)-equivalence class $[v]$, the \emph{descendants} of $[v]$ are 
those ($\en$)-equivalence classes $[v']$ such that $[v'] \leq [v]$ (i.e. 
$N_{v'} \subseteq N_v$) and $N_{v'} \neq \emptyset$.  A descendant 
$[v']$ of $[v]$ is a \emph{child} of $[v]$ and $[v]$ is a \emph{parent} 
of $[v']$ if there are no descendants of $[v]$ between $[v']$ and $[v]$ 
with respect to $\leqn$ - i.e. there does not exist a $w$ such that 
$N_{v'} \subsetneq N_w \subsetneq N_v$.

We now define a tree $T_{\Delta}$ from the simplicial complex $\Delta$.  
Let $|\Delta|$ denote the number of vertices of $\Delta$, and for any 
subset $S$ of $V(\Delta)$, let $|S|$ denote its size.

Recall the function $Y_m(x)$, defined in Theorem \ref{thm:radial}.  We 
will use $Y_m(x)$ repeatedly to define $T_{\Delta}$.

Let $v_0$ be a vertex of $\Delta$ which is maximal under $\leqn$.  Thus, 
$N_{v_0}$ is maximal under inclusion among all vertex neighborhoods of 
vertices of $\Delta$.

If $N_{v_0}$ is empty, define the tree $T_{\Delta}$ to be a radial tree 
(see Theorem \ref{thm:radial}) whose essential vertex $a$ has degree 
$deg(a)$ such that
  $$Y_n(deg(a)) = |\Delta|$$
if such a number exists.  If such a number does not exist, then 
$T_{\Delta}$ is undefined.

Now assume that $N_{v_0}$ is nonempty.  Define the finite undirected 
graph $H'$ to be the \emph{neighborhood heirarchy} of $[v_0]$, as 
follows.  Create a vertex $p_1$ in $H'$, and for each descendant $[v]$ 
of $[v_0]$ create a vertex $p_{[v]}$ in $H'$, so that the vertex set of 
$H'$ is

  $$\{p_1\} \cup \{p_{[v]} | [v]\hbox{ is a descendant of }[v_0]\}.$$
Every vertex $p_{[v]}$ is connected to $p_{[v']}$ for each child $[v']$ 
of $[v]$.  Also, there is one edge connecting $p_1$ to $p_{[v_0]}$ 
(when $\Delta$ is as in Theorem \ref{thm:cohomiso}, think of $p_1$ as 
corresponding to the cochain $1$, so that every $1$-cochain is in its 
`neighborhood').

If $n = 4$, the neighborhood heirarchy $H'$ is already almost the tree 
$T_{\Delta}$, but if $n = 5$, the $H'$ contains too many vertices.  To 
address this issue, we define a subgraph $H$ of the graph $H'$. If $n = 
4$, define the finite undirected graph $H$ to be $H'$.  If $n = 5$, 
consider if there exists a child $[v_0']$ of $[v_0]$ such that

  \begin{enumerate}

  \item $[v_0']$ has exactly half as many descendants as $[v_0]$, and

  \item if $[v]$ and $[v']$ are children of $[v_0]$ with a common 
  descendant, then one of $[v]$ or $[v']$ is a descendant of $[v_0']$.

  \end{enumerate}
If such a child $[v_0']$ exists, define $H$ to be $H'$ but with the 
descendants of $[v_0']$ removed from $H'$ - that is, $H$ is $H'$ minus 
the vertices corresponding to descendants of $[v_0']$ and any edges 
connecting them to any other vertices.  If no such child exists, $H$ is 
undefined.

We want to construct a tree $T_{\Delta}$ by `growing' $T_{\Delta}$ from 
the graph $H$.  To do so, we want to associate to each vertex $p_{[v]}$ 
of $H$ a number $pdeg_{[v]}$, which will be the desired degree of 
$p_{[v]}$.  We say $p_{[v]}$ is a \emph{leaf} of $H$ if $[v]$ has no 
children.  If $p_{[v]}$ is a \emph{leaf} of $H$, then define 
$pdeg_{[v]}$ to be the positive integer greater than 2 satisfying

  $$Y_{(n-2)}(pdeg_{[v]}) = |N_v|$$
for any $v \in [v]$; if such a number does not exist, $pdeg_{[v]}$ is 
undefined.  If $p_{[v]}$ is not a leaf of $H$, then define $pdeg_{[v]}$ 
to be the positive integer greater than $2$ satisfying

  $$Y_2(pdeg_{[v]}) = |[v']|$$
for every child $[v']$ of $[v]$; if such a number does not exist, 
$pdeg_{[v]}$ is undefined.  Also define $pdeg_1$ to be the positive 
integer greater than $2$ satisfying 

  $$Y_2(pdeg_1) = |[v_0]|;$$
if such a number does not exist, $pdeg_1$ is undefined.

If $pdeg_{[v]}$ exists, it is unique, since $Y_2$ and $Y_{(n-2)}$ are 
monotonic increasing on values greater than 2.

We may now define the $T_{\Delta}$.  The tree $T_{\Delta}$ is undefined 
if any of the following hold:

  \begin{enumerate}

  \item $H$ is undefined;

  \item $H$ is not a tree;

  \item there exists $p_{[v]}$ in $H$ such that $pdeg_{[v]}$ is 
  undefined, or $pdeg_1$ is undefined;

  \item there exists $p_{[v]}$ in $H$ which is not a leaf and $[v]$ has 
  more than $d_{[v]}-1$ children.

  \end{enumerate}
If we are not in one of the cases where $T_{\Delta}$ is undefined, then 
$T_{\Delta}$ is the tree $H$ together with some edges added for the 
purpose of increasing the degrees of the vertices $p_{[v]}$.  Each 
distinct added edge connects a vertex $p_{[v]}$ to a distinct vertex of 
degree $1$ in $T_{\Delta}$.  We add edges so that the degree of 
$p_{[v]}$ in $T_{\Delta}$ is exactly $pdeg_{[v]}$.  The degree of $p_1$ 
in $T_{\Delta}$ is $pdeg_1$.

There is one exceptional case when defining $T_{\Delta}$.  If $n = 5$ 
and $H$ has exactly three vertices, then the graph $H'$ had exactly five 
vertices.  Thus, there were exactly two children $[v_0']$ and 
$[\hat{v}_0']$ which had exactly half the number of descendants of 
$[v_0]$.  The vertices of $H'$ are then exactly $p_1$, $p_{[v_0]}$, 
$p_{[v_0']}$, $p_{[\hat{v}_0']}$, and some vertex $p_{[w]}$.  Let $a$, 
$b$, and $c$ be positive integers greater than 2 such that
  $$Y_2(a) = |[v_0]|, \qquad Y_3(b) - Y_2(b) = |[w]|, \qquad Y_2(c) = 
  |N_w|,$$
if such integers exist.  If any of $a$, $b$, or $c$ is undefined, then 
$T_{\Delta}$ is undefined.  If $a$, $b$, and $c$ are all defined, then 
define $T_{\Delta}$ to be the linear tree having no vertices of degree 
$2$ and exactly three essential vertices $A$, $B$, and $C$, where the 
essential vertices have degrees $a$, $b$, and $c$, respectively, and $A$ 
and $C$ are extremal.

We note that, according to the definition presented here, the tree 
$T_\Delta$ may depend on the choice of $\leqn$-maximal vertex $v_0$.  
As a consequence of Theorem \ref{thm:45rigidity}, we will have that,
when $\Delta$ is the unique simplicial complex associated to $H^*(B_nT)$ 
for $T$ a tree and $n = 4$ or $5$, the tree $T_\Delta$ is independent of 
$v_0$.

  \begin{figure}[!h]
    \begin{center}
    \input{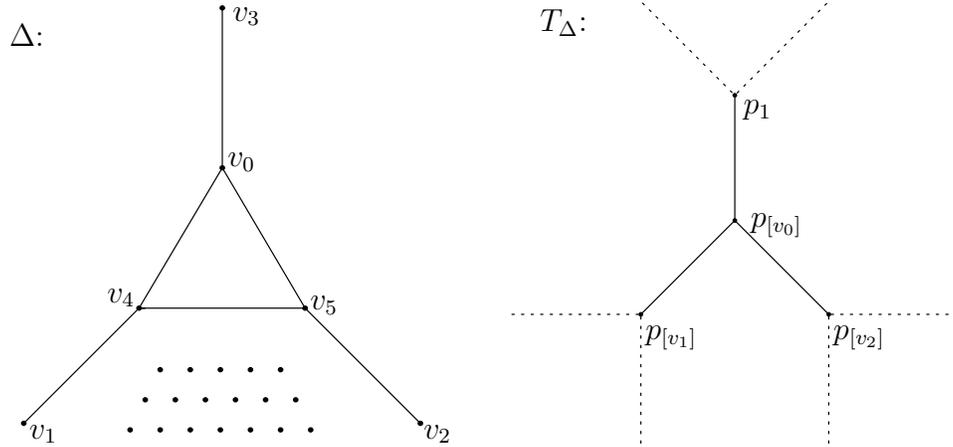}
    \end{center}

  \caption{An example calculation of the tree $T_{\Delta}$.}

  \label{fig:treeTDelta}
  \end{figure}

\begin{example}\label{ex:treeTDelta}

Consider the simplicial complex $\Delta$ shown in Figure 
\ref{fig:Deltamin}, and reproduced in Figure \ref{fig:treeTDelta}.  We 
have labelled the vertices of $\Delta$ with nontrivial neighborhoods.  
With the vertices of $\Delta$ as labelled in Figure 
\ref{fig:treeTDelta}, the vertex $v_0$ is maximal under $\leqn$ 
($N_{v_0} = \{v_1,v_4,v_5\}$).  Since $N_{v_1} = \{v_4\}$ and $N_{v_2} = 
\{v_5\}$, $[v_1]$ and $[v_2]$ are descendants of $[v_0]$.  The 
neighborhood heirarchy of $[v_0]$ is shown on the right of Figure 
\ref{fig:treeTDelta} with solid edges.  Fixing $n$ to be $4$, the tree 
$H$ constructed above is the neighborhood heirarchy.  To grow 
$T_{\Delta}$ from $H$, we compute the desired degrees in $T_{\Delta}$ of 
the vertices of $H$.  The desired degree of $p_1$ is $3$, since $Y_2(3) 
= 1 = |[v_0]|$. The desired degree of $p_{[v_0]}$ is $pdeg_{[v_0]} = 3$, 
since $p_{[v_0]}$ is not a leaf of $H$ and $Y_2(3) = 1 = |[v_1]| = 
|[v_2]|$.  The desired degree of $p_{[v_1]}$ is $pdeg_{[v_1]} = 3$, 
since $p_{[v_1]}$ is a leaf of $H$ and $Y_{(4-2)}(3) = 1 = |N_{v_1}|$. 
Finally, the desired degree of $p_{[v_2]}$ is $pdeg_{[v_2]} = 3$, since 
$p_{[v_2]}$ is a leaf of $H$ and $Y_{(4-2)}(3) = 1 = |N_{v_1}|$.  Thus, 
the tree $T_{\Delta}$ is as depicted on the right of Figure 
\ref{fig:treeTDelta}.

\end{example}

\subsection{The Four and Five Strand Case}\label{sec:45strands}

We now wish to prove the Rigidity for Tree Braid Groups in the 
restricted setting of $n = 4$ or $5$.  As usual, we need a series of 
definitions and lemmas to complete the proof.  These lemmas will 
completely describe, for a given critical cell $c$, which critical cells 
$c'$ are such that $Mc^*\cup M(c')^* \neq [0]$.  It is this information 
that we use to prove Theorem \ref{thm:45rigidity}.  All lemmas in this 
subsection will assume $n \in \{4, 5\}$.

For any critical $1$-cell $c$, define the \emph{$M$-cup neighborhood} 
$N_c$ of $c$ to be the set
  $$N_c := \{c_1 | Mc^* \cup Mc_1^* \neq [0]\}.$$
When $\Delta$ is as in Theorem \ref{thm:cohomiso} and $v = Mc^*$ is a 
vertex of $\Delta$, the vertex neighborhood $N_v$ of $v$ in $\Delta$ is 
by definition in bijective correspondence with the $M$-cup neighborhood 
of $c$.

\begin{lemma}[The Direction $d_c$]\label{lem:directiondc}

Let $c = (a,e,\vec{x})$ be a critical $1$-cell. Either the $M$-cup 
neighborhood $N_c$ of $c$ is empty or there exists a unique direction 
$d$ from $a$ such that, for every $c_1 \in N_c$, $c_1$ lies in direction 
$d_c$ from $a$.  

\end{lemma}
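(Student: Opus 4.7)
The plan is to proceed by case analysis on whether $c$ is exceptional. Cases I and II are immediate from Lemma~\ref{lem:multIbyM}: Lemma~\ref{lem:multIbyM}(1) implies every $c_1 \in N_c$ has its essential vertex in direction $d_0$ from $a$ when $c$ is exceptional of Type I, and Lemma~\ref{lem:multIbyM}(2) does the same with direction $d_1$ when $c$ is exceptional of Type II. So I take $d_c := d_0$ or $d_1$ respectively.

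For the remaining cases ($c$ exceptional of Type III or non-exceptional), I will split $N_c$ by the exceptional type of $c_1$, using graded commutativity of the cup product over $\mathbb{Z}/2\mathbb{Z}$. If $c_1 \in N_c$ is itself exceptional of Type I or II, then applying Lemma~\ref{lem:multIbyM} to $c_1$ (with the roles of the two cells swapped) forces $a$ to lie in a positive direction from $a_1$; by the tree structure, this means $a_1$ lies in direction $0$ from $a$. So every such $c_1$ contributes the direction $0$. If $c_1$ is non-exceptional or Type III, then Corollary~\ref{cor:multbyM}(1) (applied via whichever of $c \leq_r c_1$ or $c_1 \leq_r c$ holds) requires $[c]$ and $[c_1]$ to have a common upper bound whose reduced representative is critical.

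For those non-exceptional $c_1$ contributions, the Upper Bound Lemma (Lemma~\ref{lem:upperbound}) constrains the direction from $a$ to $a_1$: either direction $0$ (when $a_1 < a$, requiring $x_0 \geq 2$) or some direction $\alpha > 0$ (requiring $x_\alpha \geq 2 + \epsilon(c,\alpha)$). I then establish uniqueness of this direction by a combinatorial analysis leveraging $|\vec{x}| = n \in \{4, 5\}$, criticality of $c$ (which gives $x_d \geq 1$ and $x_i > 0$ for some $0 < i < d$), and Lemma~\ref{lem:x_0} (which gives $y_0 \leq n - 2$ for any non-extraneous reduced $c_1$). The restriction $n \leq 5$ keeps the possible configurations of $\vec{x}$ finite and tractable; for each candidate direction $\alpha$, either the upper bound condition itself or the criticality criterion from Lemma~\ref{lem:upperbound}(4) eliminates it, leaving at most one direction.

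The main obstacle I expect is ruling out co-occurrence of direction $0$ contributions and direction $\alpha > 0$ contributions. I plan to handle this by showing that $x_0 \geq 2$ together with $x_\alpha \geq 2 + \epsilon$ for some $\alpha > 0$, combined with the criticality of $c$ and $|\vec{x}| = n \leq 5$, forces $\vec{x}$ into a narrow form (with $0 < \alpha < d$, $x_0 = x_\alpha = 2$, $x_d = 1$, and all other entries zero). In this form, Lemma~\ref{lem:upperbound}(4)(a) requires $y_0 \geq n - 1$ for the upper bound with a direction-$\alpha$ neighbor to be critical, contradicting Lemma~\ref{lem:x_0}. Consequently no direction-$\alpha$ cell can appear in $N_c$, and $d_c = 0$ is the unique direction.
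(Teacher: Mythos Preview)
Your proposal is correct and relies on the same three tools as the paper's proof: the Upper Bound Lemma, Corollary~\ref{cor:multbyM}, and Lemma~\ref{lem:multIbyM}. The organization, however, is different. The paper argues by direct contradiction: assuming two neighbors $c_1, c_2 \in N_c$ lie in distinct directions $d_1, d_2$ from $a$, it uses the Upper Bound Lemma and Corollary~\ref{cor:multbyM} to force $x_{d_1} = x_{d_2} = 2$ with $0 < d_1, d_2 < d$, which makes $c$ exceptional of Type~II and then contradicts Lemma~\ref{lem:multIbyM}. You instead dispatch Types~I and~II for $c$ up front via Lemma~\ref{lem:multIbyM}, and only afterward run the combinatorial analysis on $\vec{x}$; you also split on the exceptional type of $c_1$ in order to justify invoking case~(1) of Corollary~\ref{cor:multbyM}. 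Your route makes the case structure, and in particular the direction-$0$ case, more explicit (the paper handles that case rather tersely), at the cost of more bookkeeping. The paper's route is shorter and discovers the exceptional type of $c$ as a conclusion rather than imposing it as a hypothesis.
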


The direction $d_c$ is called the \emph{CUB (Critical Upper Bound) 
direction} for $c$.

\begin{proof} 

This lemma follows from repeated application of the Upper Bound Lemma, 
Lemma \ref{lem:upperbound}.  Throughout this proof, we use the Upper 
Bound Lemma, usually without reference.

Assume the converse: that $N_c$ is nonempty and there are critical 
$1$-cells $c_1, c_2 \in N_c$ such that the directions $d_1$ and $d_2$ 
from $a$ toward $c_1$ and $c_2$, respectively, are distinct.  By 
definition, we know $x_d \geq 1$.  We have that $x_{d_1}, x_{d_2} \geq 
2$.  Also, if $d_1 = d$ then $x_{d_1} \geq 3$, while if $d_2 = d$, 
$x_{d_2} \geq 3$.  If $x_{d_1} \geq 3$ and $d_2 \neq d$, then $x_{d_2} 
\leq 1$, a contradiction. If $x_{d_2} \geq 3$ while instead $d_2 = d$, 
then $x_{d_1} \leq 2$, again a contradiction.  Thus, $x_{d_1} < 3$.  
Since $2 \leq x_{d_1} < 3$, $x_{d_1} = 2$, and so $d_1 \neq d$.  
Similarly, $x_{d_2} = 2$ and $d_2 \neq d$.

Consider if $d_1 = 0$.  Then $c$ is not exceptional of Type I. Let 
$[s_2]$ be the least upper bound of $[c]$ and $[c_2]$, and let $s_2$ be 
the reduced representative of $[s_2]$.  By Corollary \ref{cor:multbyM}, 
$s_2$ exists and is critical.  But by the Upper Bound Lemma, $e$ is 
respectful in $s_2$, a contradiction.  Thus, $d_1 \neq 0$.  Similarly, 
$d_2 \neq 0$.

If $d_1 > d$, note $c$ is not exceptional of Type I.  Let $[s_2]$ be the 
least upper bound of $[c]$ and $[c_2]$, and let $s_2$ be the reduced 
representative of $[s_2]$.  By Corollary \ref{cor:multbyM}, $s_2$ exists 
and is critical.  but by the Upper Bound Lemma, $e$ is respectful in 
$s_2$, a contradiction.  Thus, $d_1 < d$.  Similarly, $d_2 < d$.

The only possibility left is that $c$ is exceptional of Type II.  The 
existence of both $c_1$ and $c_2$ contradicts Lemma \ref{lem:multIbyM}.  
This finishes the proof.

\end{proof}

Consider a critical $1$-cell $c = (a,d,\vec{x})$ with nonempty $M$-cup 
neighborhood $N_c$, and $d_c$ as in Lemma \ref{lem:directiondc}.  Just 
as the CUB direction $d_c$ helps determine $N_c$, there is a number, 
$CUB(c)$, that we may associate to $c$ to help determine $N_c$.  Before 
defining $CUB(c)$, we need the following definition. Let 
$\epsilon_c(d')$ be the \emph{cup constant} of $c$ in direction $d'$, 
associated to $c$ as follows.  If $d' = 0$ or $d' > d$, define 
$\epsilon_c(d') := 0$.  If $0 < d' < d$ and there is some index $i$ with 
$0 < i < d$ and $i \neq d'$ with $x_i > 0$, define $\epsilon_c(d') := 
0$.  If $n = 5$ and $c$ is exceptional of Type I, define $\epsilon_c(d') 
:= 0$.  Otherwise, define $\epsilon_c(d') := 1$. Note that, unless $n = 
5$ and $c$ is exceptional of Type I, if the upper bound constant 
$\epsilon(c,d')$ for $c$ in direction $d'$ is $1$, then the cup constant 
$\epsilon_c(d')$ for $c$ in direction $d'$ is also $1$. For convenience, 
we let $\epsilon_c := \epsilon_c(d_c)$. Then, the number $x_{d_c} - 
\epsilon_c$ is denoted $CUB(c)$, and is the \emph{CUB (Critical Upper 
Bound) number} for $c$.

\begin{lemma}[A Bound on $CUB(c)$]\label{lem:CUBbound}

For a critical $1$-cell $c$ with nonempty $M$-cup neighborhood, $2 \leq 
CUB(c) \leq n - 2$.

\end{lemma}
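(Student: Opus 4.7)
The plan is to pick any $c_1 = (a_1, d_1, \vec{y}) \in N_c$ and translate the hypothesis $Mc^* \cup Mc_1^* \neq [0]$, via Corollary \ref{cor:multbyM}, into combinatorial constraints on $\vec{x}$: $[c]$ and $[c_1]$ have a least upper bound $[s]$ with reduced representative $s$, and $s$ is critical except in the Type I case. The two bounds are proved separately, using the Upper Bound Lemma (Lemma \ref{lem:upperbound}) and Lemma \ref{lem:x_0} as the primary inputs. A preliminary observation simplifies the case analysis: Type I forces $d_c = dir1 \notin \{0, d\}$ and Type II forces $d_c = dir2 \notin \{0, d\}$, so the extreme values of $d_c$ automatically exclude both exceptional types; moreover $\epsilon_c = 0$ in both exceptional cases, so Upper Bound Lemma(3) ($x_{d_c} \geq 2 + \epsilon(c, d_c)$) alone handles them for the lower bound.

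For the upper bound $CUB(c) \leq n - 2$, I argue by cases on $d_c$. Criticality of $c$ gives $x_d \geq 1$ (edge) and some $x_{d''} \geq 1$ with $0 < d'' < d$ (edge disrespectful); Lemma \ref{lem:x_0} gives $x_0 \leq n - 2$. When $\epsilon_c = 0$, each regime of $d_c$ directly bounds $x_{d_c} \leq n - 2$ via $|\vec{x}| = n$ (for instance $d_c > d$ gives $x_{d_c} \leq n - x_d - x_{d''}$; Type I uses the structure $x_{dir1} + x_{dir2} + x_{dir3} = 5$ with $x_{dir2} \geq 2$ and $x_{dir3} \geq 1$). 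When $\epsilon_c = 1$, the same kind of count only yields $x_{d_c} \leq n - 1$, but subtracting $\epsilon_c = 1$ still gives $CUB(c) \leq n - 2$.

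For the lower bound when $d_c > 0$, Upper Bound Lemma(3) supplies $x_{d_c} \geq 2 + \epsilon(c, d_c)$, which suffices as long as $\epsilon_c \leq \epsilon(c, d_c)$. The mismatch case $\epsilon_c = 1 > \epsilon(c, d_c) = 0$ occurs exactly when $0 < d_c < d$, $\vec{x}$ has no mass in any other positive direction less than $d$, and $c$ is non-exceptional. Since $c$ is not Type I or II here, Corollary \ref{cor:multbyM}(1) gives $s$ critical, and Lemma \ref{lem:upperbound}(4) applied to the edge $e$ of $c$ forces condition (a): $x_{d_c} + y_0 > n$. Combining with $y_0 \leq n - 2$ (Lemma \ref{lem:x_0}) gives $x_{d_c} \geq 3 = 2 + \epsilon_c$, closing the gap.

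The main obstacle is the case $d_c = 0$, where $\epsilon_c = 0$ but the Upper Bound Lemma's lower bound is on $y_\beta$ (with $\beta$ the direction from $a_1$ to $a$), not on $x_0$. Here $c$ is not exceptional, so $s$ is critical. If $y_\beta \leq n - 2$, Upper Bound Lemma(2) immediately gives $x_0 \geq n - y_\beta \geq 2$. The borderline case $y_\beta = n - 1$ forces $y_{d_1} = 1$ and $y_i = 0$ otherwise; $c_1$ critical then requires $\beta < d_1$ (else $e_1$ could not be disrespectful in $c_1$), so condition (b) of Lemma \ref{lem:upperbound}(4) fails in $s$ and criticality of $s$ forces (a) $y_\beta + x_0 > n$, giving $x_0 \geq 2$. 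The extreme $y_\beta = n$ is ruled out since $y_{d_1} \geq 1$ and $\beta = d_1$ would require $n = 2$.
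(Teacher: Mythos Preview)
Your argument is correct and uses the same ingredients as the paper—the Upper Bound Lemma together with Corollary~\ref{cor:multbyM} to compare $x_{d_c}$ against $2+\epsilon_c$ and $n-2+\epsilon_c$. Your case analysis is more explicit than the paper's compressed version; in particular your separate treatment of the case $d_c = 0$ (where $c$ may lie over the larger of the two essential vertices, so part~(3) of Lemma~\ref{lem:upperbound} does not directly bound $x_{d_c}$) fills in a step that the paper's one-line invocation of the Upper Bound Lemma glosses over.
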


\begin{proof}

If $c$ is exceptional of Type I, then $CUB(c) = 2$ and there is nothing 
to prove.  So, assume $c$ is not exceptional of Type I. Let $c = 
(a,d,\vec{x})$.  By the Upper Bound Lemma, $x_{d_c} \geq 2 + 
\epsilon(c,d_c)$.  Since $c$ has non-empty $M$-cup neighborhood, this 
inequality is strict if there exists no index $i \neq d_c$ such that $0 
< i < d$ and $x_i > 0$, by Corollary \ref{cor:multbyM}.  But by the 
definition of $\epsilon_c$, this means $x_{d_c} \geq 2 + \epsilon_c$. 
Thus,

  $$CUB(c) = x_{d_c} - \epsilon_c \geq 2.$$

Since $c$ is critical, there exist at least two nonzero coordinates of 
$\vec{x}$, so $x_{d_c} \leq n - 1$.  If $x_{d_c} < n - 1$, there is 
nothing to prove, so consider if $x_{d_c} = n - 1$.  If $d_c = 0$ 
or $d_c > d$, then $c$ is not critical, so it must be that $0 < d_c \leq 
d$. If $d_c = d$, then $\epsilon_c = 1$.  If $0 < d_c < d$, then there 
exists no index $j\neq d_c$ with $0 < j < d$ and $x_j > 0$, so again 
$\epsilon_c = 1$.  Either way,
  $$CUB(c) = x_{d_c} - \epsilon_c \leq n-1-1 = n - 2.$$

\end{proof}

The CUB direction and number determine $N_c$ in the following sense:

\begin{lemma}[The Structure of Neighborhoods]\label{lem:Ncstructure}

Let $c = (a,e,\vec{x})$ and $c' = (a',e',\vec{x'})$ be critical 
$1$-cells.  Then $c \in N_{c'}$ and $c' \in N_c$ if and only if:

  \begin{enumerate}

  \item $a \neq a'$,

  \item $a'$ lies in direction $d_c$ from $a$,

  \item $a$ lies in direction $d_{c'}$ from $a'$, and

  \item $CUB(c) + CUB(c') \geq n$.

  \end{enumerate}

\end{lemma}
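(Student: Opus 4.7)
The plan is to establish each direction of the biconditional by translating the condition $Mc^* \cup M(c')^* \neq [0]$ into statements about least upper bounds via Corollary \ref{cor:multbyM}, and then into inequalities on entries of the vectors $\vec{x}$ and $\vec{x'}$ via the Upper Bound Lemma (Lemma \ref{lem:upperbound}). The definitions of the CUB direction $d_c$ and the cup constant $\epsilon_c$, and hence of $CUB(c) = x_{d_c} - \epsilon_c$, were tailored precisely so that the single condition $CUB(c) + CUB(c') \geq n$ simultaneously captures the existence of a least upper bound $[s]$ of $\{[c],[c']\}$ and the criticality (or, in the exceptional Type I case, the non-criticality) of its reduced representative.

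For the forward direction, assume $c \in N_{c'}$ and $c' \in N_c$. Corollary \ref{cor:multbyM} produces a least upper bound $[s]$ of $\{[c],[c']\}$, from which (1) follows by the Upper Bound Lemma. Since both $c' \in N_c$ and $c \in N_{c'}$, applying Lemma \ref{lem:directiondc} twice yields (2) and (3). Assume without loss of generality that $a < a'$ in the order on vertices; then part (1) of the Upper Bound Lemma forces the direction from $a'$ to $a$ to be $0$, hence $d_{c'} = 0$ and $d_c = \alpha$, the nonzero direction from $a$ to $a'$. I then perform a case split on $d_c$ (namely $d_c > d$, $d_c = d$, and $0 < d_c < d$ with subcases according to whether some other nonzero $x_i$ with $0 < i < d$ exists), combining the Upper Bound Lemma inequality $x_{d_c} + y_0 \geq n + \epsilon(c, d_c)$ with its part (4) criterion for $e$ to be disrespectful in $s$. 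In each case, the identities $x_{d_c} = CUB(c) + \epsilon_c$ and $y_0 = CUB(c')$ reduce the combined conditions to $CUB(c) + CUB(c') \geq n$. The Type I and Type II exceptional cases are treated separately by invoking Lemma \ref{lem:multIbyM}, which, together with the constraint $|\vec{x}| = n = 5$, pins down $\vec{x}$ enough to verify (4) by direct computation (in particular, for Type I one gets $CUB(c) = 2$ and is forced into $CUB(c') = 3$).

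For the reverse direction, assume (1)-(4) and, again without loss of generality, $a < a'$. Then (2)-(3) combined with the Upper Bound Lemma part (1) force $d_{c'} = 0$ and $d_c = \alpha$. Translating (4) back via $x_{d_c} = CUB(c) + \epsilon_c$ and $y_0 = CUB(c')$ yields the Upper Bound Lemma inequality $x_{d_c} + y_0 \geq n + \epsilon(c,d_c)$, producing a least upper bound $[s]$ of $\{[c],[c']\}$. A final case analysis on $\epsilon_c$ (matching the one in the forward direction) shows that the reduced representative $s$ is critical in every case except when $c$ is exceptional of Type I, in which case equality $CUB(c) + CUB(c') = n$ (forced by Lemma \ref{lem:CUBbound}) makes $s$ non-critical, exactly matching the requirement of Corollary \ref{cor:multbyM}(2); the Type II case uses condition (2) and the position of $d_c$ to exclude $\alpha$ from being the smallest nonzero direction of $\vec{x}$, as required by Corollary \ref{cor:multbyM}(3). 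Applying Corollary \ref{cor:multbyM} then yields $Mc^* \cup M(c')^* \neq [0]$.

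The main obstacle is the exceptional case analysis: in Type I the criticality requirement on $s$ flips, in Type II an extra restriction on the direction $\alpha$ appears, and the definitions of $\epsilon_c$ were engineered to absorb both complications into the single inequality (4). Verifying this engineering across the several subcases -- standard with $d_c$ above, at, or below $d$, plus Type I and Type II -- is where the bulk of the careful bookkeeping lies, and it is essentially a case-by-case cross-check between the definitions of $\epsilon_c$, the Upper Bound Lemma's inequality and disrespectfulness criterion, and the criticality conditions in Corollary \ref{cor:multbyM}.
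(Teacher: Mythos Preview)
Your proposal is correct and follows essentially the same route as the paper's proof: both directions hinge on Lemma~\ref{lem:directiondc}, the Upper Bound Lemma, and Corollary~\ref{cor:multbyM}, with the identities $x_{d_c} = CUB(c) + \epsilon_c$ and $x'_{0} = CUB(c')$ bridging condition~(4) and the upper-bound inequality. Your write-up is in fact more explicit than the paper's: the paper compresses the entire forward direction into a single sentence citing Lemma~\ref{lem:directiondc} and the Upper Bound Lemma, whereas you spell out the case split on the position of $d_c$ relative to $d$ (which is genuinely needed, since in the subcase $0 < d_c < d$ with no other nonzero $x_i$ one must invoke criticality of $s$ via Corollary~\ref{cor:multbyM} to upgrade $x_{d_c} + x'_0 \geq n$ to a strict inequality). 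Similarly, in the reverse direction you explicitly check the extra Type~II hypothesis of Corollary~\ref{cor:multbyM}(3), which the paper leaves to the reader.
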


\begin{proof}

Note $c \in N_{c'}$ if and only if $c' \in N_c$ by definition.  If $c 
\in N_{c'}$ and $c' \in N_c$, then the first three conditions follow 
from Lemma \ref{lem:directiondc} and the fourth condition from the Upper 
Bound Lemma, Lemma \ref{lem:upperbound}.

Now consider if the four conditions hold.  That $[c]$ and $[c']$ have an 
upper bound $[s]$ follows from the Upper Bound Lemma.  Let $s$ be the 
reduced representative of $[s]$.  We claim that either $s$ is critical 
or the $\leqn$-smaller of $c$ and $c'$ is exceptional of Type I.  If the 
claim holds, then by Corollary \ref{cor:multbyM}, $c \in N_{c'}$ and $c' 
\in N_c$. If $s$ is not critical, then one of $e$ or $e'$ is respectful 
in $s$.  Without loss of generality, assume $e$ is respectful in $s$.  
By the Upper Bound Lemma, $d_{c'} = 0$ and $d_c$ satisfies $0 < d_c < 
dir$, where $dir$ is the direction from $a$ along $e$.  Futhermore, $x_i 
= 0$ for every $i \neq d_c$ satisfying $0 < i < dir$, and $x_{d_c} + 
x'_0 = n$.  By definition, $\epsilon_{c'}(d_{c'}) = 0$.  Since
  $$n - \epsilon_c = (x_{d_c} - \epsilon_c) + (x'_0 - \epsilon_c') = 
  CUB(c) + CUB(c') \geq n,$$
it follows that $\epsilon_c = 0$.  By definition, it must be that $c$ is 
exceptional of Type I.  This proves the claim.

\end{proof}

\begin{corollary}[Neighborhood Containment]\label{cor:subequalneighborhoods}

Let $c = (a,d,\vec{x})$ and $c' = (a',d',\vec{x'}$ be critical $1$-cells 
with nonempty $M$-cup neighborhoods.  Then $N_{c'} \subseteq N_c$ if and 
only if $CUB(c') \leq CUB(c)$ and either:

  \begin{enumerate}

  \item $a = a'$ and $d_c = d_{c'}$, or

  \item $a \neq a'$ and:

    \begin{enumerate}

    \item $a'$ is in direction $d_c$ from $a$ and

    \item $a$ is not in direction $d_{c'}$ from $a'$.

    \end{enumerate}

  \end{enumerate}

\end{corollary}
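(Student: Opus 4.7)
The strategy is to reduce both directions of the biconditional to Lemma \ref{lem:Ncstructure} by analyzing $c_1 \in N_{c'}$ pointwise: the backward direction by direct verification using tree geometry, and the forward direction by a contrapositive construction of a witness $c_1 \in N_{c'} \setminus N_c$.

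For the reverse direction, fix $c_1 = (a_1, e_1, \vec{x}_1) \in N_{c'}$ and apply Lemma \ref{lem:Ncstructure} to the pair $(c', c_1)$ to obtain $a_1 \neq a'$, $a_1$ in direction $d_{c'}$ from $a'$, $a'$ in direction $d_{c_1}$ from $a_1$, and $CUB(c') + CUB(c_1) \geq n$. In Case (1), where $a = a'$ and $d_c = d_{c'}$, these four conditions transfer verbatim to $(c, c_1)$, and the $CUB$ inequality follows from $CUB(c) \geq CUB(c')$. In Case (2), the key geometric observation is that (2)(a) and (2)(b) together place $a$ and $a_1$ in distinct connected components of $T \setminus \{a'\}$, so the unique simple path from $a$ to $a_1$ must cross $a'$. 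This forces the direction from $a$ to $a_1$ to equal that from $a$ to $a'$, namely $d_c$, and the direction from $a_1$ to $a$ to equal that from $a_1$ to $a'$, namely $d_{c_1}$, completing the verification that $c_1 \in N_c$.

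For the forward direction, I would argue contrapositively: if $CUB(c') > CUB(c)$, or neither Case (1) nor Case (2) holds, then I construct a critical $c_1 \in N_{c'}$ that fails at least one of the four conditions for membership in $N_c$, contradicting $N_{c'} \subseteq N_c$. When $CUB(c') > CUB(c)$, a $c_1 \in N_{c'}$ of minimal value $CUB(c_1) = n - CUB(c')$ violates $CUB(c) + CUB(c_1) \geq n$. When $a = a'$ but $d_c \neq d_{c'}$, any $c_1 \in N_{c'}$ has $a_1$ in direction $d_{c'} \neq d_c$ from the common vertex. When $a \neq a'$ and $a'$ is not in direction $d_c$ from $a$, a $c_1$ with $a_1$ close to $a'$ in direction $d_{c'}$ lies in a direction from $a$ different from $d_c$. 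When $a \neq a'$ and $a$ is in direction $d_{c'}$ from $a'$, a witness with base $a_1 = a$ (and CUB direction pointing back toward $a'$) automatically fails the $a_1 \neq a$ requirement for membership in $N_c$.

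The main obstacle is establishing the existence of these witness critical cells $c_1$ with prescribed base vertex, CUB direction, and CUB value. Producing such a cell requires assembling a valid $a_1$-vector $\vec{x}_1$ satisfying the criticality and essential-terminal-endpoint conditions and respecting the $CUB$ bounds of Lemma \ref{lem:CUBbound}. This should go through using the sufficient-subdivision hypothesis on $T$ together with the presence of essential vertices in the requisite directions, but will likely require careful subcasing, especially around the exceptional cell Types I, II, and III when $n = 5$.
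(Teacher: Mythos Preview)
Your approach matches the paper's: both reduce the corollary to Lemma~\ref{lem:Ncstructure}. The paper's proof is a single sentence---``These conditions are equivalent to the statement `if $c_0 \in N_{c'}$ then $c_0 \in N_c$', by Lemma~\ref{lem:Ncstructure}''---and you have correctly unpacked what that sentence actually entails. Your treatment of the reverse direction (Cases (1) and (2) via tree-median geometry) is exactly the verification the paper leaves implicit.

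For the forward direction you are right that witness construction is the content, and right that the paper glosses over it. Your worry about this being a major obstacle is somewhat overstated, however. Since $n \in \{4,5\}$ and $N_{c'} \neq \emptyset$, the required witnesses are easy to write down by hand: there is an essential vertex in direction $d_{c'}$ from $a'$, and over any essential vertex one can directly build a critical $1$-cell with prescribed CUB direction and any CUB number in $\{2,\dots,n-2\}$ (this set has at most two elements). The exceptional Types I--III do not interfere, because what you need is existence, and the bijection underlying the Counting Lemma (Lemma~\ref{lem:counting}, which appears just after this corollary) furnishes the cells explicitly. So your plan is sound; the ``careful subcasing'' you anticipate collapses to a short explicit construction once you use $n \le 5$.
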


\begin{proof}

These conditions are equivalent to the statement ``if $c_0 \in N_{c'}$ 
then $c_0 \in N_c$", by Lemma \ref{lem:Ncstructure}.

\end{proof}

\begin{corollary}[Neighborhood Equality]\label{cor:equalneighborhoods}

Let $c = (a,d,\vec{x})$ and $c' = (a',d', \vec{x'})$ be critical 
$1$-cells with nonempty $M$-cup neighborhoods.  Then $N_c = N_{c'}$ if 
and only if 

  \begin{enumerate}

  \item $a = a'$,

  \item $d_c = d_{c'}$, and

  \item $CUB(c) = CUB(c')$.\qed

  \end{enumerate}

\end{corollary}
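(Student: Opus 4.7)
The plan is to deduce this corollary as an immediate consequence of Corollary \ref{cor:subequalneighborhoods} (Neighborhood Containment), by applying the containment criterion in both directions and observing that the two cases of that corollary, when combined, collapse to the three conditions stated.

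For the backward implication, I would assume (1), (2), and (3). Since $a = a'$ and $d_c = d_{c'}$, we fall into case (1) of Corollary \ref{cor:subequalneighborhoods}, and the equality $CUB(c) = CUB(c')$ yields both $CUB(c) \leq CUB(c')$ and $CUB(c') \leq CUB(c)$. Therefore both $N_c \subseteq N_{c'}$ and $N_{c'} \subseteq N_c$ hold, and $N_c = N_{c'}$.

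For the forward implication, I would assume $N_c = N_{c'}$ and apply Corollary \ref{cor:subequalneighborhoods} to each of the containments $N_c \subseteq N_{c'}$ and $N_{c'} \subseteq N_c$. The key observation, which is the only nontrivial step, is that case (2) of that corollary cannot hold simultaneously for both containments: if $a \neq a'$, then applying case (2) to $N_{c'} \subseteq N_c$ forces $a$ to lie in direction $d_{c'}$ from $a'$, while applying case (2) to $N_c \subseteq N_{c'}$ forces $a$ \emph{not} to lie in direction $d_{c'}$ from $a'$, which is a contradiction. Hence we must have $a = a'$, i.e.\ case (1) of Corollary \ref{cor:subequalneighborhoods} applies for both containments. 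This immediately yields $d_c = d_{c'}$ together with the two inequalities $CUB(c) \leq CUB(c')$ and $CUB(c') \leq CUB(c)$, giving (3).

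Since both directions reduce mechanically to the previously established containment corollary, there is no real obstacle here; the only point requiring care is the symmetry argument ruling out the case $a \neq a'$. Consequently, I would keep the write-up short: one paragraph for each direction, with the bulk devoted to the contradiction that forces $a = a'$ in the forward direction.
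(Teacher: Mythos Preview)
Your approach is exactly that of the paper: reduce $N_c = N_{c'}$ to the two containments and apply Corollary \ref{cor:subequalneighborhoods} in each direction, ruling out the $a \neq a'$ case by the resulting contradiction. One small slip to fix in your write-up: you have the two containments swapped in the contradiction step. Case (2) of Corollary \ref{cor:subequalneighborhoods} applied to $N_{c'} \subseteq N_c$ says $a$ is \emph{not} in direction $d_{c'}$ from $a'$, while applied to $N_c \subseteq N_{c'}$ (with the roles of $c$ and $c'$ interchanged) it says $a$ \emph{is} in direction $d_{c'}$ from $a'$; the contradiction and the remainder of your argument are unaffected.
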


\begin{proof}

That $N_c = N_{c'}$ is equivalent to $N_c \subseteq N_{c'}$ and $N_{c'} 
\subseteq N_c$.  The result then follows from applying Corollary 
\ref{cor:subequalneighborhoods} twice.

\end{proof}

This lemma and its two corollaries almost completely describe the 
structure of the $M$-cup neighborhoods as well as their behavior under 
inclusion.  The final two ingredients for the proof of Theorem 
\ref{thm:45rigidity} are to count exactly how many critical $1$-cells 
have a given neighborhood, and describe what kinds of critical $1$-cells 
have neighborhoods which are maximal under inclusion.  We accomplish 
these tasks in the following two results.

\begin{lemma}[Counting]\label{lem:counting}

For any essential vertex $a$, there exist exactly $Y_n(deg(a))$ critical 
$1$-cells lying over $a$.  Let $d$ be a direction from $a$ such that 
there exists some essential vertex besides $a$ itself in direction $d$ 
from $a$.  Let $k$ be an integer satisfying $2 \leq k \leq n-2$.  Then 
there exists exactly $Y_{n-k}(deg(a))$ critical $1$-cells lying over $a$ 
with CUB direction $d$ and CUB number at least $k$.

\end{lemma}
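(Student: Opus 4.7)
The plan is to reduce both parts to a combinatorial enumeration over pairs $(dir, \vec{x})$ parametrizing critical $1$-cells lying over $a$: here $dir \in \{2, \ldots, \deg(a) - 1\}$ is the edge direction, $\vec{x}$ is an $a$-vector with $|\vec{x}| = n$ and $x_{dir} \geq 1$, and the edge is disrespectful (hence the cell critical) iff $x_i > 0$ for some $0 < i < dir$.

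\textbf{Part~1} will follow by inclusion-exclusion for each $dir$: the $\vec{x}$ with $|\vec{x}| = n$ and $x_{dir} \geq 1$ number $\binom{n + \deg(a) - 2}{\deg(a) - 1}$, and the subset with $x_i = 0$ for all $0 < i < dir$ numbers $\binom{n + \deg(a) - dir - 1}{\deg(a) - dir}$. Summing the differences over $dir \in \{2, \ldots, \deg(a) - 1\}$ and rewriting the binomials via $\binom{a}{\deg(a) - 1} = \binom{a}{n - 1}$ recovers the defining expression for $Y_n(\deg(a))$.

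For \textbf{Part~2}, I will restrict the same enumeration to cells with CUB direction $d$ and CUB number at least $k$. Since $k \geq 2$ and an essential vertex sits in direction $d$, Lemma~\ref{lem:directiondc} guarantees that the numerical condition $x_d - \epsilon_c(d) \geq k$ makes $d$ the CUB direction for any non-exceptional cell. I split by the relationship between $d$ and $dir$ (the cases $d = 0$, $0 < d < dir$, $d = dir$, $d > dir$) and, within $0 < d < dir$, by whether $d$ is the unique index in $(0, dir)$ with $x_d > 0$ (which determines $\epsilon_c(d) = 1$ or $0$). In each subcase the substitution $y_d := x_d - k$ or $y_d := x_d - k - 1$ converts the count into a standard stars-and-bars expression, and a single application of Pascal's identity collapses the two subcases of $0 < d < dir$ into the clean per $dir$ formula $R(dir) := \binom{n - k + \deg(a) - 2}{\deg(a) - 1} - \binom{n - k + \deg(a) - dir - 1}{\deg(a) - dir}$; summation over $dir$ yields $Y_{n-k}(\deg(a))$.

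\textbf{The main obstacle} is the contribution of Type~I and Type~II exceptional cells (which arise only when $n = 5$). For a Type~I cell $c$, the values $\epsilon_c(\cdot)$ are overridden to $0$ and $x_{dir1} = 2$, so $c$ has CUB number exactly $2$ but $x_{dir1} < k + 1 = 3$; hence $c$ is \emph{missed} by the naive enumeration at $d = dir1$ and $k = 2$. Dually, for the corresponding Type~II cell $c'$ (having the same $a$-vector as $c$ but with edge direction $dir3$ rather than $dir2$), the numerical test $x_{dir1} - \epsilon_{c'}(dir1) = 2 \geq 2$ \emph{is} satisfied, so $c'$ is wrongly captured in the enumeration for $d = dir1$; yet Lemma~\ref{lem:multIbyM} declares the true CUB direction of $c'$ to be $dir2$. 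The Type~I/Type~II correspondence (which pairs $c$ and $c'$ bijectively) makes the missing Type~I contribution and the spurious Type~II contribution cancel exactly at $d = dir1$, while at $d = dir2$ no correction is needed: Type~II is correctly captured via the standard $\epsilon = 0$ subcase, and Type~I (with CUB direction $dir1 \neq dir2$) is correctly excluded. This cancellation is the delicate point of the proof; once verified, the total count is $Y_{n-k}(\deg(a))$ in all cases.
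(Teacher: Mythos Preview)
Your argument is correct but takes a genuinely different route from the paper's. The paper proves the first statement by observing that the data determining a critical $1$-cell over $a$ is purely local, so there is a bijection with critical $1$-cells of the radial tree of the same degree, and then invokes Theorem~\ref{thm:radial} directly. For the second statement the paper builds an explicit bijection: given $c$ with $d_c = d$ and $CUB(c) \geq k$, it deletes $k$ strands in direction $d$ (with a direction swap when $c$ is Type~I) to produce a critical $(n-k)$-strand cell over $a$; the inverse adds $k$ strands back (with a swap when the result would be Type~II). The count $Y_{n-k}(\deg(a))$ then falls out of the first statement applied to $n-k$ strands.

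Your approach instead enumerates directly by stars-and-bars, case-splitting on the relation between the target direction $d$ and the edge direction $dir$, and collapsing subcases via Pascal's identity to the uniform per-$dir$ contribution $R(dir)$; summing recovers $Y_{n-k}(\deg(a))$. Your Type~I/Type~II cancellation (the Type~I miss and the Type~II spurious capture at $d = dir1$, with no correction needed at $d = dir2$) is exactly right and plays the same role as the paper's direction swap in the bijection. What the paper's bijective proof buys is conceptual cleanliness and no dependence on the explicit form of $Y_m$; what your direct count buys is an explicit per-$dir$ formula and an argument that never leaves the $n$-strand setting. Both approaches ultimately rely on the Type~I/Type~II correspondence for the delicate step.
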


\begin{proof}

By Definition \ref{def:reducednotation} and the preceding discussion, a 
critical $1$-cell $c$ is determined by: an essential vertex $a$, an edge 
$e$ with terminal endpoint $a$, an $a$-vector $\vec{x}$, and the 
condition that at least one vertex of $c$ be blocked by $e$ at $a$.  All 
of these properties are local properties, and do not depend on $T$ or on 
the existence or nonexistence of any other essential vertices in $T$.  
Thus, there is a bijection between critical $1$-cells lying over $a$ and 
critical $1$-cells in a radial tree whose essential vertex has degree 
$deg(a)$.  The first statement of the lemma then follows from the Radial 
Rank Theorem, Theorem \ref{thm:radial}.

We now prove the second statement of the lemma.  If $c = 
(a,dir,\vec{x})$ is a critical $1$-cell lying over $a$ with $d_c = d$ 
and $CUB(c) \geq k$, then $x_d \geq k + \epsilon_c$.  Thus, $c$ has at 
least $k$ vertices (not including the edge $e$) in direction $d$ from 
$a$.  Let $c'$ be the reduced $1$-cell corresponding to deleting $k$ 
vertices from $c$ in direction $d$ from $a$.  That is, $c' := 
(a,dir,\vec{x'})$, where $\vec{x'}$ equals $\vec{x}$ in all but the 
$d^{th}$ coordinate, and $x'_d = x_d - k$.  We make one exception: if 
$c$ is exceptional of Type I, define $c' := (a,dir',\vec{x'})$, where 
$dir' > dir$ is the direction towards the edge of the exceptional 
$1$-cell of Type II associated to $c$.

We claim $c'$ is critical.  For, if $c'$ is not critical, then it must 
be that $0 < d < dir$, $x_d = k$, and $x_i = 0$ for all $0 < i < dir$, 
$i \neq d$.  Since $x_d \geq k + \epsilon_c$, it must be that 
$\epsilon_c = 0$.  By the definition of $\epsilon_c$, it follows that 
$c$ is exceptional of Type I.  But when $c$ is exceptional of Type I, we 
changed $c'$ to be critical.

Now consider a critical $1$-cell $c_0'$ lying over $a$ on $n-k$ strands.  
Let $c_0$ be the reduced $1$-cell on $n$ strands corresponding to adding 
$k$ vertices to $c_0'$ in the direction $d$ from $a$.  As $c_0'$ is 
critical, so is $c_0$.  It is straightforward to construct a critical 
$1$-cell $c_1$ lying over a vertex in direction $d$ from $a$ such that 
$[c_0]$ and $[c_1]$ have a least upper bound $[s]$ whose reduced 
representative $s$ is critical.  We leave this to the reader.  By Lemma 
\ref{lem:directiondc} and Corollary \ref{cor:multbyM}, unless $c_0$ is 
exceptional of Type II and $d$ is the smallest direction from $a$ in 
which $c_0$ has strands, $d_{c_0} = d$.  It then follows that $CUB(c_0) 
\geq k$.  Furthermore, since $c_0'$ is critical, $c_0$ cannot be of Type 
I.  If $c_0$ is exceptional of Type II and $d$ is the smallest direction 
from $a$ in which $c_0$ has strands, we make an exception and replace 
$c_0$ with the corresponding exceptional cell of Type I.

These two constructions (constructing $c'$ from $c$ and $c_0$ from 
$c_0'$) are inverses of each other (we leave it to the reader to check 
this when $c$ is of Type I or $c_0'$ is of Type II).  Thus, they give a 
set bijection between `critical $1$-cells $c$ with $d_c = d$ and $CUB(c) 
\geq k$' and `critical $1$-cells lying over $a$ with $n-k$ strands'.  
In particular, these two sets have the same number of elements.  The 
latter set has $Y_{n-k}(deg(a))$ elements, by Theorem \ref{thm:radial}.  
This proves the lemma.

\end{proof}

When $\Delta$ is as in Theorem \ref{thm:cohomiso} and $v = Mc^*$ is a 
vertex of $\Delta$ where $[v]$ is maximal under $\leqn$, then both the 
vertex neighborhood $N_v$ and the $M$-cup neighborhood $N_c$ will be 
maximal under inclusion.  Furthermore:

\begin{corollary}[Maximal Neighborhoods]\label{cor:extremal}

Let $c$ be a critical $1$-cell.  If the $M$-cup neighborhood $N_c$ of 
$c$ is maximal under inclusion among all $M$-cup neighborhoods, then $c$ 
lies over an extremal vertex $a$ of $T$, and $CUB(c) = n - 2$.

\end{corollary}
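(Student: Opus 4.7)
The argument splits into two independent steps: verifying the CUB number and verifying extremality. Both proceed by contrapositive, producing in each case a critical $1$-cell $c^*$ with $N_c \subsetneq N_{c^*}$ whenever the conclusion fails, contradicting the maximality assumption on $N_c$.

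For the CUB number, Lemma \ref{lem:CUBbound} already gives $CUB(c) \leq n-2$, so suppose for contradiction $CUB(c) < n-2$. Since $N_c$ is nonempty and $d_c$ points toward an essential vertex of $T$, Lemma \ref{lem:counting} applied with $k = n-2$ produces at least $Y_2(\deg(a)) \geq 1$ critical $1$-cells over $a$ with CUB direction $d_c$ and CUB number exactly $n-2$ (the upper bound on $CUB$ forces equality). Pick any such $c'$. Because $c$ and $c'$ lie over the same vertex with the same CUB direction and $CUB(c) \leq CUB(c')$, Corollary \ref{cor:subequalneighborhoods} gives $N_c \subseteq N_{c'}$, while Corollary \ref{cor:equalneighborhoods} rules out equality since the CUB numbers differ. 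This contradicts maximality, so $CUB(c) = n-2$.

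For extremality, suppose $a$ is not extremal. Then some essential vertex $b \neq a$ lies in a direction $d' \neq d_c$ from $a$. Let $d^*$ be the direction from $b$ toward $a$; since $a$ is essential, Lemma \ref{lem:counting} supplies a critical $1$-cell $c^* = (b, e^*, \vec{y})$ over $b$ with CUB direction $d^*$ and $CUB(c^*) = n-2$. To show $N_c \subseteq N_{c^*}$, take any $c' = (a', e', \vec{z}) \in N_c$ and check the four conditions of Lemma \ref{lem:Ncstructure} for the pair $c', c^*$: since $a'$ lies in direction $d_c$ from $a$ while $b$ lies in direction $d' \neq d_c$, the unique path from $b$ to $a'$ passes through $a$, placing $a'$ in direction $d^*$ from $b$ and (by the same path, reversed) $b$ in direction $d_{c'}$ from $a'$; distinctness $a' \neq b$ follows from $d_c \neq d'$; and the CUB inequality $(n-2) + CUB(c') \geq n$ is just $CUB(c') \geq 2$, which is Lemma \ref{lem:CUBbound}.

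To see the containment is strict, apply Lemma \ref{lem:counting} at $a$ with direction $d'$ and $k=2$ to produce a critical $1$-cell $c^\prime$ over $a$ with CUB direction $d'$ and $CUB(c^\prime) \geq 2$. A direction check analogous to the previous paragraph places $c^\prime$ in $N_{c^*}$, but $c^\prime \notin N_c$ since $c^\prime$ and $c$ share the vertex $a$, violating the first condition of Lemma \ref{lem:Ncstructure}. Hence $N_c \subsetneq N_{c^*}$, contradicting maximality, so $a$ is extremal. The main obstacle is the extremality step: one must carefully confirm that the directional constraints of Lemma \ref{lem:Ncstructure} transfer when the base vertex of the neighborhood shifts from $a$ to $b$, which is precisely where the tree structure of $T$ (forcing $a$ onto every path from $b$ to the relevant $a'$) does the essential work.
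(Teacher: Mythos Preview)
Your proof is correct and follows essentially the same strategy as the paper: in each part, assume the conclusion fails and invoke the Counting Lemma (Lemma~\ref{lem:counting}) to produce a critical $1$-cell with strictly larger $M$-cup neighborhood. The only tactical differences are that the paper handles extremality first, picks an \emph{extremal} vertex $a'$ in the non-$d_c$ direction, and then invokes Corollaries~\ref{cor:subequalneighborhoods} and~\ref{cor:equalneighborhoods} directly; you instead pick an arbitrary essential vertex $b$ in that direction and verify the inclusion by hand via Lemma~\ref{lem:Ncstructure}, then establish strictness with an explicit witness over $a$. Both routes work, and yours has the mild advantage of sidestepping the hypothesis in Corollary~\ref{cor:subequalneighborhoods} that both neighborhoods be nonempty.

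One small omission: you assert ``since $N_c$ is nonempty'' without justification. The paper covers this by noting that if $T$ is radial then $a$ is trivially extremal, and otherwise some $M$-cup neighborhood is nonempty, so the empty set is never maximal. You should add a sentence to this effect at the start.
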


\begin{proof}

Let $a$ be the essential vertex over which $c$ lies.  If $T$ is radial, 
then the only essential vertex is $a$, so $a$ is extremal.  If $T$ is 
not radial, then there exist non-empty $M$-cup neighborhoods, so the 
empty set is not maximal under inclusion and therefore $N_c$ is 
nonempty.  Assume that $a$ is not extremal, and let $d_c$ be the 
direction prescribed by Lem \ref{lem:directiondc}.  Since $a$ is not 
extremal, there exist extremal vertices in at least 2 directions from 
$a$.  Thus there exists an extremal vertex $a'$ in a direction which is 
not $d_c$ from $a$.  Let $d'$ be the direction from $a'$ to $a$.  By 
Lemma \ref{lem:counting}, there exists a critical $1$-cell $c'$ lying 
over $a'$ with $d_{c'} = d'$ and $CUB(c') = CUB(c)$.  By Corollaries 
\ref{cor:subequalneighborhoods} and \ref{cor:equalneighborhoods}, $N_c 
\subsetneq N_{c'}$, contradicting the maximality of $N_c$.  Thus $a$ is 
extremal.

That $CUB(c) = n-2$ is similar: assume $CUB(c) < n-2$.  By Lemma 
\ref{lem:counting}, there exists a critical $1$-cell $c'$ lying over $a$ 
with $d_{c'} = d_c$ and $CUB(c') = n - 2$.  By Corollaries 
\ref{cor:subequalneighborhoods} and \ref{cor:equalneighborhoods}, $N_c 
\subsetneq N_{c'}$, contradicting the maximality of $N_c$.  Thus $CUB(c) 
= n - 2$. 

\end{proof}

\medskip

We are now ready to prove the following rigidity result for $n = 4$ and 
$5$ strand tree braid groups:

\begin{theorem}[Rigidity]\label{thm:45rigidity}

Let $T$ and $T'$ be two finite trees, and let $n = 4$ or $5$.  
The tree braid groups $B_nT$ and $B_nT'$ are isomorphic as groups if and 
only if the trees $T$ and $T'$ are homeomorphic as trees.

\end{theorem}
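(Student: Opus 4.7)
The forward direction is immediate: a homeomorphism $T \to T'$ induces a homeomorphism of the unlabelled configuration spaces, and hence an isomorphism $B_nT \cong B_nT'$. For the converse, suppose $B_nT \cong B_nT'$. The induced isomorphism $H^*(B_nT; \mathbb{Z}/2\mathbb{Z}) \cong H^*(B_nT'; \mathbb{Z}/2\mathbb{Z})$ of graded algebras, combined with Theorem \ref{thm:cohom} (each side is an exterior face algebra $\Lambda(\Delta)$ or $\Lambda(\Delta')$ for unique $1$-dimensional simplicial complexes) and Gubeladze's Theorem (Theorem \ref{thm:Gubeladze}), produces a simplicial isomorphism $\Delta \cong \Delta'$. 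Thus it suffices to show that the reconstruction $T_\Delta$ is well-defined and homeomorphic to $T$, for then $T \cong T_\Delta \cong T_{\Delta'} \cong T'$.

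The heart of the proof is to verify $T \cong T_\Delta$ by matching vertex labels, adjacencies, and degrees. Under the isomorphism of Theorem \ref{thm:cohomiso}, the vertices of $\Delta$ correspond to classes $Mc^*$ for critical $1$-cells $c$, and vertex neighborhoods coincide with the $M$-cup neighborhoods $N_c$; by Corollary \ref{cor:equalneighborhoods}, $\en$-equivalence classes of vertices of $\Delta$ correspond to triples $(a, d_c, CUB(c))$, and Corollary \ref{cor:subequalneighborhoods} reads off the preorder $\leqn$ from these triples. Choose a $\leqn$-maximal vertex $v_0$; by Corollary \ref{cor:extremal} its associated critical cell lies over an extremal vertex $a_0$ of $T$ with maximal CUB number $n-2$. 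I would then establish a bijection between the vertex set $\{p_1\} \cup \{p_{[v]}\}$ of the neighborhood heirarchy $H$ and the essential vertices of $T$, sending $p_1$ to $a_0$ and each $p_{[v]}$ to the essential vertex of $T$ lying at the tip of the CUB direction of the representatives of $[v]$; the child relation in $H$ then corresponds to adjacency of essential vertices in $T$ via Corollary \ref{cor:subequalneighborhoods}. Lemma \ref{lem:counting} ensures the degrees of essential vertices are correctly recovered from $pdeg_{[v]}$ and $pdeg_1$: the $Y_{n-2}$ formula at leaves of $H$ counts cells whose CUB direction points back toward the associated essential vertex of $T$, while the $Y_2$ formula at non-leaves counts $\en$-classes of cells with fixed home vertex and CUB direction.

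The main obstacles lie in the $n = 5$ complications. For $n = 5$ each pair $(a, d)$ of home vertex and CUB direction yields two $\en$-classes (one for each CUB value $2$ or $3$), so the reduction $H \subsetneq H'$ is required to collapse these redundant layers; the ``half as many descendants'' criterion isolates the child of $[v_0]$ whose descendants form the spurious $CUB = 2$ copies, and removing them restores the bijection with essential vertices of $T$. The exceptional linear case covers the situation when $T$ has exactly three essential vertices arranged on a path: there both $[v_0']$ and $[\hat{v}_0']$ exist as valid choices, a common descendant $[w]$ of the two middle classes appears in $H'$, and the degree of the middle essential vertex is recovered from $Y_3(b) - Y_2(b) = |[w]|$, again via Lemma \ref{lem:counting}. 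Verifying that these casework steps and the degree formulas correctly reproduce $T$ in every situation is the core technical obstacle.
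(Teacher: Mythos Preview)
Your proposal is correct and follows essentially the same approach as the paper: reduce via Theorem~\ref{thm:cohom} and Gubeladze to showing $T_\Delta \cong T$, then build the bijection between essential vertices of $T$ and vertices of $H$ using Corollaries~\ref{cor:subequalneighborhoods}--\ref{cor:extremal} and Lemma~\ref{lem:counting}, handling the $n=5$ doubling and the exceptional three-vertex linear case separately. The paper organizes this as constructing an explicit inverse $\Psi_n$ to $B_n$ and verifying the three properties (label correspondence, adjacency, degree), which is exactly the program you outline; your identification of $[w]$ as the common descendant in the exceptional case and the formula $Y_3(b)-Y_2(b)=|[w]|$ match the paper's computation.
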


There are many equivalent formulations of Theorem \ref{thm:45rigidity}.  
Some of these restatements include:

\begin{itemize}

\item there is a bijection between finite trees up to homeomorphism and 
$n$-strand tree braid groups up to isomorphism, $n = 4$ or $5$;

\item there is a bijection between $n$-strand tree braid groups up to 
isomorphism, $n = 4$ or $5$, and the set of cohomology rings of $n$ 
strand tree braid groups (that this is a reformulation follows from 
Theorem \ref{thm:cohom});

\item Given an $n$-strand tree braid group $G$ with $n = 4$ or $5$, 
there exists a unique tree $T$ (up to homeomorphism) such that $G = 
B_nT$.

\end{itemize} 
To prove Theorem \ref{thm:45rigidity}, we actually show the last 
equivalent statement.

Note that, in each homeomorphism class of trees, there exists exactly 
one tree that has no vertices of degree two.  For this tree, the 
vertices are exactly all of the nonmanifold points.  The following proof 
involves construction of a tree with no vertices of degree two, and 
graph braid groups on this tree.  As the fundamental group of the 
associated configuration spaces is invariant under homeomorphism, this 
does not affect the associated tree braid group.  The reader is 
cautioned, however, that in order to utilize critical cells via 
discretized configuration spaces and Theorem \ref{thm:Abrams}, further 
subdivision of this tree is required.

For applications of the proof of Theorem \ref{thm:45rigidity}, we refer 
the reader to Examples \ref{ex:exTmin} and \ref{ex:5example}, 
immediately following the proof.

\begin{proof}

Let $Trees$ denote the set of all trees up to homeomorphism.  Let $B_n$ 
be the map taking a tree $T$ to its $n$ strand tree braid group $B_nT$. 
By definition, $B_n$ is well-defined.  We prove $B_n: Trees \to 
B_n(Trees)$ is bijective for $n = 4, 5$ by constructing an inverse map.  
Let $\Psi_n: B_n(Trees) \to Trees$ be given by $\Psi_n(B_nT) = 
T_{\Delta}$, where $\Delta$ is the simplicial complex uniquely 
determined by $H^*(B_nT)$, as in Theorem \ref{thm:cohom}, and 
$T_{\Delta}$ is as in Section \ref{sec:T_Delta}.  We claim $\Psi_n$ is 
well-defined and that $\Psi_n \circ B_n$ is the identity map for $n = 4, 
5$.  Our goal is to analyze $\Psi_n$ so that both properties will be 
proven simultaneously. This would prove the desired result, as $B_4$ and 
$B_5$ would thus be injective.  See Figure \ref{fig:45rigidity}.

  \begin{figure}[!h]
    \begin{center}
    \input{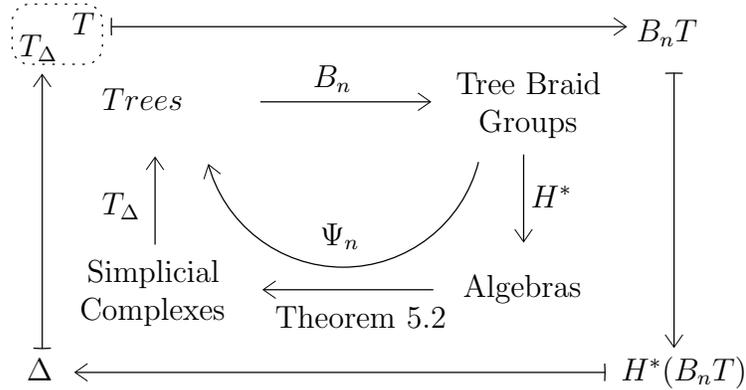}
    \end{center}

  \caption{A diagram showing the maps used in the proof of Theorem 
  \ref{thm:45rigidity}.  The proof of Theorem \ref{thm:45rigidity} shows 
  that, when $n = 4$ or $5$, $T_{\Delta}$ is homeomorphic to $T$.}
  
  \label{fig:45rigidity}
  \end{figure}

Let $T$ be a tree.  If $T$ is radial with central vertex $a$, then by 
\cite{FarleySabalka2a} we know that $H^*(B_nT)$ is free of rank 
$Y_n(deg(a))$ (see Theorem \ref{thm:radial}).  Then $\Delta$ is a 
collection of $Y_n(deg(a))$ vertices.  By definition, $T_{\Delta}$ is a 
tree with one essential vertex of degree $deg(a)$.  Thus, $T$ is 
homeomorphic to $T_{\Delta}$, as desired.

Assume that $T$ is not radial (and is sufficiently subdivided for $n$).  
Let $\Delta$ be the unique $1$-dimensional simplicial complex giving the 
exterior face algebra structure on $H^*(B_nT)$ given by Theorem 
\ref{thm:cohom}.  Fix a Morse $T$-embedding, so that the vertices of 
$\Delta$ may be labelled $Mc^*$, where each $c$ is a critical $1$-cell, 
as in Definition \ref{def:Delta}.

Since $T$ is not radial and $n \geq 4$, there exist critical $2$-cells 
(this is a consequence of Lemmas \ref{lem:Ncstructure} and 
\ref{lem:counting}, and is an exercise for the reader).  By Theorem 
\ref{thm:cohomology}, $H^2(B_nT)$ is nontrivial, so $\Delta$ contains 
some vertex with nonempty vertex neighborhood.  Let $v_0 = Mc_0^*$ be a 
vertex of $\Delta$ which is maximal under $\leqn$.  We will prove that 
the tree $T_\Delta$ defined by this choice of $\leqn$-maximal vertex is 
homeomorphic to $T$.

By the definition of $\leqn$, both the vertex neighborhood $N_{v_0}$ of 
$v_0$ and the $M$-cup neighborhood $N_{c_0}$ of $c_0$ are nonempty and 
maximal under inclusion.  Let $q_1$ denote the essential vertex in $T$ 
over which $c_0$ lies.  By Corollary \ref{cor:extremal}, $q_1$ is 
extremal and $CUB(c) = n-2$.

Consider a descendant $[v]$ of $[v_0]$.  Pick $v \in [v]$, and let $c$ 
be the critical $1$-cell such that $v = Mc^*$.  Let $q$ be the vertex 
over which $c$ lies.  By Lemma \ref{lem:CUBbound}, we have that $2 \leq 
CUB(c) \leq n -2$.  By Corollary \ref{cor:subequalneighborhoods}, either

  \begin{enumerate}

  \item $n = 5$, $q = q_1$, $d_c = d_{c_0}$, and $CUB(c) = CUB(c_0)-1 = 
  2$, or

  \item $q \neq q_1$, $q$ is in direction $d_{c_0}$ from $q_1$, and 
  $q_1$ is not in direction $d_c$ from $q$.

  \end{enumerate}
Consider if $CUB(c) = CUB(c_0) = n - 2$.  Then we are in the second case 
above.  Label the unique essential vertex of $T$ which is adjacent to 
$q$ and is in direction $d_c$ from $q$ by $q_{[v]}$.  By Corollary 
\ref{cor:equalneighborhoods}, the vertex $q$, the CUB direction $d_c$, 
and the CUB number $CUB(c)$ are independent of the choice of $v \in 
[v]$, so $q_{[v]}$ is well-defined.  By Lemma \ref{lem:counting}, every 
essential vertex of $T$ except $q_1$ will have a label of the form 
$q_{[v]}$.  As $T$ is a tree, the labels are unique.

We claim that the map $q_{[Mc^*]} \mapsto p_{[Mc^*]}$ from essential 
vertices of $T$ to vertices of $T_{\Delta}$ shows that $T_{\Delta}$ is 
defined and that this map induces the desired homeomorphism 
$\Psi_n$ between the two trees.  To prove our claim, we need to show:

  \begin{enumerate}[(i)]

  \item There exists a vertex labelled $q_{[Mc^*]}$ in $T$ if and only 
  if there exists a vertex labelled $p_{[Mc^*]}$ in $T_{\Delta}$,

  \item $q_{[M(c')^*]}$ is adjacent to $q_{[Mc^*]}$ if and only if
  $p_{[M(c')^*]}$ is adjacent to $p_{[Mc^*]}$, and

  \item $deg(q_{[Mc^*]}) = deg(p_{[Mc^*]})$.

  \end{enumerate}

(i) If $n = 4$, this is clear.  If $n = 5$, let $[v_0']$ be a child of 
$[v_0]$, and let $c_0'$ be a critical $1$-cell such that $v_0' = 
M^*c_0'$.  Consider if $CUB(c_0') = CUB(c_0) - 1 = 2$.  For $[v_0']$ to 
be a child of $[v_0]$, it follows from Corollary 
\ref{cor:subequalneighborhoods} that $c_0'$ lies over $q_1$ and 
$d_{c_0'} = d_{c_0}$.  By Corollary \ref{cor:equalneighborhoods}, 
$[v_0']$ is thus uniquely determined by the choice of $CUB(c_0') = 2$.  
Either a descendant $[Mc^*]$ of $[v_0]$ is also a descendant of $[v_0']$ 
or not.  By Corollary \ref{cor:subequalneighborhoods}, $[Mc^*]$ is a 
descendant of $[v_0']$ if and only if $CUB(c) = 2$, and is not a 
descendant of $[v_0']$ if and only if $CUB(c) = 3$.  Thus, by Lemma 
\ref{lem:counting}, $[v_0']$ has exactly half as many descendants as 
$[v_0]$.  If $[Mc^*]$ and $[M(c')^*]$ are distinct children of $[v_0]$ 
such that neither $[Mc^*]$ nor $[M(c')^*]$ is $[v_0']$, then $CUB(c) = 
CUB(c') = CUB(c_0) = 3$, as $[v_0']$ is uniquely determined by the 
choice $CUB(c_0') = 2$.  By Corollary \ref{cor:subequalneighborhoods}, 
it follows that $c$ and $c'$ both lie over $q_{[v_0]}$.  By Corollary 
\ref{cor:equalneighborhoods}, since $[Mc^*] \neq [M(c')^*]$, $d_c \neq 
d_c'$.  It follows from Corollary \ref{cor:subequalneighborhoods} that 
$[Mc^*]$ and $[M(c')^*]$ have no common descendants, let alone children.

Thus, $[v_0']$ is as in the definition of $T_{\Delta}$.  Note this child 
$[v_0']$ always exists, by Lemma \ref{lem:counting}. There exists a 
vertex labelled $q_{[Mc^*]}$ in $T$ if and only if $[Mc^*]$ is a 
descendant of $[v_0]$ and $CUB(c) = 3$, if and only if $[Mc^*]$ is a 
descendant of $[v_0]$ but not $[v_0']$, if and only if there exists a 
vertex labelled $p_{[Mc^*]}$ in $T_{\Delta}$.  This proves (i) in the 
case that $CUB(c_0') = 2$.

If $CUB(c_0') = 3$, let $[M(c')^*]$ denote the child of $[v_0]$ such 
that $CUB(c') = 2$.  If $[v_0]$ has any other child $[Mc^*]$ distinct 
from $[v_0']$ and $[M(c')^*]$, then $[M(c')^*]$ and $[Mc^*]$ have a 
common descendant (namely, $[M(c'')^*]$ where $CUB(c'') = 2$, $d_{c''} = 
d_c$, and $c''$ and $c$ lie over the same vertex in $T$).  Thus, unless 
$[v_0]$ has no other children, $[v_0']$ is not as in the definition of 
$T_{\Delta}$.  If $[v_0]$ has no other children, then by Corollary 
\ref{cor:subequalneighborhoods}, every descendant of $[v_0]$ except 
$[v_0]$ and $[M(c')^*]$ are also descendants of $[v_0']$.  Thus, unless 
$[v_0]$ has exactly $4$ descendants, $[v_0']$ is again not as in the 
definition of $T_{\Delta}$.

If $CUB(c_0') = 3$, $[v_0]$ has no other children, and exactly $4$ 
descendants, we are in the exceptional case in the definition of 
$T_{\Delta}$. We repeatedly apply Corollary 
\ref{cor:subequalneighborhoods} to analyze this case.  As $[v_0]$ has 
only one child $[v_0'] = [M(c_0')^*]$ with $CUB(c_0') = 3$, the vertex 
$q_{[v_0]}$ is adjacent to exactly two essential vertices in $T$: $q_1$ 
and $q_{[v_0']}$.  As $[v_0']$ has only one descendant $[Mc^*]$, 
$q_{[v_0']}$ must be adjacent to only $q_{[v_0]}$.  Let $x$, $y$, and 
$z$ denote the degees of vertices $q_1$, $q_{[v_0]}$, and $q_{[v_0']}$, 
respectively.  Then by Lemma \ref{lem:counting}, it follows that:
  $$Y_2(x) = |[v_0]|, \qquad Y_3(y) - Y_2(y) = |[Mc^*]|, \qquad 
  Y_2(z) = |N_{c}|.$$
Thus, in this exceptional case, we already have that $T$ is homeomorphic 
to $T_{\Delta}$.  This finishes the proof of (i), as either we must be 
in this exceptional case or $CUB(c_0') = 2$.

(ii) Let $v := Mc^*$ and $v' := M(c')^*$ be such that $q_{[v]}$ and 
$q_{[v']}$ are essential vertices in $T$.  Without loss of generality, 
assume $q_{[v]}$ is closer to $q_1$ than $q_{[v']}$.  Since $CUB(c) = 
CUB(c') = n-2$, by Corollary \ref{cor:subequalneighborhoods}, $q_{[v]}$ 
and $q_{[v']}$ are adjacent if and only if there are no critical 
$1$-cells $c''$ such that $N_{c'} \subsetneq N_{c''} \subsetneq N_{c}$.  
By definition, this holds if and only if $[v']$ is a child of $[v]$, if 
and only if $p_{[v]}$ is adjacent to $p_{[v']}$.

(iii)Let $c$ be a critical $1$-cell such that $q_{[Mc^*]}$ labels a 
vertex of $T$. Let $a' := q_{[Mc^*]}$, and let $a$ denote the vertex 
over which $c$ lies.  Then $a'$ is adjacent to $a$ and in direction 
$d_c$ from $a$.  If $a'$ is extremal, then by Lemma 
\ref{lem:Ncstructure}, $c' \in N_c$ if and only if $c'$ lies over $a'$ 
and $d_{c'}$ is the direction from $a'$ to $a$.  By Lemma 
\ref{lem:counting}, we have
  $$Y_{(n-2)}(deg(a')) = |N_c|.$$
If $a'$ is not extremal, then if follows from Corollary 
\ref{cor:subequalneighborhoods} that any child $[M(c')^*]$ of $[Mc^*]$ 
will satisfy either:

  \begin{enumerate}

  \item $n = 5$, $c'$ lies over $a$, $d_{c'} = d_c$, and $CUB(c') = 
  CUB(c)-1 = 2$, or

  \item $c'$ lies over $a'$, $CUB(c') = CUB(c)$, and $a$ is not in 
   direction $d_{c'}$ from $a'$.

  \end{enumerate}
In the former case, there is no vertex labelled $q_{[M(c')^*]}$ in $T$.  
In the latter case, by Lemma \ref{lem:counting}, we have
  $$Y_2(deg(a')) = |N_{c'}|.$$
Thus, regardless of whether $a'$ is extremal or not, 
  $$deg(q_{[Mc^*]}) = deg(p_{[Mc^*]}).$$
This finishes the proof of (iii), and of the lemma.

\end{proof}

  \begin{figure}[!h]
    \begin{center}
    \input{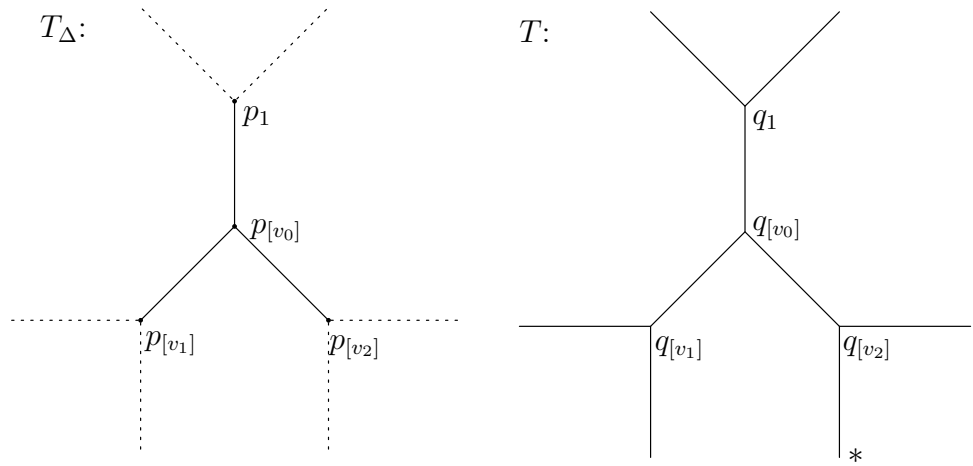}
    \end{center}

  \caption{The tree $T_{\Delta}$ and the isomorphism of Theorem 
  \ref{thm:45rigidity} for the tree $T_{min}$.}
  
  \label{fig:exTmin}

  \end{figure}

\begin{example}\label{ex:exTmin}

Consider the tree $T_{min}$, shown on the right of Figure 
\ref{fig:exTmin}.  On the left of Figure \ref{fig:exTmin} is the tree 
$T_{\Delta}$ generated in Example \ref{ex:treeTDelta}. The complex 
$\Delta$ used is in fact the simplicial complex underlying the exterior 
face algebra structure on $H^*(B_4T_{min})$.  The proof of Theorem 
\ref{thm:45rigidity} gives the isomorphism between $T_{\Delta}$ and $T$, 
induced by the map $q_{x} \mapsto p_{x}$. Here, $c_0 = 
(q_1,2,\left[\begin{array}{c}2\\1\\1\end{array}\right])$, $c_1 = 
(q_{[v_0]},2,\left[\begin{array}{c}0\\3\\1\end{array}\right])$, and $c_1 
= (q_{[v_0]},2,\left[\begin{array}{c}2\\1\\1\end{array}\right])$.

\end{example}

  \begin{figure}[!h]
    \begin{center}
    \input{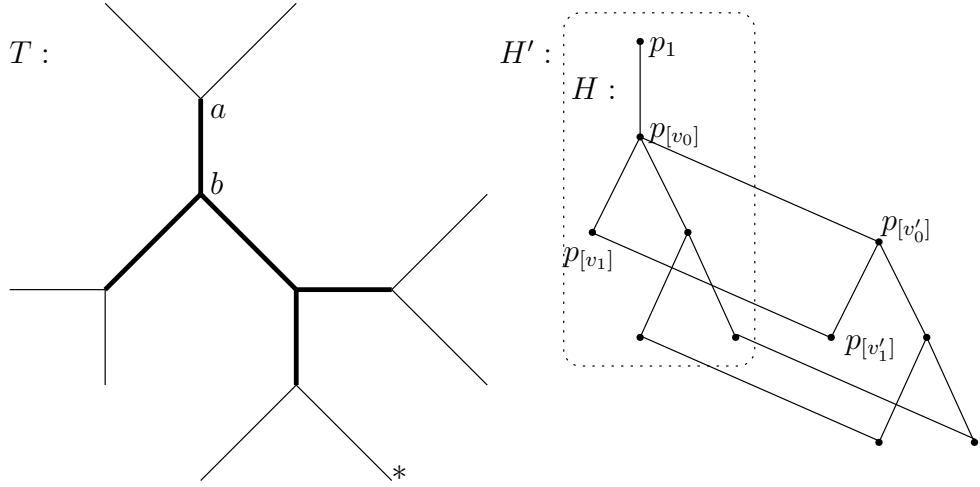}
    \end{center}

  \caption{An example with $n = 5$.}

  \label{fig:5example}
  
  \end{figure}

\begin{example}\label{ex:5example}

We wish to show an example of a partial computation for an $n=5$ strand 
tree braid group.  Consider the tree $T$ shown on the left of Figure 
\ref{fig:5example}, depicted with a Morse $T$-embedding.  Let $\Delta$ 
be the simplicial complex giving the exterior face algebra structure on 
$H^*(B_5T)$.  On the right of Figure \ref{fig:5example} is the graph 
$H'$ is the neighborhood heirarchy $H'$ for a vertex $v_0$ in the 
simplicial complex $\Delta$.  Since we have a Morse $T$-embedding, we 
can give $v_0$ a label: it is $Mc_0^*$, where $c_0$ is the critical 
$1$-cell $(a,2,\left[\begin{array}{c}3\\1\\1\end{array}\right])$.  
Similarly, $v_0' = M(c_0')^*$, $v_1 = Mc_1^*$, and $v_1' = M(c_1')^*$, 
where $c_0':= (a,2,\left[\begin{array}{c}2\\1\\2\end{array}\right])$, 
$c_1 := (b,2,\left[\begin{array}{c}0\\4\\1\end{array}\right])$, and 
$c_1':= (b,2,\left[\begin{array}{c}0\\3\\2\end{array}\right])$. The 
subtree $H$ of $H'$, circled in Figure \ref{fig:5example}, is the 
subtree from which $T_{\Delta}$ is grown.  The subtree $H$ is isomorphic 
to a subtree of $T$, shown in bold.

\end{example}

We may expand the proof of Theorem \ref{thm:45rigidity} to give us the 
following useful corollary:

\begin{corollary}[Determining $n$]\label{cor:45candeterminen}

If $G = B_nT$ is a tree braid group on either $n = 4$ or $5$ strands 
for which $T$ has at least 3 essential vertices, then $n$ may be 
determined from $G$.

\end{corollary}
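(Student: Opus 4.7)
Proof plan. The strategy is to run the tree-reconstruction procedure of Section \ref{sec:T_Delta} twice, once assuming $n=4$ and once assuming $n=5$, and detect which reconstruction is correct.

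First, from a finite presentation of $G$, compute the graded ring $H^*(G;\mathbb{Z}/2\mathbb{Z})$ in the standard way. By Theorem \ref{thm:cohom} this is an exterior face algebra, and by Gubeladze's Theorem \ref{thm:Gubeladze} its defining simplicial complex $\Delta$ is uniquely determined; hence $\Delta$ is an extractable isomorphism invariant of $G$. Since $T$ has at least $3$ essential vertices, $T$ is nonradial and $\Delta$ is not just a collection of isolated vertices.

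Next, for each $n\in\{4,5\}$, attempt to run the construction of $T_\Delta$ from Section \ref{sec:T_Delta}, which depends on $n$ through the radial ranks $Y_n$, $Y_{n-2}$, and $Y_2$ from Theorem \ref{thm:radial} (and through the removal of a particular subtree in the $n=5$ case). This produces a candidate tree $T^{(n)}$, possibly undefined if some prescribed degree equation $Y_k(\cdot)=|\cdot|$ fails to have an integer solution of the required form. For each well-defined candidate, use the Morse theory of Section \ref{sec:prevresults} together with the cohomology analysis of Section \ref{sec:extcohom} to compute directly the defining simplicial complex $\Delta^{(n)}$ of $H^*(B_n T^{(n)};\mathbb{Z}/2\mathbb{Z})$ from the combinatorics of $T^{(n)}$, and test whether $\Delta^{(n)}\cong\Delta$. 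Output the value of $n$ for which this test succeeds.

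By the proof of Theorem \ref{thm:45rigidity}, the \emph{true} value of $n$ always passes: the construction yields $T^{(n)}$ well-defined with at least $3$ essential vertices, homeomorphic to the defining tree $T$, and hence with $\Delta^{(n)}\cong\Delta$. The main obstacle, and the one I expect to be the hard part, is uniqueness: ruling out that \emph{both} $n=4$ and $n=5$ could pass the test, equivalently, that no pair of trees $T,T'$ (each with at least $3$ essential vertices) can satisfy $B_4 T\cong B_5 T'$. I would attack this by comparing the numerical invariants that $\Delta$ must simultaneously match in both hypothetical reconstructions: the total vertex count $|\Delta|$, the cardinality $|[v_0]|$ of a $\leqn$-maximal class and its neighborhood size $|N_{v_0}|$, and the corresponding cardinalities for a child $[v_1]$ of $[v_0]$ (which exists because $T$ has $\geq 3$ essential vertices). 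These would have to be realized simultaneously as $Y_2(d)$, $Y_2(d)$, and $Y_2(d')$-values under the $n=4$ construction, and as $Y_2(\tilde d)$, $Y_3(\tilde d)$, and $Y_2(\tilde d')$-values under the $n=5$ construction. The strict monotonicity of $Y_n(x)$ in $n$ noted after Theorem \ref{thm:radial}, together with the fact that $Y_2$ and $Y_3$ agree on integers $\geq 3$ only at sparse coincidences, should make this Diophantine system incompatible whenever at least one child $[v_1]$ is present, completing the proof.
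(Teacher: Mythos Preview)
Your plan has a genuine gap at exactly the point you flag as the hard part. You reduce the question to showing that the two trial reconstructions $T^{(4)}$ and $T^{(5)}$ cannot both pass the test $\Delta^{(n)}\cong\Delta$, and then propose a Diophantine attack on the cardinalities $|[v_0]|$, $|N_{v_0}|$, $|[v_1]|$. But the constraints you list do not separate the two cases in the way you suggest. In the construction of $T_\Delta$, the equation for $pdeg_1$ is $Y_2(pdeg_1)=|[v_0]|$ \emph{regardless} of whether $n=4$ or $n=5$, and for a non-leaf $p_{[v]}$ the child-size equation $Y_2(pdeg_{[v]})=|[v']|$ is likewise independent of $n$. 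The only place $n$ enters numerically is at the leaves via $Y_{n-2}$, but for $n=5$ the graph $H$ is obtained from $H'$ by deleting a subtree, so the leaves of the two candidate $H$'s are not the same objects and cannot be compared termwise. A bare appeal to ``sparse coincidences'' of $Y_2$ and $Y_3$ is not a proof, and there is no indication the resulting system is actually inconsistent.

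The paper's argument bypasses all Diophantine considerations with a purely combinatorial test on $\Delta$ itself. The key observation is about the poset structure of the neighborhood hierarchy $H'$: when $n=4$, Theorem~\ref{thm:45rigidity} forces $H'$ to be a tree, so no two children of $[v_0]$ share a common child. When $n=5$ and $T$ has at least three essential vertices, one exhibits explicitly two children of $[v_0]$ with a common child --- one child corresponding to $CUB=2$ over the extremal vertex, and another corresponding to $CUB=3$ over the adjacent essential vertex, both sitting above a class with $CUB=2$ over that adjacent vertex (Corollary~\ref{cor:subequalneighborhoods} and Lemma~\ref{lem:counting} supply these). Thus the criterion ``do two children of a $\leqn$-maximal class share a child?'' decides $n$ directly, with no reconstruction of $T$ and no arithmetic on the $Y$-functions. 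Your two-trial approach could in principle be completed, but you would need an entirely different argument for the uniqueness step; the paper's structural criterion is both simpler and what actually makes the corollary go through.
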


\begin{proof}

Let $\Delta$ be the unique simplicial complex associated to $H^*(G)$, by 
Theorem \ref{thm:cohom}.  If $T$ has at least 3 essential vertices, $T$ 
is not radial, so $G$ is not free.  If $G$ is not free, there exists a 
vertex $v_0$ in the associated complex $\Delta$ with nonempty 
neighborhood and which is maximal under $\leqn$. If $n = 5$, then 
$[v_0]$ has a child $[v_0']$ as in the definition of $\Delta$. As $v_0$ 
is maximal uner $\leqn$, $v_0$ lies over an extremal vertex $q_1$ of 
$T$.  Let $q_{[v_0]}$ denote the unique essential vertex of $T$ adjacent 
to $q_1$.  As $T$ has at least $3$ essential vertices, there exists a 
third essential vertex $q \neq q_1$ adjacent to $q_{[v_0]}$.  Pick a 
Morse $T$-embedding, and let $c$ and $c'$ be critical $1$-cells that lie 
over $q_{[v_0]}$ and such that $d_{c'} = d_c$ is the direction from 
$q_{[v_0]}$ to $q$, $CUB(c) = 3$, and $CUB(c') = 2$ (by Lemma 
\ref{lem:counting}, such $c$ and $c'$ exist).  By Corollary 
\ref{cor:subequalneighborhoods}, it follows that $[Mc^*]$ is a child of 
$[v_0]$ but not of $[v_0']$, and $[M(c')^*]$ is a child of both $[Mc^*]$ 
and $[v_0']$.  Thus, if $n = 5$, there exist two children of $[v_0]$ 
with a common child.  If $n = 4$, then the neighborhood heirarchy of 
$[v_0]$ must be a tree, by Theorem \ref{thm:45rigidity}.  Thus, $n = 5$ if 
and only if there exist two children of $[v_0]$ with a common child.  
Note this characterization does not actually depend on fixing a Morse 
$T$-embedding. 

\end{proof}

It seems reasonable that some combinatorial argument can address the 
case when $T$ has exactly two essential vertices.  If this is done, the 
only exception to Corollary \ref{cor:45candeterminen} would be when $T$ 
has exactly one essential vertex.  But if $T$ has only one essential 
vertex, $T$ is radial, so $G$ is free.  Thus, modulo a combinatorial 
argument about when $T$ has exactly two essential vertices, Corollary 
\ref{cor:45candeterminen} would read: If $G$ is a tree braid group on 
either $n = 4$ or $5$ strands which is not free, then $n$ may be 
determined by $G$.

Theorem \ref{thm:45rigidity} also proves:

\begin{corollary} \label{cor:T_Delta}

The tree $T_{\Delta}$ constructed in Section \ref{sec:T_Delta} is 
defined if and only if $\Delta$ is a simplicial complex giving the 
exterior face algebra structure of $H^*(B_nT)$ for some tree $T$ and 
some $n = 4$ or $5$.\qed

\end{corollary}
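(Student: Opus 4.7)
The plan is to prove both directions of the biconditional separately. Fix $n \in \{4, 5\}$.

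The reverse direction is essentially immediate from Theorem \ref{thm:45rigidity}. If $\Delta$ is the simplicial complex giving the exterior face algebra structure of $H^*(B_nT)$ for some tree $T$, then the proof of that theorem explicitly carries out each step of the definition of $T_\Delta$ in Section \ref{sec:T_Delta}, verifying that no step is undefined (including the $n=5$ exceptional linear case) and concluding $T_\Delta \cong T$. Hence $T_\Delta$ is defined.

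For the forward direction, assume $T_\Delta$ is defined and set $T := T_\Delta$. By Theorem \ref{thm:cohom}, there is a unique simplicial complex $\Delta'$ giving the exterior face algebra structure on $H^*(B_nT)$; the goal is to show $\Delta = \Delta'$. I would proceed by comparing the two complexes via the essential vertices of $T_\Delta$. On the one hand, the construction of $T_\Delta$ associates to each $\en$-equivalence class $[v]$ in $\Delta$ an essential vertex $p_{[v]}$ of $T_\Delta$ whose degree is chosen precisely so that the appropriate $Y_2$- or $Y_{n-2}$-value equals $|[v]|$ (respectively $|N_v|$ at leaves of the hierarchy). On the other hand, the proof of Theorem \ref{thm:45rigidity} applied to $T_\Delta$ labels every essential vertex $q$ by an $\en$-class $[Mc^*]$ in $\Delta'$, and Lemma \ref{lem:counting} computes $|[Mc^*]|$ and $|N_{Mc^*}|$ by the same $Y_k$ formulas evaluated on $\deg q$. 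Matching $p_{[v]}$ with $q_{[Mc^*]}$ whenever they name the same essential vertex of $T_\Delta$ yields a bijection between $\en$-equivalence classes of $\Delta$ and of $\Delta'$, with matching cardinalities.

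To promote this to an equality $\Delta = \Delta'$, I would verify that the bijection preserves adjacency, yielding a simplicial isomorphism $\Delta \cong \Delta'$ compatible with the vertex labels; Gubeladze's Theorem (Theorem \ref{thm:Gubeladze}) then forces $\Delta = \Delta'$ as simplicial complexes. Adjacency in $\Delta'$ is controlled by Lemma \ref{lem:Ncstructure} in terms of the tree positions and CUB data of underlying critical cells, while the $\leqn$-structure between $\en$-classes in $\Delta$ that governs the hierarchy is characterized by Corollaries \ref{cor:subequalneighborhoods} and \ref{cor:equalneighborhoods}; the two sets of conditions must agree under the bijection because both reduce to the same geometric relation on $T_\Delta$. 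The main obstacle I anticipate is exceptional-case bookkeeping: exceptional critical cells of Types I, II, and III and the $n = 5$ three-essential-vertex linear configuration each alter the CUB data, the change-of-basis matrices, and the hierarchy construction in parallel ways, and one must verify case-by-case that the two parallel modifications yield consistent output. Once this matching is complete, the desired identification $\Delta = \Delta'$ is forced and the forward direction follows.
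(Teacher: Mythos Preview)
The paper offers no argument for this corollary beyond the \qed and the sentence ``Theorem \ref{thm:45rigidity} also proves,'' so your reverse direction already matches the paper: the proof of Theorem \ref{thm:45rigidity} walks through every branch of the Section~\ref{sec:T_Delta} construction and shows none is undefined.

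The forward direction, however, does not follow from Theorem \ref{thm:45rigidity}, and as literally stated it is false; your proposed argument cannot close the gap because the gap is in the statement, not the method. The construction of $T_\Delta$ in the non-radial case uses only the neighborhood hierarchy of a single $\leqn$-maximal class $[v_0]$, together with the cardinalities $|[v_0]|$, $|[v']|$ for children, and $|N_v|$ at leaves. In particular it never inspects vertices of $\Delta$ that are not descendants of $[v_0]$: isolated vertices (those with $N_v=\emptyset$) are explicitly excluded from the descendant set, and any connected component of $\Delta$ disjoint from $N_{v_0}$ contributes nothing to $H'$, $H$, or the degree computations. So if $\Delta$ is the complex for some $H^*(B_nT)$ and $\widetilde\Delta$ is $\Delta$ together with extra isolated vertices, then $T_{\widetilde\Delta}$ is still defined and equals $T_\Delta\cong T$; but $\widetilde\Delta$ has strictly more vertices than $\Delta$, so by Theorem \ref{thm:Gubeladze} it is not the defining complex of $H^*(B_nT')$ for any $T'$ (else Theorem \ref{thm:45rigidity} would force $T'\cong T$ and hence $\widetilde\Delta\cong\Delta$).

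Your outline inherits exactly this problem: the bijection you set up is between $\en$-classes appearing in the hierarchy, i.e.\ descendants of $[v_0]$, and these all have nonempty neighborhood by definition. You have no mechanism to control the isolated vertices of $\Delta$, nor any component of $\Delta$ lying outside the hierarchy, so you cannot conclude $\Delta=\Delta'$. The corollary should be read as a slightly informal summary of Theorem \ref{thm:45rigidity} rather than a sharp characterization; if you want a true biconditional you must add to the definition of ``$T_\Delta$ is defined'' further consistency checks, at minimum on $|\Delta|$ and on the absence of components outside the $[v_0]$-hierarchy.
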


The fact that one may reconstruct any tree $T$ up to homeomorphism given 
a ($4$ or $5$ strand) tree braid group $B_nT$ on $T$ is an artifact of 
the $1$-dimensionality of trees.  For instance, let $k > 1$ and consider 
the braid groups on a $k$-dimensional ball, and on a $k$-dimensional 
ball joined at a single point with a line segment.  On any number of 
strands, the two corresponding braid groups are isomorphic.

\subsection{More Strands}\label{sec:nstrands}

It would be nice to generalize Theorem \ref{thm:45rigidity} to say that, 
given a tree braid group on $n$ strands \emph{for any $n \geq 4$}, one 
may reconstruct the underlying tree.  As noted in Conjecture 
\ref{conj:extcohom}, we believe that tree braid groups on more than 5 
strands do not have an exterior face algebra structure on cohomology in 
general.  Thus the techniques used for $n = 4$ and $5$ probably do not 
apply for $n \geq 6$.  However, we still believe a generalization is 
possible:

\begin{conjecture}[Rigidity for an Arbtirary Number of 
Strands]\label{conj:nrigidity}

Let $G$ be an $n$ strand tree braid group where $n \geq 4$.  Then there 
exists a unique tree $T$ (up to homeomorphism) such that $G = B_nT$.

\end{conjecture}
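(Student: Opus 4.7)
The plan is to generalize Theorem \ref{thm:45rigidity} from $n = 4, 5$ to arbitrary $n \geq 4$, broadly following the same cohomological strategy but working with a weaker replacement for the exterior face algebra structure on $H^*(B_nT;\Z/2\Z)$. The core idea is that this ring, with its cup product, should still encode enough combinatorial data to reconstruct $T$, even though by Conjecture \ref{conj:extcohom} it is not expected to be an exterior face algebra when $n \geq 6$ and $T$ is nonlinear. The radial case is handled at once by the rank computation of Theorem \ref{thm:radial}, which recovers $deg(a)$ from $\mathrm{rk}\,H^1(B_nT) = Y_n(deg(a))$ by monotonicity of $Y_n$; so I would assume throughout that $T$ is nonlinear.

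First, I would extend the change of basis construction of Section \ref{sec:extcohom}, defining matrices $M_c$ for all reduced $1$-cells $c$ and their ordered product $M = Mt \cdot Ms$. For $n \geq 6$ one expects additional exceptional patterns beyond Types I, II, and III, because pairs of reduced $1$-cells can have non-critical upper bounds in more varied ways when $|\vec{x}|$ is larger. The output would be a preferred basis $\{Mc^* \mid c \text{ critical}\}$ of $H^1(B_nT)$ and a preferred presentation $H^*(B_nT) \cong \Lambda(K)/I$, where $K$ is built from pairwise nontrivial cup products of basis elements and $I$ encodes the ``poisonous'' relations suggested by Figure \ref{fig:W}. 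Using this, I would define invariants of basis elements depending only on the ring structure: the $M$-cup neighborhood $N_c := \{c_1 \mid Mc^* \cup Mc_1^* \neq [0]\}$, the preorder $\leqn$ induced by inclusion of neighborhoods, and generalizations of the CUB direction $d_c$ and CUB number $CUB(c)$. Analogues of Lemmas \ref{lem:directiondc}, \ref{lem:Ncstructure}, \ref{lem:CUBbound}, and \ref{lem:counting} should then go through, with the counting step using strict monotonicity of $Y_n$, $Y_{n-k}$, and $Y_2$ in both arguments. With these in hand, the construction of $T_\Delta$ in Section \ref{sec:T_Delta} and the inductive peeling-off of extremal essential vertices in the proof of Theorem \ref{thm:45rigidity} can be run essentially verbatim to produce $T$ from the ring $H^*(B_nT)$.

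The principal obstacle, and the reason this remains a conjecture, is showing that the basis $\{Mc^*\}$ and the preorder $\leqn$ are intrinsic to the ring, independent of the initial Morse $T$-embedding. For $n \leq 5$ this came for free from Theorem \ref{thm:Gubeladze}, since the ring was an exterior face algebra and the defining simplicial complex was canonical; for $n \geq 6$ one must either prove a Gubeladze-type rigidity theorem for the specific class of quotient algebras $\Lambda(K)/I$ arising here, or find a genuinely ring-theoretic characterization of ``atomic'' elements of $H^1(B_nT)$ in the presence of the poisonous relations. A plausible supplementary tool is the higher cohomology $H^i(B_nT)$ for $2 < i \leq \lfloor n/2 \rfloor$, which becomes nontrivial for $n \geq 6$ and whose ring structure should further constrain any candidate basis.
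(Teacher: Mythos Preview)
This statement is a \emph{conjecture} in the paper, not a theorem; the paper does not prove it. So there is no proof to compare your proposal against, and you yourself correctly identify that the principal obstacle is unresolved. Your writeup is an honest outline of a strategy together with a clear statement of where it gets stuck, which is appropriate.

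That said, the paper's own motivation for the conjecture takes a different tack from yours. Rather than attempting to extend the full change-of-basis machinery and the $T_\Delta$ construction to arbitrary $n$, the paper proves Lemma~\ref{lem:Bn-2}: for an \emph{extremal} critical $1$-cell $c_0$ lying over an extremal essential vertex $a$, there is an isomorphism
\[
H^*(B_{n-2}(T-\{a\})) \;\cong\; H^*(B_nT)\cup c_0^*.
\]
This suggests an inductive scheme: identify a cohomology class playing the role of $c_0^*$, peel off the corresponding extremal vertex, and descend by two strands on a smaller tree, eventually reaching the base cases $n=4,5$ handled by Theorem~\ref{thm:45rigidity}. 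The paper explicitly says that the heart of the problem is finding such classes intrinsically, without reference to a Morse $T$-embedding. Your approach instead keeps $n$ fixed and tries to rebuild the exterior-face-algebra argument in the presence of the ``poisonous'' relations; this requires a Gubeladze-type rigidity result for a class of algebras that is not known, as you note. Both approaches founder on the same underlying issue---producing a canonical, embedding-free basis or distinguished family of degree-one classes---but the paper's inductive reduction has the advantage that it isolates a single cohomology class to be characterized at each step, rather than an entire basis.
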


Our intuition for conjecturing this generalization comes from the 
following situation. Let $T$ be a finite tree and let $n \geq 4$. Fix a 
Morse $T$-embedding.  As we have a Morse $T$-embedding, we may talk 
about critical cells.  We may still apply the results of Sections 
\ref{sec:prevresults} and \ref{sec:cobound} to $T$ and its 
critical cells. Consider a critical $1$-cell $c_0 = (a_0,d_0,\vec{x})$.  
Assume that $a_0$ is an extremal vertex.  Let $d_{c_0}$ denote the 
direction from $a_0$ towards every other essential vertex of $T$.  
Further assume that $x_{d_{c_0}} = n-2 + \epsilon_{c_0}(d_{c_0})$, where 
$\epsilon_{c_0}(d_{c_0})$ is the cup constant associated to $c_0$ in 
direction $d_{c_0}$.  We call a critical cell of this form 
\emph{extremal}. Under these assumptions, we claim that:

\begin{lemma}[Motivation for More Strands]\label{lem:Bn-2}

Let $c_0 = (a_0,d_0,\vec{x})$ be extremal.  Then
  $$H^*(B_{n-2}(T-\{a\})) \cong H^*(B_nT)\cup c_0^*.$$

\end{lemma}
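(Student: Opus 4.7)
The plan is to identify both sides with vector spaces spanned by Morse bases and then promote the identification to a ring isomorphism. By Theorem \ref{thm:cohomology}(2), the ideal $H^{\ast}(B_nT)\cup c_0^{\ast}$ is spanned as a $\mathbb{Z}/2\mathbb{Z}$-vector space by duals $s^{\ast}$ of critical cells $s$ of $\ud{n}{T}$ whose least-upper-bound decomposition into critical $1$-cells includes $[c_0]$, equivalently critical cells $s$ with $[s]\geq[c_0]$. On the other side, let $T'$ be the connected component of $T\setminus\{a_0\}$ containing the essential vertex adjacent to $a_0$ in direction $d_{c_0}$, the other components being contractible paths which have no effect on $B_{n-2}$. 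Fix a Morse $T'$-embedding compatible with the ambient one: inherit the planar embedding, and take as basepoint $\ast'$ the degree-$1$ vertex of $T'$ formerly adjacent to $a_0$. Then $H^{\ast}(B_{n-2}T')$ has a Morse basis of duals of critical cells in $\ud{n-2}{T'}$.

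Next I would build a dimension-shifting bijection $\Phi$ from critical $k$-cells $s'$ of $\ud{n-2}{T'}$ to critical $(k+1)$-cells $s$ of $\ud{n}{T}$ with $[s]\geq[c_0]$. The map adjoins to $s'$ the edge $e_0$ of $c_0$ together with the filler vertex-strand(s) of $c_0$ lying outside direction $d_{c_0}$ from $a_0$. Counting strands, $s'$ contributes $n-2$ and the adjoined data contributes $2$; the saturating hypothesis $x_{d_{c_0}}=n-2+\epsilon_{c_0}(d_{c_0})$ forces this filler to be a unique configuration in each of the two cases $d_0=d_{c_0}$ and $d_0\neq d_{c_0}$. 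Invertibility follows from Lemma \ref{lem:Dais}: any $s$ with $[s]\geq[c_0]$ has, in every direction from $a_0$ other than $d_{c_0}$, exactly the same data as $c_0$, so ``restriction to $T'$'' is a well-defined inverse.

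Then I would verify that $\Phi(s')$ is critical iff $s'$ is. The only new candidate blocker for a vertex of $s'\subset\Phi(s')$ is the edge $e_0$ of $c_0$; but the combined effect of $e_0$ and the filler of $c_0$ near $a_0$ on vertices in direction $d_{c_0}$ mirrors the effect of $\ast'$ in $T'$, by the choice of basepoint. Since $T'$ inherits its planar embedding and vertex ordering from $T$, disrespectfulness of the edges of $s'$ is preserved; the edge $e_0$ itself is automatically disrespectful in $\Phi(s')$ because $c_0$ is critical, and the case split $d_0=d_{c_0}$ versus $d_0\neq d_{c_0}$ handles the two possible positions of $e_0$.

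Finally I would upgrade the bijection to a ring isomorphism. Restricted to $1$-cells, $\Phi$ matches critical $1$-cells of $\ud{n-2}{T'}$ with critical $1$-cells $c$ of $\ud{n}{T}$ satisfying $c^{\ast}\cup c_0^{\ast}\neq 0$. By Theorem \ref{thm:cohomology}(2), the products on both sides are determined by which collections of equivalence classes of critical $1$-cells have a common upper bound whose reduced representative is critical; a term-by-term unraveling of Lemma \ref{lem:upperbound} in $T'$ and in $T$ shows that these conditions transport along $\Phi$, so the induced vector-space map $(s')^{\ast}\mapsto \Phi(s')^{\ast}$ is a ring isomorphism from $H^{\ast}(B_{n-2}T')$ onto $H^{\ast}(B_nT)\cup c_0^{\ast}$. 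The main obstacle will be the criticality step, particularly the basepoint bookkeeping required to see that blockings and respectfulness at $\ast'$ in $T'$ really do match their $T$-counterparts under the filler data of $c_0$, together with a clean treatment of the two cases for $d_0$.
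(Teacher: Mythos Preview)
Your approach is different from the paper's in a meaningful way. You try to build a bijection $\Phi$ directly on critical cells and then deduce the ring isomorphism from compatibility of least upper bounds. The paper instead works one level up, on cochains: it defines an ``add two strands toward $a_0$'' map $at$ on $b$-vectors, then on reduced $1$-cells, then on $k$-forms, sets $AT([\omega]) := [at(\omega)\wedge dc_0]$, and proves $AT$ and its putative inverse are well-defined on cohomology by checking that coboundaries of \emph{necessary} forms (the relations in the presentation of Theorem~\ref{thm:cohompres}) go to coboundaries of necessary forms. No comparison of Morse structures or basepoints on $T$ versus $T'$ is ever made.

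Your step~4 is where the real difficulty hides, and it is not just an unraveling of Lemma~\ref{lem:upperbound}. By Theorem~\ref{thm:cohomology}(3), $c_1^*\cup\cdots\cup c_k^* = [\phi_{[s]}]$ with $[s]$ the least upper bound \emph{whether or not} $s$ is critical; when $s$ is non-critical, $[\phi_{[s]}]$ must be rewritten in the Morse basis via the relations of Theorem~\ref{thm:cohompres}. So the ring structure is not determined merely by which tuples have a critical upper bound; you would need to show those rewritings transport along $\Phi$, which amounts to checking that necessary forms and their coboundaries correspond---precisely the paper's argument, now reappearing inside yours. Your basepoint worry is also genuine: nothing in the hypotheses forces $\ast$ to lie outside $T'$, and if $\ast$ does lie in direction $d_{c_0}$ from $a_0$ then your choice of $\ast'$ gives the wrong direction-$0$ at essential vertices of $T'$, so the inherited ordering and hence criticality will not match. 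The paper's cochain-level argument is insensitive to this because it never compares critical-cell structures directly.
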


We will prove this lemma momentarily.  Assuming Lemma \ref{lem:Bn-2} 
holds, we should be able to reconstruct $T$ by induction: reconstruct 
$T-\{a\}$ from $B_{n-2}(T-\{a\})$, and then simply `reattach' $a$ to 
$T$.

Of course, in general, when we are given a tree braid group we are not 
given any information about a Morse $T$-embedding or a classification of 
critical cells.  The heart of answering Conjecture \ref{conj:nrigidity} 
is in finding cohomology classes which behave like $c_0^*$ in the sense 
of Lemma \ref{lem:Bn-2}.

\begin{proof}[Proof of Lemma \ref{lem:Bn-2}]

We wish to construct the isomorphism on cohomology.  To do so, we define 
functions on several types of objects to build up to the desired map: on 
$b$-vectors for essential vertices $b \neq a$; on reduced $1$-cells; on 
$k$-forms; and finally on cohomology classes.

For any essential vertex $b$ of $T$ which is not $a$, define a function 
$at_b$ on $b$-vectors, which takes a $b$-vector and adds two (hence the 
name $at$) to the $(d_b)^{th}$ coordinate, where $d_b$ is the direction 
from $b$ to $a$.  By definition, $at_b$ is injective.  Note that $at_b$ 
is defined regardless of the length of the $b$-vectors.

Now define a function $at$ on reduced $1$-cells: 
$at(b,f,\vec{y}) := (b,f,at_b(\vec{y}))$.  This takes a reduced 
$1$-cell and adds two (again, hence the name) strands in the direction 
towards $a$.  We must require that $b \neq a$.  As with $at_b$, the map 
$at$ is injective.  Note $at$ is defined regardless of the length of 
$\vec{y}$.  For a reduced $k$-cell $s$, by Theorem \ref{thm:leq} there 
exists a unique collection $\{[c_1],\dots,[c_k]\}$ of equivalence 
classes of $1$-cells such that $[s]$ is the least upper bound of 
$\{[c_1],\dots,[c_k]\}$.  Define $at(s)$ to be the reduced 
representative of the least upper bound of 
$\{[at(c_1)],\dots,[at(c_k)]\}$.  We must require that no $c_i$ lies 
over $a$.  This corresponds to adding two strands to $s$ in the 
direction towards $a$ from the essential vertex $b$ closest to $a$ over 
which one of the $c_i$ lies.  Think of $at$ as placing two strands at 
the vertex $a$ in $T$.  Again, as the collection $\{[c_1],\dots,[c_k]\}$ 
is unique, $at$ is injective.

We generalize the map $at$ to $k$-forms, as follows.  For a basic 
$k$-form $\omega = f(b,\vec{y})dc_1\wedge\dots\wedge dc_k$, define 
$at(\omega)$ to be
  $$f(b,at_b(\vec{y}))d(at(c_1))\wedge \dots \wedge d(at(c_k)).$$
Here we require that $b \neq a$ and that each $c_i$ does not lie over 
$a$.  Again, $at$ is injective and defined regardless of the number of 
strands.

We claim that the desired isomorphism on cohomology is the map $AT$, where
$AT: H^*(B_{n-2}(T-\{a\})) \to H^*(B_nT)\cup c_0^*$ is given by
  $$[\omega] \mapsto [at(\omega)\wedge dc_0],$$
where $\omega$ is a cochain for $n-2$ strands on the tree $T-\{a\}$, 
thought of as a cochain for $n-2$ strands on the tree $T$.  
We prove that $AT$ is well-defined with a well-defined inverse map 
$AT^{-1}$, where
  $$AT^{-1}([\omega \wedge dc_0]) := [at^{-1}(\omega)]$$
if $at^{-1}(\omega)$ is defined; otherwise, $AT^{-1}([\omega \wedge 
dc_0]) := [0]$. If $AT$ and $AT^{-1}$ are well-defined, then $AT$ is a 
ring homomorphism, by definition and Proposition \ref{prop:d=delta}.  
Then, clearly from the definitions of $AT$ and $AT^{-1}$, they will be 
indeed inverse homomorphisms whose composition in either order is the 
identity, giving the desired isomorphism.

First, consider if $AT$ is well-defined.  Let $\omega = 
f(b,\vec{y})dc_1\wedge\dots\wedge dc_k \in [0]$ be a necessary $k$-form, 
and let $c = (b,f,\vec{y})$ denote the necessary $1$-cell for $\omega$.  
By the definition of necessary, $\{[c_1],\dots, [c_k], [c]\}$ has an 
upper bound $[s]$, and $f$ is the unique respectful edge in the reduced 
representative $s$ of $[s]$.  Consider $at(\omega)$.  If $at(\omega)$ is 
necessary, then $at(\omega)$ is cohomologous to $0$, and 
$[at(\omega)\wedge dc_0] = [0]$.  Consider if $at(\omega)$ is not 
necessary.  Clearly $f$ is such that $(b,f,at_b(\vec{y})) = at(c)$ is a 
reduced $1$-cell.  Moreover, $\{[at(c_1)],\dots, [at(c_k)], [at(c)]\}$ 
has an upper bound $[at(s)]$, by the definition of $at$.  Thus, if 
$at(\omega)$ is not necessary, $f$ must be disrespectful in $at(s)$, by 
the definition of necessary.  As $f$ is respectful in $s$ but 
disrespectful in $at(s)$, it must be that the two strands added to $s$ 
in $at(s)$ are the strands which make $f$ disrespectful.  A 
generalization of the Upper Bound Lemma, which we leave as an exercise 
for the sake of brevity, shows that 
$\{[at(c_1)],\dots,[at(c_k)],[at(c)],[c_0]\}$ has an upper bound $[s']$, 
and that moreover $f$ is the unique respectful edge in the reduced 
representative $s'$ of $[s']$.  Thus, $at(c)$ is necessary for the 
necessary $(k+1)$-form $at(\omega)\wedge dc_0$.  Thus, regardless of 
whether $at(\omega)$ is necessary or not, $[at(\omega) \wedge dc_0] = 
[0]$.  This shows $AT$ is well-defined.

Now, consider if $AT^{-1}$ is well-defined.  Let $\omega = 
f(b,\vec{y})dc_1'\wedge\dots\wedge dc_k'$ be a $k$-form such that 
$\omega \wedge dc_0 \in [0]$ is necessary, and let $c' = (b,f,\vec{y})$ 
denote the necessary $1$-cell for $\omega \wedge dc_0$.  By the 
definition of necessary, $\{[c_1'],\dots, [c_k'],[c_0], [c']\}$ has an 
upper bound $[s']$, and $f$ is the unique respectful edge in the reduced 
representative $s'$ of $[s']$.  Either $at^{-1}(\omega)$ is defined or 
not.

If $at^{-1}(\omega)$ is defined and $at^{-1}(\omega)$ is necessary, then 
$at^{-1}(\omega)$ is cohomologous to $0$, and there is nothing to prove. 
Consider if $at^{-1}(\omega)$ is defined but not necessary.  Clearly $f$ 
is such that $(b,f,at^{-1}_b(\vec{y})) = at^{-1}(c')$ is a reduced 
$1$-cell.  Moreover, the set of preimages $\{[at^{-1}(c_1)], \dots,  
[at^{-1}(c_k)], [at^{-1}(c)]\}$ has an upper bound $[at^{-1}(s)]$, by 
the definition of $at$.  Thus, if $at^{-1}(\omega)$ is not necessary, 
$f$ must be disrespectful in $at^{-1}(s)$, by the definition of 
necessary.  But if $f$ were disrespectful in $at^{-1}(s)$, then $f$ 
would be disrespectful in $at(s)$, a contradiction.  Thus, whether or 
not $at^{-1}(\omega)$ is necessary, if $at^{-1}(\omega)$ is defined then 
$AT^{-1}(\omega)$ is well defined.

If $at^{-1}(\omega)$ is not defined, then by definition either 
$at_b^{-1}(\vec{y})$ is not defined or $at^{-1}(c_i')$ is not defined 
for some $i \in \{1, \dots, k\}$.  In the former case, let $c'' := c'$ 
and in the latter case let $c'' := c_i'$.  In either case, express $c''$ 
as $(a'',d'',\vec{z})$, and let $d_0''$ denote the direction from $a''$ 
to $a_0$.  Note $[c'']$ and $[c_0]$ have a least upper bound $[s'']$ 
whose reduced representative $s''$ is $\sim$-equivalent to a face of 
$s$, since $c''$ and $c_0$ are both $\sim$-equivalent to faces of $s$. 
As $at^{-1}(c'')$ is not defined but $[s'']$ is, by the Upper Bound 
Lemma it follows that the edge $e_0$ of $c_0$ is respectful in $s''$.  
But then $e_0$ is respectful in $s$.  This contradicts the assumption 
that $\omega \wedge dc_0$ was necessary.  Thus, $at^{-1}(\omega)$ must 
be defined and cohomologous to $0$.  This proves that $AT^{-1}$ is 
well-defined as a homomorphism.

This finishes the proof.

\end{proof}

To conclude this section, we state a theorem which clarifies the 
behavior of free tree braid groups - that is, tree braid groups on 
radial trees or on fewer than $4$ strands:

\begin{theorem}[The Free Case]\label{thm:freecase}

Let $G$ be a free tree braid group.  If the number of strands $n$ for 
which $G = B_nT$ for some tree $T$ is known and $n \geq 4$, $T$ may be 
reconstructed up to homeomorphism.  Otherwise, $T$ may not be uniquely 
determined up to homeomorphism.

\end{theorem}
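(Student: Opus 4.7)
The plan is to split the theorem into its two halves: when $n \geq 4$ is prescribed we must reconstruct $T$ from $G$, and when $n$ is unspecified we must exhibit obstructions to uniqueness.

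For the first half, the key step is to argue that a free tree braid group on $n \geq 4$ strands can only arise from a radial tree. Suppose for contradiction that $G = B_nT$ is free while $T$ has at least two essential vertices. I would directly produce a critical $2$-cell in $\ud{n}{T}$: pick two adjacent essential vertices $a,b$ with $b$ closer to $\ast$, pick edges $e_a,e_b$ with $\tau(e_a)=a$ and $\tau(e_b)=b$ pointing toward leaf directions away from each other, and place the remaining $n-2 \geq 2$ strands as blocked vertices arranged so that both $e_a$ and $e_b$ are disrespectful. By Theorem~\ref{thm:homology} this produces a nonzero generator of $H_2(B_nT)$, contradicting freeness. (This is precisely the ``exercise'' invoked in the proof of Theorem~\ref{thm:45rigidity}.) Once $T$ is known to be radial with essential vertex of degree $x$, Theorem~\ref{thm:radial} identifies $G$ as free of rank $Y_n(x)$. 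Since $Y_n$ is strictly monotone increasing in $x$ for $x \geq 3$ (as noted after Theorem~\ref{thm:radial}), the degree $x$, and hence $T$ up to homeomorphism, is recovered from $\mathrm{rank}(G)$ and $n$.

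For the second half, the plan is simply to exhibit an explicit coincidence. Computing from Theorem~\ref{thm:radial}, $Y_2(4) = 3 = Y_3(3)$, so the free group $F_3$ is realized both as $B_2T$ with $T$ the radial tree of degree $4$ and as $B_3T'$ with $T'$ the radial tree of degree $3$; these trees are manifestly non-homeomorphic. Further non-uniqueness arises for $n \leq 3$, where every tree braid group is already free, providing many additional realizations for any given rank.

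The main obstacle is the critical $2$-cell construction in the first half. Verifying that both chosen edges are disrespectful and that all $n-2$ auxiliary vertices are blocked requires careful bookkeeping with the Morse $T$-embedding (the clockwise traversal order and the definition of ``blocked''); once these combinatorial checks are in place, the rest of the argument follows quickly from monotonicity of $Y_n$ and Theorem~\ref{thm:radial}.
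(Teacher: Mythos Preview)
Your proposal is correct and follows essentially the same approach as the paper: show that freeness with $n\geq 4$ forces $T$ to be radial (the paper asserts this, citing the existence of critical $2$-cells as an exercise via Lemmas~\ref{lem:Ncstructure} and~\ref{lem:counting}, just as you sketch directly), then invoke monotonicity of $Y_n$ from Theorem~\ref{thm:radial} to recover the degree. For the non-uniqueness half the paper uses larger coincidences such as $Y_4(4)=Y_3(5)=26$ and notes the general pattern $Y_n(x)=Y_{x-1}(n+1)$, whereas your example $Y_2(4)=Y_3(3)=3$ is smaller but equally sufficient; both demonstrations are valid, and your additional remark that $n\leq 3$ gives many non-radial realizations is a nice complement.
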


\begin{proof}

If $n < 4$, then not much can be said about $T$.  If $n \geq 4$, then 
$T$ is radial.  If $n$ is known, Theorem \ref{thm:radial} gives an 
explicit equation to solve for the degree of the unique essential 
vertex.  The solution is unique since all of the $Y_n$ functions are 
monotone increasing.  If $n$ is not known, then it is possible for free 
groups of ranks given by $r = Y_{n}(x)$ and $r' = Y_{n'}(x')$ to be such 
that $r = r'$, $n \neq n'$, and $x \neq x'$.  For instance: for $n = 4$, 
$x = 4$, $n' = 3$, and $x' = 5$, then $r = r' = 26$; for $n = 5$, $x = 
5$, $n' = 4$, and $x' = 6$, then $r = r = 155$.  In general, for $x' = 
n+1$ and $n' = x-1$, $r = r'$.  We note without proof, though, that 
empirically it appears this is the \emph{only} situation in which $r = 
r'$.

\end{proof}

\section{The Isomorphism Problem and Generalizations}\label{sec:isoproblem}

Theorems \ref{thm:45rigidity} and \ref{thm:freecase} allow us to solve 
the isomorphism problem for tree braid groups when we can reconstruct 
the defining trees by enumeration:

\begin{theorem}[The Isomorphism Problem]\label{thm:isoproblem}

Let $G$ and $G'$ be two groups be given by finite presentations, and 
assume that $G \cong B_nT$ and $G' \cong B_nT'$ for some positive 
integer $n$ and finite trees $T$ and $T'$.  If either:

  \begin{itemize} 
  \item $n = 4$ or $5$ or 
  \item at least one of $G$ or $G'$ is free, 
  \end{itemize} 
then there exists an algorithm which decides whether $G$ and $G'$ are 
isomorphic.  The trees $T$ and $T'$ need not be specified.  If one of 
$T$ and $T'$ has at least 3 essential vertices, then $n$ need not be 
specified.

\end{theorem}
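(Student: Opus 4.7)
The plan is to reduce the isomorphism problem for tree braid groups to the isomorphism problem for finite trees up to homeomorphism (which is trivially decidable) by using Theorem \ref{thm:45rigidity} to algorithmically reconstruct the defining tree from a given finite presentation. The semi-decidability of isomorphism for finitely presented groups via Tietze transformations, combined with the uniqueness afforded by rigidity, converts a semi-decision procedure into a genuine decision procedure.

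For the main case $n \in \{4,5\}$ with $n$ specified, given a finite presentation of $G$, I would run two enumerations in parallel: one producing all Tietze-descendants of the presentation of $G$, and one producing homeomorphism classes of finite trees $T_i$ together with their Morse presentations of $B_n T_i$ (supplied by Theorem \ref{thm:Morse}). The algorithm halts as soon as some Tietze-descendant of $G$ coincides syntactically with the Morse presentation of some $B_n T_i$. Termination follows because $G \cong B_n T$ for some $T$ by hypothesis and Tietze's theorem guarantees that any such isomorphism is witnessed by a finite sequence of transformations; uniqueness of the $T$ thereby identified follows from Theorem \ref{thm:45rigidity}. Repeating the procedure on $G'$ yields $T'$, after which one performs the finite combinatorial check of whether $T$ and $T'$ are homeomorphic.

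If $n$ is not specified, I would extend the enumeration to range over $n \in \{4, 5\}$ jointly. Provided at least one of $T, T'$ has three or more essential vertices, Corollary \ref{cor:45candeterminen} guarantees that the value of $n$ attached to $G$ (and to $G'$) is uniquely recoverable, so the joint enumeration still produces a unique tree-with-strand-count, whereas if both trees have at most two essential vertices the hypothesis requires $n$ as input. In the free subcase, the rank of any finitely presented free group is a computable invariant, extracted from the abelianization, so if both $G$ and $G'$ are free their isomorphism reduces to equality of ranks; if only one is free, one uses the rigidity enumeration on the other group (under the hypothesis $n \in \{4,5\}$) to recover its defining tree and check whether that tree is radial, this being equivalent to freeness of the tree braid group by \cite{FarleySabalka1} and \cite{Ghrist} when $n \geq 4$ (and automatic for $n \leq 3$).

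The main obstacle is conceptual rather than computational: everything hinges on Theorem \ref{thm:45rigidity}, whose proof is the substantial content of this paper. Without rigidity, the enumeration over trees would not be guaranteed to produce a unique match and Tietze semi-decidability alone would be insufficient to terminate. Given rigidity, the remaining work is bookkeeping to combine the $n$-specified, $n$-unspecified, and free subcases into a single algorithm and to verify that each of the required auxiliary checks (computing abelianizations, testing tree homeomorphism, recognizing radial trees) is routinely effective.
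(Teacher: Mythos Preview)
Your proposal is correct and follows essentially the same strategy as the paper: run a search procedure over trees whose termination is guaranteed by the hypothesis that $G$ and $G'$ are tree braid groups and whose output is made unambiguous by Theorem \ref{thm:45rigidity}, then reduce to the homeomorphism problem for finite trees. The only mechanical difference is that you semi-decide isomorphism by enumerating Tietze transformations and looking for a syntactic match with a Morse presentation, whereas the paper enumerates triples (tree, presentation, homomorphism) by diagonalization and looks for an isomorphism; these are equivalent devices.

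One small wrinkle: in the free subcase you write that, when only one group is free, you run ``the rigidity enumeration on the other group (under the hypothesis $n \in \{4,5\}$)''. But the second bullet of the theorem does not assume $n \in \{4,5\}$, so rigidity is not available there. The repair is that rigidity is not actually needed for this step: the enumeration over trees still halts at \emph{some} $T'$ with $G' \cong B_nT'$ (termination uses only the hypothesis that $G'$ is a tree braid group, not uniqueness), and for $n \geq 4$ the group $B_nT'$ is free if and only if $T'$ is radial, independently of whether $T'$ is the unique such tree. The paper is comparably informal here, simply separating the cases ``both free'' and ``exactly one free'' without spelling out how the algorithm distinguishes them.
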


\begin{proof}

If both $B_nT$ and $B_nT'$ are free, then the problem reduces to the 
isomorphism problem for free groups, which has a solution.  If exactly 
one of $B_nT$ or $B_nT'$ is free, then the groups cannot be isomorphic.  
If both of $B_nT$ and $B_nT'$ are not free, then we reconstruct the 
trees $T$ and $T'$ (and the value $n$ when one of $T$ or $T'$ has at 
least 3 essential vertices) from the corresponding groups, which can be 
done, according to Theorem \ref{thm:45rigidity}.  We need to show that 
we can reconstruct the trees algorithmically.  We begin by extrapolating 
$T$ from $B_nT$.

Let the \emph{degree} of a tree denote the sum of degrees of all 
essential vertices of the tree.  Note that the number of trees up to 
homeomorphism of any given degree is finite.  Let the \emph{length} of a 
presentation for a group denote the sum of lengths of the defining 
relators in the presentation plus the number of generators.  Note that 
the number of distinct presentations for a group of any given length is 
finite.  For any given homomorphism between two groups and any given 
presentations of both, let the \emph{length} of the homomorphism with 
respect to the presentations be the sum of the lengths of the images of 
the generators of the first group under the homomorphism.  Note that, 
for fixed groups and presentations, the number of homomorphisms of a 
given length is finite.

Enumerate all trees up to homeomorphism in some order from lesser degree 
to greater. For a given tree and a given $n$, enumerate all (reduced) 
presentations of the corresponding $n$ strand tree braid groups in some 
order from lesser length to greater.  For a given tree, a given $n$, and 
a given presentation of the corresponding $n$ strand tree braid group, 
enumerate all homomorphisms to $B_nT$ in some order from lesser length 
to greater.  Finally, enumerate all homomorphisms from $n$ strand tree 
braid groups to $B_nT$ by diagonalization.

As we know that $B_nT$ is a tree braid group, eventually in this 
enumeration there will be an isomorphism.  Define an algorithm to 
extrapolate the tree $T$ (up to homeomorphism) and the value $n$ from 
the group $B_nT$ by running through the above enumeration and finding 
the first isomorphism, and outputting the corresponding tree $T$ and 
value $n$.  Similarly, define an algorithm to extract the tree $T'$ (up 
to homeomorphism) from $B_nT'$ (which can be shortened, since we now 
know $n$).

By Theorems \ref{thm:45rigidity} and \ref{thm:freecase}, the trees $T$ 
and $T'$ are uniquely determined up to isomorphism for the cases of the 
theorem.  Note Corollary \ref{cor:45candeterminen} implies that $n$ is 
uniquely determined up to isomorphism if either $T$ or $T'$ contain at 
least 3 essential vertices.

Now, the isomorphism problem for $n$ strand tree braid groups has been 
reduced to solving the homeomorphism problem for trees.  But there 
exists an algorithm for solving this problem, so we are done.

\end{proof}

\section{Right-angled Artin groups}\label{sec:RAAGs}

Let $\Delta$ be a finite graph.  For the purposes of this paper, we will 
always assume $\Delta$ is \emph{simple}: it contains no nontrivial 
embedded edge loops having less than 3 edges.  The \emph{right-angled 
Artin group} $G(\Delta)$ associated to $\Delta$ is defined as follows. 
The group $G(\Delta)$ is generated by the vertices of $\Delta$, and the 
only relations are that two generators commute if and only if the 
corresponding vertices are connected by an edge.  For more information 
on right-angled Artin groups, see \cite{Charney}.

The purpose of this section is to state analogues of the main theorems 
of this paper, but for right-angled Artin groups.  Graph braid groups 
are closely related to right-angled Artin groups, so it is not 
surprising that similar theorems hold for each.  The differences between 
the two classes of groups is highlighted, however, by the comparative 
difficulty of proofs for graph braid groups.

For a finite simplicial graph $\Delta$, the \emph{flagification} of 
$\Delta$, denoted $\overline{\Delta}$, is the simplicial complex whose 
vertices are the vertices of $\Delta$, and for which $k$ vertices span a 
$k$-simplex in $\overline{\Delta}$ if and only if the $k$ vertices span 
a complete subgraph of $\Delta$.  We say $\Delta$ is \emph{flag} if 
$\Delta = \overline{\Delta}$.  The analogue of Theorem \ref{thm:cohom} 
is already known, and was proven by Charney and Davis:

\begin{theorem}[RAAG cohomology]\cite{CharneyDavis}\label{thm:flag}

Let $\Delta$ be a finite simple graph.  Then $H^*(G(\Gamma)) \cong 
\Lambda(\overline{\Delta})$.

\end{theorem}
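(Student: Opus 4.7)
The plan is to realize $G(\Delta)$ as the fundamental group of a small, explicit aspherical CW complex $S(\Delta)$ (the Salvetti complex) whose cellular cohomology ring is manifestly $\Lambda(\overline{\Delta})$. Construct $S(\Delta)$ by taking a single $0$-cell, attaching one oriented $1$-cell $\gamma_i$ for each vertex $v_i$ of $\Delta$, and, for every clique $\{v_{i_1},\dots,v_{i_k}\}$ of $\Delta$ (equivalently, every simplex of $\overline{\Delta}$), gluing in a $k$-torus whose $1$-skeleton is the subcomplex $\gamma_{i_1}\cup\cdots\cup\gamma_{i_k}$. The resulting $2$-cells come from edges of $\Delta$ and enforce exactly the commutator relations in the standard presentation of $G(\Delta)$, so $\pi_1(S(\Delta)) \cong G(\Delta)$.

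First I would verify that $S(\Delta)$ is aspherical, so that $H^*(G(\Delta)) = H^*(S(\Delta))$. The standard argument, due to Charney and Davis, is that $S(\Delta)$ carries a non-positively curved cubical structure: Gromov's link condition reduces to the observation that the links of vertices in the universal cover are flag complexes, which is automatic since $\overline{\Delta}$ is flag by construction. Granting this, one computes $H^*(S(\Delta))$ cellularly. The $k$-cells of $S(\Delta)$ are in bijection with the $(k-1)$-simplices of $\overline{\Delta}$, matching the shift of indices built into the definition of an exterior face algebra. A direct inspection of attaching maps shows all cellular boundary operators vanish (the opposite parallel faces of each torus cancel in pairs), so each $H^k(S(\Delta))$ is a free module on the duals of its $k$-cells; write $\alpha_i \in H^1$ for the class dual to $\gamma_i$.

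The final step is to identify the cup product. Restricting to any single $k$-torus in $S(\Delta)$ and invoking the classical cohomology of tori yields $\alpha_i^2 = 0$, $\alpha_i\alpha_j = -\alpha_j\alpha_i$, and $\alpha_{i_1}\cup\cdots\cup\alpha_{i_k}$ equal to the dual of the top cell of that torus whenever $\{v_{i_1},\dots,v_{i_k}\}$ is a clique of $\Delta$. Conversely, if those vertices fail to span a clique, then $S(\Delta)$ contains no cell supporting all of $\gamma_{i_1},\dots,\gamma_{i_k}$ simultaneously in its closure, so the cup product must vanish on the chain level. These are precisely the defining multiplicative relations of $\Lambda(\overline{\Delta})$, which yields the desired ring isomorphism. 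The main obstacle is the asphericity of $S(\Delta)$; once that is in hand, the cup product computation reduces to a local calculation on tori glued along standard subtori.
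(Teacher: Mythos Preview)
The paper does not prove this theorem; it simply cites Charney and Davis and moves on. Your sketch is essentially the Charney--Davis argument itself (Salvetti complex, Gromov's link condition for asphericity, then the cup product read off from the torus subcomplexes), so there is nothing to compare against---you have supplied the proof the paper deliberately omitted. As a sketch it is correct; the one place that deserves care if you were writing this out in full is the link condition: the link of the vertex in $S(\Delta)$ is not $\overline{\Delta}$ itself but its ``doubling'' (two vertices $v_i^{\pm}$ for each $v_i$, with $v_i^{\pm}$ joined to $v_j^{\pm}$ exactly when $v_iv_j\in\Delta$), and one must check that this doubled complex is flag, which it is precisely because $\overline{\Delta}$ is.
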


We remark that in \cite{FarleySabalka2a} it was proven that a tree braid 
group $B_nT$ is right-angled Artin if and only if $n \leq 3$ or $T$ is 
linear.  The method of proof essentially involved proving, in the 
non-right-angled Artin cases, that even if the cohomology ring of a tree 
braid group is an exterior face algebra, it cannot be an exterior face 
algebra over a flag simplicial complex.

There are two analagues of Theorem \ref{thm:45rigidity}.  One is an 
important rigidity result and appears in the literature, due to Droms.  
The other does not yet appear in the literature, but follows from 
Droms's theorem and Gubeladze's theorem:

\begin{theorem}[RAAG defining graph 
rigidity]\cite{Droms}\label{thm:RAAGrigidity}

Let $\Delta$ and $\Delta'$ be finite simple graphs.  Then $G(\Delta) 
\cong G(\Delta')$ if and only if $\Delta \cong \Delta'$.

\end{theorem}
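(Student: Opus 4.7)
The plan is to recover the graph $\Delta$ from the group $G(\Delta)$ algebraically, via the cohomology ring, and then invoke Gubeladze's rigidity result. The reverse direction of the biconditional is immediate: an isomorphism of graphs induces a bijection on generators preserving the commutation relations, hence an isomorphism of right-angled Artin groups. So the real content is the forward direction, i.e.\ the statement that $G(\Delta)$ determines $\Delta$ up to isomorphism.

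First, I would compute $H^*(G(\Delta);\mathbb{Z}/2\mathbb{Z})$ directly from the group. Since the isomorphism type of the cohomology ring is a group-theoretic invariant, two isomorphic right-angled Artin groups have isomorphic cohomology rings. By Theorem \ref{thm:flag} (Charney--Davis), this ring is canonically isomorphic to the exterior face algebra $\Lambda(\overline{\Delta})$ of the flagification $\overline{\Delta}$ of $\Delta$. Consequently, if $G(\Delta) \cong G(\Delta')$ then $\Lambda(\overline{\Delta}) \cong \Lambda(\overline{\Delta'})$ as algebras.

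Next, I would apply Gubeladze's theorem on exterior face algebra rigidity (Theorem \ref{thm:Gubeladze}): an exterior face algebra over $\mathbb{Z}/2\mathbb{Z}$ determines its defining simplicial complex up to simplicial isomorphism. Therefore the flag complexes $\overline{\Delta}$ and $\overline{\Delta'}$ are isomorphic as simplicial complexes. The final step is to observe that $\Delta$ is literally the $1$-skeleton of $\overline{\Delta}$: flagification adds higher-dimensional simplices to fill in complete subgraphs but introduces no new edges. Taking $1$-skeleta is functorial under simplicial isomorphisms, so an isomorphism $\overline{\Delta} \cong \overline{\Delta'}$ restricts to a graph isomorphism $\Delta \cong \Delta'$, completing the proof.

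There is essentially no obstacle once Theorems \ref{thm:flag} and \ref{thm:Gubeladze} are in hand; the entire argument amounts to composing the two rigidity statements with the trivial observation about $1$-skeleta. The intellectual content has been isolated into the two cited theorems: Charney--Davis identifies the correct cohomological invariant, and Gubeladze shows this invariant is sharp. It is worth noting that this modern route via cohomology is not Droms's original argument, which predates both \cite{CharneyDavis} and \cite{Gubeladze} and proceeds instead by recognizing the graph $\Delta$ from the structure of centralizers and commuting elements in $G(\Delta)$; I mention this because our setting (tree braid groups) has already required us to develop both $H^*$ and exterior face algebras, making the cohomological proof by far the most natural one here.
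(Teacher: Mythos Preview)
The paper does not actually prove this theorem: it is stated with a citation to Droms and no proof is given. So there is no ``paper's own proof'' to compare against.

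Your argument is correct. The chain
\[
G(\Delta)\cong G(\Delta') \;\Longrightarrow\; H^*(G(\Delta);\mathbb{Z}/2\mathbb{Z})\cong H^*(G(\Delta');\mathbb{Z}/2\mathbb{Z}) \;\Longrightarrow\; \Lambda(\overline{\Delta})\cong\Lambda(\overline{\Delta'}) \;\Longrightarrow\; \overline{\Delta}\cong\overline{\Delta'} \;\Longrightarrow\; \Delta\cong\Delta'
\]
is valid, with the middle implications supplied by Theorem~\ref{thm:flag} and Theorem~\ref{thm:Gubeladze}, and the last by passing to $1$-skeleta. You are also right that this is \emph{not} Droms's original proof, which is purely group-theoretic and predates both cited ingredients; your cohomological route is genuinely different, but entirely legitimate and, as you say, the most natural one given the machinery already assembled in this paper. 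One small caveat worth making explicit: the paper states Theorem~\ref{thm:flag} over $\mathbb{Z}/2\mathbb{Z}$ only implicitly (Charney--Davis work integrally), and Gubeladze's theorem as quoted here is specifically over $\mathbb{Z}/2\mathbb{Z}$, so your choice of coefficients is exactly what is needed to chain the two results together.
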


\begin{theorem}[RAAG cohomology rigidity]\label{thm:RAAGcohomrigidity}

There is a setwise bijection between right-angled Artin groups up to 
isomorphism and their cohomology rings up to isomorphism.

\end{theorem}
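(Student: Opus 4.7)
The plan is to establish the bijection by chaining together the three results already available: Droms's Theorem \ref{thm:RAAGrigidity}, Charney--Davis's Theorem \ref{thm:flag}, and Gubeladze's Theorem \ref{thm:Gubeladze}. Concretely, for each finite simple graph $\Delta$, assignment of the cohomology ring $H^*(G(\Delta))$ to the right-angled Artin group $G(\Delta)$ is well-defined on isomorphism classes, since isomorphic groups have isomorphic cohomology rings. Hence there is a well-defined map from isomorphism classes of right-angled Artin groups to isomorphism classes of rings, and its image is by definition the set of isomorphism classes of cohomology rings of right-angled Artin groups. It therefore suffices to prove that this map is injective.

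For injectivity, I would argue by the following chain of equivalences. Suppose $G(\Delta)$ and $G(\Delta')$ are two right-angled Artin groups on finite simple graphs $\Delta$ and $\Delta'$. First, by Droms's Theorem \ref{thm:RAAGrigidity}, $G(\Delta) \cong G(\Delta')$ if and only if $\Delta \cong \Delta'$ as simplicial graphs. Second, observe that flagification $\Delta \mapsto \overline{\Delta}$ is an invertible operation on the level of isomorphism classes: given $\overline{\Delta}$, the graph $\Delta$ is recovered as its $1$-skeleton, and conversely $\overline{\Delta}$ is determined by $\Delta$ by filling in a $k$-simplex for every complete subgraph on $k+1$ vertices. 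Thus $\Delta \cong \Delta'$ as graphs if and only if $\overline{\Delta} \cong \overline{\Delta'}$ as simplicial complexes. Third, by Gubeladze's Theorem \ref{thm:Gubeladze}, $\overline{\Delta} \cong \overline{\Delta'}$ as simplicial complexes if and only if $\Lambda(\overline{\Delta}) \cong \Lambda(\overline{\Delta'})$ as algebras. Finally, by the Charney--Davis Theorem \ref{thm:flag}, $\Lambda(\overline{\Delta}) \cong H^*(G(\Delta))$ and $\Lambda(\overline{\Delta'}) \cong H^*(G(\Delta'))$, so the previous isomorphism of exterior face algebras is equivalent to $H^*(G(\Delta)) \cong H^*(G(\Delta'))$.

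Concatenating these four equivalences yields $G(\Delta) \cong G(\Delta')$ if and only if $H^*(G(\Delta)) \cong H^*(G(\Delta'))$, which is exactly the injectivity needed; combined with well-definedness and surjectivity onto the image, this gives the desired setwise bijection. There is no single ``hard step'' here: each of the three nontrivial equivalences is a black box citation, and the only content I actually need to verify myself is the elementary observation that flagification is an invertible correspondence between finite simple graphs and finite flag simplicial complexes, which is immediate from the definition of $\overline{\Delta}$. The proof will therefore be short, with the entire substance packaged into the statement as a corollary of the prior theorems.
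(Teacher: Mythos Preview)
Your proposal is correct and is essentially the same argument as the paper's: both chain together Droms's Theorem \ref{thm:RAAGrigidity}, the Charney--Davis Theorem \ref{thm:flag}, Gubeladze's Theorem \ref{thm:Gubeladze}, and the elementary observation that flagification gives a bijection between finite simple graphs and finite flag simplicial complexes. The only difference is organizational---you frame the argument as verifying injectivity of the map $G(\Delta)\mapsto H^*(G(\Delta))$ on isomorphism classes, whereas the paper phrases it as composing three bijections---but the content is identical.
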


\begin{proof}

By Theorem \ref{thm:flag}, cohomology rings of right-angled Artin groups 
are always exterior face algebras over flag simplicial complexes. 
Moreover, every flag simplicial complex corresponds to the cohomology 
ring of some right-angled Artin group, as there is a bijective 
correspondence between finite simplicial graphs and flag simplicial 
complexes.  To get from graphs to simplicial complexes, add a 
$k$-simplex whenever its $1$-skeleton is present; to get from flag 
simplicial complexes to graphs, take the $1$-skeleton.  Gubeladze's 
Theorem (Theorem \ref{thm:Gubeladze}) shows that there is thus a 
bijective correspondence between cohomology rings of right-angled Artin 
groups and flag simplicial complexes.  As there are bijections between 
right-angled Artin groups and finite simplicial graphs, finite 
simplicial graphs and finite flag simplicial complexes, and finite flag 
simplicial complexes and cohomology rings of right-angled Artin groups, 
this proves the theorem.

\end{proof}

We end with the analogue of Theorem \ref{thm:isoproblem} for 
right-angled Artin groups.  Theorem \ref{thm:RAAGrigidity} suggests that 
the isomorphism for right-angled Artin groups may be solved by 
reconstructing $\Delta$ given $G(\Delta)$.  One simply needs an 
algorithm for the reconstruction.

\begin{theorem}[RAAG isomorphism problem]

Let $G$ and $G'$ be two groups be given by finite presentations, and 
assume that $G \cong G(\Delta)$ and $G' \cong G(\Delta')$ for some 
finite simple graphs $\Delta$ and $\Delta'$.  Then there exists an 
algorithm which decides whether $G$ and $G'$ are isomorphic.  The graphs 
$\Delta$ and $\Delta'$ need not be specified.

\end{theorem}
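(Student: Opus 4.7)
The plan is to mirror the proof of Theorem \ref{thm:isoproblem}, substituting the rigidity of right-angled Artin groups (Theorem \ref{thm:RAAGrigidity}) for the rigidity of tree braid groups (Theorem \ref{thm:45rigidity}). The strategy is to algorithmically reconstruct defining graphs $\Delta$ and $\Delta'$ (up to isomorphism) from the given finite presentations of $G$ and $G'$, and then decide whether $\Delta \cong \Delta'$ as finite simple graphs, which is a finite brute-force check over bijections between vertex sets.

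First I would enumerate finite simple graphs $\Delta_0$ up to isomorphism in order of increasing number of vertices and edges. For each $\Delta_0$ the standard presentation of $G(\Delta_0)$ (one generator per vertex, one commutator relation per edge) is immediate. Using diagonalization, I would next enumerate triples $(\Delta_0, \phi, \psi)$ where $\phi$ is a candidate assignment of generators of $G(\Delta_0)$ to words in the given presentation of $G$, and $\psi$ is the reverse, ordered by total word length plus graph size.

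For each such triple I would launch parallel searches for Tietze derivations witnessing that (i) $\phi$ carries each commutator relation of $G(\Delta_0)$ to the identity in $G$, so $\phi$ extends to a homomorphism; (ii) analogously for $\psi$; and (iii) the compositions $\psi \circ \phi$ and $\phi \circ \psi$ act as the identity on the respective generating sets modulo the respective relators. Since the hypothesis supplies an isomorphism $G \cong G(\Delta)$, such certificates exist for some $\Delta_0 \cong \Delta$ and will eventually be found. This reconstructs $\Delta$ up to isomorphism, and repeating the procedure with $G'$ reconstructs $\Delta'$. By Theorem \ref{thm:RAAGrigidity}, the outputs are well-defined graphs up to isomorphism, and then checking $\Delta \cong \Delta'$ is decidable by trying all bijections.

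There is no serious obstacle beyond carefully formalizing the enumeration and observing that termination is guaranteed by the existence (but not constructive knowledge) of an isomorphism $G \cong G(\Delta)$, combined with Droms's rigidity. The conceptual work is entirely absorbed by Theorem \ref{thm:RAAGrigidity}; the algorithmic scaffolding parallels Theorem \ref{thm:isoproblem} verbatim, and is in fact cleaner here because no hypotheses on the number of strands, freeness, or number of essential vertices are needed, and because there are no free-versus-nonfree cases to separate.
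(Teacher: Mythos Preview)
Your proposal is correct and follows essentially the same approach as the paper: enumerate candidate graphs and candidate isomorphisms by diagonalization, appeal to Droms's rigidity (Theorem \ref{thm:RAAGrigidity}) to guarantee the recovered graph is well-defined up to isomorphism, and then reduce to the decidable isomorphism problem for finite graphs. If anything, your version is slightly more explicit than the paper's in spelling out how one certifies that a candidate pair $(\phi,\psi)$ is an isomorphism via Tietze derivations, but the underlying strategy is identical.
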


The proof of this theorem is similar to the proof of Theorem 
\ref{thm:isoproblem}, and is a standard diagonalization argument.  The 
theorem follows directly from Theorem \ref{thm:RAAGrigidity}, and is, 
more or less, generally known.

\begin{proof}

Let the \emph{degree} of a graph denote the sum of degrees of all 
essential vertices.  Note that the number of graphs up to homeomorphism 
of any given degree is finite.  Let the \emph{length} of a presentation 
for a group denote the sum of lengths of the defining relators in the 
presentation plus the number of generators.  Note that the number of 
distinct presentations for a group of any given length is finite.  For 
any given homomorphism between two groups and any given presentations of 
both, let the \emph{length} of the homomorphism with respect to the 
presentations be the sum of the lengths of the images of the generators 
of the first group under the homomorphism.  Note that, for fixed groups 
and presentations, the number of homomorphisms of a given length is 
finite.

Enumerate all graphs in some order from lesser degree to greater. For a 
given graph, enumerate all (reduced) presentations of the corresponding 
right-angled Artin group in some order from lesser length to greater.  
For a given graph and a given presentation of the corresponding 
right-angled Artin group, enumerate all homomorphisms to $G$ in some 
order from lesser length to greater.  Finally, enumerate all 
homomorphisms from right-angled Artin groups to $G$ by diagonalization.

As we know that $G$ is a right-angled Artin group, eventually in this 
enumeration there will be an isomorphism.  Define an algorithm to 
extrapolate the graph $\Delta$ from $G$.  Similarly, extrapolate 
$\Delta'$ from $G'$.  By \ref{thm:RAAGrigidity}, $\Delta$ and $\Delta'$ 
are uniquely determined.  Thus, the isomorphism problem for right-angled 
Artin groups reduces to the isomorphism problem for graphs.

\end{proof}

\bibliography{refs-S3}

\def\cprime{$'$}
\begin{thebibliography}{10}

\bibitem{Abrams1}
Aaron Abrams.
\newblock Configuration spaces of braid groups of graphs.
\newblock Ph.D. thesis, U. of California, Berkeley, 2000.

\bibitem{Adyan}
S.~I. Adyan.
\newblock Algorithmic unsolvability of problems of recognition of certain
  properties of groups.
\newblock {\em Dokl. Akad. Nauk SSSR (N.S.)}, 103:533--535, 1955.

\bibitem{BridsonHaefliger}
Martin~R. Bridson and Andr{\'e} Haefliger.
\newblock {\em Metric spaces of non-positive curvature}, volume 319 of {\em
  Grundlehren der Mathematischen Wissenschaften [Fundamental Principles of
  Mathematical Sciences]}.
\newblock Springer-Verlag, Berlin, 1999.

\bibitem{BKM}
Inna Bumagin, Olga Kharlampovich, and Alexei Miasnikov.
\newblock The isomorphism problem for finitely generated fully residually free
  groups.
\newblock {\em J. Pure Appl. Algebra}, 208(3):961--977, 2007.

\bibitem{Charney}
Ruth Charney.
\newblock An introduction to right-angled {A}rtin groups.
\newblock {\em Geom. Dedicata}, 125:141--158, 2007.

\bibitem{CharneyDavis}
Ruth Charney and Michael~W. Davis.
\newblock Finite {$K(\pi, 1)$}s for {A}rtin groups.
\newblock In {\em Prospects in topology (Princeton, NJ, 1994)}, volume 138 of
  {\em Ann. of Math. Stud.}, pages 110--124. Princeton Univ. Press, Princeton,
  NJ, 1995.

\bibitem{ConnollyDoig}
Frank Connolly and Margaret Doig.
\newblock Braid groups and right-angled {A}rtin groups.
\newblock Preprint, 2004; arxiv:math.GT/0411368.

\bibitem{CrispWiest}
John Crisp and Bert Wiest.
\newblock Embeddings of graph braid and surface groups in right-angled {A}rtin
  groups and braid groups.
\newblock {\em Algebr. Geom. Topol.}, 4:439--472, 2004.

\bibitem{Dehn}
Max Dehn.
\newblock {\em Papers on group theory and topology}.
\newblock Springer-Verlag, New York, 1987.
\newblock Translated from the German and with introductions and an appendix by
  John Stillwell, With an appendix by Otto Schreier.

\bibitem{Droms}
Carl Droms.
\newblock Subgroups of graph groups.
\newblock {\em J. Algebra}, 110(2):519--522, 1987.

\bibitem{Farber}
Michael Farber.
\newblock Topological complexity of motion planning.
\newblock {\em Discrete Comput. Geom.}, 29(2):211--221, 2003.

\bibitem{Farber2}
Michael Farber.
\newblock Instabilities of robot motion.
\newblock {\em Topology Appl.}, 140(2-3):245--266, 2004.

\bibitem{Farley}
Daniel Farley.
\newblock Homology of tree braid groups.
\newblock In {\em Topological and asymptotic aspects of group theory}, volume
  394 of {\em Contemp. Math.}, pages 101--112. Amer. Math. Soc., Providence,
  RI, 2006.

\bibitem{Farley2}
Daniel Farley.
\newblock Presentations for the cohomology rings of tree braid groups.
\newblock In {\em Topology and robotics}, volume 438 of {\em Contemp. Math.},
  pages 145--172. Amer. Math. Soc., Providence, RI, 2007.

\bibitem{FarleySabalka1}
Daniel Farley and Lucas Sabalka.
\newblock Discrete {M}orse theory and graph braid groups.
\newblock {\em Algebr. Geom. Topol.}, 5:1075--1109 (electronic), 2005.

\bibitem{FarleySabalka2a}
Daniel Farley and Lucas Sabalka.
\newblock On the cohomology rings of graph braid groups.
\newblock {\em J. Pure Appl. Algebra}, 212(1):53--71, 2008.

\bibitem{GhristPeterson}
R.~Ghrist and V.~Peterson.
\newblock The geometry and topology of reconfiguration.
\newblock {\em Adv. in Appl. Math.}, 38(3):302--323, 2007.

\bibitem{Ghrist}
Robert Ghrist.
\newblock Configuration spaces and braid groups on graphs in robotics.
\newblock In {\em Knots, braids, and mapping class groups---papers dedicated to
  Joan S. Birman (New York, 1998)}, volume~24 of {\em AMS/IP Stud. Adv. Math.},
  pages 29--40. Amer. Math. Soc., Providence, RI, 2001.

\bibitem{GrunewaldSegal}
Fritz Grunewald and Daniel Segal.
\newblock Some general algorithms.
\newblock {\em Ann. of Math. (2)}, 112(3):531--617, 1980.

\bibitem{Gubeladze}
Joseph Gubeladze.
\newblock The isomorphism problem for commutative monoid rings.
\newblock {\em J. Pure Appl. Algebra}, 129(1):35--65, 1998.

\bibitem{Hu}
Sze-tsen Hu.
\newblock Isotopy invariants of topological spaces.
\newblock {\em Proc. Roy. Soc. London. Ser. A}, 255:331--366, 1960.

\bibitem{Rabin}
Michael~O. Rabin.
\newblock Recursive unsolvability of group theoretic problems.
\newblock {\em Ann. of Math. (2)}, 67:172--194, 1958.

\bibitem{Sabalka4}
Lucas Sabalka.
\newblock Braid groups on graphs.
\newblock Ph.D. thesis, U. of Illinois at Urbana-Champaign, 2006. Available at:
  math.binghamton.edu/sabalka.

\bibitem{Sabalka}
Lucas Sabalka.
\newblock Embedding right-angled {A}rtin groups into graph braid groups.
\newblock {\em Geom. Dedicata}, 124:191--198, 2007.

\bibitem{Segal}
Dan Segal.
\newblock Decidable properties of polycyclic groups.
\newblock {\em Proc. London Math. Soc. (3)}, 61(3):497--528, 1990.

\bibitem{Sela}
Z.~Sela.
\newblock The isomorphism problem for hyperbolic groups. {I}.
\newblock {\em Ann. of Math. (2)}, 141(2):217--283, 1995.

\end{thebibliography}
\bibliographystyle{plain}

\end{document}